\newcommand{\assign}{:=}
\newcommand{\backassign}{=:}
\newcommand{\comma}{{,}}
\newcommand{\mathd}{\mathrm{d}}
\newcommand{\nobracket}{}
\newcommand{\tmaffiliation}[1]{\\ #1}
\newcommand{\tmcolor}[2]{{\color{#1}{#2}}}
\newcommand{\tmemail}[1]{\\ \textit{Email:} \texttt{#1}}
\newcommand{\tmop}[1]{\ensuremath{\operatorname{#1}}}
\newcommand{\tmscript}[1]{\text{\scriptsize{$#1$}}}
\newcommand{\tmtextbf}[1]{{\bfseries{#1}}}
\newcommand{\tmtextit}[1]{{\itshape{#1}}}
\newenvironment{enumeratenumeric}{\begin{enumerate}[1.] }{\end{enumerate}}
\newenvironment{enumerateroman}{\begin{enumerate}[i.] }{\end{enumerate}}
\numberwithin{equation}{section}
\numberwithin{figure}{section}
\newenvironment{proof}{\noindent\textbf{Proof.}}{\hspace*{\fill}$\Box$\medskip}
\theoremstyle{plain}
\newtheorem{theorem}{Theorem}[section]
\newtheorem{corollary}[theorem]{Corollary}
\newtheorem{definition}[theorem]{Definition}
{\theorembodyfont{\rmfamily}\newtheorem{example}[theorem]{Example}}
\newtheorem{lemma}[theorem]{Lemma}
\newtheorem{proposition}[theorem]{Proposition}
{\theorembodyfont{\rmfamily}\newtheorem{remark}[theorem]{Remark}}
\newcommand{\tmkeywords}{\textbf{Keywords:} }
\begin{document}

\title{Nonlinear Young differential equations: a review}

\author{
  L.~Galeati
  \tmaffiliation{Institute of Applied Mathematics \&\\
  Hausdorff Center for Mathematics\\
  University of Bonn\\
  Germany}
  \tmemail{lucio.galeati@iam.uni-bonn.de}
}

\maketitle

\begin{abstract}
  Nonlinear Young integrals have been first introduced
  in~{\cite{catelliergubinelli}} and provide a natural generalisation of
  classical Young ones, but also a versatile tool in the pathwise study of
  regularisation by noise phenomena. We present here a self-contained account
  of the theory, focusing on wellposedness results for abstract nonlinear
  Young differential equations, together with some new extensions; convergence
  of numerical schemes and nonlinear Young PDEs are also treated. Most results
  are presented for general (possibly infinite dimensional) Banach spaces and
  without using compactness assumptions, unless explicitly stated.
  
  \
  
  {\noindent}\tmtextbf{MSC(2020):} Primary: 60L20. Secondary: 60L50, 34A08.
\end{abstract}

\tmkeywords{Nonlinear Young integral, Young differential equations, numerical
schemes, flow property, transport equations, parabolic Young equations.}

{\tableofcontents}

\section{Introduction}\label{sec1}

The main goal of this article is to solve and study differential equations of
the form
\begin{equation}
  x_t = x_0 + \int_0^t A (\mathd s, x_s) \label{intro eq1}
\end{equation}
where $x$ is an $\alpha$-H\"{o}lder continuous path taking values in a Banach
space $V$ and $A : [0, T] \times V \rightarrow V$ is a vector field with
suitable space-time H\"{o}lder regularity. If $A$ is sufficiently smooth in
time, then $A (\mathd s, x_s)$ can be interpreted as $\partial_t A (s, x_s)
\mathd s$, so that~\eqref{intro eq1} can be regarded as an ODE in integral
form; here however we are interested in the case $\partial_t A$ does not
exist, so that~{\eqref{intro eq1}} does not admit a classical interpretation.

\

In the case $A (t, z) = f (z) y_t$, where $y$ is an $U$-valued
$\alpha$-H\"{o}lder continuous path and $f$ maps $V$ into the space of linear
maps from $U$ to $V$, equation~{\eqref{intro eq1}} can be rewritten as
\begin{equation}
  x_t = x_0 + \int_0^t f (x_s) \mathd y_s \label{intro eq2}
\end{equation}
which can be regarded as a rough differential equation driven by a signal $y$.

\

In the regime $\alpha \in (1 / 2, 1]$, for sufficiently regular $f$,
equation~{\eqref{intro eq2}} can be rigorously interpreted by means of Young
integrals, introduced in~{\cite{young}}; wellposedness of Young differential
equations (YDEs) was first studied in~{\cite{lyons1}}. After that, several
alternative approaches to~{\eqref{intro eq2}} have been developed, either by
means of fractional calculus~{\cite{zahle}} or numerical
schemes~{\cite{davie}}; see also the review~{\cite{lejay}} for a
self-contained exposition of the main results for YDEs and the
paper~{\cite{cong}} for some recent developments. YDEs have found several
applications in the study of SDEs driven by fractional Brownian motion (fBm)
of parameter $H > 1 / 2$, see for instance~{\cite{rascanu}}.

\

Although equation~{\eqref{intro eq1}} may be seen as a natural generalization
of~{\eqref{intro eq2}}, its development is much more recent. Nonlinear Young
integrals of the form
\[ \int_0^t A (\mathd s, x_s) \]
were first defined in~{\cite{catelliergubinelli}} in applications to
additively perturbed ODEs and subsequently rediscovered in~{\cite{hu}}, where
they were employed to give a pathwise interpretation to Feynman-Kac formulas
and SPDEs with random coefficients.

\

In this paper we will consider exclusively the time regularity regime $\alpha
> 1 / 2$, also known as the Young (or or level-1 rough path) regime. However it is now
well known, since the pioneering work of Lyons~{\cite{lyons2}}, that it is
possible to give meaning to equation~{\eqref{intro eq2}} even in the case
$\alpha \leq 1 / 2$ by means of the theory of rough paths, see the
monographs~{\cite{frizvictoir}},~{\cite{frizhairer}} for a detailed account on
the topic. An analogue extesion of~{\eqref{intro eq1}} to the case of
\tmtextit{nonlinear rough paths} has been recently achieved
in~{\cite{coghinilssen}},~{\cite{nualart}}; so far however it hasn't found the
same variety of applications, discussed below, as the nonlinear Young case.
Let us finally mention that all of the above can also be seen as subcases of
the theory of rough flows developed
in~{\cite{bailleul2015}},~{\cite{bailleulriedel}}.

\

Nonlinear YDEs of the form~{\eqref{intro eq1}} mostly present direct analogue
results to their classical counterpart~{\eqref{intro eq2}}, but their
importance and the main motivation for this work lies in their
\tmtextit{versatility}. Indeed, many differential systems which a priori do
not present such structure, may be \tmtextit{recast} as nonlinear YDEs; this
allows to give them meaning in situations where classical theory breaks down.

\

This methodology seems seems particularly effective in applications to
\tmtextit{regularization by noise} phenomena; to clarify what we mean, let us
illustrate the following example, taken
from~{\cite{choukgubinelli1}},~{\cite{choukgubinelli2}}. In these works the
authors study abstract modulated PDEs of the form
\begin{equation}
  \mathd \varphi_t = A \varphi \dot{w}_t + \mathcal{N} (\varphi_t) \mathd t
  \label{intro modulated pde}
\end{equation}
where $w : [0, T] \rightarrow \mathbb{R}$ is a continuous (possibly very
rough) path, $A$ is the generator of a group $\{ e^{t A} \}_{t \in
\mathbb{R}}$ and $\mathcal{N}$ is a nonlinear functional, possibly ill-posed
in low regularity spaces. Formally, setting $\psi_t \assign e^{- w_t A}
\varphi_t$, $\psi$ would solve
\[ \psi_t = \psi_0 + \int_0^t e^{- w_s A} \mathcal{N} (e^{w_s A} \psi_s)
   \mathd s, \]
which can be regarded as an instance of~{\eqref{intro eq1}} for the choice
\begin{equation}
  A (t, z) = \int_0^t e^{- w_s A} \mathcal{N} (e^{w_s A} z) \mathd s.
  \label{intro field}
\end{equation}
Under suitable assumption, even if $w$ is not smooth (actually
\tmtextit{exactly} because it is rough, as measured by its
$\rho$-irregularity), it is possibile to rigorously define the field $A$, even
if the integral appearing on the r.h.s. of~{\eqref{intro field}} is not
meaningful in the Lebesgue sense. As a consequence, the transformation of the
state space given by $\varphi \mapsto \psi$ allows to interpret the original
PDE~{\eqref{intro modulated pde}} as a suitable nonlinear YDE; the general
abstract theory presented here can then be applied, immediately yielding
wellposedness results.

\

A similar reasoning holds for additively perturbed ODEs of the form
\[ x_t = x_0 + \int_0^t b (x_s) \mathd s + w_t \]
which were first considered in~{\cite{catelliergubinelli}}, in which case the
transformation amounts to $x \mapsto \theta \assign x - w$. This case has
recently received a lot of attention and developed into a general theory of
pathwise regularisation by noise for ODEs and SDEs,
see~{\cite{galeatigubinelli1}}, {\cite{galeatigubinelli2}},
{\cite{galeatiharang}}, {\cite{perkowski}}, {\cite{harang}} and on a related
note~{\cite{harangmayorcas}}.

\

Motivated by the above discussion, we collect here several results for
abstract nonlinear YDEs which have appeared in the above references, together
with some new extensions; they provide general criteria for existence,
uniqueness and stability of solutions to~{\eqref{intro eq1}}, as well as
convergence of numerical schemes and differentiability of the flow. This work
is deeply inspired by the review~{\cite{lejay}}, of which it can be partially
regarded as an extension; all the theory is developed in (possibly infinte
dimensional) Banach spaces and relies systematically on the use of the sewing
lemma, a by now standard feature of the rough path framework. We hope however
that the also reader already acquainted with RDEs can find the paper of
interest due to later Sections~\ref{sec5}-\ref{sec7}, containing less standard
results and applications to Young PDEs.

\

\tmtextbf{Structure of the paper.} In Section~\ref{sec2}, the nonlinear Young
integral is constructed and its main properties are established.
Section~\ref{sec3} is devoted to criteria for existence, uniqueness, stability
and convergence of numerical schemes for nonlinear YDEs, Sections~\ref{sec3.4}
and~\ref{sec3.5} focusing on several variants of the main case.
Section~\ref{sec4} deals continuity of the solutions with respect to the data
of the problem, giving conditions for the existence of a flow and
differentiability of the It\^{o} map. The results from Section~\ref{sec3.3}
are revisited in Section~\ref{sec5}, where more refined criteria for
uniqueness of solutions are given; we label them as ``conditional uniqueness''
results, as they require additional assumptions which are often met in
probabilistic applications, but are difficult to check by purely analytic
arguments. Sections~\ref{sec6} and~\ref{sec7} deal respectively with Young
transport and parabolic type of PDEs. We chose to collect in the Appendix some
useful tools and further topics.

\

\tmtextbf{Notation.} Here is a list of the most relevant and frequently used
notations and conventions:
\begin{itemize}
  \item We write $a \lesssim b$ if $a \leqslant C b$ for a suitable constant,
  $a \lesssim_x b_{}$ to stress the dependence $C = C (x)$.
  
  \item We will always work on a finite time interval $[0, T]$; the Banach
  spaces $V$, $W$ appearing might be infinite dimensional but will be always
  assumed separable for simplicity.
  
  \item Given a Banach space $(E, \| \cdot \|_E)$, we set $C^0_t E = C ([0, T]
  ; E)$ endowed with supremum norm
  \[ \| f \|_{\infty} = \sup_{t \in [0, T]} \| f_t \|_E \quad \forall \, f \in
     C^0_t E \]
  where $f_t : = f (t)$ and we adopt the incremental notation $f_{s, t}
  \assign f_t - f_s$. Similarly, for any $\alpha \in (0, 1)$ we set
  $C^{\alpha}_t E = C^{\alpha} ([0, T] ; E)$ be the space of
  $\alpha$-H\"{o}lder continuous functions with norm
  \[ \llbracket f \rrbracket_{\alpha} \assign \sup_{\tmscript{\begin{array}{c}
       0 \leqslant s < t \leqslant T
     \end{array}}} \frac{\| f_{s, t} \|_E}{| t - s |^{\alpha}}, \qquad \| f
     \|_{\alpha} \assign \| f \|_{\infty} + \llbracket f \rrbracket_{\alpha} .
  \]
  \item The above notation will be applied to several choice of $E$ such as
  $C^{\alpha}_t V$, $C^{\alpha}_t \mathbb{R}^d$ but also $C^{\alpha}_t
  C^{\beta, \lambda}_{V, W}$ or $C^{\alpha}_t C^{\beta}_{V, W, \tmop{loc}}$,
  for which we refer to Definitions~\ref{sec2 defn field1} and~\ref{sec2 defn
  field2}.
  
  \item We denote by $\mathcal{L} (V ; W)$ the set of all linear bounded
  operators from $V$ to $W$, $L (V) = L (V ; V)$.
  
  \item Whenever we will refer to differentiability this must be understood in
  the sense of Frech{\'e}t, unless specified otherwise; given a map $F : V
  \rightarrow W$ we regard its Frech{\'e}t differential $D^k F$ of order $k$
  as a map from $V$ to $\mathcal{L}^k (V ; W)$, the set of bounded $k$-linear
  forms from $V^k$ to $W$. We will use indifferently $D F (x, y) = D F (x)
  (y)$ for the differential at point $x$ evaluated along the direction $y$.
  
  \item Given a linear unbounded operator $A$, $\tmop{Dom} (A)$ denotes its
  domain, $\tmop{rg} (A)$ its range.
  
  \item As a rule of thumb, whenever $J (\Gamma)$ appears, it denotes the
  sewing of $\Gamma : \Delta_2 \rightarrow E$; we refer to
  Section~\ref{sec2.1} for more details on the sewing map. Similarly, in
  proofs based on a Banach fixed point argument, $I$ will denote the map whose
  constractivity must be established.
  
  \item As a rule of thumb, we will use $C_i$, $i \in \mathbb{N}$ for the
  constants appearing in the main statements and $\kappa_i$ for those only
  appearing inside the proofs; the numbering restarts at each statement and is
  only meant to distinguish the dependence of the constants from relevant
  parameters.
\end{itemize}
\section{The nonlinear Young integral}\label{sec2}

This section is devoted to the construction of nonlinear Young integrals and
nonlinear Young calculus more in general, as a preliminary step to the study
of nonlinear Young differential equations which will be developed in the next
section. We follow the modern rough path approach to abstract integration,
based on the sewing lemma as developed in~{\cite{gubinelli}}
and~{\cite{feyel}}, which is recalled first.

\subsection{Preliminaries}\label{sec2.1}

This subsections contains an exposition of the sewing lemma and the definition
of the joint space-time H\"{o}lder continous drifts $A$ we will work with; the
reader already acquainted with this concepts may skip it.

\

Given a finite interval $[0, T]$, consider the $n$\mbox{-}simplex $\Delta_n :
= \{ (t_1, \ldots, t_n) : 0 \leqslant t_1 \leqslant \ldots \leqslant t_n
\leqslant T \}$. Let $V$ be a Banach space, for any $\Gamma : \Delta_2
\rightarrow V$ we define $\delta \Gamma : \Delta_3 \rightarrow V$ by
\[ \delta \Gamma_{s, u, t} : = \, \Gamma_{s, t} - \Gamma_{s, u} - \Gamma_{u,
   t} . \]
We say that $\Gamma \in C^{\alpha, \beta}_2 ([0, T] ; V) = C^{\alpha, \beta}_2
V$ if $\Gamma_{t, t} = 0$ for all $t \in [0, T]$ and $\| \Gamma \|_{\alpha,
\beta} < \infty$, where
\[ \| \Gamma \|_{\alpha} \assign \sup_{s < t} \frac{\| \Gamma_{s, t} \|_V}{| t
   - s |^{\alpha}}, \quad \left\| \delta \, \Gamma \right\|_{\beta} \assign
   \sup_{s < u < t} \frac{\left\| \delta \, \Gamma_{s, u, t} \right\|_V}{| t -
   s |^{\beta}}, \quad \| \Gamma \|_{\alpha, \beta} \assign \| \Gamma
   \|_{\alpha} + \left\| \delta \, \Gamma \right\|_{\beta} . \]
For a map $f : [0, T] \rightarrow V$, we still denote by $f_{s, t}$ the
increment $f_t - f_s$.

\begin{lemma}[Sewing lemma]
  \label{sec2 sewing lemma}Let $\alpha$, $\beta$ be such that $0 < \alpha < 1
  < \beta$. For any $\Gamma \in C^{\alpha, \beta}_2 V$ there exists a unique
  map $\mathcal{J} (\Gamma) \in C^{\alpha}_t V$ such that $\mathcal{J}
  (\Gamma)_0 = 0$ and
  \begin{equation}
    \| \mathcal{J} (\Gamma)_{s, t} - \Gamma_{s, t} \|_V \leqslant C_1 \, \|
    \delta \Gamma \|_{\beta}  | t - s |^{\beta} \label{sec2 sewing property 1}
  \end{equation}
  where the constant $C_1$ can be taken as $C_1 = (1 - 2^{\beta - 1})^{- 1}$.
  Thus the sewing map $\mathcal{J} : C^{\alpha, \beta}_2 V \rightarrow
  C^{\alpha}_t V$ is linear and bounded and there exists $C_2 = C_2 (\alpha,
  \beta, T)$ such that
  \begin{equation}
    \left\| \mathcal{J} \, (\Gamma) \right\|_{\alpha} \leqslant C_2  \| \Gamma
    \|_{\alpha, \beta} . \label{sec2 sewing property 2}
  \end{equation}
  For a given $\Gamma$, $\mathcal{J} \, (\Gamma)$ is characterized as the
  unique limit of Riemann\mbox{-}Stjeltes sums: for any $t > 0$
  \[ \mathcal{J} \, (\Gamma)_t = \lim_{| \Pi | \rightarrow 0} \sum_i
     \Gamma_{t_i, t_{i + 1}} . \]
  The notation above means that for any sequence of partitions $\Pi_n = \{ 0 =
  t_0 < t_1 < \ldots < t_{k_n} = t \}$ with mesh $| \Pi_n | = \sup_{i = 1,
  \ldots, k_n} | t_i - t_{i - 1} | \rightarrow 0$ as $n \rightarrow \infty$,
  it holds
  \[ \mathcal{J} \, (\Gamma)_t = \lim_{n \rightarrow \infty} \sum_{i = 0}^{k_n
     - 1} \Gamma_{t_i, t_{i + 1}} . \]
\end{lemma}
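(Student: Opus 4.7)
The plan is to construct $\mathcal{J}(\Gamma)$ via the sequence of dyadic Riemann sums, derive the approximation bound \eqref{sec2 sewing property 1} along the way, and then obtain uniqueness and the general Riemann--Stieltjes characterization as direct consequences of that bound. For $(s,t)\in \Delta_2$ and $n\ge 0$, let $t_i^n = s + i(t-s)/2^n$ be the endpoints of the dyadic partition of $[s,t]$ into $2^n$ equal subintervals, and set $\mathcal{J}^n_{s,t}:=\sum_{i=0}^{2^n-1}\Gamma_{t_i^n,t_{i+1}^n}$. Refining from level $n$ to level $n+1$ only inserts midpoints $m_i^n$, so by definition of $\delta$,
\[
\mathcal{J}^{n+1}_{s,t} - \mathcal{J}^n_{s,t} \;=\; -\sum_{i=0}^{2^n-1}\delta\Gamma_{t_i^n,m_i^n,t_{i+1}^n}.
\]
Bounding each summand by $\|\delta\Gamma\|_\beta (|t-s|/2^n)^\beta$ and summing over the $2^n$ pieces gives $\|\mathcal{J}^{n+1}_{s,t}-\mathcal{J}^n_{s,t}\|_V\le \|\delta\Gamma\|_\beta|t-s|^\beta (2^{1-\beta})^n$; since $\beta>1$ this is the general term of a convergent geometric series, so $(\mathcal{J}^n_{s,t})_n$ is Cauchy in $V$, and summing from $n=0$ yields the uniform estimate $\|\mathcal{J}^n_{s,t}-\Gamma_{s,t}\|_V\le(1-2^{1-\beta})^{-1}\|\delta\Gamma\|_\beta|t-s|^\beta$, which in the limit produces \eqref{sec2 sewing property 1}.

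To turn the two-parameter object $\mathcal{J}(\Gamma)_{s,t}$ into the path $\mathcal{J}(\Gamma)_t:=\mathcal{J}(\Gamma)_{0,t}$, I would verify the additivity $\delta\mathcal{J}(\Gamma)=0$: for any $s<u<t$ a common refinement of the dyadics of $[s,u]$ and of $[u,t]$ provides partitions of $[s,t]$ of arbitrarily small mesh on which the partial sums split additively, and passing to the limit (using \eqref{sec2 sewing property 1} to compare with the dyadics on $[s,t]$) gives $\mathcal{J}(\Gamma)_{s,t}=\mathcal{J}(\Gamma)_{s,u}+\mathcal{J}(\Gamma)_{u,t}$. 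The H\"older estimate \eqref{sec2 sewing property 2} then follows at once from \eqref{sec2 sewing property 1} combined with $\|\Gamma_{s,t}\|\le \|\Gamma\|_\alpha |t-s|^\alpha$ and $|t-s|^{\beta-\alpha}\le T^{\beta-\alpha}$, yielding a constant of the form $C_2=1+C_1 T^{\beta-\alpha}$.

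For uniqueness, any other candidate $\tilde{\mathcal{J}}(\Gamma)$ makes the difference $D_{s,t}:=\mathcal{J}(\Gamma)_{s,t}-\tilde{\mathcal{J}}(\Gamma)_{s,t}$ satisfy $\delta D=0$, so $D_{s,t}=D_t-D_s$ is the increment of a single path with $D_0=0$, while applying \eqref{sec2 sewing property 1} to both candidates forces $\|D_{s,t}\|_V\lesssim |t-s|^\beta$ with $\beta>1$; any path with such H\"older increments is constant, so $D\equiv 0$. For the Riemann--Stieltjes characterization, given a partition $\Pi=\{0=t_0<\dots<t_k=t\}$ I would telescope $\mathcal{J}(\Gamma)_t=\sum_i\mathcal{J}(\Gamma)_{t_i,t_{i+1}}$ using the additivity just proved and apply \eqref{sec2 sewing property 1} on each subinterval to obtain
\[
\Bigl\|\mathcal{J}(\Gamma)_t-\sum_i\Gamma_{t_i,t_{i+1}}\Bigr\|_V\le C_1\|\delta\Gamma\|_\beta\sum_i|t_{i+1}-t_i|^\beta\le C_1\|\delta\Gamma\|_\beta\, T\, |\Pi|^{\beta-1},
\]
which vanishes with the mesh. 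The only mildly delicate step is the additivity at arbitrary non-dyadic intermediate points, handled by the refinement argument sketched above; everything else reduces to clean geometric-series bookkeeping once the dyadic scheme is in place.
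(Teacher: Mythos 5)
The paper does not prove this lemma; it cites Lemma~4.2 of Friz--Hairer. Your dyadic construction is the standard one and is essentially correct, so a comparison with ``the paper's proof'' is really a comparison with the standard argument. Two remarks on substance.

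First, the paper states $C_1=(1-2^{\beta-1})^{-1}$, which is negative for $\beta>1$; this is a typo, and the constant $(1-2^{1-\beta})^{-1}$ you obtain from summing the geometric series is the correct one.

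Second, and more importantly, the additivity step is glossed over in a way that, as written, is circular. You write that one can pass to the limit ``using \eqref{sec2 sewing property 1} to compare with the dyadics on $[s,t]$,'' but \eqref{sec2 sewing property 1} only compares $\mathcal{J}(\Gamma)_{s,t}$ with the \emph{trivial} one-step sum $\Gamma_{s,t}$, not with a general partial sum over a partition of $[s,t]$. To deduce from \eqref{sec2 sewing property 1} that the partial sum over an arbitrary partition $\Pi=\{s=u_0<\cdots<u_k=t\}$ is close to $\mathcal{J}(\Gamma)_{s,t}$, you would telescope $\mathcal{J}(\Gamma)_{s,t}=\sum_i \mathcal{J}(\Gamma)_{u_i,u_{i+1}}$ --- which is precisely the additivity you are trying to establish. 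What is actually needed is a point-insertion bookkeeping argument: pass from the combined dyadic partition $\Pi_n$ of $[s,u]\cup[u,t]$ and the dyadic partition $\Pi_n'$ of $[s,t]$ to their common refinement one interior point at a time; each insertion into an interval of length $\le\operatorname{mesh}(\Pi_n)$ changes the partial sum by at most $\|\delta\Gamma\|_\beta\,\operatorname{mesh}(\Pi_n)^\beta$, and since at most $O(2^n)$ points are inserted while $\operatorname{mesh}(\Pi_n)\sim 2^{-n}$, the total discrepancy is $O(2^{n(1-\beta)})\to 0$. This is exactly the ``successive point removal'' estimate on which the sewing lemma rests (in its cleanest form, Friz--Hairer prove directly that $\bigl\|\sum_\Pi\Gamma_{t_i,t_{i+1}}-\Gamma_{s,t}\bigr\|\le C\|\delta\Gamma\|_\beta|t-s|^\beta$ \emph{uniformly over all finite partitions} $\Pi$ of $[s,t]$, which gives existence, additivity, and the Riemann--Stieltjes characterization simultaneously). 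Once you make this step explicit the remainder of your argument --- the geometric-series bound, the deduction of \eqref{sec2 sewing property 2}, and uniqueness via $\beta>1$ implying any $\beta$-H\"older path is constant --- is correct and complete. Note also that your derivation of \eqref{sec2 sewing property 2} already invokes $\mathcal{J}(\Gamma)_{s,t}=\mathcal{J}(\Gamma)_t-\mathcal{J}(\Gamma)_s$, so additivity must be secured before that estimate can be read off.
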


For a proof, see Lemma~4.2 from~{\cite{frizhairer}}.

\begin{remark}
  \label{sec2 remark sewing}Let us stress two important aspects of the above
  result. The first one is that all the estimates do not depend on the Banach
  space $V$ considered; the second one is that, even when the map $\mathcal{J}
  (\Gamma)$ is already known to exist, property~{\eqref{sec2 sewing property
  1}} still gives non trivial estimates on its behaviour. In particular, if $f
  \in C^{\alpha}_t V$ is a function such that $\| \Gamma_{s, t} - f_{s, t}
  \|_V \leqslant \kappa | t - s |^{\alpha}$ for an unknown constant $\kappa$,
  then by the sewing lemma we can deduce that $f = \mathcal{J} (\Gamma)$ and
  that $\kappa$ can be taken as $C_1 \, \| \delta \Gamma \|_{\beta}$.
\end{remark}

Next we need to introduce suitable classes of H\"{o}lder continuous maps on
Banach spaces.

\begin{definition}
  \label{sec2 defn field1}Let $V, W$ Banach spaces, $f \in C (V ; W)$, $\beta
  \in (0, 1)$. We say that $f$ is locally $\beta$-H\"{o}lder continuous and
  write $f \in C^{\beta}_{V, W, \tmop{loc}}$ if for any $R > 0$ the following
  quantities are finite:
  \[ \llbracket f \rrbracket_{\beta, R} \assign
     \sup_{\tmscript{\begin{array}{c}
       x \neq y \in V\\
       \| x \|_V, \| y \|_V \leqslant R
     \end{array}}} \frac{\| f (x) - f (y) \|_W}{\| x - y \|_V^{\beta}}, \quad
     \| f \|_{\beta, R} \assign \llbracket f \rrbracket_{\beta, R} +
     \sup_{\tmscript{\begin{array}{c}
       x \in V\\
       \| x \|_V \leqslant R
     \end{array}}} \| f (x) \|_V . \]
  For $\lambda \in (0, 1]$, we define the space $C^{\beta, \lambda}_{V, W}$ as
  the collection of all $f \in C (V ; W)$ such that
  \[ \llbracket f \rrbracket_{\beta, \lambda} \assign \sup_{R \geqslant 1}
     R^{- \lambda}  \llbracket f \rrbracket_{\beta, R}, \quad \| f \|_{\beta,
     \lambda} \assign \llbracket f \rrbracket_{\beta, \lambda} + \| f (0) \|_V
     < \infty . \]
  Finally, the classical H\"{o}lder space $C^{\beta}_{V, W}$ is defined as the
  collection of all $f \in C (V ; W)$ such that
  \[ \llbracket f \rrbracket_{\beta} \assign \sup_{\tmscript{\begin{array}{c}
       x \neq y \in V
     \end{array}}} \frac{\| f (x) - f (y) \|_W}{\| x - y \|_V^{\beta}}, \quad
     \| f \|_{\beta} = \llbracket f \rrbracket_{\beta} + \sup_{x \in V} \| f
     (x) \|_V < \infty . \]
  
\end{definition}

\begin{remark}
  We ask the reader to keep in mind that although linked, $\llbracket f
  \rrbracket_{\beta, R}$ and $\llbracket f \rrbracket_{\beta, \lambda}$ denote
  two different quantities. Throughout the paper $R$ will always denote the
  radius of an open ball in $V$ and consequently all related seminorms are
  localised on such ball; instead the parameter $\lambda$ measures the
  polynomial growth of $\llbracket \cdot \rrbracket_{\beta, R}$ as a function
  of $R$.
  
  $C^{\beta}_{V, W, \tmop{loc}}$ is a Fr{\'e}chet space with the topology
  induced by the seminorms $\{ \| f \|_{\beta, R} \}_{R \geqslant 0}$, while
  $C^{\beta, \lambda}_{V, W}$ and $C^{\beta}_{V, W}$ are Banach spaces.
  Observe that if $f \in C^{\beta, \lambda}_{V, W}$, we have an upper bound on
  its growth at infinity, since for any $x \in V$ with $\| x \|_V \geqslant 1$
  it holds
  \[ \| f (x) \|_V \leqslant \| f (x) - f (0) \|_V + \| f (0) \|_V \leqslant
     \| x \|_V^{\beta}  \llbracket f \rrbracket_{\beta, \| x \|_V} + \| f (0)
     \|_V \leqslant \| f \|_{\beta, \lambda} (1 + \| x \|_V^{\beta + \lambda})
     . \]
  In particular, if $\beta + \lambda \leqslant 1$, then $f$ has at most linear
  growth.
\end{remark}

We can now introduce fields $A : [0, T] \times V \rightarrow W$ satisfying a
joint space-time H\"{o}lder continuity. We adopt the incremental notation
$A_{s, t} (x) \assign A (t, x) - A (s, x)$, as well as $A_t (x) = A (t, x)$;
from now on, whenever $A$ appears, it is implicitly assumed that $A (0, x) =
0$ for all $x \in V$.

\begin{definition}
  \label{sec2 defn field2}Given $A$ as above, $\alpha, \beta \in (0, 1)$, we
  say that $A \in C^{\alpha}_t C^{\beta}_{V, W, \tmop{loc}}$ if for any $R
  \geqslant 0$ it holds
  \[ \llbracket A \rrbracket_{\alpha, \beta} \assign \sup_{0 \leqslant s < t
     \leqslant T} \frac{\llbracket A_{s, t} \rrbracket_{\beta, R}}{| t - s
     |^{\alpha} }, \quad \| A \|_{\alpha, \beta} \assign \sup_{0 \leqslant s <
     t \leqslant T} \frac{\| A_{s, t} \|_{\beta, R}}{| t - s |^{\alpha} } <
     \infty . \]
  We say that $A \in C^{\alpha}_t C^{\beta, \lambda}_{V, W}$ if
  \[ \llbracket A \rrbracket_{\alpha, \beta, \lambda} \assign \sup_{0
     \leqslant s < t \leqslant T} \frac{\llbracket A_{s, t} \rrbracket_{\beta,
     \lambda}}{| t - s |^{\alpha} }, \quad \| A \|_{\alpha, \beta, \lambda}
     \assign \sup_{0 \leqslant s < t \leqslant T} \frac{\| A_{s, t} \|_{\beta,
     \lambda}}{| t - s |^{\alpha} } ; \]
  analogue definitions hold for $C^{\alpha}_t C^{\beta}_{V, W}$, $\llbracket
  \cdot \rrbracket_{\alpha, \beta}$, $\| \cdot \|_{\alpha, \beta}$.
\end{definition}

The definition can be extended to the cases $\alpha = 0$ or $\beta = 0$ by
interpreting the norm in the supremum sense: for instance $A \in C^0_t
C^{\beta}_{V, W}$ if
\[ \| A \|_{0, \beta} = \sup_{t \in [0, T]} \| A_t \|_{\beta} < \infty . \]
Given a smooth $F : V \rightarrow W$, we regard its Frech{\'e}t differential
$D^k F$ of order $k$ as a map from $V$ to $\mathcal{L}^k (V ; W)$, the set of
bounded $k$-linear forms from $V^k$ to $W$.

\begin{definition}
  We say that $A \in C^{\alpha}_t C^{n + \beta}_{V, W}$ if $A \in C^{\alpha}_t
  C^{\beta}_{V, W}$ and it is k-times Frech{\'e}t differentiable in $x$, with
  $D^k A \in C^{\alpha}_t C^{\beta}_{V, \mathcal{L}^k (V ; W)}$ for all $k
  \leqslant n$. $C^{\alpha}_t C^{n + \beta}_{V, W}$ is a Banach space with
  norm
  \[ \| A \|_{\alpha, n + \beta} = \sum_{k = 0}^n \| D^k A \|_{\alpha, \beta}
     . \]
  Analogue definitions hold for $C^{\alpha}_t C^{n + \beta}_{V, W,
  \tmop{loc}}$ and $C^{\alpha}_t C^{n + \beta, \lambda}_{V, W}$.
\end{definition}

\subsection{Construction and first properties}

We are now ready to construct nonlinear Young integrals, following the line of
proof from~{\cite{hu}},~{\cite{perkowski}}; other constructions are possible,
see Appendix~\ref{appendixB}.

\begin{theorem}
  \label{sec2 thm definition young integral}Let $\alpha, \beta, \gamma \in (0,
  1)$ such that $\alpha + \beta \gamma > 1$, $A \in C^{\alpha}_t C^{\beta}_{V,
  W, \tmop{loc}}$ and $x \in C^{\gamma}_t V$. Then for any $[s, t] \subset [0,
  T]$ and for any sequence of partitions of $[s, t]$ with infinitesimal mesh,
  the following limit exists and is independent of the chosen sequence of
  partitions:
  \[ \int_s^t A (\mathd u, x_u) \assign
     \tmcolor{blue}{\tmcolor{black}{\lim_{\tmscript{\begin{array}{c}
       | \Pi | \rightarrow 0
     \end{array}}}}} \sum_i A_{t_i, t_{t + 1}} (x_{t_i}) . \]
  The limit is usually referred as a nonlinear Young integral. Furthermore:
  \begin{enumeratenumeric}
    \item For all $(s, r, t) \in \Delta_3$ it holds $\int_s^r A (\mathd u,
    x_u) + \int_r^t A (\mathd u, x_u) = \int_s^t A (\mathd u, x_u)$.
    
    \item If $\partial_t A$ exists continuous, then $\int_s^t A (\mathd u,
    x_u) = \int_s^t \partial_t A (u, x_u) \mathd u$.
    
    \item There exists a constant $C_1 = C_1 (\alpha, \beta, \gamma)$ such
    that
    \begin{equation}
      \left\| \int_s^t A (\mathd u, x_u) - A_{s, t} (x_s) \right\|_W \leqslant
      C_1 | t - s |^{\alpha + \beta \gamma} \llbracket A \rrbracket_{\alpha,
      \beta, \| x \|_{\infty}} \llbracket x \rrbracket^{\beta}_{\gamma} .
      \label{sec2 nonlinear integral estimate1}
    \end{equation}
    \item The map $(A, x) \mapsto \int_0^{\cdot} A (\mathd u, x_u)$ is
    continuous as a function from $C^{\alpha}_t C^{\beta}_{V, W, \tmop{loc}}
    \times C^{\gamma}_t V \rightarrow C^{\alpha}_t W$. More precisely, it is a
    linear map in $A$ and there exists $C_2 = C_2 (\alpha, \beta, \gamma, T)$
    such that
    \begin{equation}
      \left\| \int_0^{\cdot} A^1 (\mathd u, x_u) - \int_0^{\cdot} A^2 (\mathd
      u, x_u) \right\|_{\alpha} \leqslant C_2 \| A^1 - A^2 \|_{\alpha, \beta,
      \| x \|_{\infty}}  (1 + \llbracket x \rrbracket_{\gamma}) ; \label{sec2
      nonlinear integral estimate2}
    \end{equation}
    it is locally $\delta$\mbox{-}H\"{o}lder continuous in $x$ for any $\delta
    \in (0, 1)$ such that $\delta < (\alpha + \beta \gamma - 1) / \gamma$ and
    there exists $C_3 = C_3 (\alpha, \beta, \gamma, \delta, T)$ such that, for
    any $R \geqslant \| x \|_{\infty} \vee \| y \|_{\infty}$, it holds
    \begin{equation}
      \left\| \int_0^{\cdot} A (\mathd u, x_u) - \int_0^{\cdot} A (\mathd u,
      y_u) \right\|_{\alpha} \leqslant C_3 \| A \|_{\alpha, \beta, R}  (1 + \|
      x \|_{\gamma} + \nobracket \| y \nobracket \|_{\gamma})  \llbracket x -
      y \rrbracket^{\delta}_{\gamma} . \label{sec2 nonlinear integral
      estimate3}
    \end{equation}
  \end{enumeratenumeric}
\end{theorem}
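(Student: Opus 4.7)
The plan is to apply the sewing lemma (Lemma~\ref{sec2 sewing lemma}) to the natural germ $\Gamma_{s,t} := A_{s,t}(x_s)$ taking values in $W$. Using $A_{s,t} = A_{s,u} + A_{u,t}$, one computes $\delta\Gamma_{s,u,t} = A_{u,t}(x_s) - A_{u,t}(x_u)$, and combining the spatial H\"older regularity of $A_{u,t}$ on the ball of radius $R := \|x\|_\infty$ with its time regularity and the $\gamma$-H\"older regularity of $x$ yields
\[ \|\delta\Gamma_{s,u,t}\|_W \leq \llbracket A\rrbracket_{\alpha,\beta,R}\, \llbracket x\rrbracket_\gamma^\beta\, |t-s|^{\alpha+\beta\gamma}, \qquad \|\Gamma_{s,t}\|_W \leq \|A\|_{\alpha,\beta,R}\,|t-s|^\alpha. \]
Since $\alpha + \beta\gamma > 1$, the sewing lemma applies with exponent $\alpha+\beta\gamma$ and produces a unique $\mathcal{J}(\Gamma) \in C^\alpha_t W$. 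I would then \emph{define} $\int_0^\cdot A(du,x_u) := \mathcal{J}(\Gamma)$. The Riemann-sum characterisation and the additivity (1) are immediate from the analogous properties of $\mathcal{J}$; estimate~\eqref{sec2 nonlinear integral estimate1} in (3) is just~\eqref{sec2 sewing property 1} applied to this $\Gamma$; and for (2), when $\partial_t A$ is continuous one writes $A_{t_i,t_{i+1}}(x_{t_i}) = \int_{t_i}^{t_{i+1}}\partial_t A(u,x_{t_i})\,du$ and passes to the limit by uniform continuity of $\partial_t A$ on the (compact) range of $x$.

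For (4), linearity in $A$ is inherited from linearity of $A \mapsto \Gamma$ and of $\mathcal{J}$; the bound~\eqref{sec2 nonlinear integral estimate2} follows by feeding the germ $\Gamma^{A^1-A^2}$ into~\eqref{sec2 sewing property 2} together with the obvious estimates on $\Gamma^{A^1-A^2}_{s,t}$ and $\delta\Gamma^{A^1-A^2}_{s,u,t}$ derived exactly as above.

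The delicate part is the local $\delta$-H\"older continuity in $x$. I would apply the sewing lemma to the difference germ $\tilde\Gamma_{s,t} := A_{s,t}(x_s) - A_{s,t}(y_s)$, whose coboundary is the second-order difference
\[ \delta\tilde\Gamma_{s,u,t} = A_{u,t}(x_s) - A_{u,t}(x_u) - A_{u,t}(y_s) + A_{u,t}(y_u). \]
Two natural bounds are available: (A) grouping as $[A_{u,t}(x_s) - A_{u,t}(x_u)] - [A_{u,t}(y_s) - A_{u,t}(y_u)]$ and using spatial H\"older regularity of $A_{u,t}$ against the $\gamma$-H\"older smoothness of $x,y$, giving a factor $(\llbracket x\rrbracket_\gamma^\beta + \llbracket y\rrbracket_\gamma^\beta)|t-s|^{\alpha+\beta\gamma}$; (B) grouping as $[A_{u,t}(x_s) - A_{u,t}(y_s)] - [A_{u,t}(x_u) - A_{u,t}(y_u)]$, giving a factor $\|x-y\|_\infty^\beta |t-s|^\alpha$. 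A geometric interpolation between (A) and (B) with weight $\theta \in (0,1)$ then produces a bound of the form $\llbracket A\rrbracket_{\alpha,\beta,R}(\llbracket x\rrbracket_\gamma + \llbracket y\rrbracket_\gamma)^{\beta(1-\theta)}\,|t-s|^{\alpha+\beta\gamma(1-\theta)}\,\|x-y\|_\infty^{\beta\theta}$; the sewing requirement $\alpha+\beta\gamma(1-\theta) > 1$ forces $\theta$, and hence the resulting H\"older exponent on $x-y$, to lie precisely in the stated range $\delta < (\alpha+\beta\gamma-1)/\gamma$. Applying~\eqref{sec2 sewing property 2} to $\tilde\Gamma$ together with this estimate on $\delta\tilde\Gamma$ gives~\eqref{sec2 nonlinear integral estimate3}.

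The main obstacle is therefore this interpolation in (4), balancing the time exponent (to preserve sewing applicability) against the target H\"older exponent in $x-y$. A secondary, systematic, nuisance is the bookkeeping of the localisation radius $R$ in the seminorms $\llbracket A\rrbracket_{\alpha,\beta,R}$: since $x$ and $y$ remain in a bounded ball, the localised version always suffices, which is precisely what lets the proof run under the hypothesis $A \in C^\alpha_t C^\beta_{V,W,\tmop{loc}}$ rather than any global regularity.
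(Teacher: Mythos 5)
Your proposal follows essentially the same route as the paper's proof: the same germ $\Gamma_{s,t} = A_{s,t}(x_s)$, the same coboundary estimate with exponent $\alpha + \beta\gamma$, the same application of the sewing lemma, and the same two-way bound on $\delta\tilde\Gamma$ interpolated with a weight $\theta$ constrained by the sewing threshold $\alpha + \beta\gamma(1-\theta) > 1$, which reproduces the stated range for $\delta$. All steps are correct and the localisation remark at the end matches the paper's use of the seminorms $\llbracket A\rrbracket_{\alpha,\beta,R}$ with $R = \|x\|_\infty \vee \|y\|_\infty$.
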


\begin{proof}
  In order to show convergence of the Riemann sums, it is enough to apply the
  sewing lemma to the choice $\Gamma_{s, t} \assign A_{s, t} (x_s) = A (t,
  x_s) - A (s, x_s)$. Indeed we have
  \[ \| \Gamma \|_{\alpha} = \sup_{s < t} \frac{\| A_{s, t} (x_s) \|_W}{| t -
     s |^{\alpha}} \leqslant \sup_{s < t} \frac{\| A_{s, t} \|_{0, \| x
     \|_{\infty}}}{| t - s |^{\alpha}} \leqslant \| A \|_{\alpha, 0, \| x
     \|_{\infty}} \]
  and
  
  \begin{align*}
    \| \delta \Gamma_{s, u, t} \|_W & = \| A_{u, t} (x_s) - A_{u, t} (x_u)
    \|_W \leqslant \llbracket A_{u, t} \rrbracket_{\beta, \| x \|_{\infty}} 
    \| x_{u, s} \|_V^{\beta} \leqslant | t - u |^{\alpha}  | u - s |^{\beta
    \gamma}  \llbracket A \rrbracket_{\alpha, \beta, \| x \|_{\infty}} 
    \llbracket x \rrbracket_{\gamma}^{\beta}
  \end{align*}
  
  which implies $\| \delta \Gamma \|_{\alpha + \beta \gamma} \leqslant
  \llbracket A \rrbracket_{\alpha, \beta, \| x \|_{\infty}}  \llbracket x
  \rrbracket_{\gamma}^{\beta}$. In particular $\Gamma \in C^{\alpha, \alpha +
  \beta \gamma}_2 W$ with $\alpha + \beta \gamma > 1$, therefore by the sewing
  lemma we can set
  \[ \int_0^t A (\mathd s, x_s) \assign \mathcal{J} (\Gamma)_t =
     \tmcolor{blue}{\tmcolor{black}{\lim_{\tmscript{\begin{array}{c}
       | \Pi | \rightarrow 0
     \end{array}}}}} \sum_i \Gamma_{t_i, t_{t + 1}} . \]
  Property~1. then follows from $\mathcal{J} (\Gamma)_{s, t} = \mathcal{J}
  (\Gamma)_{s, r} + \mathcal{J} (\Gamma)_{r, t}$ and Property~3. from the
  above estimates on $\| \delta \Gamma \|_{\alpha + \beta \gamma}$. Similarly
  estimate~{\eqref{sec2 nonlinear integral estimate2}} is obtained by the
  previous estimates applied to $A = A^1 - A^2$. Property~2. follows from the
  fact that if $\partial_t A$ exists continuous, then necessarily
  \[ \tmcolor{blue}{\tmcolor{black}{\lim_{\tmscript{\begin{array}{c}
       | \Pi | \rightarrow 0
     \end{array}}}}} \sum_i A_{t_i, t_{t + 1}} (x_{t_i}) = \int_0^t \partial_t
     A (u, x_u) \mathd u. \]
  It remains to show estimate~{\eqref{sec2 nonlinear integral estimate3}}. To
  this end, for fixed $x, y \in C^{\gamma}_t V$ and $R$ as above, we need to
  provide estimates for $\| \delta \tilde{\Gamma} \|_{1 + \varepsilon}$ for
  $\tilde{\Gamma}_{s, t} : = A_{s, t} (x_s) - A_{s, t} (y_s)$ and suitable
  $\varepsilon > 0$. It holds
  
  \begin{align*}
    | \delta \tilde{\Gamma}_{s, u, t} | & \leqslant | A_{u, t} (x_u) - A_{u,
    t} (x_s) | + | A_{u, t} (y_u) - A_{u, t} (y_s) | \leqslant \| A
    \|_{\alpha, \beta, R}  (\llbracket x \rrbracket^{\beta}_{\gamma} +
    \llbracket y \rrbracket^{\beta}_{\gamma})  | t - s |^{\alpha + \beta
    \gamma},\\
    | \delta \tilde{\Gamma}_{s, u, t} | & \leqslant | A_{u, t} (x_u) - A_{u,
    t} (y_u) | + | A_{u, t} (x_s) - A_{u, t} (y_s) | \lesssim \| A \|_{\alpha,
    \beta, R}  \| x - y \|^{\beta}_0  | t - s |^{\alpha}
  \end{align*}
  
  which interpolated together give
  \[ \| \delta \Gamma \|_{(1 - \theta) (\alpha + \beta \gamma) + \theta
     \alpha} \lesssim \| A \|_{\alpha, \beta, R}  (1 + \llbracket x
     \rrbracket_{\gamma} + \llbracket y \rrbracket_{\gamma})  \| x - y
     \|_0^{\beta \theta} \]
  for any $\theta \in (0, 1)$ such that $(1 - \theta) (\alpha + \beta \gamma)
  + \theta \alpha = 1 + \varepsilon > 1$, namely such that
  \[ \beta \theta < \frac{\alpha + \beta \gamma - 1}{\gamma} . \]
  The sewing lemma then implies that
  
  \begin{align*}
    \left\| \int_s^t A (\mathd r, x_r) - \int_s^t A (\mathd r, y_r) \right\|_W
    & \lesssim_{\theta} \left\| \int_s^t A (\mathd r, x_r) - \int_s^t A
    (\mathd r, y_r) - \tilde{\Gamma}_{s, t} \right\|_W + \| \tilde{\Gamma}_{s,
    t} \|_W\\
    & \lesssim \| \delta \tilde{\Gamma} \|_{1 + \varepsilon} | t - s |^{1 +
    \varepsilon}_{} + \| A \|_{\alpha, \beta, R} | t - s |^{\alpha} \| x - y
    \|^{\beta}_0\\
    & \lesssim_{\theta, T} | t - s |^{\alpha}  \| A \|_{\alpha, \beta, R}  (1
    + \| x \|_{\gamma} + \| y \|_{\gamma})  \| x - y \|^{\beta \theta}_0 .
  \end{align*}
  
  Dividing by $| t - s |^{\alpha}$ and taking the supremum we
  obtain~{\eqref{sec2 nonlinear integral estimate3}}.
\end{proof}

\begin{remark}
  \label{sec2 remark other integrals}Several other variants of the nonlinear
  Young integral can be constructed. For instance, for $A$ and $x$ as above,
  we can also define
  \[ \int_0^{\cdot} A (s, \mathd x_s) \in C^{\beta \gamma}_t W \]
  as the sewing of $\Gamma_{s, t} \assign A_s (x_t) - A_s (x_s)$. Another
  possibility are integrals of the form
  \[ \int_0^{\cdot} y_s A (\mathd s, x_s) \]
  for $y \in C^{\delta}_t \mathbb{R}$ such that $\alpha + \delta > 1$ and $A,
  x$ as above. This can be either interpreted as a more classical Young
  integral of the form $\int_0^{\cdot} y_t \mathd \left( \int_0^t A (\mathd s,
  x_s) \right) = \mathcal{J} (\Gamma)$ for $\Gamma_{s, t} = y_s  \int_s^t A
  (\mathd r, x_r)$, or as the sewing of $\tilde{\Gamma}_{s, t} = y_s A_{s, t}
  (x_s) ;$it is immediate to check equivalence of the two definitions. This
  case can be further extended to consider a bilinear map $G : W \times U
  \rightarrow Z$, where $U$ and $Z$ are other Banach spaces, so that
  \[ \int_0^{\cdot} G (y_s, A (\mathd s, x_s)) \in C^{\alpha}_t Z \]
  is well defined for $y \in C^{\delta}_t U$, $A$ and $x$ as above, as the
  sewing of $\Gamma_{s, t} = G (y_s, A_{s, t} (x_s)) \in C^{\alpha, \alpha +
  \delta}_2 Z$, since
  \begin{eqnarray*}
    \| \Gamma_{s, t} \| & \leqslant & | t - s |^{\alpha} \| G \|  \| y
    \|_{\infty} \| A \|_{\alpha, \beta},\\
    \| \delta \Gamma_{s, u, t} \| & \lesssim & | t - s |^{\alpha + \delta}  \|
    G \|  \| y \|_{\delta}  \| A \|_{\alpha, \beta}  (1 + \llbracket x
    \rrbracket_{\gamma}) .
  \end{eqnarray*}
\end{remark}

Nonlinear Young integrals are a generalisation of classical ones, as the next
example shows.

\begin{example}
  \label{sec2 example}Let $f \in C^{\beta} (\mathbb{R}^d ; \mathbb{R}^{d
  \times m})$ and $y \in C^{\alpha}_t \mathbb{R}^m$, then $A (t, x) \assign f
  (x) y_t$ is an element of $C^{\alpha}_t C^{\beta}_{\mathbb{R}^d}$, since
  \[ | A_{s, t} (x) - A_{s, t} (y) | = | [f (x) - f (y)] y_{s, t} | \leqslant
     | f (x) - f (y) |  | y_{s, t} | \leqslant \llbracket f \rrbracket_{\beta}
     \llbracket y \rrbracket_{\alpha} | t - s |^{\alpha} | x - y |^{\beta} .
  \]
  In particular, for any $x \in C^{\gamma}_t \mathbb{R}^d$ with $\alpha +
  \beta \gamma > 1$, we can consider $\int_0^{\cdot} A (\mathd s, x_s)$; this
  corresponds to the classical Young integral $\int_0^{\cdot} f (x_s) \mathd
  y_s$, since both are defined as sewings of
  \[ A_{s, t} (x_s) = f (x_s) y_t - f (x_s) y_s = f (x_s) y_{s, t} . \]
  The previous example generalizes an infinite sum of Young integrals, i.e.
  considering sequences $f^n \in C^{\beta} (\mathbb{R}^d ; \mathbb{R}^d)$,
  $y^n \in C^{\alpha}_t ([0, T] ; \mathbb{R})$ such that (possibly locally)
  \[ \sum_n \| f^n \|_{\beta} \| y^n \|_{\alpha} < \infty . \]
  In this case we can define $A (t, x) : = \sum_n f^n (x) y^n_t$, which
  satisfies $\| A \|_{\alpha, \beta} \leqslant \sum_n \| f^n \|_{\beta} \| y^n
  \|_{\alpha}$ and for any $x \in C^{\delta}_t \mathbb{R}^d$ it holds
  \[ \int_0^{\cdot} A (\mathd s, x_s) = \sum_n \int_0^{\cdot} f^n (x_s) \mathd
     y^n_s . \]
\end{example}

\begin{remark}
  In the classical setting (let us take $d = 1$ for simplicity), if $f : [0,
  T] \times \mathbb{R} \rightarrow \mathbb{R}$ satisfies
  \begin{equation}
    | f (t, z_1) - f (s, z_2) | \leqslant C (| t - s |^{\beta \gamma} + | z_1
    - z_2 |^{\beta}), \label{sec2 final remark eq1}
  \end{equation}
  $x \in C^{\gamma}_t$ and $y \in C^{\alpha}_t$ with $\alpha + \beta \gamma >
  1$, then one can define the Young integral $\int_0^{\cdot} f (s, x_s) \mathd
  y_s$. However, $\int_0^{\cdot} f (s, x_s) \mathd y_s$ does not coincide with
  $\int A (\mathd s, x_s)$ for the choice $A (t, x) : = f (t, x) y_t$.
  
  This is partially because the domain of definition of the two integrals is
  different, since condition~{\eqref{sec2 final remark eq1}} (which is locally
  equivalent to $f \in C^{\beta \gamma}_t C^0_x \cap C^0_t C^{\beta}_x$) is
  not enough to ensure that $A \in C^{\alpha}_t C^{\beta}_x$; however, if we
  additionally assume $f \in C^{\alpha}_t C^{\beta}_x$, then so does $A$, and
  the relation between the two integrals is given by
  \begin{equation}
    \int_0^t A (\mathd s, x_s) = \int_0^t f (s, x_s) \mathd y_s + \int_0^t y_s
    f (\mathd s, x_s) . \label{sec2 final remark eq2}
  \end{equation}
  To derive~{\eqref{sec2 final remark eq2}}, define $\Gamma^A_{s, t} = A_{s,
  t} (x_s)$; then
  \[ \Gamma^A_{s, t} = f (t, x_s) y_t - f (s, x_s) y_s = f (s, x_s) y_{s, t} +
     y_s f_{s, t} (x_s) + R_{s, t} \backassign \Gamma^y_{s, t} + \Gamma^f_{s,
     t} + R_{s, t} \]
  where $| R_{s, t} | = | f_{s, t} (x_t) - f_{s, t} (x_s) | \lesssim | t - s
  |^{\alpha + \beta \gamma}$. This implies $\mathcal{J} (\Gamma^A) =
  \mathcal{J} (\Gamma^y) + \mathcal{J} (\Gamma^f)$, namely~{\eqref{sec2 final
  remark eq2}}.
\end{remark}

\subsection{Nonlinear Young calculus}

Theorem~\ref{sec2 thm definition young integral} establishes continuity of the
map $(A, x) \mapsto \int_0^{\cdot} A (\mathd s, x_s)$; if $A$ is sufficiently
regular, then we can even establish its differentiability.

\begin{proposition}
  \label{sec2 prop frechet differentiability}Let $\alpha, \beta, \gamma \in
  (0, 1)$ such that $\alpha + \beta \gamma > 1$, $A \in C^{\alpha}_t C^{1 +
  \beta}_{V, W, \tmop{loc}}$. Then the nonlinear Young integral, seen as a map
  $F : C^{\gamma}_t V \rightarrow C^{\alpha}_t W$, $F (x) = \int_0^{\cdot} A
  (\mathd s, x_s)$, is Frech{\'e}t differentiable with
  \begin{equation}
    D F (x) : y \mapsto \int_0^{\cdot} D A (\mathd s, x_s) y_s . \label{sec2
    frechet derivative young}
  \end{equation}
\end{proposition}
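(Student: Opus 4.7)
The plan is to identify the candidate derivative as a nonlinear Young integral in its own right, realise the Fr\'echet remainder as a sewing, and quantify its smallness through a careful interpolation. Since $A \in C^{\alpha}_t C^{1+\beta}_{V,W,\tmop{loc}}$ gives $D A \in C^{\alpha}_t C^{\beta}_{V,\mathcal{L}(V;W),\tmop{loc}}$, the bilinear variant of the Young integral from Remark~\ref{sec2 remark other integrals} (with pairing $\mathcal{L}(V;W) \times V \to W$ given by evaluation) produces a well-defined bounded linear operator $L : C^{\gamma}_t V \to C^{\alpha}_t W$, $L(y) = \int_0^{\cdot} D A (\mathd s, x_s)\, y_s$, realised as the sewing of $\Gamma^L_{s,t} := D A_{s,t}(x_s) y_s$. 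Thus \eqref{sec2 frechet derivative young} at least defines a candidate Fr\'echet differential.

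Setting $E(y) := F(x+y) - F(x) - L(y)$, linearity of $\mathcal{J}$ expresses $E(y) = \mathcal{J}(\Gamma)$ with
\[
\Gamma_{s,t} := A_{s,t}(x_s + y_s) - A_{s,t}(x_s) - D A_{s,t}(x_s)\, y_s .
\]
Fr\'echet differentiability amounts to $\|E(y)\|_{\alpha} = o(\|y\|_{\gamma})$ as $\|y\|_{\gamma} \to 0$. Fixing a radius $R \geq \|x\|_{\infty} + 1$ allows me to treat all local seminorms of $A$ as bounded constants, provided $\|y\|_{\gamma}$ is small.

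The easy estimate is on $\Gamma$ itself: from the integral remainder $\Gamma_{s,t} = \int_0^1 [D A_{s,t}(x_s + \theta y_s) - D A_{s,t}(x_s)] y_s\,\mathd \theta$ and the $\beta$-H\"older continuity of $D A_{s,t}$, one gets $\|\Gamma_{s,t}\|_W \lesssim |t-s|^{\alpha} \|y\|_{\infty}^{1+\beta}$, hence $\|\Gamma\|_{\alpha} \lesssim \|y\|_{\gamma}^{1+\beta} = o(\|y\|_{\gamma})$. The substantial work is on $\delta \Gamma$: using time-additivity of $A$ and $D A$ one finds $\delta \Gamma_{s,u,t} = g_{u,t}(x_s, y_s) - g_{u,t}(x_u, y_u)$ with $g_{u,t}(x,y) := A_{u,t}(x+y) - A_{u,t}(x) - D A_{u,t}(x)\, y$. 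Two competing estimates present themselves: (i) a \emph{direct} bound $\|\delta \Gamma_{s,u,t}\| \lesssim |t-s|^{\alpha} \|y\|_{\infty}^{1+\beta}$, obtained by bounding each $g$-term separately via the $C^{1+\beta}$-regularity, and (ii) a \emph{time-H\"older} bound, obtained by splitting $g_{u,t}(x_s,y_s) - g_{u,t}(x_u,y_u)$ into its increments in $x$ and in $y$ separately and invoking the $\beta$-H\"older continuity of $D A$; this produces terms carrying time exponents $|t-s|^{\alpha + \beta \gamma}$ or $|t-s|^{\alpha + \gamma}$, both exceeding $1$ by the standing hypothesis $\alpha + \beta \gamma > 1$.

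Neither bound suffices alone: (i) has time exponent $\alpha < 1$, while (ii) contributes $y$-factors only of order $O(\|y\|_{\gamma})$, not $o(\|y\|_{\gamma})$. The main technical step is to interpolate between them term by term: for an interpolation parameter $\theta \in (0, 1 - (1-\alpha)/(\beta\gamma))$, which is a nonempty interval precisely because $\alpha + \beta\gamma > 1$, the geometric mean of (i) and (ii) yields $\|\delta \Gamma\|_{1+\varepsilon} \lesssim \|y\|_{\gamma}^{1+\theta\beta}$ with $1 + \varepsilon = \alpha + (1-\theta)\beta\gamma > 1$ and the extra factor $\|y\|_{\gamma}^{\theta \beta}$ upgrading any $O(\|y\|_{\gamma})$ to $o(\|y\|_{\gamma})$. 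The sewing lemma~\ref{sec2 sewing lemma} then delivers $\|E(y)\|_{\alpha} \lesssim \|y\|_{\gamma}^{1+\beta} + \|y\|_{\gamma}^{1+\theta\beta} = o(\|y\|_{\gamma})$, which is the claim. The most delicate point throughout is that $g_{u,t}$ is only once differentiable in $x$, not $C^2$, so bound (ii) must be derived by organising second-order \emph{finite} differences of $D A$ in its spatial argument and exploiting $\beta$-H\"older continuity directly, rather than invoking a second-order Taylor expansion.
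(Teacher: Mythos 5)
Your proof is correct and takes a genuinely different route from the paper's. The paper establishes the Gateaux derivative first: it sets $\Gamma^{\varepsilon}_{s,t} = \varepsilon^{-1}[A_{s,t}(x_s+\varepsilon y_s) - A_{s,t}(x_s)] - DA_{s,t}(x_s)y_s$, shows $\|\Gamma^{\varepsilon}\|_{\alpha}\to 0$ while $\|\delta\Gamma^{\varepsilon}\|_{\alpha+\beta\gamma}$ stays bounded, and invokes the interpolation Lemma~\ref{appendix lemma interpolation} to conclude $\mathcal{J}\Gamma^{\varepsilon}\to 0$; Fr\'echet differentiability is then inferred separately from joint continuity of the Gateaux differential. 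You instead estimate the Fr\'echet remainder $E(y)=\mathcal{J}(\Gamma)$ directly, realising the needed ``trade time regularity for smallness in $y$'' mechanism not as an abstract lemma on sewings but as a pointwise geometric-mean interpolation of two competing bounds on $\delta\Gamma_{s,u,t}$. This is a real difference: the paper's Lemma~\ref{appendix lemma interpolation} controls $\mathcal{J}\Gamma$ through dyadic refinement and only gives qualitative convergence of Gateaux quotients (sufficient there, since the subsequent Gateaux-to-Fr\'echet upgrade carries the rest of the weight); your interpolation at the level of $\delta\Gamma$ delivers a quantitative $\|E(y)\|_{\alpha}\lesssim\|y\|_{\gamma}^{1+\beta}+\|y\|_{\gamma}^{1+\theta\beta}$ in one pass, making the Gateaux detour unnecessary. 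The exponent bookkeeping in your proposal checks out, including the key point you flag at the end: to get the bound (ii) with the linear $\|y\|_{\gamma}$ factor rather than $\|y\|_{\gamma}^{\beta}$, the second-order finite difference in the increment-in-$x$ part of $\delta\Gamma$ must be organised so that $y_u$ is carried as a multiplicative factor (writing it as $\int_0^1[DA_{u,t}(x_s+\lambda y_u)-DA_{u,t}(x_u+\lambda y_u)]y_u\,\mathrm{d}\lambda$ minus the Taylor term) and the $\beta$-H\"older modulus is applied in the $x$-increment. If you write this out in full, that is the one step where a careless decomposition would cost you a factor $\|y\|_{\gamma}^{\beta}$ instead of $\|y\|_{\gamma}$ and break the interpolation interval.
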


\begin{proof}
  For notational simplicity we will assume $A \in C^{\alpha}_t C^{1 +
  \beta}_{V, W}$. It is enough to show that, for any $x, y \in C^{\gamma}_t
  V$, the Gateaux derivative of $F$ at $x$ in the direction $y$ is given by
  the expression above, i.e.
  \begin{equation}
    \lim_{\varepsilon \rightarrow 0} \frac{F (x + \varepsilon y) - F
    (x)}{\varepsilon} = \int_0^{\cdot} D A (\mathd s, x_s) y_s \label{sec2
    proof frechet eq1}
  \end{equation}
  where the limit is in the $C^{\alpha}_t W$-topology. Indeed, once this is
  shown, it follows easily from reasoning as in Theorem~\ref{sec2 thm
  definition young integral} that the map $(x, y) \mapsto \int D A (\mathd s,
  x_s) y_s$ is jointly uniformly continuous in bounded balls and linear in the
  second variable; Frech{\'e}t differentiability then follows from existence
  and continuity of the Gateaux differential.
  
  In order to show~{\eqref{sec2 proof frechet eq1}}, setting for any
  $\varepsilon > 0$
  \[ \Gamma^{\varepsilon}_{s, t} \assign \frac{A_{s, t} (x_s + \varepsilon
     y_s) - A_{s, t} (x_s)}{\varepsilon} - D A_{s, t} (x_s) y_s, \]
  it suffices to show that $\mathcal{J} (\Gamma^{\varepsilon}) \rightarrow 0$
  in $C^{\alpha}_t W$. In particular by Lemma~\ref{appendix lemma
  interpolation} from the Appendix, we only need to check that $\|
  \Gamma^{\varepsilon} \|_{\alpha} \rightarrow 0$ as $\varepsilon \rightarrow
  0$ while $\| \delta \Gamma^{\varepsilon} \|_{\alpha + \beta \gamma}$ stays
  uniformly bounded. It holds
  
  \begin{align*}
    \| \Gamma^{\varepsilon}_{s, t} \|_W & = \left\| \int_0^1 [D A_{s, t} (x_s
    + \lambda \varepsilon y_s) - D A_{s, t} (x_s)] y_s \mathd \lambda
    \right\|_W\\
    & \leqslant \varepsilon^{\beta} \| D A_{s, t} \|_{\beta}  \| y_s
    \|_V^{\beta + 1} \leqslant \varepsilon^{\beta} | t - s |^{\alpha}  \| A
    \|_{\alpha, 1 + \beta}  \| y \|_{\delta}^{\beta + 1}
  \end{align*}
  
  which implies that $\| \Gamma^{\varepsilon} \|_{\alpha} \lesssim
  \varepsilon^{\beta} \rightarrow 0$; similar calculations show that
  
  \begin{align*}
    \| \Gamma^{\varepsilon}_{s, u, t} \|_W & = \left\| \int_0^1 [D A_{u, t}
    (x_s + \lambda \varepsilon y_s) - D A_{u, t} (x_s)] y_s \mathd \lambda -
    \int_0^1 [D A_{u, t} (x_u + \lambda \varepsilon y_u) - D A_{u, t} (x_u)]
    y_u \mathd \lambda \right\|_W\\
    & = \| - \int_0^1 [D A_{u, t} (x_s + \lambda \varepsilon y_s) - D
    A_{u, t} (x_s)] y_{s, u} \mathd \lambda\\
    &  \quad + \int_0^1 [D A_{u, t} (x_s + \lambda \varepsilon y_s) - D
    A_{u, t} (x_s) - D A_{u, t} (x_u + \lambda \varepsilon y_u) + D A_{u, t}
    (x_u)] y_u \mathd \lambda \|_W\\
    & \lesssim | t - s |^{\alpha + \gamma} \| D A \|_{\alpha, \beta}  \| y
    \|_{\gamma}^{1 + \beta} + | t - s |^{\alpha + \beta \gamma}  \| D A
    \|_{\alpha, \beta} \| y \|_{\gamma} (\llbracket x
    \rrbracket_{\gamma}^{\beta} + \llbracket y \rrbracket_{\gamma}^{\beta})
  \end{align*}
  
  which implies that $\| \delta \Gamma \|_{\alpha + \beta \gamma} \lesssim 1$
  uniformly in $\varepsilon > 0$. The conclusion the follows.
\end{proof}

Proposition~\ref{sec2 prop frechet differentiability} allows to give an
alternative proof of Lemma~6 from~{\cite{galeatigubinelli1}}.

\begin{corollary}
  \label{sec2 corollary frechet}Let $\alpha, \beta, \gamma \in (0, 1)$ such
  that $\alpha + \beta \gamma > 1$, $A \in C^{\alpha}_t C^{1 + \beta}_{V, W,
  \tmop{loc}}$, $x^1, x^2 \in C^{\gamma}_t V$. Then
  \begin{equation}
    \int_0^{\cdot} A (\mathd s, x^1_s) - \int_0^{\cdot} A (\mathd s, x^2_s) =
    \int_0^{\cdot} v_{\mathd s} (x^1_s - x^2_s) \label{sec2 corollary frechet
    separability}
  \end{equation}
  with $v$ given by
  \begin{equation}
    v_t \assign \int_0^t \int_0^1 D A (\mathd s, x^2_s + \lambda (x_s^1 -
    x^2_s)) \mathd \lambda ; \label{sec2 corollary frechet eq1}
  \end{equation}
  the above formula meaningfully defines an element of $C^{\alpha}_t
  \mathcal{L} (V, W)$ which satisfies
  \begin{equation}
    \llbracket v \rrbracket_{\alpha} \leqslant C \| D A \|_{\alpha, \beta, R}
    (1 + \llbracket x^1 \rrbracket_{\gamma} + \llbracket x^2
    \rrbracket_{\gamma}) \label{sec2 corollary frechet estimate}
  \end{equation}
  where $R \geqslant \| x \|_{\infty} \vee \| y \|_{\infty}$ and $C = C
  (\alpha, \beta, \gamma, T)$.
\end{corollary}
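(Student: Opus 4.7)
The plan is to realise both sides of \eqref{sec2 corollary frechet separability} as sewings of explicit germs and show, in the spirit of Remark~\ref{sec2 remark sewing}, that the two germs differ by a remainder of order $|t-s|^{\alpha+\beta\gamma}$, an exponent strictly greater than $1$. The key algebraic input is the fundamental theorem of calculus applied to $\lambda \mapsto A_{s,t}(x^{\lambda}_s)$, where $x^{\lambda}_s \assign x^2_s + \lambda(x^1_s - x^2_s)$.

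First I construct $v$. For each $\lambda \in [0,1]$ the path $x^{\lambda}$ lies in $C^{\gamma}_t V$ with $\|x^{\lambda}\|_{\infty}\leqslant R$ and $\llbracket x^{\lambda}\rrbracket_{\gamma}\leqslant \llbracket x^1\rrbracket_{\gamma}+\llbracket x^2\rrbracket_{\gamma}$, uniformly in $\lambda$. Since $DA \in C^{\alpha}_t C^{\beta}_{V,\mathcal{L}(V;W),\tmop{loc}}$ and $\alpha+\beta\gamma>1$, Theorem~\ref{sec2 thm definition young integral} produces
\[
  w^{\lambda}_t \assign \int_0^t DA(\mathd s, x^{\lambda}_s) \in C^{\alpha}_t \mathcal{L}(V;W),
\]
and by \eqref{sec2 nonlinear integral estimate3} the map $\lambda \mapsto w^{\lambda}$ is continuous into this space. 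I then define $v_t \assign \int_0^1 w^{\lambda}_t\,\mathd\lambda$ as a Bochner integral in $\mathcal{L}(V;W)$; this clearly agrees with \eqref{sec2 corollary frechet eq1}. Combining \eqref{sec2 nonlinear integral estimate1} with $\sup_{\|z\|\leqslant R}\|DA_{s,t}(z)\|_{\mathcal{L}(V;W)}\leqslant |t-s|^{\alpha}\|DA\|_{\alpha,\beta,R}$ gives a $\lambda$-uniform bound on $\|w^{\lambda}_{s,t}\|_{\mathcal{L}(V;W)}$, and averaging in $\lambda$ yields \eqref{sec2 corollary frechet estimate}.

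To establish \eqref{sec2 corollary frechet separability}, set
\[
  \Gamma_{s,t} \assign A_{s,t}(x^1_s) - A_{s,t}(x^2_s), \qquad \widetilde{\Gamma}_{s,t} \assign v_{s,t}(x^1_s - x^2_s).
\]
By Theorem~\ref{sec2 thm definition young integral}, $\mathcal{J}(\Gamma)$ equals the left-hand side of \eqref{sec2 corollary frechet separability}. For the right-hand side, note that $z\mapsto v_t(z)$ is a linear field in $C^{\alpha}_t C^{1}_{V,W}$ and $\alpha+\gamma \geqslant \alpha+\beta\gamma>1$, so by Remark~\ref{sec2 remark other integrals} the integral $\int_0^{\cdot} v_{\mathd s}(x^1_s-x^2_s)$ is well defined and equals $\mathcal{J}(\widetilde{\Gamma})$. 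By the fundamental theorem of calculus,
\[
  \Gamma_{s,t} = \int_0^1 DA_{s,t}(x^{\lambda}_s)(x^1_s - x^2_s)\,\mathd\lambda,
\]
while \eqref{sec2 nonlinear integral estimate1} applied to each $w^{\lambda}$ gives, uniformly in $\lambda$,
\[
  \|w^{\lambda}_{s,t} - DA_{s,t}(x^{\lambda}_s)\|_{\mathcal{L}(V;W)} \lesssim |t-s|^{\alpha+\beta\gamma}.
\]
Averaging in $\lambda$ and evaluating at $x^1_s-x^2_s$ yields $\|\widetilde{\Gamma}_{s,t}-\Gamma_{s,t}\|_W \lesssim |t-s|^{\alpha+\beta\gamma}$. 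Since $\alpha+\beta\gamma>1$, Riemann sums of $\Gamma - \widetilde{\Gamma}$ over partitions of $[0,t]$ are controlled by $|\Pi|^{\alpha+\beta\gamma-1}\to 0$; as both sums separately converge (to $\mathcal{J}(\Gamma)_t$ and $\mathcal{J}(\widetilde{\Gamma})_t$), their limits coincide, proving \eqref{sec2 corollary frechet separability}.

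The main bookkeeping issue is checking that $v$ genuinely defines an element of $C^{\alpha}_t \mathcal{L}(V;W)$ and that the identity $v_{s,t}=\int_0^1 w^{\lambda}_{s,t}\,\mathd\lambda$ holds in $\mathcal{L}(V;W)$; once this follows from the continuity of $\lambda\mapsto w^{\lambda}$ and linearity of Bochner integration, the remainder of the argument is a clean germ comparison, with no subtle exchange between sewing and $\lambda$-integration to justify.
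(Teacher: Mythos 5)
Your proof is correct, and it takes a genuinely different route from the paper's. The paper derives identity~\eqref{sec2 corollary frechet separability} from Proposition~\ref{sec2 prop frechet differentiability} (Fr\'echet differentiability of the Young integral map) together with the abstract mean-value identity $F(x^1)-F(x^2)=\bigl[\int_0^1 DF(x^2+\lambda(x^1-x^2))\,\mathd\lambda\bigr](x^1-x^2)$, and then invokes the Fubini lemma for the sewing map (Lemma~\ref{appendix lemma fubini}) to justify that the two possible readings of formula~\eqref{sec2 corollary frechet eq1} (integrate first in $\mathd\lambda$, then sew; or sew first, then average in $\lambda$) agree. Your argument bypasses both of these ingredients: you realise both sides of \eqref{sec2 corollary frechet separability} as sewings of explicit germs, reduce the fundamental theorem of calculus to the finite increments $A_{s,t}$ (no differentiation in path space), and compare the germs via the pointwise sewing estimate $\|w^{\lambda}_{s,t}-DA_{s,t}(x^{\lambda}_s)\|\lesssim|t-s|^{\alpha+\beta\gamma}$, which is uniform in $\lambda$. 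This is more elementary and entirely self-contained, requiring only Theorem~\ref{sec2 thm definition young integral} and the sewing lemma.

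One small clarification worth stating explicitly: you define $v_t$ as the Bochner integral $\int_0^1 w^{\lambda}_t\,\mathd\lambda$, which is the ``$\lambda$-outer'' reading of \eqref{sec2 corollary frechet eq1}, whereas the paper's own proof starts from the ``$s$-outer'' reading (sewing of the $\lambda$-averaged germ) and needs Fubini to pass between the two. Since the notation in \eqref{sec2 corollary frechet eq1} is genuinely ambiguous and your reading is the one for which the germ comparison works cleanly, your choice is legitimate; it just means that if one insists on the sewing-outer interpretation as the primary one, the Fubini lemma is still lurking in the background. For the main identity \eqref{sec2 corollary frechet separability} and the estimate \eqref{sec2 corollary frechet estimate}, your argument is complete and avoids any such exchange.
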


\begin{proof}
  It follows from the hypothesis on $A$ that the map
  \begin{equation}
    y \in V \mapsto \int_0^1 \left[ \int_0^t D A (\mathd s, x_s^2 + \lambda
    (x^1_s - x^2_s)) y \right] \mathd \lambda \in W \label{sec2 corollary
    frechet eq2}
  \end{equation}
  is well defined, the outer integral being in the Bochner sense, and it is
  linear in $y$; moreover estimate~{\eqref{sec2 nonlinear integral estimate1}}
  combined with the trivial inequality $1 + \llbracket x^2 + \lambda (x^1_s -
  x^2_s) \rrbracket_{\gamma}^{\beta} \lesssim 1 + \llbracket x^1
  \rrbracket_{\gamma} + \llbracket x^2 \rrbracket_{\gamma}$, valid for any
  $\lambda, \beta \in [0, 1]$, yields
  \[ \left\| \int_0^1 \left[ \int_0^t D A (\mathd s, x^2 + \lambda (x^1_s -
     x^2_s)) y \right] \mathd \lambda \right\|_W \lesssim \| D A \|_{\alpha,
     \beta, R}  (1 + \llbracket x^1 \rrbracket_{\gamma} + \llbracket x^2
     \rrbracket_{\gamma})  \| y \|_V . \]
  In particular, if we define $v_t$ as the linear map appearing~{\eqref{sec2
  corollary frechet eq2}}, it is easy to check that similar estimates yield $v
  \in C^{\alpha}_t \mathcal{L} (V, W)$. The fact that this definition coincide
  with the one from~{\eqref{sec2 corollary frechet eq1}}, i.e. that we can
  exchange integration in $\mathd \lambda$ and in ``$\mathd s$'', follows from
  the Fubini theorem for the sewing map, see Lemma~\ref{appendix lemma fubini}
  in the Appendix. Inequality~{\eqref{sec2 corollary frechet estimate}} then
  follows from estimates analogue to the ones obtained above.
  Identity~{\eqref{sec2 corollary frechet separability}} is an application of
  the more abstract classical identity
  \[ F (x^1) - F (x^2) = \left[ \int_0^1 D F (x^2 + \lambda (x^1 - x^2))
     \mathd \lambda \right] (x^1 - x^2) \]
  applied to $F (x) = \int_0^{\cdot} A (\mathd s, x_s)$, for which the exact
  expression for $D F$ is given by Proposition~\ref{sec2 prop frechet
  differentiability}.
\end{proof}

The following It\^{o}-type formula is taken from~{\cite{hu}}, Theorem~3.4.

\begin{proposition}
  \label{sec2 prop ito formula}Let $F \in C^{\alpha}_t C^{\beta}_{V, W,
  \tmop{loc}}$ and $x \in C^{\gamma}_t V$ with $\alpha + \beta \gamma > 1$,
  then it holds
  \begin{equation}
    F (t, x_t) - F (0, x_0) = \int_0^t F (\mathd s, x_s) + \int_0^t F (s,
    \mathd x_s) ; \label{sec2 ito formula1}
  \end{equation}
  if in addition $F \in C^0_t C^{1 + \beta'}_{V, W, \tmop{loc}}$ with $\beta'
  \in (0, 1)$ s.t. $\gamma (1 + \beta') > 1$, then
  \begin{equation}
    F (t, x_t) - F (0, x_0) = \int_0^t F (\mathd s, x_s) + \int_0^t D F (s,
    x_s) (\mathd x_s) . \label{sec2 ito formula2}
  \end{equation}
  In particular, if $x = \int_0^{\cdot} A (\mathd s, y_s)$ for some $A \in
  C^{\gamma}_t C^{\delta}_V$, $y \in C^{\eta}_t V$ with $\gamma + \eta \delta
  > 1$, then~{\eqref{sec2 ito formula2}} becomes
  \begin{equation}
    F (t, x_t) - F (0, x_0) = \int_0^t F (\mathd s, x_s) + \int_0^t D F (s,
    x_s) (A (\mathd s, y_s)) . \label{sec2 ito formula3}
  \end{equation}
\end{proposition}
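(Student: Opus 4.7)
The plan is to derive all three identities from the uniqueness characterization of the sewing map recorded in Remark~\ref{sec2 remark sewing}: if a continuous path starting at $0$ differs from a germ $\Gamma$ by a remainder of order $|t-s|^{1+\varepsilon}$ with $\varepsilon>0$, it must coincide with $\mathcal{J}(\Gamma)$.

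For~\eqref{sec2 ito formula1}, I set $\Gamma^1_{s,t}\assign F_{s,t}(x_s)$ and $\Gamma^2_{s,t}\assign F_s(x_t)-F_s(x_s)$; by Theorem~\ref{sec2 thm definition young integral} and Remark~\ref{sec2 remark other integrals}, their sewings are exactly $\int_0^\cdot F(\mathd s,x_s)$ and $\int_0^\cdot F(s,\mathd x_s)$. A direct algebraic manipulation yields
\[ [F(t,x_t)-F(s,x_s)] - (\Gamma^1_{s,t}+\Gamma^2_{s,t}) = F_{s,t}(x_t)-F_{s,t}(x_s), \]
and the joint H\"older regularity of $F$ combined with that of $x$ bounds the right-hand side by $\llbracket F\rrbracket_{\alpha,\beta,R}\,\llbracket x\rrbracket_\gamma^\beta\,|t-s|^{\alpha+\beta\gamma}$, where $\alpha+\beta\gamma>1$ by assumption. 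Applying Remark~\ref{sec2 remark sewing} to $\Gamma\assign\Gamma^1+\Gamma^2$ identifies $t\mapsto F(t,x_t)-F(0,x_0)$ with $\mathcal{J}(\Gamma^1)+\mathcal{J}(\Gamma^2)$, yielding~\eqref{sec2 ito formula1}.

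The additional hypothesis $F\in C^0_t C^{1+\beta'}_{V,W,\tmop{loc}}$ with $\gamma(1+\beta')>1$ permits a first-order Taylor expansion at fixed time, so that $\Gamma^2_{s,t}=DF(s,x_s)(x_{s,t})+r_{s,t}$ with $\|r_{s,t}\|_W\lesssim \llbracket x\rrbracket_\gamma^{1+\beta'}\,|t-s|^{\gamma(1+\beta')}$ of strictly super-linear order. Consequently the Riemann sums $\sum_i DF(t_i,x_{t_i})(x_{t_i,t_{i+1}})$ differ from $\sum_i \Gamma^2_{t_i,t_{i+1}}$ by a quantity controlled by $|\Pi|^{\gamma(1+\beta')-1}\,T$, which vanishes as $|\Pi|\to 0$; this simultaneously defines $\int_0^\cdot DF(s,x_s)(\mathd x_s)$ and identifies it with $\int_0^\cdot F(s,\mathd x_s)$, producing~\eqref{sec2 ito formula2}. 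The identity~\eqref{sec2 ito formula3} then follows by one more application of the same principle: estimate~\eqref{sec2 nonlinear integral estimate1} applied to $x=\int_0^\cdot A(\mathd s,y_s)$ yields $\|x_{s,t}-A_{s,t}(y_s)\|_V\lesssim |t-s|^{\gamma+\delta\eta}$, so the germ $\hat\Gamma_{s,t}\assign DF(s,x_s)(A_{s,t}(y_s))$, whose sewing realises $\int_0^\cdot DF(s,x_s)(A(\mathd s,y_s))$ via the bilinear extension of Remark~\ref{sec2 remark other integrals}, differs from $DF(s,x_s)(x_{s,t})$ only by a higher-order remainder, and the two integrals must coincide.

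The only conceptual obstacle is the algebraic identity of the first step: recognising that the natural decomposition of $F(t,x_t)-F(s,x_s)$ into $\Gamma^1+\Gamma^2$ leaves exactly the symmetrised residual $F_{s,t}(x_t)-F_{s,t}(x_s)$, which is precisely of joint H\"older order $|t-s|^{\alpha+\beta\gamma}$. Once this observation is in place, every subsequent passage reduces to bookkeeping H\"older exponents so that each remainder lies strictly above the critical threshold.
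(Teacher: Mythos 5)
Your proof is correct and follows essentially the same path as the paper's: you identify the germs $\Gamma^1_{s,t}=F_{s,t}(x_s)$ and $\Gamma^2_{s,t}=F_s(x_t)-F_s(x_s)$, observe that the residual $F_{s,t}(x_t)-F_{s,t}(x_s)$ is of order $|t-s|^{\alpha+\beta\gamma}$, and then upgrade the germs via Taylor expansion and estimate~\eqref{sec2 nonlinear integral estimate1} to get the later identities. The only cosmetic difference is that for~\eqref{sec2 ito formula1} the paper passes through explicit telescoping Riemann sums and lets the cross-term $\sum_i R_{t_i,t_{i+1}}$ vanish, while you invoke the uniqueness characterization of the sewing map from Remark~\ref{sec2 remark sewing} directly; these are equivalent.
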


\begin{proof}
  Let $0 = t_0 < t_1 < \cdots < t_n = t$, then it holds
  
  \begin{align*}
    F (t, x_t) - F (0, x_0) & = \sum_i [F (t_{i + 1}, x_{t_{i + 1}}) - F (t_i,
    x_{t_i})]\\
    & = \sum_i F_{t_i, t_{i + 1}} (x_{t_i}) + \sum_i [F_{t_i} (x_{t_{i + 1}})
    - F_{t_i} (x_{t_i})] + \sum_i R_{t_i, t_{i + 1}} = : I^n_1 + I^n_2 + I^n_3
  \end{align*}
  
  where $R_{t_i, t_{i + 1}} = F_{t_i, t_{i + 1}} (x_{t_i + 1}) - F_{t_i, t_{i
  + 1}} (x_{t_i})$ satisfies $\| R_{t_i, t_{i + 1}} \| \leqslant \| F
  \|_{\alpha, \beta, \| x \|_{\infty}}  \llbracket x
  \rrbracket_{\gamma}^{\beta} | t_{i + 1} - t_i |^{\alpha + \beta \gamma}$,
  while $I^n_1$ and $I^n_2$ are Riemann-Stjeltes sums associated to
  $\Gamma^1_{s, t} = F_{s, t} (x_s)$ and $\Gamma^2_{s, t} = F_s (x_t) - F_s
  (x_s)$. Taking a sequence of partitions $\Pi_n$ with $| \Pi_n | \rightarrow
  0$, by the above estimate we have $I^n_3 \rightarrow 0$ and by the sewing
  lemma we obtain
  \[ F (t, x_t) - F (0, x_0) = \mathcal{J} (\Gamma^1)_t + \mathcal{J}
     (\Gamma^2)_t, \]
  which is exactly~{\eqref{sec2 ito formula1}}. If $F \in C^0_t C^{1 +
  \beta'}_{V, W, \tmop{loc}}$, then setting $\Gamma^3_{s, t} \assign D F (s,
  x_s) (x_{s, t})$, it holds
  
  \begin{align*}
    \| \Gamma^2_{s, t} - \Gamma^3_{s, t} \|_V & = \| F (s, x_t) - F (s, x_s) -
    D F (s, x_s) (x_{s, t}) \|_V\\
    & = \left\| \int_0^1 [D F (s, x_s + \lambda x_{s, t}) - D F (s, x_s)]
    (x_{s, t}) \mathd \lambda \right\|_V\\
    & \lesssim \| D F (s, \cdot) \|_{\beta', \| x \|_{\infty}}  \| x_{s, t}
    \|^{1 + \beta'}_{} \lesssim \| F \|_{0, 1 + \beta', \| x \|_{\infty}}
    \llbracket x \rrbracket_{\gamma}^{\beta'} | t - s |^{\gamma (1 + \beta')}
  \end{align*}
  
  which under the assumption $\gamma (1 + \beta') > 1$ implies by the sewing
  lemma that $\mathcal{J} (\Gamma^2) = \mathcal{J} (\Gamma^3)$ and
  thus~{\eqref{sec2 ito formula2}}. The proof of~{\eqref{sec2 ito formula3}}
  is analogue, only this time consider $\Gamma^4_{s, t} \assign D F (s, x_s)
  (A_{s, t} (y_s))$, then it's easy to check that $\| \Gamma^3_{s, t} -
  \Gamma^4_{s, t} \|_V \lesssim | t - s |^{\gamma + \eta \delta}$ which
  implies that $\mathcal{J} (\Gamma^3) = \mathcal{J} (\Gamma^4)$.
\end{proof}

\begin{remark}
  The above formulas admit further variants. For instance for any $F \in
  C^{\alpha}_t C^{\beta}_{V, W}$, $x \in C^{\gamma}_t V$ and $g \in
  C^{\delta}_t \mathbb{R}$ with $\alpha + \beta \gamma > 1$, $\alpha + \delta
  > 1$ and $\beta \gamma + \delta > 1$ it holds
  \[ \int_0^t g_s \mathd [F (s, x_s)] = \int_0^t g_s F (\mathd s, x_s) +
     \int_0^t g_s F (s, \mathd x_s) \]
  and we have the product rule formula
  \[ g_t F (t, x_t) - g_0 F (0, x_0) = \int_0^t F (s, x_s) \mathd g_s +
     \int_0^t g_s F (\mathd s, x_s) + \int_0^t g_s F (s, \mathd x_s) . \]
  Also observe that, whenever $\partial_t F$ exists continuous, it holds
  \[ \int_0^t g_s F (\mathd s, x_s) = \int_0^t g_s \partial_t F (s, x_s)
     \mathd s \quad \forall \, g \in C^{\delta}_t \mathbb{R}. \]
\end{remark}

\section{Existence, uniqueness, numerical schemes}\label{sec3}

This section is devoted to the study of nonlinear Young differential equations
(YDE for short), defined below; it provides sufficient conditions for
existence and uniqueness of solutions, as well as convergence of numerical
schemes.

\begin{definition}
  Let $A \in C^{\alpha}_t C^{\beta}_{V, \tmop{loc}}$, $x_0 \in V$. We say that
  $x$ is a solution to the YDE associated to $(x_s, A)$ on an interval $[s, t]
  \subset [0, T]$ if $x \in C^{\gamma} ([s, t] ; V)$ for some $\gamma$ such
  that $\alpha + \beta \gamma > 1$ and it satisfies
  \begin{equation}
    x_r = x_s + \int_s^r A (\mathd u, x_u) \quad \forall \, r \in [s, t] .
    \label{sec3 defn solution}
  \end{equation}
\end{definition}

Before proceeding further, let us point out that by Example~\ref{sec2 example}
any Young differential equation
\[ x_t = x_0 + \int f (x_s) \mathd y_s \]
can be reinterpreted as a nonlinear YDE associated to $A \assign f \otimes y$.
Nonlinear YDEs therefore are a natural extension of the standard ones; most
results regarding their existence and uniqueness which will be presented are
perfect analogues (in terms of regularity requirements) to the well known
classical ones (which can be found for instance in~{\cite{lejay}} or Section~8
of~{\cite{frizhairer}}).

\

Throughout this section, for $x : [0, T] \rightarrow V$ and $I \subset [0,
T]$, we set
\[ \llbracket x \rrbracket_{\gamma ; I} \assign
   \sup_{\tmscript{\begin{array}{c}
     s, t \in I\\
     s \neq t
   \end{array}}} \frac{\| x_{s, t} \|_V}{| t - s |^{\gamma}} \]
as well as $\llbracket x \rrbracket_{\gamma ; s, t}$ in the case $I = [s, t]$;
similarly for $\| x \|_{\infty ; I}$ and $\| x \|_{\gamma ; I}$. For any
$\Delta > 0$ we also define
\[ \llbracket x \rrbracket_{\gamma, \Delta, V} = \llbracket x
   \rrbracket_{\gamma, \Delta} \assign \sup_{\tmscript{\begin{array}{c}
     s, t \in [0, T]\\
     | s - t | \in (0, \Delta]
   \end{array}}} \frac{\| x_{s, t} \|_V}{| t - s |^{\gamma}} . \]
\subsection{Existence}\label{sec3.1}

We provide here sufficient conditions for the existence of either local or
global solutions to the YDE, under suitable compactness assumptions on $A$.
The proof is based on an Euler scheme for the YDE, in the style of those
from~{\cite{davie}},~{\cite{lejay}}; its rate of convergence will be studied
later on. Other proofs, based on a priori estimates and compactness techniques
or an application of Leray--Schauder--Tychonoff fixed point theorem, are
possible, see~{\cite{catelliergubinelli}},~{\cite{hu}}.

\begin{theorem}
  \label{sec3.1 thm existence}Let $A \in C^{\alpha}_t C^{\beta}_{V, W}$ where
  $W$ is compactly embedded in $V$ and $\alpha (1 + \beta) > 1$. Then for any
  $s > 0$ and $x_s \in V$ there exists a solution to the YDE
  \begin{equation}
    x_t = x_s + \int_s^t A (\mathd s, x_s) \quad \forall \, t \in [s, T] .
    \label{sec3.1 yde}
  \end{equation}
\end{theorem}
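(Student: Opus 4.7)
The plan is to construct approximate solutions via an Euler scheme, establish uniform Hölder bounds using the sewing lemma together with the condition $\alpha(1+\beta) > 1$, extract a convergent subsequence via the compact embedding $W \hookrightarrow V$, and pass to the limit invoking the continuity of the nonlinear Young integral from Theorem~\ref{sec2 thm definition young integral}. Concretely, for a sequence of partitions $\Pi_n$ of $[s,T]$ with $|\Pi_n| \to 0$, set $x^n_s = x_s$ and define $x^n_t = x^n_{t_i^n} + A_{t_i^n, t}(x^n_{t_i^n})$ on each subinterval $[t_i^n, t_{i+1}^n]$; because $A$ is $W$-valued, each increment of $x^n$ lives in $W$, providing the extra regularity required for compactness.

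The crucial step is a uniform-in-$n$ bound on $\llbracket x^n \rrbracket_{\alpha}$ in the $V$-norm. For grid points $s', t' \in \Pi_n$, the increment telescopes into the Riemann sum of $\Gamma^n_{a,b} \assign A_{a,b}(x^n_a)$; assuming a provisional local bound $\llbracket x^n \rrbracket_{\alpha; s', t'} \leq M$ and using $\llbracket A_{c,b}\rrbracket_\beta \leq \|A\|_{\alpha,\beta}|b-c|^\alpha$, one gets
\[
  \|\delta \Gamma^n_{a,c,b}\|_V \leq \|A\|_{\alpha,\beta}\, M^\beta\, |b-a|^{\alpha(1+\beta)},
\]
so that precisely the hypothesis $\alpha(1+\beta) > 1$ activates the sewing estimate. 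Combining it with the trivial bound $\|\Gamma^n_{s',t'}\|_V \lesssim \|A\|_{\alpha,\beta}|t'-s'|^\alpha$, controlling the discrepancy between the Riemann sum and $\mathcal{J}(\Gamma^n)$ (which vanishes as $|\Pi_n| \to 0$ by the quantitative form of Lemma~\ref{sec2 sewing lemma}), and observing that non-grid increments reduce to the single-interval case $A_{t_i^n, t}(x^n_{t_i^n})$, one arrives at a self-improving inequality of the form $M \leq C_1 + C_2 M^\beta (t'-s')^{\alpha\beta}$. For subintervals of sufficiently small length $\delta$ this closes and produces a uniform bound on $M$; concatenating finitely many such subintervals, exploiting that $\|A\|_{\alpha,\beta}$ is global in $V$, extends the uniform $C^\alpha$ bound to all of $[s,T]$.

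Since each elementary increment $A_{t_i^n, t}(x^n_{t_i^n})$ lies in $W$ with $W$-norm bounded by $\|A\|_{\alpha,\beta}|t - t_i^n|^\alpha$, the same argument in fact yields a uniform bound on $x^n$ in $C^\alpha([s,T]; W)$. The compact embedding $W \hookrightarrow V$ combined with Ascoli--Arzelà then gives precompactness in $C^{\alpha'}([s,T]; V)$ for any $\alpha' < \alpha$ still satisfying $\alpha + \beta \alpha' > 1$. Extract a subsequential limit $x^{n_k} \to x$, write the Euler identity in the form $x^{n_k}_t = x_s + \int_s^t A(\mathd u, x^{n_k}_u) + R_{n_k}$ with $R_{n_k}$ collecting the Riemann-sum error and the boundary corrections between grid points (both going to zero with $|\Pi_{n_k}|$), and pass to the limit using the continuity estimate~\eqref{sec2 nonlinear integral estimate3}; this identifies $x$ as a solution of~\eqref{sec3.1 yde}. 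The main technical obstacle is the self-referential nature of the a priori estimate on $M$: finiteness of $M$ must be established before the inequality can be invoked to control it, which is handled by running the argument first on an initial short subinterval where finiteness is immediate from the explicit piecewise definition of $x^n$, and then propagating the bound forward by concatenation.
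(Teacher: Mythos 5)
Your proposal follows essentially the same strategy as the paper's proof: construct Euler approximations $x^n$, observe that they satisfy a perturbed YDE $x^n_t = x_s + \int_s^t A(\mathrm{d}u, x^n_u) + \psi^n_t$ with a remainder $\psi^n$ controlled by the sewing lemma, derive a self-improving bound on $\llbracket x^n \rrbracket_{\alpha,\Delta,W}$ on short windows of length $\Delta$ (using $a^\beta \leq 1+a$ and absorbing the $M^\beta\Delta^{\alpha\beta}$ term once $\Delta$ is small and $n$ is large), extend to a global uniform bound, invoke Ascoli--Arzelà via the compact embedding $W \hookrightarrow V$, and pass to the limit using continuity of the nonlinear Young integral on $C^{\alpha-\varepsilon}_t V$. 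Your remark about the self-referentiality of the a priori bound is correct but slightly misstated: finiteness of $\llbracket x^n \rrbracket_{\alpha}$ for each fixed $n$ is automatic from the explicit piecewise $C^\alpha$ construction of $x^n$ (not from running the argument on a short initial interval), and the short-interval argument is what makes the bound \emph{uniform in $n$} once finiteness is known.
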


\begin{proof}
  The proof is based on the application of an Euler scheme. Up to rescaling
  and shifting, we can assume for simplicity $T = 1$ and $s = 0$.
  
  Fix $N \in \mathbb{N}$, set $t^n_k = k / n$ for $k \in \{ 0, \ldots, n \}$
  and define recursively $(x^n_k)_{k = 1}^n$ by $x^n_0 = x_0$ and
  \[ x^n_{k + 1} = x^n_k + A_{t^n_k, t^n_{k + 1}} (x^n_k) . \]
  We can embed $(x^n_k)_{k = 1}^n$ into $C^0_t V$ by setting
  \[ x^n_t \assign x_0 + \sum_{0 \leqslant k \leqslant \lfloor n t \rfloor}
     A_{t^n_k, t \wedge t^{n + 1}_k} (x_k^n) ; \]
  note that by construction $x^n - x_0$ is a path in $C^{\alpha}_t W$. Using
  the identity
  \[ A_{s, t} (x^n_s) = \int_s^t A (\mathd r, x^n_r) + \int_s^t [A (\mathd r,
     x^n_s) - A (\mathd r, x^n_r)] \]
  we deduce that $x^n$ satisfies a YDE of the form
  \begin{equation}
    x^n_t = x_0 + \int_0^t A (\mathd s, x_s^n) + \psi^n_t \label{sec3.1 proof
    euler eq1}
  \end{equation}
  where
  \[ \psi^n_t = \sum_{0 \leqslant k \leqslant n} \psi_t^{n, k} = \sum_{0
     \leqslant k \leqslant n} \int_{t^n_k}^{(t \wedge t^n_{k + 1}) \vee t^n_k}
     [A (\mathd r, x^n_{t^n_k}) - A (\mathd r, x^n_r)] . \]
  By the properties of Young integrals, $\psi^n$ satisfies
  \begin{equation}
    \| \psi^n_{t^n_k, t^n_{k + 1}} \|_W = \left\| \int_{t^n_k}^{t^n_{k + 1}}
    [A (\mathd r, x^n_{t^n_k}) - A (\mathd r, x^n_r)] \right\|_W \lesssim n^{-
    \alpha (1 + \beta)} \| A \|_{\alpha, \beta}  \llbracket x^n
    \rrbracket^{\beta}_{\alpha, 1 / n, V} . \label{sec3.1 proof euler eq2}
  \end{equation}

  We first want to obtain a bound for $\llbracket \psi^n \rrbracket_{\gamma,
  \Delta, W}$; we can assume wlog $\Delta > 1 / n$, since we want to take $n
  \rightarrow \infty$. Estimates depend on whether $s$ and $t$ lie on the same
  interval $[t^n_k, t^n_{k + 1}]$ or not; assume first this is the case, then
  
  \begin{align*}
    \| \psi^n_{s, t} \|_W & = \left\| \int_s^t [A (\mathd r, x^n_{t^n_k}) - A
    (\mathd r, x^n_r)] \right\|_W\\
    & \lesssim \| A_{s, t} (x^n_{t^n_k}) - A_{s, t} (x_s^n) \|_W + | t - s
    |^{\alpha (1 + \beta)} \| A \|_{\alpha, \beta}  \llbracket x^n
    \rrbracket^{\beta}_{\alpha, \Delta, V}\\
    & \lesssim \, n^{- \alpha \beta} | t - s |^{\alpha} \| A \|_{\alpha,
    \beta}  \llbracket x^n \rrbracket^{\beta}_{\alpha, \Delta, V} .
  \end{align*}
  
  Next, given $s < t$ such that $| t - s | < \Delta$ which are not in the same
  interval, there are around $n | t - s |$ intervals separating them, i.e.
  there exist $l < m$ such that $m - l \sim n | t - s |$ and $s \leqslant
  t^n_l < \cdots < t^n_m \leqslant t$. Therefore in this case we have
  
  \begin{align*}
    \| \psi^n_{s, t} \|_W & \leqslant \| \psi_{s, t^n_l}^n \|_W + \sum_{k =
    l}^{m - 1} \| \psi^n_{t^n_k, t^n_{k + 1}} \|_W + \| \psi^n_{t_m^n, t_{}}
    \|_W\\
    & \lesssim \| A \|_{\alpha, \beta} \llbracket x^n
    \rrbracket^{\beta}_{\alpha, \Delta, V} [| t - s |^{\alpha} n^{- \alpha
    \beta} + (m - l) n^{- \alpha (1 + \beta)}]\\
    & \lesssim \| A \|_{\alpha, \beta} \llbracket x^n
    \rrbracket^{\beta}_{\alpha, \Delta, V} [| t - s |^{\alpha} n^{- \alpha
    \beta} + | t - s | n^{1 - \alpha (1 + \beta)}]\\
    & \lesssim \| A \|_{\alpha, \beta} \llbracket x^n
    \rrbracket^{\beta}_{\alpha, \Delta, V} | t - s |^{\alpha} n^{1 - \alpha (1
    + \beta)}
  \end{align*}
  
  where in the second line we used both~{\eqref{sec3.1 proof euler eq2}} and
  the previous bound for $\psi^n_{s, t^n_l}$ and $\psi^n_{t^n_m, t}$, while in
  the last one the fact that $- \alpha \beta \leqslant 1 - \alpha (1 +
  \beta)$. Overall we conclude that
  \begin{equation}
    \llbracket \psi^n \rrbracket_{\alpha, \Delta, W} \leqslant \kappa_1 n^{1 -
    \alpha (1 + \beta)} \| A \|_{\alpha, \beta}  \llbracket x^n
    \rrbracket_{\alpha, \Delta, V}^{\beta} \label{sec3.1 proof euler eq3}
  \end{equation}
  for a suitable constant $\kappa_1 = \kappa_1 (\alpha, \beta)$ independent of
  $\Delta$ and $n$.
  
  Our next goal is a uniform bound for $\llbracket x^n \rrbracket_{\alpha,
  \Delta, W}$. Since $x^n$ solves~{\eqref{sec3.1 proof euler eq1}}, it holds
  
  \begin{align*}
    \| x^n_{s, t} \|_W & \lesssim \| A_{s, t} (x^n_s) \|_W + | t - s |^{\alpha
    (1 + \beta)} \| A \|_{\alpha, \beta}  \llbracket x^n
    \rrbracket^{\beta}_{\alpha, \Delta, W} + \| \psi^n_{s, t} \|_W\\
    & \lesssim | t - s |^{\alpha} \| A \|_{\alpha, \beta} + | t - s
    |^{\alpha} \Delta^{\alpha \beta} \| A \|_{\alpha, \beta} \llbracket x^n
    \rrbracket^{\beta}_{\alpha, \Delta, W} + | t - s |^{\alpha} \llbracket
    \psi^n \rrbracket_{\alpha, \Delta, W}\\
    & \lesssim | t - s |^{\alpha} \| A \|_{\alpha, \beta} + | t - s
    |^{\alpha} \| A \|_{\alpha, \beta} \llbracket x^n
    \rrbracket^{\beta}_{\alpha, \Delta, W} (\Delta^{\alpha \beta} + n^{1 -
    \alpha (1 + \beta)})
  \end{align*}
  
  and so dividing by $| t - s |$ and taking the supremum over all $| t - s | <
  \Delta$, choosing $\Delta$ such that $\Delta^{\alpha \beta}  \| A
  \|_{\alpha, \beta} \leqslant 1 / 4$, then for all $n$ big enough such that
  $n^{1 - \alpha (1 + \beta)} \| A \|_{\alpha, \beta} \leqslant 1 / 4$ it
  holds
  \[ \llbracket x^n \rrbracket_{\alpha, \Delta, W} \lesssim \| A \|_{\alpha,
     \beta} + \frac{1}{2} \llbracket x^n \rrbracket^{\beta}_{\alpha, \Delta,
     W} \lesssim \| A \|_{\alpha, \beta} + \frac{1}{2} + \frac{1}{2}
     \llbracket x^n \rrbracket_{\alpha, \Delta, W} \]
  by the trivial bound $a^{\beta} \leqslant 1 + a$, which holds for all $\beta
  \in [0, 1]$ and $a \geqslant 0$. This implies the uniform bound $\llbracket
  x^n \rrbracket_{\alpha, \Delta, W} \lesssim 1 + \| A \|_{\alpha, \beta}$ for
  all $n$ big enough.
  
  The subspace $\{ y \in C^{\alpha} ([0, 1] ; W) : y_0 = 0 \}$ is a Banach
  space endowed with the seminorm $\llbracket y \rrbracket_{\alpha, \Delta,
  W}$, which in this case is equivalent to the norm $\| y \|_{\alpha, W}$; $\{
  x_n - x_0 \}_{n \in \mathbb{N}}$ is a uniformly bounded sequence in this
  space. By Ascoli--Arzel{\`a}, since $W$ compactly embeds in $V$, we can
  extract a subsequence (not relabelled for simplicity) such that $x_n - x_0
  \rightarrow x - x_0$ in $C^{\alpha - \varepsilon}_t V$ for any $\varepsilon
  > 0$, for some $x \in C^{\alpha}_t V$ such that $x (0) = x_0$. Observe that
  $\psi^n$ satisfy~{\eqref{sec3.1 proof euler eq3}} and $\llbracket x^n
  \rrbracket^{\beta}_{\alpha, \Delta, V}$ are uniformly bounded, therefore
  $\psi^n \rightarrow 0$ in $C^{\alpha}_t W$ as $n \rightarrow \infty$;
  choosing $\varepsilon$ small enough s.t. $\alpha + \beta (\alpha -
  \varepsilon) > 1$, by continuity of the non-linear Young integral it holds
  \[ \int_0^{\cdot} A (\mathd s, x_s^n) \rightarrow \int_0^{\cdot} A (\mathd
     s, x_s) \quad \text{in } C^{\alpha}_t W \]
  and therefore passing to the limit in~{\eqref{sec3.1 proof euler eq1}} we
  obtain the conclusion.
\end{proof}

\begin{remark}
  If $V$ is finite dimensional, the compactness condition is trivially
  satisfied by taking $V = W$. The proof also works for non uniform partitions
  $\Pi_n$ of $[0, T]$, under the condition that their mesh $| \Pi_n |
  \rightarrow 0$ and that there exists $c > 0$ such that $| t^n_{i + 1} -
  t^n_i | \geqslant c | \Pi_n |$ for all $n \in \mathbb{N}$, $i \in \{ 0,
  \ldots, N_n \}$.
\end{remark}

\begin{remark}
  \label{sec3.1 remark euler a priori estimates}The proof provides several
  estimates, some of which are true even without the compactness assumption.
  For instance, by $\llbracket x^n \rrbracket_{\alpha, \Delta} \lesssim 1 + \|
  A \|_{\alpha, \beta}$ and Exercise~4.24 from~{\cite{frizhairer}}, choosing
  $\Delta$ s.t. $\Delta^{\alpha \beta} \| A \|_{\alpha, \beta} \sim 1$, we
  deduce that there exists $C_1 = C_1 (\alpha, \beta, T)$ such that
  \[ \llbracket x^n \rrbracket_{\alpha} \leqslant C_1 \left( 1 + \| A
     \|_{\alpha, \beta}^{1 + \frac{1 - \alpha}{\alpha \beta}} \right)  \quad
     \forall \, n \in \mathbb{N}. \]

  Estimate~{\eqref{sec3.1 proof euler eq3}} is true for any choice of $\Delta
  > 0$, in particular for $\Delta = T$, which gives a global bound; combining
  it with the above one, we deduce that
  \[ \llbracket \psi^n \rrbracket_{\alpha} \leqslant C_2 n^{1 - \alpha (1 +
     \beta)} \left( 1 + \| A \|_{\alpha, \beta}^{\frac{1 + \alpha
     \beta}{\alpha}} \right) \quad \forall \, n \in \mathbb{N} \]
  for some $C_2 = C_2 (\alpha, \beta, T)$. Also observe that from the
  assumptions on $\alpha$ and $\beta$ it always holds
  \[ 1 + \frac{1 - \alpha}{\alpha \beta} \leqslant 2, \qquad \frac{1 + \alpha
     \beta}{\alpha} \leqslant 3. \]

  Under the compactness assumption, since $x^n \rightarrow x$ in $C^0_t V$,
  the solution $x$ obtained also satisfies
  \begin{equation}
    \llbracket x \rrbracket_{\alpha} \leqslant \liminf_{n \rightarrow \infty}
    \llbracket x^n \rrbracket_{\alpha} \leqslant C_1 \left( 1 + \| A
    \|_{\alpha, \beta}^{1 + \frac{1 - \alpha}{\alpha \beta}} \right) \leqslant
    2 C_1 (1 + \| A \|_{\alpha, \beta}^2) . \label{sec3.1 euler a priori
    estimate1}
  \end{equation}

  Finally observe that by going through the same proof of~{\eqref{sec3.1 proof
  euler eq3}}, for any $T > 0$ and $\alpha, \beta, \gamma$ such that $\alpha +
  \beta \gamma > 1$, there exists $C_3 = C_3 (\alpha, \beta, \gamma, T)$ such
  that
  \begin{equation}
    \llbracket \psi^n \rrbracket_{\alpha, \Delta, V} \leqslant C_3 n^{1 -
    \alpha - \beta \gamma} \| A \|_{\alpha, \beta}  \llbracket x^n
    \rrbracket_{\gamma, \Delta, V}^{\beta} \quad \forall \, n \in \mathbb{N}.
    \label{sec3.1 euler a priori estimate2}
  \end{equation}
  This estimate is rather useful when $A$ enjoys different space-time
  regularity at different scales, see the discussion at Section~\ref{sec3.4}.
\end{remark}

\begin{corollary}
  \label{sec3.1 cor local existence}Let $A \in C^{\alpha}_t C^{\beta}_{V, W,
  \tmop{loc}}$ where $W$ is compactly embedded in $V$ and $\alpha (1 + \beta)
  > 1$. Then for any $s \in [0, T)$ and any $x_s \in V$, there exists
  $\tau^{\ast} \in (s, T]$ and a solution to the YDE~{\eqref{sec3.1 yde}}
  defined on $[s, T^{\ast})$, with the property that either $T^{\ast} = T$ or
  \[ \lim_{t \uparrow T^{\ast}}  \| x_t \|_V = + \infty . \]
\end{corollary}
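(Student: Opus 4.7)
The plan is to reduce to the global setting of Theorem~\ref{sec3.1 thm existence} via a truncation, use Zorn's lemma to extract a maximal solution, and establish the blow-up dichotomy by a bootstrap argument.

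First, for each $R > 0$, I construct a globally H\"older extension $A^R \in C^{\alpha}_t C^{\beta}_{V, W}$ coinciding with $A$ on $[0, T] \times \{\|x\|_V \leq R\}$. Concretely, set $\chi_R (x) \assign \phi (\|x\|_V)$ with $\phi : [0, \infty) \to [0,1]$ Lipschitz, equal to $1$ on $[0, R]$ and $0$ on $[R + 1, \infty)$, and put $A^R \assign \chi_R A$. Using the boundedness and Lipschitz (hence globally $\beta$-H\"older) character of $\chi_R$ together with the local seminorms $\llbracket A_{s, t} \rrbracket_{\beta, R + 1}$, a direct check gives $A^R \in C^{\alpha}_t C^{\beta}_{V, W}$ globally. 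Theorem~\ref{sec3.1 thm existence} applies to $A^R$; combined with the a priori estimate~\eqref{sec3.1 euler a priori estimate1}, this yields local existence at any initial datum $(\bar{s}, \bar{x}) \in [0, T) \times V$: choosing $R > \|\bar{x}\|_V$, the constructed solution $y$ of the $A^R$-YDE satisfies $\|y_t - \bar{x}\|_V \leq K (t - \bar{s})^\alpha$ with $K = K(\|A^R\|_{\alpha, \beta}, T)$, so $\|y_t\|_V \leq R$ on some $[\bar{s}, \bar{s} + \delta]$ and $y$ solves the original YDE there.

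Next, I apply Zorn's lemma to the poset $\mathcal{F}$ of pairs $(J, x)$, where $J \subset [s, T]$ is an interval containing $s$ and $x$ solves~\eqref{sec3.1 yde} on $J$ with initial datum $x_s$, ordered by extension; each chain admits an upper bound by pasting over its union. Let $(J^*, x^*)$ be maximal and set $T^* \assign \sup J^*$. Local existence at the right endpoint implies that $T^* \in J^*$ together with $T^* < T$ contradicts maximality; hence either $J^* = [s, T]$ (whence the conclusion holds trivially) or $J^* = [s, T^*)$ for some $T^* \in (s, T]$. In the latter case it remains to show that if $T^* < T$ then $\lim_{t \uparrow T^*} \|x^*_t\|_V = + \infty$.

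Suppose by contradiction there exist $M < \infty$ and $t_n \uparrow T^*$ with $\|x^*_{t_n}\|_V \leq M$. With $R \assign M + 1$, set $\sigma_n \assign \sup\{\tau \in [t_n, T^*) : \|x^*_u\|_V < R \ \forall u \in [t_n, \tau]\}$; on $[t_n, \sigma_n)$ the path $x^*$ solves the YDE for the globally H\"older field $A^R$. Applying the sewing estimate~\eqref{sec2 nonlinear integral estimate1} and the fixed-point bootstrap used in the proof of Theorem~\ref{sec3.1 thm existence}---which only exploits the sewing estimate on the YDE itself and thus applies to any solution, not only Euler-scheme outputs---produces an a priori bound $\llbracket x^* \rrbracket_{\alpha; [t_n, \sigma_n)} \leq K'$ depending only on $\|A^R\|_{\alpha, \beta}$ and $T$. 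Consequently $\|x^*_t\|_V \leq M + K' (t - t_n)^\alpha < R$ on $[t_n, t_n + \delta]$ for $\delta > 0$ independent of $n$; taking $n$ large enough that $t_n + \delta > T^*$ forces $\sigma_n = T^*$, so $\|x^*_t\|_V < R$ on $[t_n, T^*)$, and hence $x^*$ is bounded on all of $[s, T^*)$. The same bootstrap applied globally then yields $\llbracket x^* \rrbracket_{\alpha; [s, T^*)} < \infty$, so $x^*$ admits a limit at $T^*$ by the Cauchy criterion, and continuity of the Young integral in the upper limit shows the extension still solves~\eqref{sec3.1 yde} on $[s, T^*]$, contradicting maximality. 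The main technical point I expect to grind through is the transfer of the a priori H\"older estimate from the Euler construction of Theorem~\ref{sec3.1 thm existence} to the abstract (possibly non-unique) maximal solution $x^*$, handled by revisiting that bootstrap and noting it uses only the sewing estimate on the YDE.
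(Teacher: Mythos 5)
Your proof is correct and reaches the conclusion by a genuinely different route in one key respect: you extract the maximal solution via Zorn's lemma on the poset of extensions, whereas the paper constructs it by explicit iteration (gluing solutions over intervals $[\tau_n, \tau_{n+1}]$ and taking $T^\ast = \lim_n \tau_n$), then argues that if blow-up fails one can keep extending and ``reiterating this procedure'' -- a step that is left somewhat informal. Your Zorn argument has maximality built in from the start, which makes the final contradiction clean; the trade-off is that your bootstrap step requires the a priori H\"older bound to hold for an \emph{arbitrary} solution, not just the Euler-scheme output to which estimate~\eqref{sec3.1 euler a priori estimate1} literally applies. You flag this yourself and resolve it by noting that the bootstrap in the proof of Theorem~\ref{sec3.1 thm existence} only exploits the sewing estimate on the YDE; this is exactly the content of Proposition~\ref{sec3.3 prop a priori estimate 1}, stated immediately after the corollary in the paper, which you are in effect re-deriving. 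Your blow-up argument also differs in flavor: you show the maximal solution itself is bounded, hence H\"older-Cauchy, hence admits a limit at $T^\ast$, contradicting maximality of $(J^\ast, x^\ast)$; the paper instead builds a \emph{different} solution by splicing in a local solution $y^n$ past $T^\ast$. Both work. A minor point: your explicit cutoff $A^R = \chi_R A$ with $\phi$ Lipschitz on $[R,R+1]$ does land in $C^\alpha_t C^\beta_{V,W}$, but the ``direct check'' genuinely needs the bounded-domain trick (a Lipschitz function on a ball of radius $R+1$ is $\beta$-H\"older with constant proportional to $(R+1)^{1-\beta}$) plus a projection onto the sphere $\|z\|_V = R+1$ to handle the case where only one of $x,y$ is in the support; since the paper merely asserts the existence of such a truncation without constructing it, you are at least as rigorous.
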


\begin{proof}
  As before it is enough to treat the case $s = 0, T = 1$. Fix $R > 0$ and
  consider $A^R \in C^{\alpha}_t C^{\beta}_{V, W}$ such that $A^R (t, x) = A
  (t, x)$ for any $(t, x)$ with $\| x \|_V \leqslant 2 R$ and $A^R (t, x)
  \equiv 0$ for $\| x \|_V \geqslant 3 R$; let $C_R \assign C (1 + \| A
  \|_{\alpha, \beta, 3 R}^2)$, where $C$ is the constant appearing
  in~{\eqref{sec3.1 euler a priori estimate1}}.
  
  For any $x_0 \in V$ with $\| x_0 \| \leqslant R$, by Theorem~\ref{sec3.1 thm
  existence} there exists a solution $x_{\cdot}$ to the YDE associated to
  $(x_0, A^R)$ on the interval $[0, 1]$; setting $\tau_1 \assign \inf \{ t \in
  [0, 1] : \| x_t \|_V \geqslant 2 R \}$, by~{\eqref{sec3.1 euler a priori
  estimate1}} it holds $\llbracket x \rrbracket_{\alpha ; [0, \tau_1]}
  \leqslant C_R$, and so
  \[ 2 R = \| x_{\tau_1} \|_V \leqslant \| x_0 \|_V + \tau_1^{\alpha}
     \llbracket x \rrbracket_{\alpha ; [0, \tau_1]} \leqslant R +
     \tau_1^{\alpha} C_R \]
  which implies
  \begin{equation}
    \tau_1 \geqslant \left( \frac{C_R}{R} \right)^{- \alpha} . \label{sec3.1
    blowup estimate}
  \end{equation}
  In particular, since $A = A^R$ on $[0, T] \times B_{2 R}$, we conclude that
  $x_{\cdot}$ is also a solution to the YDE associated to $(x_0, A)$ on the
  interval $[0, \tau_1]$.
  
  We can now iterate this procedure, i.e. set $x^1 \assign x_{\tau_1}$ and
  construct another solution to~{\eqref{sec3.1 yde}}, defined on an interval
  $[\tau_1, \tau_2]$, and so on; by ``gluing'' these solutions together, we
  obtain an increasing sequence $\{ \tau_n \} \subset [0, 1]$ and a solution
  $x_{\cdot}$ defined on $[0, T^{\ast})$, where $T^{\ast} = \lim_n \tau_n$.
  
  Now suppose that $T^{\ast} < T$ and $\liminf_{t \rightarrow T^{\ast}} \| x_t
  \|_V < \infty$, then we can find a sequence $t_n \rightarrow T^{\ast}$ such
  that $\| x_{t_n} \|_V \leqslant M$ for some $M > 0$; but then starting from
  any of this $x_{t_n}$ we can construct another solution $y^n$ defined on
  $[t_n, t_n + \varepsilon]$, where $\varepsilon$ is uniform in $n$ since $\|
  x_{t_n} \| \leqslant M$ and $\varepsilon$ can be estimated by~{\eqref{sec3.1
  blowup estimate}} with $R$ replaced by $M$. By replacing the solution
  $x_{\cdot}$ on $[t_n, T^{\ast})$ with $y^n$, choosing $n$ big enough, we can
  construct a solution defined on $[0, T^{\ast} + \varepsilon / 2)$.
  Reiterating this procedure we obtain the conclusion.
\end{proof}

\subsection{A priori estimates}\label{sec3.2}

A classical way to pass from local to global solutions is to establish
suitable a priori estimates, which are also of fundamental importance for
compactness arguments. Throughout this section, we assume that a solution $x$
to the YDE is already given and focus exclusively on obtainig bounds on it;
for simplicity we work on $[0, T]$, but all the statements immediately
generalise to $[s, T]$.

\begin{proposition}
  \label{sec3.3 prop a priori estimate 1}Let $\alpha > 1 / 2$, $\beta \in (0,
  1)$ such that $\alpha (1 + \beta) > 1$, $A \in C^{\alpha}_t C^{\beta}_V$,
  $x_0 \in V$ and $x \in C^{\alpha}_t V$ be a solution to the associated YDE.
  Then there exists $C = C (\alpha \comma \beta, T)$ such that
  \begin{equation}
    \llbracket x \rrbracket_{\alpha} \leqslant C (1 + \| A \|_{\alpha,
    \beta}^2), \qquad \| x \|_{\alpha} \leqslant C (1 + \| x_0 \|_V + \| A
    \|_{\alpha, \beta}^2) . \label{sec3.3 a priori estimate1}
  \end{equation}
\end{proposition}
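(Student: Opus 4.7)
The plan is to apply the sewing-type estimate from Theorem~\ref{sec2 thm definition young integral}, part~3, to the defining relation $x_{s,t} = \int_s^t A(\mathd u, x_u)$, and then bootstrap a local Hölder bound into a global one via a standard partition argument. Since $x$ is already assumed to lie in $C^\alpha_t V$, the exponent $\gamma$ in~\eqref{sec2 nonlinear integral estimate1} can be taken equal to $\alpha$, and $A \in C^\alpha_t C^\beta_V$ (rather than just locally) removes any $R$-dependence.

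First I would use~\eqref{sec2 nonlinear integral estimate1} together with the trivial bound $\|A_{s,t}(x_s)\|_V \leq |t-s|^\alpha \|A\|_{\alpha,\beta}$ to obtain, for any $0 \leq s < t \leq T$,
\[
\|x_{s,t}\|_V \leq |t-s|^\alpha \|A\|_{\alpha,\beta} + C_1 |t-s|^{\alpha(1+\beta)} \|A\|_{\alpha,\beta} \llbracket x \rrbracket_\alpha^\beta.
\]
Restricting to $|t-s| \leq \Delta$, dividing by $|t-s|^\alpha$ and using $a^\beta \leq 1+a$, this yields
\[
\llbracket x \rrbracket_{\alpha,\Delta} \leq \|A\|_{\alpha,\beta} + C_1 \Delta^{\alpha\beta} \|A\|_{\alpha,\beta}\bigl(1 + \llbracket x \rrbracket_{\alpha,\Delta}\bigr).
\]
Choosing $\Delta$ so that $C_1 \Delta^{\alpha\beta}\|A\|_{\alpha,\beta} \leq 1/2$, concretely $\Delta \sim (1+\|A\|_{\alpha,\beta})^{-1/(\alpha\beta)} \wedge T$, allows me to absorb the $\llbracket x \rrbracket_{\alpha,\Delta}$ term on the left and conclude the local estimate $\llbracket x \rrbracket_{\alpha,\Delta} \lesssim 1 + \|A\|_{\alpha,\beta}$.

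Second, I would upgrade this to a global estimate through the standard fact, cited in Remark~\ref{sec3.1 remark euler a priori estimates} (Exercise~4.24 in~\cite{frizhairer}), that $\llbracket x \rrbracket_\alpha \lesssim_{\alpha,T} (1 \vee T/\Delta)^{1-\alpha} \llbracket x \rrbracket_{\alpha,\Delta}$. Substituting the chosen $\Delta$ gives
\[
\llbracket x \rrbracket_\alpha \lesssim \bigl(1+\|A\|_{\alpha,\beta}\bigr)^{1+\frac{1-\alpha}{\alpha\beta}}.
\]
The standing assumption $\alpha(1+\beta) > 1$ is exactly the statement $(1-\alpha)/(\alpha\beta) < 1$, so the exponent is strictly below $2$, and the elementary inequality $1 + r^p \lesssim 1 + r^2$ (for $p \leq 2$, $r \geq 0$) delivers the first bound of~\eqref{sec3.3 a priori estimate1}. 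The second bound is then immediate from $\|x\|_\infty \leq \|x_0\|_V + T^\alpha \llbracket x \rrbracket_\alpha$.

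The only minor obstacle is the bookkeeping of the exponent when passing from the local to the global Hölder seminorm and confirming that the hypothesis $\alpha(1+\beta) > 1$ really forces the quadratic dependence claimed in the statement; beyond this, the proof is a direct application of the nonlinear Young integral estimate~\eqref{sec2 nonlinear integral estimate1} combined with a contraction-style absorption.
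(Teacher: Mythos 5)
Your proposal is correct and follows essentially the same route as the paper: apply the sewing estimate \eqref{sec2 nonlinear integral estimate1} with $\gamma=\alpha$, absorb the $\llbracket x\rrbracket_{\alpha,\Delta}$ term by choosing $\Delta^{\alpha\beta}\|A\|_{\alpha,\beta}$ small, upgrade to a global seminorm via the standard interval-splitting fact (Exercise~4.24 of \cite{frizhairer}), and finally use $\alpha(1+\beta)>1 \Leftrightarrow (1-\alpha)/(\alpha\beta)<1$ to cap the exponent at $2$. One small bookkeeping remark: in your first display the right-hand side should carry $\llbracket x\rrbracket_{\alpha;[s,t]}^\beta$ (or $\llbracket x\rrbracket_{\alpha,\Delta}^\beta$ once you restrict to $|t-s|\le\Delta$) rather than the global $\llbracket x\rrbracket_\alpha^\beta$, since the sewing estimate on $[s,t]$ sees only the local seminorm; this is exactly what makes the absorption argument close, and you clearly used the local version in the subsequent step, so the gap is purely notational.
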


\begin{proof}
  \tmcolor{blue}{}Let $\Delta \in (0, T]$ be a parameter to be chosen later.
  For any $s < t$ such that $| s - t | \leqslant \Delta$, using the fact that
  $x$ is a solution, it holds
  
  \begin{align*}
    \| x_{s, t} \|_V & = \, \left\| \int_s^t A (\mathd u, x_u) \right\|_V\\
    & \leqslant \, \| A_{s, t} (x_s) \|_V + \kappa_1 | t - s |^{\alpha (1 +
    \beta)}  \llbracket A \rrbracket_{\alpha, \beta}^{}  \llbracket x
    \rrbracket^{\beta}_{\alpha, \Delta}\\
    & \leqslant | t - s |^{\alpha} \| A \|_{\alpha, \beta}  (1 + \kappa_1
    \Delta^{\alpha \beta}  \llbracket x \rrbracket^{\beta}_{\alpha, \Delta})\\
    & \leqslant | t - s |^{\alpha} \| A \|_{\alpha, \beta} (1 + \kappa_1
    \Delta^{\alpha \beta} + \kappa_1 \Delta^{\alpha \beta}  \llbracket x
    \rrbracket_{\alpha, \Delta})
  \end{align*}
  
  were we used the trivial inequality $a^{\beta} \leqslant 1 + a$. Dividing
  both sides by $| t - s |^{\alpha}$ and taking the supremum over $| s - t |
  \leqslant \Delta$, we get
  \[ \llbracket x \rrbracket_{\alpha, \Delta} \leqslant \| A \|_{\alpha,
     \beta} (1 + \kappa_1 \Delta^{\alpha \beta}) + \kappa_1 \Delta^{\alpha
     \beta} \| A \|_{\alpha, \beta} \llbracket x \rrbracket_{\alpha, \Delta} .
  \]
  Choosing $\Delta$ small enough such that $\kappa_1 \Delta^{\alpha \beta} \|
  A \|_{\alpha, \beta} \leqslant 1 / 2$, we obtain
  \[ \llbracket x \rrbracket_{\alpha, \Delta} \leqslant 2 \| A \|_{\alpha,
     \beta}  (1 + \kappa_1 \Delta^{\alpha \beta}) \lesssim 1 + \| A
     \|_{\alpha, \beta} . \]
  If we can take $\Delta = T$, we get an estimate for $\llbracket x
  \rrbracket_{\alpha}$, which gives the conclusion. If this is not the case,
  we can choose $\Delta$ such that in addition $\kappa_1 \Delta^{\alpha \beta}
  \| A \|_{\alpha, \beta} \geqslant 1 / 4$ and then as before, by
  Exercise~4.24 from~{\cite{frizhairer}} it holds $\llbracket x
  \rrbracket_{\alpha} \lesssim_T \Delta^{\alpha - 1} \llbracket x
  \rrbracket_{\alpha, \Delta}$, so that
  
  \begin{align*}
    \llbracket x \rrbracket_{\alpha} & \lesssim \, (1 + \| A \|_{\alpha,
    \beta}) \Delta^{\alpha - 1}\\
    & \lesssim \, (1 + \| A \|_{\alpha, \beta}) \| A \|_{\alpha, \beta}^{(1 -
    \alpha) / (\alpha \beta)}\\
    & \lesssim \, 1 + \| A \|_{\alpha, \beta}^2
  \end{align*}
  
  where we used the fact that $\alpha (1 + \beta) > 1$ implies $(1 - \alpha) /
  (\alpha \beta) < 1$. The conclusion follows by the standard inequality $\| x
  \|_{\alpha} \lesssim_T \| x_0 \|_V + \llbracket x \rrbracket_{\alpha}$.
\end{proof}

The assumption of a global bound on $A$ of the form $A \in C^{\alpha}_t
C^{\beta}_V$ is sometimes too strong for practical applications. It can be
relaxed to suitable growth conditions, as the next result shows; it is taken
from~{\cite{hu}}, Theorem~3.1 (see also Theorem~2.9
from~{\cite{catelliergubinelli}}).

\begin{proposition}
  \label{sec3.3 proposition bounds growth condition}Let $A \in C^{\alpha}_t
  C^{\beta, \lambda}_V$ with $\alpha (1 + \beta) > 1$, $\beta + \lambda
  \leqslant 1$. Then there exists a constant $C = C (\alpha, \beta, T)$ such
  that any solution $x$ on $[0, T]$ to the YDE associated to $(x_0, A)$
  satisfies
  \begin{equation}
    \| x \|_{\alpha} \leqslant C \exp \left( \| A \|^{1 + \frac{1 -
    \alpha}{\alpha \beta}}_{\alpha, \beta, \lambda} \right) (1 + \| x_0 \|_V)
    . \label{sec3.3 a priori estimates growth}
  \end{equation}
\end{proposition}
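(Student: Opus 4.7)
The plan is to adapt the argument of Proposition~\ref{sec3.3 prop a priori estimate 1} to the present setting, where the hypothesis $\beta + \lambda \leqslant 1$ is crucial. This condition implies, via the remark following Definition~\ref{sec2 defn field1}, the at-most-linear growth bound
\[
\|A_{s,t}(x)\|_V \leqslant |t-s|^\alpha \|A\|_{\alpha,\beta,\lambda}(1 + \|x\|_V),
\]
while the localized H\"older seminorms satisfy $\llbracket A_{s,t}\rrbracket_{\beta,R} \leqslant R^\lambda |t-s|^\alpha \|A\|_{\alpha,\beta,\lambda}$ for $R \geqslant 1$. These are the two substitutes we will use in place of the global bound $\|A\|_{\alpha,\beta}$ that was available in Proposition~\ref{sec3.3 prop a priori estimate 1}.

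\textbf{Step 1 (Local H\"older estimate).} Fix $I=[s,t]\subset[0,T]$ with $|t-s|\leqslant\Delta$ and set $R=\|x\|_{\infty;I}\vee 1$. Using~\eqref{sec2 nonlinear integral estimate1} on each $[u,v]\subset I$ together with the two bounds above, and then absorbing via $a^\beta\leqslant 1+a$ as in Proposition~\ref{sec3.3 prop a priori estimate 1}, I would obtain
\[
\llbracket x\rrbracket_{\alpha;I}\leqslant \|A\|_{\alpha,\beta,\lambda}(1+R)+\kappa_1\Delta^{\alpha\beta}R^\lambda\|A\|_{\alpha,\beta,\lambda}(1+\llbracket x\rrbracket_{\alpha;I}).
\]
Provided $\kappa_1\Delta^{\alpha\beta}R^\lambda\|A\|_{\alpha,\beta,\lambda}\leqslant 1/2$, this yields the local H\"older bound $\llbracket x\rrbracket_{\alpha;I}\lesssim \|A\|_{\alpha,\beta,\lambda}(1+R)$.

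\textbf{Step 2 (Gr\"onwall iteration and optimization).} Partition $[0,T]$ into subintervals $[t_k,t_{k+1}]$ of common length $\Delta$ and set $M_k\assign\|x\|_{\infty;[0,t_k]}\vee 1$. On each subinterval, Step 1 combined with $\|x_{t_k,t_{k+1}}\|_V\leqslant\Delta^\alpha\llbracket x\rrbracket_{\alpha;[t_k,t_{k+1}]}$ gives, after absorption (for $\Delta$ small enough in terms of $\|A\|_{\alpha,\beta,\lambda}$), the recursive estimate $M_{k+1}\leqslant(1+C\Delta^\alpha\|A\|_{\alpha,\beta,\lambda})M_k+C\Delta^\alpha\|A\|_{\alpha,\beta,\lambda}$. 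Iterating $N=\lceil T/\Delta\rceil$ times yields
\[
M_N\leqslant \exp\!\bigl(CT\Delta^{\alpha-1}\|A\|_{\alpha,\beta,\lambda}\bigr)(1+\|x_0\|_V).
\]
The delicate point is that Step 1 requires $\Delta^{\alpha\beta}R^\lambda\|A\|_{\alpha,\beta,\lambda}\leqslant 1/2$ with $R$ possibly of the order of $M_N$; however, since $\lambda\leqslant 1-\beta\leqslant 1$, the bound $R^\lambda\leqslant 1+R$ can be used to replace this by a constraint involving $R$ linearly, which in turn can be bootstrapped on each subinterval through $R\leqslant 2M_k+O(1)$. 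This lets one take $\Delta$ of order $\|A\|_{\alpha,\beta,\lambda}^{-1/(\alpha\beta)}$, independent of $R$, up to constants. With this choice $T\Delta^{\alpha-1}\|A\|_{\alpha,\beta,\lambda}\sim T\|A\|_{\alpha,\beta,\lambda}^{1+(1-\alpha)/(\alpha\beta)}$, matching the exponent in~\eqref{sec3.3 a priori estimates growth}. Feeding the resulting supremum bound back into Step 1 and using $\|x\|_\alpha\lesssim_T\|x_0\|+\llbracket x\rrbracket_\alpha$ (together with the patching Exercise~4.24 of~\cite{frizhairer} if $\Delta<T$) completes the proof.

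The main obstacle, as hinted above, is the circular dependence between the admissible step size $\Delta$ and the a~priori unknown quantity $R=\|x\|_{\infty;[0,T]}$: a naive optimization would produce a spurious polynomial factor of $R$ in the exponent. The hypothesis $\beta+\lambda\leqslant 1$ is precisely what allows us to disentangle this loop, since it turns the super-linear localized bound $R^\lambda$ into something compatible with the linear growth of $A$, so that both the constraint on $\Delta$ and the recursion for $M_k$ close simultaneously at the rate $\|A\|_{\alpha,\beta,\lambda}^{1+(1-\alpha)/(\alpha\beta)}$ without any residual $R$-dependence.
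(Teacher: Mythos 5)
Your overall strategy (local H\"older bound plus Gr\"onwall iteration over subintervals of length $\Delta$) matches the paper's, but your Step~1 leaves a genuine gap that the later bootstrap does not close, and which the paper resolves in a different way.

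The problem is in the inequality you derive at the end of Step~1,
\[
\llbracket x\rrbracket_{\alpha;I}\leqslant \|A\|_{\alpha,\beta,\lambda}(1+R)+\kappa_1\Delta^{\alpha\beta}\,R^\lambda\,\|A\|_{\alpha,\beta,\lambda}\bigl(1+\llbracket x\rrbracket_{\alpha;I}\bigr),
\]
where the factor $R^\lambda$ sits on the \emph{absorption coefficient}. To absorb $\llbracket x\rrbracket_{\alpha;I}$ you must require $\kappa_1\Delta^{\alpha\beta}R^\lambda\|A\|_{\alpha,\beta,\lambda}\leqslant 1/2$, forcing $\Delta\sim (R^\lambda\|A\|_{\alpha,\beta,\lambda})^{-1/(\alpha\beta)}$ with $R=\|x\|_{\infty;I}$ unknown a priori. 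Your remedy --- replacing $R^\lambda$ by $1+R$ and bootstrapping through $M_k$ --- does not remove the $R$-dependence: $1+R$ is larger than $R^\lambda$, not smaller, and at the end of the iteration the exponent in the Gr\"onwall bound would carry a factor of $(1+\|x\|_{\infty})^{\lambda(1-\alpha)/(\alpha\beta)}$, hence an implicit dependence on $\|x_0\|$. The claimed estimate has the exponent depending on $\|A\|_{\alpha,\beta,\lambda}$ alone, with $(1+\|x_0\|_V)$ appearing only multiplicatively, and your fixed-point equation $M\lesssim e^{cM^{\eta}}(1+\|x_0\|_V)$ with $\eta=\lambda(1-\alpha)/(\alpha\beta)>0$ is not even guaranteed to be solvable for large $\|x_0\|_V$.

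The missing idea is to apply Young's inequality \emph{before} absorbing, not after. The sewing remainder is bounded by $|r-u|^{\alpha}|t-s|^{\alpha\beta}\|A\|_{\alpha,\beta,\lambda}\bigl(1+\|x\|_{\infty;s,t}^{\lambda}\bigr)\llbracket x\rrbracket^{\beta}_{\alpha;s,t}$. Use $a^{\lambda}b^{\beta}\leqslant a^{\beta+\lambda}+b^{\beta+\lambda}$ with $a=\|x\|_{\infty;s,t}$, $b=\llbracket x\rrbracket_{\alpha;s,t}$, and then $a^{\beta+\lambda}\leqslant 1+a$ thanks to $\beta+\lambda\leqslant 1$. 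This turns the \emph{product} $\|x\|_{\infty;s,t}^{\lambda}\llbracket x\rrbracket^{\beta}$ into the \emph{sum} $1+\|x\|_{\infty;s,t}+\llbracket x\rrbracket_{\alpha;s,t}$. Together with $\|x\|_{\infty;s,t}\lesssim_T\|x_s\|_V+\llbracket x\rrbracket_{\alpha;s,t}$, the local estimate becomes
\[
\llbracket x\rrbracket_{\alpha;s,t}\lesssim \|A\|_{\alpha,\beta,\lambda}\bigl(1+\|x_s\|_V\bigr)+\|A\|_{\alpha,\beta,\lambda}\,|t-s|^{\alpha\beta}\llbracket x\rrbracket_{\alpha;s,t},
\]
where the coefficient of $\llbracket x\rrbracket_{\alpha;s,t}$ no longer carries any $R^\lambda$. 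The step size $\Delta$ can then be chosen depending only on $\|A\|_{\alpha,\beta,\lambda}$, and your Gr\"onwall iteration with $J_n=1+\|x\|_{\infty;I_n}$ proceeds without any circularity, giving exactly the exponent $\|A\|_{\alpha,\beta,\lambda}^{1+(1-\alpha)/(\alpha\beta)}$. This decoupling is the precise role of the hypothesis $\beta+\lambda\leqslant 1$ --- it enables the $a^{\beta+\lambda}\leqslant 1+a$ step --- and is the part your proposal is missing.
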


\begin{proof}
  Fix an interval $[s, t] \subset [0, T]$, set $R = \| x \|_{\infty ; s, t}$.
  Since $x$ is a solution, for any $[u, r] \subset [s, t]$ it holds
  \begin{eqnarray*}
    \| x_{u, r} \|_V & \lesssim & \| A_{u, r} (x_u) \|_V + | r - u |^{\alpha
    (1 + \beta)} \llbracket A \rrbracket_{\alpha, \beta, R} \llbracket x
    \rrbracket^{\beta}_{\alpha ; s, t}\\
    & \lesssim & \| A_{u, r} (x_u) - A_{u, r} (x_s) \|_V + | r - u |^{\alpha}
    \| A \|_{\alpha, \beta, \lambda} (1 + \| x_s \|_V)\\
    &  & + | r - u |^{\alpha} | t - s |^{\alpha \beta} \| A \|_{\alpha,
    \beta, \lambda}  (1 + \| x \|_{\infty ; s, t}^{\lambda}) \llbracket x
    \rrbracket^{\beta}_{\alpha ; s, t}\\
    & \lesssim & | r - u |^{\alpha} \| A \|_{\alpha, \beta, \lambda} [1 + \|
    x_s \|_V + | t - s |^{\alpha \beta} (1 + \| x \|_{\infty ; s,
    t}^{\lambda}) \llbracket x \rrbracket^{\beta}_{\alpha ; s, t}]
  \end{eqnarray*}
  which implies, dividing by $| r - u |^{\alpha}$ and taking the supremum,
  that
  
  \begin{align*}
    \llbracket x \rrbracket_{\alpha ; s, t} & \lesssim \| A \|_{\alpha, \beta,
    \lambda} (1 + \| x_s \|_V) + | t - s |^{\alpha \beta} \| A \|_{\alpha,
    \beta, \lambda}  (1 + \| x \|_{\infty ; s, t}^{\lambda}) \llbracket x
    \rrbracket^{\beta}_{\alpha ; s, t} .
  \end{align*}
  
  By an application of Young's inequality, for any $a, b \geqslant 0$ it holds
  $a^{\lambda} b^{\beta} \leqslant a^{\beta + \lambda} + b^{\beta + \lambda}$;
  moreover $\beta + \lambda \leqslant 1$ so that $a^{\beta + \lambda}
  \leqslant 1 + a$ for any $\theta \in [0, 1]$, therefore we obtain
  
  \begin{align*}
    \llbracket x \rrbracket_{\alpha ; s, t} & \lesssim \| A \|_{\alpha, \beta,
    \lambda} (1 + \| x_s \|_V) + | t - s |^{\alpha \beta} \| A \|_{\alpha,
    \beta, \lambda} (1 + \| x \|_{\infty ; s, t} + \llbracket x
    \rrbracket_{\alpha ; s, t})\\
    & \lesssim \| A \|_{\alpha, \beta, \lambda} (1 + \| x_s \|_V) + \| A
    \|_{\alpha, \beta, \lambda} | t - s |^{\alpha \beta} \llbracket x
    \rrbracket_{\alpha ; s, t}
  \end{align*}
  
  where in the second passage we used the estimate $\| x \|_{\infty ; s, t}
  \lesssim_T \| x_s \|_V + \llbracket x \rrbracket_{\alpha ; s, t}$. Overall
  we deduce the existence of a constant $\kappa_1 = \kappa_1 (\alpha, \beta,
  T)$ such that
  \[ \llbracket x \rrbracket_{\alpha ; s, t} \leqslant \frac{\kappa_1}{2} \| A
     \|_{\alpha, \beta, \lambda} (1 + \| x_s \|_V) + \frac{\kappa_1}{2} \| A
     \|_{\alpha, \beta, \lambda} | t - s |^{\alpha \beta} \llbracket x
     \rrbracket_{\alpha ; s, t} . \]
  Choosing $[s, t]$ such that $| t - s | = \Delta$ satisfies $\kappa_1 \| A
  \|_{\alpha, \beta, \lambda} \Delta^{\alpha \beta} \leqslant 1$, we obtain
  \begin{equation}
    \llbracket x \rrbracket_{\alpha ; s, t} \leqslant \kappa_1 \| A
    \|_{\alpha, \beta, \lambda} (1 + \| x_s \|_V) . \label{sec3.3 apriori
    proof eq1}
  \end{equation}
  If $T$ satisfies $\kappa_1 \| A \|_{\alpha, \beta, \lambda} T^{\alpha \beta}
  \leqslant 1$, then we can take $\Delta = T$, which gives a global estimate
  and thus the conclusion. If this is not the case, then we can choose $\Delta
  < T$ s.t. $\kappa_1 \| A \|_{\alpha, \beta, \lambda} \Delta^{\alpha \beta} =
  1$ and~{\eqref{sec3.3 apriori proof eq1}} implies that
  \begin{equation}
    \llbracket x \rrbracket_{\alpha, \Delta} \leqslant \kappa_1 \| A
    \|_{\alpha, \beta, \lambda} (1 + \| x \|_{\infty}) \label{sec3.3 apriori
    proof eq2}
  \end{equation}
  and thus
  \[ \llbracket x \rrbracket_{\alpha} \lesssim \, \Delta^{\alpha - 1}
     \llbracket x \rrbracket_{\alpha, \Delta} \lesssim \| A \|_{\alpha, \beta,
     \lambda}^{\frac{1 - \alpha}{\alpha \beta}} \| A \|_{\alpha, \beta,
     \lambda} (1 + \| x \|_{\infty}) . \]
  Therefore
  \[ \llbracket x \rrbracket_{\alpha} \leqslant \kappa_2 \| A \|_{\alpha,
     \beta, \lambda}^{1 + \frac{1 - \alpha}{\alpha \beta}} (1 + \| x
     \|_{\infty}) \]
  where again $\kappa_2 = \kappa_2 (\alpha, \beta, T)$. In particular, in
  order to obtain the final estimate, we only need to focus on $\| x
  \|_{\infty}$. Let us consider, for $\Delta$ as above, the intervals $I_n
  \assign [(n - 1) \Delta, n \Delta]$ and set $J_n \assign 1 + \| x \|_{\infty
  ; I_n}$, with the convention $J_0 = 1 + \| x_0 \|_V$. Then estimates
  analogue to~{\eqref{sec3.3 apriori proof eq1}} yield
  
  \begin{align*}
    J_n & \leqslant 1 + \| x_{(n - 1) \Delta} \|_V + \Delta^{\alpha}
    \llbracket x \rrbracket_{\alpha ; I_n}\\
    & \leqslant (1 + \kappa_1 \Delta^{\alpha} \| A \|_{\alpha, \beta,
    \lambda}) (1 + \| x_{(n - 1) \Delta} \|_V)\\
    & \leqslant (1 + \kappa_1 \Delta^{\alpha} \| A \|_{\alpha, \beta,
    \lambda}) J_{n - 1}
  \end{align*}
  
  which iteratively implies
  \[ J_n \leqslant [1 + \kappa_1 \Delta^{\alpha} \| A \|_{\alpha, \beta,
     \lambda}]^n J_0 \leqslant \exp (\kappa_1 n \Delta^{\alpha} \| A
     \|_{\alpha, \beta, \lambda})  (1 + \| x_0 \|_V), \]
  where we used the basic inequality $1 + x \leqslant e^x$. Since $[0, T]$ is
  covered by $N \sim T \Delta^{- 1}$ intervals and we chose $\Delta^{- 1} \sim
  \| A \|^{1 / \alpha \beta}$, up to relabelling $\kappa_1$ into a new
  constant $\kappa_3$ we obtain
  
  \begin{align*}
    1 + \| x \|_{\infty} & = \sup_{n \leqslant N} J_n \leqslant \exp \left(
    \kappa_3  \| A \|^{1 + \frac{1 - \alpha}{\alpha \beta}}_{\alpha, \beta,
    \lambda} \right) (1 + \| x_0 \|_V) .
  \end{align*}
  
  Finally, combining this with the estimate for $\llbracket x
  \rrbracket_{\alpha}$ above we obtain
  
  \begin{align*}
    \llbracket x \rrbracket_{\alpha} & \leqslant \kappa_2  \| A \|_{\alpha,
    \beta, \lambda}^{1 + \frac{1 - \alpha}{\alpha \beta}} \exp \left( \kappa_3
    \| A \|^{1 + \frac{1 - \alpha}{\alpha \beta}}_{\alpha, \beta, \lambda}
    \right) (1 + \| x_0 \|_V)\\
    & \leqslant \kappa_4 \exp \left( \kappa_4 \| A \|^{1 + \frac{1 -
    \alpha}{\alpha \beta}}_{\alpha, \beta, \lambda} \right) (1 + \| x_0 \|_V)
  \end{align*}
  
  where we used the inequality $x e^{\lambda x} \leqslant \lambda^{- 1} e^{2
  \lambda x}$. The conclusion follows.
\end{proof}

\begin{remark}
  Since $\alpha (1 + \beta) > 1$, it holds $1 + \| A \|^{1 + (1 - \alpha) /
  (\alpha \beta)}_{\alpha, \beta, \lambda} \lesssim 1 + \| A \|_{\alpha,
  \beta, \lambda}^2$ and so
  \begin{equation}
    \| x \|_{\alpha} \leqslant C \exp (C \| A \|^2_{\alpha, \beta, \lambda})
    (1 + \| x_0 \|_V) \label{sec3.3 a priori estimates growth2}
  \end{equation}
  up to possibly changing constant $C = C (\alpha, \beta, T)$.
  
  The dependence of $C$ on $T$ can be established by a rescaling argument: if
  $x$ is a solution on $[0, T]$ to the YDE associated to $(x_0, A)$, then $x_t
  = \tilde{x}_{t / T}$ where $\tilde{x}$ is a solution on $[0, 1]$ to the YDE
  associated to $(x_0, \tilde{A})$, $\tilde{A} (t, z) = A (T t, z)$. Therefore
  one can apply the estimates to $\tilde{x}$, $\tilde{A}$ and $T = 1$ and then
  write explicitly how $\| x \|_{\alpha}$, $\| A \|_{\alpha, \beta, \lambda}$
  depend on $\| \tilde{x} \|_{\alpha}$, $\| \tilde{A} \|_{\alpha, \beta,
  \lambda}$. The same reasoning applies to several other estimates appearing
  later on, for which the dependence of $C$ on $T$ is not made explicit.
\end{remark}

In classical ODEs, a key role in establishing a priori estimates (as well as
uniqueness) is played by Gronwall's lemma; the following result can be
regarded as a suitable replacement in the Young setting. One of the main cases
of applicability is for $A \in C^{\alpha}_t L (V ; V)$.

\begin{theorem}
  \label{sec3.3 thm gronwall estimate YDE}Let $\alpha > 1 / 2$, $A \in
  C^{\alpha}_t \tmop{Lip}_V$ such that $A (t, 0) = 0$ for all $t \in [0, T]$
  and $h \in C^{\alpha}_t V$. Then there exists a constant $C = C (\alpha)$
  such that any solution $x$ to the YDE
  \begin{equation}
    x_t = x_0 + \int_0^t A (\mathd s, x_s) + h_t \label{sec3.3 affine YDE}
  \end{equation}
  satisfies the a priori bounds
  \begin{equation}
    \llbracket x \rrbracket_{\alpha} \leqslant C (\llbracket A
    \rrbracket_{\alpha, 1} \| x \|_{\infty} + \llbracket h
    \rrbracket_{\alpha}) ; \label{sec3.3 affine YDE bound1}
  \end{equation}
  \begin{equation}
    \| x \|_{\infty} \leqslant C \exp (C T \llbracket A \rrbracket_{\alpha,
    1}^{1 / \alpha} )  (\| x_0 + h_0 \|_V + T^{\alpha} \llbracket h
    \rrbracket_{\alpha}) ; \label{sec3.3 affine YDE bound2}
  \end{equation}
  \begin{equation}
    \| x \|_{\alpha} \leqslant C \exp (C T (1 + \llbracket A
    \rrbracket_{\alpha, 1}^2)) [\| x_0 + h_0 \|_V + (1 + T^{\alpha})
    \llbracket h \rrbracket_{\alpha}] . \label{sec3.3 affine YDE bound3}
  \end{equation}
\end{theorem}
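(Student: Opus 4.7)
The plan is to follow the localise-and-iterate strategy from Proposition~\ref{sec3.3 proposition bounds growth condition}, substantially simplified by the fact that $A$ is globally Lipschitz with $A(t,0)=0$, so that $\|A_{s,t}(x_s)\|_V \leq \llbracket A \rrbracket_{\alpha,1}\, |t-s|^\alpha\, \|x_s\|_V$, while the sewing correction in~\eqref{sec2 nonlinear integral estimate1} is of order $|t-s|^{2\alpha}$ (since here $\beta=1$ and $\gamma=\alpha$ give $\alpha + \beta\gamma = 2\alpha > 1$). I would first derive a local Hölder estimate, use it to obtain~\eqref{sec3.3 affine YDE bound1}, and then iterate its counterpart for $\|x\|_\infty$ to reach~\eqref{sec3.3 affine YDE bound2}; the mixed bound~\eqref{sec3.3 affine YDE bound3} is then routine.

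Applying~\eqref{sec2 nonlinear integral estimate1} on a subinterval $[s,t]$ and using the YDE gives
\[
\|x_{s,t}\|_V \leq |t-s|^\alpha \llbracket A \rrbracket_{\alpha,1}\, \|x_s\|_V + \kappa_1 |t-s|^{2\alpha} \llbracket A \rrbracket_{\alpha,1}\, \llbracket x \rrbracket_{\alpha;s,t} + |t-s|^\alpha \llbracket h \rrbracket_\alpha.
\]
Dividing by $|t-s|^\alpha$, taking the supremum over $|t-s|\leq\Delta$, and choosing $\Delta\in(0,T]$ so that $\kappa_1\Delta^\alpha\llbracket A\rrbracket_{\alpha,1}\leq 1/2$, i.e.\ $\Delta \sim T\wedge \llbracket A\rrbracket_{\alpha,1}^{-1/\alpha}$, yields the local estimates
\[
\llbracket x \rrbracket_{\alpha,\Delta} \leq 2\llbracket A \rrbracket_{\alpha,1}\,\|x\|_\infty + 2\llbracket h \rrbracket_\alpha, \qquad \|x\|_{\infty;[s,t]} \leq 2\|x_s\|_V + 4\Delta^\alpha \llbracket h \rrbracket_\alpha
\]
for $|t-s|\leq\Delta$. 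To obtain~\eqref{sec3.3 affine YDE bound1}, I bound the complementary case $|t-s|>\Delta$ by $\|x_{s,t}\|_V/|t-s|^\alpha \leq 2\|x\|_\infty\Delta^{-\alpha} \lesssim \llbracket A\rrbracket_{\alpha,1}\|x\|_\infty$; combining with the bound on $\llbracket x \rrbracket_{\alpha,\Delta}$ yields the claim with $C=C(\alpha)$.

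For~\eqref{sec3.3 affine YDE bound2} I would partition $[0,T]$ into $N \leq T/\Delta + 1$ consecutive subintervals $I_n$ of length at most $\Delta$, and set $J_n \assign \|x\|_{\infty;I_n}$. Reading the YDE at $t=0$ one has $x(0)=x_0+h_0$, so $J_0 \leq 2\|x_0+h_0\|_V + 4\Delta^\alpha \llbracket h \rrbracket_\alpha$; the local estimate gives the recursion $J_{n+1} \leq 2J_n + 4\Delta^\alpha\llbracket h \rrbracket_\alpha$, and a geometric-series computation produces $\max_n J_n \lesssim 2^N(\|x_0+h_0\|_V + \Delta^\alpha \llbracket h \rrbracket_\alpha)$. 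Using $2^N \leq \exp(CT\llbracket A\rrbracket_{\alpha,1}^{1/\alpha})$ and $\Delta^\alpha \leq T^\alpha$ yields~\eqref{sec3.3 affine YDE bound2}. Finally, plugging~\eqref{sec3.3 affine YDE bound2} into~\eqref{sec3.3 affine YDE bound1} and using $ae^{ca}\lesssim e^{(c+1)a}$ to absorb the prefactor $\llbracket A\rrbracket_{\alpha,1}$ into the exponent, together with $\llbracket A\rrbracket_{\alpha,1}^{1/\alpha} \leq 1 + \llbracket A\rrbracket_{\alpha,1}^2$ (valid since $\alpha>1/2$ gives $1/\alpha<2$), recovers the form $\exp(CT(1+\llbracket A\rrbracket_{\alpha,1}^2))$ in~\eqref{sec3.3 affine YDE bound3}. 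The main subtlety is the bookkeeping: the choice $\Delta \sim \llbracket A\rrbracket_{\alpha,1}^{-1/\alpha}$ simultaneously makes the local step contractive and fixes the number of iterations at $\sim T\llbracket A\rrbracket_{\alpha,1}^{1/\alpha}$, which is precisely where the $1/\alpha$ power in the exponent of~\eqref{sec3.3 affine YDE bound2} originates.
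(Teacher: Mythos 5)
Your proof is correct and follows essentially the same approach as the paper's: derive a local Hölder estimate over subintervals of length $\Delta \sim \llbracket A \rrbracket_{\alpha,1}^{-1/\alpha}$ via the sewing bound, use it to absorb $\llbracket x \rrbracket_{\alpha,\Delta}$ and establish~\eqref{sec3.3 affine YDE bound1}, then iterate a geometric recursion for $J_n = \|x\|_{\infty;I_n}$ over the $\sim T/\Delta$ subintervals to obtain~\eqref{sec3.3 affine YDE bound2}, and finally combine the two (using $\llbracket A\rrbracket_{\alpha,1}^{1/\alpha}\lesssim 1+\llbracket A\rrbracket_{\alpha,1}^2$) for~\eqref{sec3.3 affine YDE bound3}. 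The only cosmetic difference is that the paper first rescales to $T=1$ and recovers the $T$-dependence at the end, while you track $T$ explicitly throughout; both are fine.
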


\begin{proof}
  We can assume without loss of generality that $T = 1$, as the general case
  follows by rescaling. It is also clear that, up to changing constant $C$,
  inequality~{\eqref{sec3.3 affine YDE bound3}} follows from combining
  together~{\eqref{sec3.3 affine YDE bound1}} and~{\eqref{sec3.3 affine YDE
  bound2}} and using the fact that $\llbracket A \rrbracket_{\alpha, 1}^{1 /
  \alpha} \lesssim 1 + \llbracket A \rrbracket_{\alpha, 1}^2$ since $\alpha >
  1 / 2$. Up to renaming $x_0$, we can also assume $h_0 = 0$. The proof is
  similar to that of Proposition~\ref{sec3.3 proposition bounds growth
  condition}, but we provide it for the sake of completeness.
  
  Let $\Delta > 0$ to be chosen later, $s < t$ such that $| t - s | \leqslant
  \Delta$, then by~{\eqref{sec3.3 affine YDE}} it holds
  
  \begin{align*}
    \| x_{s, t} \|_V & \leqslant \, \left\| \int_s^t A (\mathd u, x_u)
    \right\|_V + \| h_{s, t} \|_V\\
    & \leqslant \, \| A_{s, t}  (x_s) \|_V + \kappa_1 | t - s |^{2 \alpha}
    \llbracket A \rrbracket_{\alpha, 1}  \llbracket x \rrbracket_{\alpha,
    \Delta} + | t - s |^{\alpha} \llbracket h \rrbracket_{\alpha}\\
    & \leqslant | t - s |^{\alpha} (\llbracket A \rrbracket_{\alpha, 1} \| x
    \|_{\infty} + \llbracket h \rrbracket_{\alpha} + \kappa_1 \Delta^{\alpha}
    \llbracket A \rrbracket_{\alpha, 1} \llbracket x \rrbracket_{\alpha,
    \Delta})
  \end{align*}
  
  and so dividing both sides by $| t - s |^{\alpha}$, taking the supremum over
  $s, t$ and choosing $\Delta$ such that $\kappa_1 \Delta^{\alpha} \llbracket
  A \rrbracket_{\alpha, 1} \leqslant 1 / 2$ we obtain
  \begin{equation}
    \llbracket x \rrbracket_{\alpha, \Delta} \leqslant 2 (\llbracket A
    \rrbracket_{\alpha, 1} \| x \|_{\infty} + \llbracket h
    \rrbracket_{\alpha}) . \label{appendixA1 linear YDE eq1}
  \end{equation}
  As usual, if $\kappa_1  \llbracket A \rrbracket_{\alpha, 1} \leqslant 1 /
  2$, then the conclusion follows from~{\eqref{appendixA1 linear YDE eq1}}
  with the choice $\Delta = 1$ and the trivial estimate $\| x \|_{\infty}
  \leqslant \| x_0 \|_V + \llbracket x \rrbracket_{\alpha}$. Suppose instead
  the opposite, choose $\Delta < 1$ such that $\kappa_1 \Delta^{\alpha}
  \llbracket A \rrbracket_{\alpha, 1} = 1 / 2$; define $I_n = [(n - 1) \Delta,
  n \Delta]$, $J_n = \| x \|_{\infty ; I_n}$, then estimates similar to the
  ones done above show that
  
  \begin{align*}
    J_{n + 1} & \leqslant \, \| x_{n \Delta} \|_V + \Delta^{\alpha} \,
    \llbracket x \rrbracket_{\alpha ; I_n}\\
    & \leqslant \, \| x_{n \Delta} \|_V  (1 + 2 \Delta^{\alpha} \llbracket A
    \rrbracket_{\alpha, 1}) + 2 \llbracket h \rrbracket_{\alpha}\\
    & \lesssim \, J_n + \llbracket h \rrbracket_{\alpha}
  \end{align*}
  
  which implies recursively that for a suitable constant $\kappa_2$ it holds
  $J_n \lesssim e^{\kappa_2 n} (\| x_0 \|_V + \llbracket h
  \rrbracket_{\alpha})$. Since $n \sim \Delta^{- 1} \sim \llbracket A
  \rrbracket_{\alpha, 1}^{1 / \alpha}$ we deduce that
  \[ \| x \|_{\infty} = \sup_n J_n \lesssim \exp (\kappa_3  \llbracket A
     \rrbracket^{1 / \alpha}_{\alpha, 1}) (\| x_0 \|_V + \llbracket h
     \rrbracket_{\alpha}) \]
  which gives~{\eqref{sec3.3 affine YDE bound2}}; combined with $\Delta^{-
  \alpha} \sim \llbracket A \rrbracket_{\alpha, 1}$,
  estimate~{\eqref{appendixA1 linear YDE eq1}} and the basic inequality
  \[ \llbracket x \rrbracket_{\alpha} \lesssim \Delta^{- \alpha} \| x
     \|_{\infty} + \llbracket x \rrbracket_{\alpha, \Delta} \]
  it also yields estimate~{\eqref{sec3.3 affine YDE bound1}}.
\end{proof}

Another way to establish that solutions don't blow-up in finite time is to the
show that the YDE admits (coercive) invariants. The next lemma gives simple
conditions to establish their existence.

\begin{lemma}
  Let $A \in C^{\alpha}_t C^{\beta}_V$ with $\alpha (1 + \beta) > 1$, $x \in
  C^{\alpha}_t V$ be a solution to the YDE associated to $(x_0, A)$ and assume
  $F \in C^2 (V ; \mathbb{R})$ is such that
  \[ D F (z) (A_{s, t} (z)) = 0 \quad \forall \, z \in V, 0 \leqslant s
     \leqslant t \leqslant T. \]
  Then $F$ is constant along $x$, i.e. $F (x_t) = F (x_0)$ for all $t \in [0,
  T]$.
\end{lemma}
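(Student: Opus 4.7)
The plan is to apply the nonlinear Young It\^o formula \eqref{sec2 ito formula3} from Proposition~\ref{sec2 prop ito formula} to $F$, regarded as a time-independent field, and to $x$, which by assumption satisfies $x = x_0 + \int_0^{\cdot} A(\mathd s, x_s)$. This reduces the computation of $F(x_t) - F(x_0)$ to a nonlinear Young integral whose defining germ vanishes pointwise by hypothesis.

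First I would verify the regularity hypotheses. Viewing $F(t,z) := F(z)$, the fact that $F \in C^2(V;\mathbb{R})$ gives $F \in C^0_t C^{1 + \beta'}_{V, \mathbb{R}, \tmop{loc}}$ for any $\beta' \in (0,1)$. The solution satisfies $x \in C^{\alpha}_t V$ with $\alpha(1 + \beta) > 1$; choosing $\beta' = \beta$, we have $\alpha(1 + \beta') > 1$, so the hypotheses of \eqref{sec2 ito formula3} are met (with $y = x$, $A$ the same drift, and all time-regularity conditions automatically satisfied since $F$ is time-independent). Since $F_{s,t}(z) \equiv 0$, the term $\int_0^t F(\mathd s, x_s)$ vanishes identically, and \eqref{sec2 ito formula3} collapses to
\[
F(x_t) - F(x_0) \;=\; \int_0^t D F(x_s)\bigl(A(\mathd s, x_s)\bigr).
\]

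Next I would identify the right-hand side with a sewing. By the construction used in the proof of Proposition~\ref{sec2 prop ito formula}, the integral above equals $\mathcal{J}(\Gamma)_t$ for the germ
\[
\Gamma_{s,t} \;:=\; D F(x_s)\bigl(A_{s,t}(x_s)\bigr).
\]
The hypothesis $D F(z)(A_{s,t}(z)) = 0$ for all $z \in V$ and $0 \leq s \leq t \leq T$, applied at $z = x_s$, gives $\Gamma_{s,t} = 0$ identically on $\Delta_2$. By the uniqueness part of the sewing lemma (Lemma~\ref{sec2 sewing lemma}), $\mathcal{J}(\Gamma) \equiv 0$, hence $F(x_t) = F(x_0)$ for every $t \in [0,T]$.

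I do not anticipate any serious obstacle: the only mild point is to ensure that the regularity required to invoke \eqref{sec2 ito formula3} is available, which follows immediately from $F \in C^2$ and the YDE condition $\alpha(1+\beta) > 1$. Alternatively, one could bypass the It\^o formula and argue directly via a Riemann-sum/sewing computation: Taylor-expanding $F(x_{t_{i+1}}) - F(x_{t_i}) = D F(x_{t_i})(x_{t_i,t_{i+1}}) + O(\|x_{t_i,t_{i+1}}\|^2)$ and substituting $x_{t_i, t_{i+1}} = A_{t_i, t_{i+1}}(x_{t_i}) + O(|t_{i+1}-t_i|^{\alpha(1+\beta)})$ produces a telescoping sum in which the leading terms vanish by hypothesis and the remainders go to zero as $|\Pi|\to 0$ thanks to $\alpha(1+\beta) > 1$ and $2\alpha > 1$.
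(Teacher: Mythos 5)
Your proof is correct and follows essentially the same route as the paper: apply the It\^o-type formula~\eqref{sec2 ito formula3} to the time-independent field $F$, observe that the resulting integral is the sewing of $\Gamma_{s,t} = DF(x_s)(A_{s,t}(x_s))$, and conclude since this germ vanishes identically by hypothesis. The paper's proof is just a compressed version of what you wrote; your extra verification of the regularity hypotheses and the alternative Riemann-sum sketch are both sound but not needed beyond what the paper records.
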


\begin{proof}
  It follows immediately from the It\^{o}-type formula~{\eqref{sec2 ito
  formula3}}, since it holds
  \[ F (x_t) - F (x_0) = \int_0^t D F (x_s) (A (\mathd s, x_s)) = \mathcal{J}
     (\Gamma) \]
  for the choice $\Gamma_{s, t} = D F (x_s) (A_{s, t} (x_s)) \equiv 0$ by
  hypothesis.
\end{proof}

\begin{remark}
  If $V$ is an Hilbert space with $\| z \|_V^2 = \langle z, z \rangle_V$, then
  $\| \cdot \|_V$ is constant along solutions of the YDE under the condition
  $\langle z, A_{s, t} (z) \rangle_V = 0$ for all $z \in V$ and $s \leqslant
  t$. In this case, blow up cannot occurr, thus under the hypothesis of
  Corollary~\ref{sec3.1 cor local existence}, global existence of solutions
  holds. Similarly, if in addition $A \in C^{\alpha}_t C^{1 + \beta}_{V,
  \tmop{loc}}$, then by Corollary~\ref{sec3.2 cor local uniqueness} below,
  global existence and uniqueness holds. 
\end{remark}

\subsection{Uniqueness}\label{sec3.3}

We now turn to sufficient conditions for uniqueness of solutions; some of the
results below also establish existence under different sets of assumptions
than those from Section~\ref{sec3.1}.

\begin{theorem}
  \label{sec3 thm wellposedness}Let $A \in C^{\alpha}_t C^{1 + \beta}_V$,
  $\alpha (1 + \beta) > 1$. Then for any $x_0 \in V$ there exists a unique
  global solution to the YDE associated to $(x_0, A)$.
\end{theorem}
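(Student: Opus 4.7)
My approach would combine a local Banach fixed point argument (leveraging Corollary~\ref{sec2 corollary frechet} for contractivity), a uniform iteration for global existence, and a reduction to a linear YDE via Theorem~\ref{sec3.3 thm gronwall estimate YDE} for global uniqueness.

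For local existence, I would fix $M > 0$ and consider
\[ I : x \mapsto x_0 + \int_0^{\cdot} A(\mathd s, x_s) \]
restricted to the complete metric space
\[ B_M^{\tau} \assign \{\, y \in C^{\alpha}([0,\tau]; V) : y_0 = x_0,\ \llbracket y \rrbracket_{\alpha;[0,\tau]} \leqslant M \,\} \]
equipped with the distance $\llbracket x - y \rrbracket_{\alpha;[0,\tau]}$. Estimate~\eqref{sec2 nonlinear integral estimate1} applied with $\gamma = \alpha$ (allowed since $\alpha(1+\beta) > 1$) yields $I(B_M^{\tau}) \subset B_M^{\tau}$ as soon as $M \gtrsim \|A\|_{\alpha,\beta}$ and $\tau^{\alpha\beta}\|A\|_{\alpha,\beta}$ is sufficiently small. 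For the contraction, instead of the $\delta$-H\"older bound~\eqref{sec2 nonlinear integral estimate3} I would exploit the extra regularity $A \in C^{1+\beta}_V$ via Corollary~\ref{sec2 corollary frechet}: for $x^1, x^2 \in B_M^{\tau}$,
\[ I(x^1) - I(x^2) = \int_0^{\cdot} v_{\mathd s}\,(x^1_s - x^2_s), \qquad \llbracket v \rrbracket_{\alpha} \leqslant C \|DA\|_{\alpha,\beta}(1+M), \]
with $v \in C^{\alpha}_t \mathcal{L}(V; V)$. Since $\alpha(1+\beta) > 1$ and $\beta < 1$ together force $\alpha > 1/2$, the right-hand side is a classical Young integral; combining the standard sewing bound with the elementary inequality $\|x^1 - x^2\|_{\infty;[0,\tau]} \leqslant \tau^{\alpha}\llbracket x^1 - x^2\rrbracket_{\alpha;[0,\tau]}$ (available because the two solutions share the initial value) gives a contraction factor of order $\tau^{\alpha}\|DA\|_{\alpha,\beta}(1+M)$, which is less than $1/2$ for $\tau$ small enough. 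Banach's fixed point theorem then produces a unique solution on $[0,\tau]$.

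The key observation for globalisation is that the resulting $\tau$ depends only on $\|A\|_{\alpha, 1+\beta}$ and \emph{not} on $x_0$: both $M$ and the contraction prefactor are controlled by norms of $A$ alone. I would therefore extend the local solution to $[0,T]$ by iterating the construction on consecutive intervals $[k\tau, (k+1)\tau]$, reaching the full time horizon in finitely many steps and concatenating the pieces into a single global solution $x \in C^{\alpha}_t V$. Proposition~\ref{sec3.3 prop a priori estimate 1} is consistent with this picture, ruling out any finite-time blow-up.

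For global uniqueness, let $x^1, x^2$ be any two solutions on $[0,T]$ with the same initial datum; each is automatically in $C^{\alpha}_t V$ by Theorem~\ref{sec2 thm definition young integral}. A second application of Corollary~\ref{sec2 corollary frechet}, this time on the whole of $[0,T]$, yields $v \in C^{\alpha}_t \mathcal{L}(V; V)$ such that the difference $e \assign x^1 - x^2$ solves the homogeneous linear YDE
\[ e_t = \int_0^t v_{\mathd s}\, e_s, \qquad e_0 = 0. \]
Regarding this as a YDE driven by the Lipschitz field $B(s, z) \assign v_s z$, Theorem~\ref{sec3.3 thm gronwall estimate YDE} (with $h \equiv 0$) immediately forces $e \equiv 0$. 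The main technical point I expect to verify carefully is that all constants in the local contraction step genuinely depend only on $\|A\|_{\alpha, 1+\beta}$ and not on $x_0$, so that the iteration really closes up with a uniform time step.
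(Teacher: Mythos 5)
Your proposal is correct and follows the same strategy as the paper: a Banach fixed point argument on $\{x \in C^{\alpha}([0,\tau];V): x_0 \text{ fixed},\ \llbracket x\rrbracket_{\alpha}\leq M\}$, with the contraction obtained via Corollary~\ref{sec2 corollary frechet}, and the crucial observation that $\tau$ depends only on $\|A\|_{\alpha,1+\beta}$, not $x_0$, so the construction iterates to cover $[0,T]$. The only cosmetic divergences are in bookkeeping: the paper establishes invariance via the decomposition $A_{s,t}(x_s)=[A_{s,t}(x_s)-A_{s,t}(x_0)]+A_{s,t}(x_0)$ rather than a direct use of~\eqref{sec2 nonlinear integral estimate1}, and it deduces global uniqueness purely from local uniqueness plus iteration, whereas you invoke Theorem~\ref{sec3.3 thm gronwall estimate YDE} --- both routes are sound and yield the same conclusion.
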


\begin{proof}
  The proof is based on an application of Banach fixed point theorem. Let $M$,
  $\tau$ be positive parameters to be fixed later and set
  \[ E : = \left\{ x \in C^{\alpha} ([0, \tau] ; V) \, : \, x (0) = x_0, \,
     \llbracket x \rrbracket_{\alpha} \leqslant M \right\}, \]
  which is complete metric space with the metric $d (x, y) = \llbracket x - y
  \rrbracket_{\alpha}$; define the map $\mathcal{I}$ by
  \[ x \mapsto \mathcal{I} (x)_{\cdot} = x_0 + \int_0^{\cdot} A (\mathd s,
     x_s) . \]

  We want to show that $\mathcal{I}$ is a contraction from $E$ to itself, for
  suitable choice of $M$ and $\tau$. It holds
  
  \begin{align*}
    \| \mathcal{I} (x)_{s, t} \|_V & \leqslant \| A_{s, t} (x_s) \|_V +
    \kappa_1 \llbracket A \rrbracket_{\alpha, 1} \llbracket x
    \rrbracket_{\alpha}  | t - s |^{2 \alpha}\\
    & \leqslant \| A_{s, t} (x_s) - A_{s, t} (x_0) \|_V + \| A_{s, t} (x_0)
    \|_V + \kappa_1 \llbracket A \rrbracket_{\alpha, 1} \llbracket x
    \rrbracket_{\alpha}  | t - s |^{2 \alpha}\\
    & \leqslant \| A \|_{\alpha, 1} \llbracket x \rrbracket_{\alpha}
    s^{\alpha} | t - s |^{\alpha} + \| A \|_{\alpha, 1} | t - s |^{\alpha} +
    \kappa_1 \llbracket A \rrbracket_{\alpha, 1} \llbracket x
    \rrbracket_{\alpha}  | t - s |^{2 \alpha}\\
    & \leqslant \tau^{\alpha} (1 + \kappa_1) \| A \|_{\alpha, 1} \llbracket x
    \rrbracket_{\alpha}  | t - s |^{\alpha} + \| A \|_{\alpha, 1} | t - s
    |^{\alpha} .
  \end{align*}
  
  Choosing $\tau$ and $M$ such that
  \[ \tau^{\alpha} (1 + \kappa_1) \| A \|_{\alpha, 1} \leqslant \frac{1}{2},
     \quad M \geqslant 2 \| A \|_{\alpha, 1}, \]
  for any $x \in V$ it holds
  \[ \| \mathcal{I} (x) \|_{\alpha} \leqslant \tau^{\alpha}  \| A \|_{\alpha,
     1} (1 + \kappa_1) \llbracket x \rrbracket_{\alpha} + \| A \|_{\alpha, 1}
     \leqslant M / 2 + M / 2 \leqslant M \]
  which shows that $\mathcal{I}$ maps $E$ into itself.
  
  By the hypothesis and Corollary~\ref{sec2 corollary frechet}, for any $x, y
  \in V$ it holds
  
  \begin{align*}
    \| \mathcal{I} (x)_{s, t} - \mathcal{I} (y)_{s, t} \|_V & = \left\|
    \int_s^t v_{\mathd u} (x_u - y_u) \right\|_V\\
    & \leqslant \| v_{s, t} (x_s - y_s) \|_V + \kappa_1 \llbracket v
    \rrbracket_{\alpha} \llbracket x - y \rrbracket_{\alpha} | t - s |^{2
    \alpha}\\
    & \leqslant \llbracket v \rrbracket_{\alpha} \llbracket x - y
    \rrbracket_{\alpha} (s^{\alpha} + \kappa_1 | t - s |^{\alpha}) | t - s
    |^{\alpha}\\
    & \leqslant \kappa_2 \| A \|_{\alpha, 1 + \beta}  (1 + \llbracket x
    \rrbracket_{\alpha} + \llbracket y \rrbracket_{\alpha}) \llbracket x - y
    \rrbracket_{\alpha} \tau^{\alpha} | t - s |^{\alpha},
  \end{align*}
  
  which implies
  \[ \llbracket \mathcal{I} (x) - \mathcal{I} (y) \rrbracket_{\alpha}
     \leqslant \kappa_2 \| A \|_{\alpha, 1 + \beta} (1 + 2 M) \tau^{\alpha}
     \llbracket x - y \rrbracket_{\alpha} < \llbracket x - y
     \rrbracket_{\alpha} \]
  as soon as we choose $\tau$ such that $\kappa_2 \| A \|_{\alpha, 1 + \beta}
  (1 + 2 M) \tau^{\alpha} < 1$. Therefore in this case $\mathcal{I}$ is a
  contraction from $E$ to itself; for any $x_0 \in V$ there exists a unique
  solution $x \in C^{\alpha} ([0, \tau] ; V)$ starting from $x_0$. The same
  procedure allows to show existence and uniqueness of solutions $x \in
  C^{\alpha} ([s, s + \tau] \cap [0, T] ; V)$ for any $s \in [0, T]$ and any
  $x_s \in V$, where $\tau$ does not depend on $(s, x_s)$; by iteration,
  global existence and uniqueness follows.
\end{proof}

\begin{corollary}
  \label{sec3.2 cor local uniqueness}Let $A \in C^{\alpha}_t C^{1 + \beta}_{V,
  \tmop{loc}}$, $\alpha (1 + \beta) > 1$. Then for any $x_0 \in V$ there
  exists a unique maximal solution $x$ to the YDE associated to $(x_0, A)$,
  defined on $[0, T^{\ast}) \subset [0, T]$, such that either $T^{\ast} = T$
  or
  \[ \lim_{t \rightarrow T^{\ast}} \| x_t \|_V = + \infty . \]
  In particular if $A \in C^{\alpha}_t C^{\beta, \lambda}_V \cap C^{\alpha}_t
  C^{1 + \beta}_{V, \tmop{loc}}$ with $\alpha (1 + \beta) > 1$, $\beta +
  \lambda \leqslant 1$, then global existence and uniqueness holds.
\end{corollary}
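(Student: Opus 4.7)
The plan is to bootstrap Theorem~\ref{sec3 thm wellposedness} by a standard truncation, then glue local solutions into a maximal one and read off the blow-up alternative from the extension argument used in Corollary~\ref{sec3.1 cor local existence}.

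\textbf{Step 1: local wellposedness by truncation.} Given $(s, x_s)$ and $R > \| x_s \|_V$, construct $A^R \in C^{\alpha}_t C^{1 + \beta}_V$ that agrees with $A$ on $\{ \| z \|_V \leqslant R \}$ and vanishes outside $\{ \| z \|_V \leqslant 2 R \}$, for instance by multiplying $A$ by a smooth radial cut-off. Theorem~\ref{sec3 thm wellposedness} provides a unique global solution $x^R \in C^\alpha ([s, T]; V)$ of the YDE for $A^R$. By continuity $x^R$ remains in the ball of radius $R$ on some interval $[s, \tau_R)$, and there $A^R(\cdot, x^R_\cdot) = A(\cdot, x^R_\cdot)$, so $x^R$ restricted to $[s, \tau_R)$ solves the original YDE. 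Conversely any other solution $y$ of the original YDE that stays in $\{ \| z \|_V \leqslant R \}$ also solves the $A^R$-YDE, hence by uniqueness coincides with $x^R$ on its domain.

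\textbf{Step 2: maximal solution and blow-up dichotomy.} Let $T^\ast$ be the supremum of the lengths of intervals $[0, \sigma)$ on which a solution to the YDE starting at $x_0$ exists; this is positive by Step~1. Any two such solutions agree on their common domain (locally comparing both to the $x^R$ from Step~1 for sufficiently large $R$), so their union defines a single solution on $[0, T^\ast)$. Suppose for contradiction that $T^\ast < T$ and $\liminf_{t \uparrow T^\ast} \| x_t \|_V \leqslant M < \infty$, and pick $t_n \uparrow T^\ast$ with $\| x_{t_n} \|_V \leqslant M$. Apply Step~1 at each $(t_n, x_{t_n})$ with the \emph{fixed} radius $R = M + 1$: inspection of the Banach fixed-point argument in the proof of Theorem~\ref{sec3 thm wellposedness} shows that the lifespan produced there depends only on $\| A^{M+1} \|_{\alpha, 1 + \beta}$, hence is a uniform $\varepsilon > 0$. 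Choosing $n$ so that $t_n + \varepsilon > T^\ast$ and concatenating yields a solution on an interval strictly larger than $[0, T^\ast)$, contradicting maximality. Therefore $\liminf_{t \uparrow T^\ast} \| x_t \|_V = +\infty$, which is equivalent to $\lim_{t \uparrow T^\ast} \| x_t \|_V = +\infty$.

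\textbf{Step 3: global wellposedness under the growth condition.} If additionally $A \in C^{\alpha}_t C^{\beta, \lambda}_V$ with $\beta + \lambda \leqslant 1$, apply Proposition~\ref{sec3.3 proposition bounds growth condition} to the maximal solution on each $[0, T'] \subset [0, T^\ast)$; this gives a bound on $\| x \|_{\infty; 0, T'}$ that is uniform in $T' < T^\ast$. This excludes finite-time blow-up, forcing $T^\ast = T$, and uniqueness on $[0, T)$ was already established in Step~2.

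The main obstacle is Step~2: the dichotomy requires a \emph{uniform} lower bound on the local existence time once the solution stays bounded in norm. The truncation delivers this, because for a fixed radius $R$ the quantity $\| A^R \|_{\alpha, 1 + \beta}$ is a fixed number and the lifespan given by Theorem~\ref{sec3 thm wellposedness} depends only on this norm, not on the starting point within the ball of radius $R$.
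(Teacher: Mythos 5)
Your proposal is correct and follows essentially the same route as the paper: localize by truncating $A$ to a ball, use the Banach fixed-point result of Theorem~\ref{sec3 thm wellposedness} for local wellposedness with a lifespan depending only on $\| A^R \|_{\alpha, 1 + \beta}$, glue into a maximal solution, obtain the blow-up alternative by contradiction via the uniform lifespan, and invoke the a priori bound of Proposition~\ref{sec3.3 proposition bounds growth condition} to exclude blow-up under the growth hypothesis. The paper only sketches these steps (deferring to the argument of Corollary~\ref{sec3.1 cor local existence}); you fill in the same details.
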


\begin{proof}
  We only sketch the proof, as it follows from classical ODE arguments and is
  similar to that of Corollary~\ref{sec3.1 cor local existence}.
  
  By localization, given any $s \in [0, T)$ and any $x_s \in V$, there exists
  $\tau = \tau (s, x_s)$ such that there exists a unique solution to the YDE
  associated to $(x_s, A)$ on the interval $[s, s + \tau]$. Therefore given
  two solutions $x^i$ defined on intervals $[s, T_i]$ with $x_s^1 = x_s^2$,
  they must coincide on $[s, T_1 \wedge T_2]$; \ in particular, any extension
  procedure of a given solution to a larger interval is consistent, which
  allows to define the maximal solution as the maximal extension of any
  solution starting from $x_0$ at $t = 0$.
  
  The blow-up alternative can be established reasoning by contradiction as in
  Corollary~\ref{sec3.1 cor local existence}. If $A \in C^{\alpha}_t C^{\beta,
  \lambda}_V$, then by the a priori estimate~{\eqref{sec3.3 a priori estimates
  growth}} blow-up cannot occur and so global well-posedness follows.
\end{proof}

Once existence of solutions is established, their uniqueness can be
alternatively shows by means of a Comparison Principle, which is the analogue
of a Gronwall type estimate for classical ODEs. Such results are of
independent interest as they also allow to compare solutions to different
YDEs; they were first introduced in~{\cite{catelliergubinelli}} and later
revisited in~{\cite{galeatigubinelli1}}.

\begin{theorem}
  \label{sec3.3 comparison principle}Let $R, M > 0$ fixed. For $i = 1, 2$, let
  $x_0^i \in V$ such that $\| x^i_0 \|_V \leqslant R$, $A^i \in C^{\alpha}_t
  C^{\beta, \lambda}_V$ with $\alpha (1 + \beta) > 1$, $\beta + \lambda
  \leqslant 1$ and $\| A^i \|_{\alpha, \beta, \lambda} \leqslant M$, as well
  as $A^1 \in C^{\alpha}_t C^{1 + \beta, \lambda}_V$ with $\| A^1 \|_{\alpha,
  1 + \beta, \lambda} \leqslant M$; let $x^i$ be two given solutions
  associated respectively to $(x_0^i, A^i)$. Then it holds
  \[ \llbracket x^1 - x^2 \rrbracket_{\alpha} \leqslant C (\| x^1_0 - x^2_0
     \|_V + \| A^1 - A^2 \|_{\alpha, \beta, \lambda}) \]
  for a constant $C = C (\alpha, \beta, T, R, M)$ increasing in the last two
  variables.
\end{theorem}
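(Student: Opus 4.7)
The plan is to recast the difference $\psi \assign x^1 - x^2$ as the solution of an affine nonlinear YDE with linear drift, and then to invoke the Gronwall-type bound of Theorem~\ref{sec3.3 thm gronwall estimate YDE}. Throughout the argument, all constants $K_i$ will be allowed to depend on $\alpha, \beta, T, R, M$ in a way which is monotone increasing in $R$ and $M$; the main bookkeeping challenge is exactly this monotonicity, but it is routine since each bound used is polynomial in the radius appearing.

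First I would obtain uniform a priori bounds on the two solutions. Applying Proposition~\ref{sec3.3 proposition bounds growth condition} to each $x^i$, the hypotheses $\|x_0^i\|_V \leq R$ and $\|A^i\|_{\alpha,\beta,\lambda} \leq M$ yield $\|x^i\|_\alpha \leq K_1$. In particular $R' \assign \|x^1\|_\infty \vee \|x^2\|_\infty \leq K_1$ and $\llbracket x^i \rrbracket_\alpha \leq K_1$.

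Next I would use the extra regularity of $A^1$ to linearise the difference of integrals. Writing
\[
\psi_t = (x^1_0 - x^2_0) + \int_0^t (A^1 - A^2)(\mathd s, x^2_s) + \left[\int_0^t A^1(\mathd s, x^1_s) - \int_0^t A^1(\mathd s, x^2_s)\right],
\]
the embedding $C^{1+\beta,\lambda}_V \subset C^{1+\beta}_{V,\mathrm{loc}}$ lets me apply Corollary~\ref{sec2 corollary frechet} to the bracket, yielding
\[
\int_0^t A^1(\mathd s, x^1_s) - \int_0^t A^1(\mathd s, x^2_s) = \int_0^t v_{\mathd s}(\psi_s),
\]
for some $v \in C^\alpha_t \mathcal{L}(V;V)$. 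Combining~\eqref{sec2 corollary frechet estimate} with $\|DA^1\|_{\alpha,\beta,R'} \leq (1 + (R')^\lambda)\|DA^1\|_{\alpha,\beta,\lambda}$ gives $\llbracket v \rrbracket_\alpha \leq K_2$. I then set $B(t,z)\assign v_t z$, which defines a drift in $C^\alpha_t \mathrm{Lip}_V$ with $B(t,0)\equiv 0$ and $\llbracket B \rrbracket_{\alpha,1} \leq \llbracket v \rrbracket_\alpha \leq K_2$, and the nonlinear Young integral $\int_0^\cdot B(\mathd s, \psi_s)$ coincides with $\int_0^\cdot v_{\mathd s}(\psi_s)$, both being the sewing of $v_{s,t}\psi_s$ (cf.\ Remark~\ref{sec2 remark other integrals}).

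Setting $h_t \assign \int_0^t (A^1 - A^2)(\mathd s, x^2_s)$, the difference $\psi$ thus satisfies the affine YDE $\psi_t = \psi_0 + h_t + \int_0^t B(\mathd s, \psi_s)$ with $h_0 = 0$ and $\psi_0 = x_0^1 - x_0^2$. Estimate~\eqref{sec2 nonlinear integral estimate2}, together with the same polynomial $\lambda$-growth used for $v$, gives $\llbracket h \rrbracket_\alpha \leq K_3 \|A^1 - A^2\|_{\alpha,\beta,\lambda}$. Applying~\eqref{sec3.3 affine YDE bound3} of Theorem~\ref{sec3.3 thm gronwall estimate YDE} to $\psi$ then yields
\[
\llbracket \psi \rrbracket_\alpha \leq \|\psi\|_\alpha \leq C\bigl(\|x_0^1 - x_0^2\|_V + \|A^1 - A^2\|_{\alpha,\beta,\lambda}\bigr),
\]
with $C = C(\alpha,\beta,T,R,M)$ monotone increasing in $R$ and $M$, as desired.
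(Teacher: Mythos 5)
Your proof is correct and follows essentially the same route as the paper's: set the difference $e = x^1 - x^2$, use Corollary~\ref{sec2 corollary frechet} to rewrite the difference of integrals with the same drift $A^1$ as $\int_0^t v_{\mathd s}(e_s)$, isolate the forcing term $\int_0^t (A^1 - A^2)(\mathd s, x^2_s)$, bound $\llbracket v\rrbracket_{\alpha,1}$ and the forcing via the a priori estimates, and finally invoke Theorem~\ref{sec3.3 thm gronwall estimate YDE}. Your additional care in spelling out the passage from the $\lambda$-weighted norms to the localised ones on the ball of radius $R'$, and in recasting $v$ as a drift $B(t,z)=v_t z$ in $C^\alpha_t\tmop{Lip}_V$ with $B(t,0)=0$ so that Theorem~\ref{sec3.3 thm gronwall estimate YDE} applies verbatim, is a stylistic expansion of what the paper does implicitly, not a different argument.
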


\begin{proof}
  Let $x^i$ be the two given solutions and set $e_t \assign x_t^1 - x_t^2$,
  then $e$ satisfies
  
  \begin{align*}
    e_t & = \, e_0 + \int_0^t A^1 (\mathd s, x^1_s) - \int_0^t A^2 (\mathd s,
    x_s^2)\\
    & = \, e_0 + \int_0^t A^1 (\mathd s, x^1_s) - \int_0^t A^1 (\mathd s,
    x_s^2) + \int_0^t (A^1 - A^2) (\mathd s, x^2_s)\\
    & = \, e_0 + \int_0^t v_{\mathd s} (e_s) + \psi_t
  \end{align*}
  
  for the choice
  \[ v_t : = \int_0^t \int_0^1 D A^1 (\mathd s, x^2_s + \lambda (x^1_s -
     x^2_s)) \mathd \lambda, \quad \psi_t \assign \int_0^t (A^1 - A^2) (\mathd
     s, x^2_s) \]
  where we applied Corollary~\ref{sec2 corollary frechet}. By the same result,
  combined with estimate~{\eqref{sec3.3 a priori estimates growth2}}, it holds
  
  \begin{align*}
    \llbracket v \rrbracket_{\alpha, 1} & \leqslant \kappa_1 \| D A^1
    \|_{\alpha, \beta, \lambda} (1 + \| x^1 \|_{\alpha} + \| x^2
    \|_{\alpha})\\
    & \leqslant \kappa_2 \exp (\kappa_2 (\| A^1 \|_{\alpha, 1 + \beta,
    \lambda}^2 + \| A^2 \|_{\alpha, \beta, \lambda}^2)) (1 + R)\\
    & \leqslant \kappa_2 \exp (2 \kappa_2 M^2) (1 + R) ;
  \end{align*}
  
  similarly, by Point~4. of Theorem~\ref{sec2 thm definition young integral},
  
  \begin{align*}
    \llbracket \psi \rrbracket_{\alpha} & \leqslant \kappa_3  \| A^1 - A^2
    \|_{\alpha, \beta, \lambda} (1 + \| x^2 \|_{\infty}^{\lambda}) (1 +
    \llbracket x^2 \rrbracket_{\alpha})\\
    & \leqslant \kappa_4  \| A^1 - A^2 \|_{\alpha, \beta, \lambda} \exp
    (\kappa_4 (1 + M^2)) (1 + R) .
  \end{align*}
  
  Applying Theorem~\ref{sec3.3 thm gronwall estimate YDE} to $e$, we have
  \[ \llbracket x^1 - x^2 \rrbracket_{\alpha} \leqslant \kappa_5 e^{\kappa_5 
     \llbracket v \rrbracket^2_{\alpha, 1} }  (\| x_0^1 - x_0^2 \|_V +
     \llbracket \psi \rrbracket_{\alpha}) \]
  which combined with the previous estimates implies the conclusion.
\end{proof}

\begin{remark}
  \label{sec3.3 remark lipschitz solution map}If $A \in C^{\alpha}_t C^{1 +
  \beta}_V$ and we consider solutions $x^i$ associated to $(x_0^i, A)$, going
  through the same proof but applying instead estimate~{\eqref{sec3.3 a priori
  estimate1}}, we obtain
  
  \begin{align*}
    \llbracket v \rrbracket_{\alpha, 1} & \lesssim \| D A \|_{\alpha, \beta}
    (1 + \| x^1 \|_{\alpha} + \| x^2 \|_{\alpha}) \lesssim 1 + \| A
    \|_{\alpha, 1 + \beta}^3
  \end{align*}
  
  which combined with~{\eqref{sec3.3 affine YDE bound3}} implies the existence
  of a constant $C = C (\alpha, \beta, T)$ such that
  \begin{equation}
    \llbracket x^1 - x^2 \rrbracket_{\alpha} \leqslant C \exp (C \| A
    \|^6_{\alpha, 1 + \beta}) \, \| x_0^1 - x_0^2 \|_V . \label{sec3.3 eq
    lipschitz map}
  \end{equation}
  As a consequence, the solution map $F [A] : x_0 \mapsto x$ associated to
  $A$, seen as a map from $V$ to $C^{\alpha}_t V$, is globally Lipschitz.
  Similar estimates show that, if $\{ A_n \}_n$ is a sequence such that $A_n
  \rightarrow A$ in $C^{\alpha}_t C^{1 + \beta}_V$, then $F [A_n] \rightarrow
  F [A]$ uniformly on bounded sets.
\end{remark}

As a corollary, we obtain convergence of the Euler scheme introduced in
Section~\ref{sec3.1}, with rate $2 \alpha - 1$. For simplicity we state the
result in the case $A \in C^{\alpha}_t C^{1 + \beta}_V$, but the same results
follow for $A \in C^{\alpha}_t C^{1 + \beta, \lambda}_V$ by the usual
localization procedure.

\begin{corollary}
  \label{sec3.3 cor euler convergence}Given $A \in C^{\alpha}_t C^{1 +
  \beta}_V$ with $\alpha (1 + \beta) > 1$ and $x_0 \in V$, denote by $x^n$ the
  element of $C^{\alpha}_t V$ constructed by the $n$-step Euler approximation
  from Theorem~\ref{sec3.1 thm existence}, and by $x$ the unique solution
  associated to $(x_0, A)$. Then there exists a constant $C = C (\alpha,
  \beta, T)$ such that
  \[ \| x - x^n \|_{\alpha} \leqslant C \exp (C \| A \|_{\alpha, 1 + \beta}^6)
     n^{1 - 2 \alpha} \quad \text{as } n \rightarrow \infty . \]
\end{corollary}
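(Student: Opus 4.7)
The plan is to view the Euler iterate $x^n$ as the solution of a perturbed YDE and then invoke the stability theory already established. Recall from the proof of Theorem~\ref{sec3.1 thm existence} (the recursive construction itself is valid in any Banach space, no compactness needed) that $x^n$ satisfies
\begin{equation*}
  x^n_t = x_0 + \int_0^t A(\mathd s, x^n_s) + \psi^n_t,
\end{equation*}
with $\psi^n_0 = 0$ and $\psi^n$ quantifying the Euler discretisation error. Setting $e := x - x^n$ (so $e_0 = 0$) and subtracting the exact and perturbed equations, one obtains
\begin{equation*}
  e_t = \int_0^t A(\mathd s, x_s) - \int_0^t A(\mathd s, x^n_s) - \psi^n_t.
\end{equation*}
Applying Corollary~\ref{sec2 corollary frechet} with $(x^1, x^2) = (x, x^n)$ rewrites the difference of the Young integrals as $\int_0^t v_{\mathd s}(e_s)$ for a suitable $v \in C^\alpha_t \mathcal{L}(V; V)$; hence $e$ solves the affine YDE $e_t = \int_0^t v_{\mathd s}(e_s) + h_t$ with $h := -\psi^n$.

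Since $v_t$ is linear, the hypothesis $v_t(0) = 0$ holds trivially, so Theorem~\ref{sec3.3 thm gronwall estimate YDE} applies. Combining Corollary~\ref{sec2 corollary frechet} with the uniform bounds $\llbracket x \rrbracket_\alpha, \llbracket x^n \rrbracket_\alpha \lesssim 1 + \|A\|^2_{\alpha, 1+\beta}$ furnished by Proposition~\ref{sec3.3 prop a priori estimate 1} and Remark~\ref{sec3.1 remark euler a priori estimates}, one reaches $\llbracket v \rrbracket_{\alpha, 1} \lesssim 1 + \|A\|^3_{\alpha, 1+\beta}$, exactly as in Remark~\ref{sec3.3 remark lipschitz solution map}. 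Estimate~\eqref{sec3.3 affine YDE bound3}, together with $e_0 = 0$ and $h_0 = 0$, then yields
\begin{equation*}
  \|e\|_\alpha \leqslant C \exp(C \|A\|^6_{\alpha, 1+\beta})\, \llbracket \psi^n \rrbracket_\alpha.
\end{equation*}

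The only remaining task, which I expect to be the main subtle point, is to bound $\llbracket \psi^n \rrbracket_\alpha$ with the correct rate. The generic estimate~\eqref{sec3.1 euler a priori estimate2} only gives $n^{1-\alpha(1+\beta)}$, which for $\beta < 1$ is strictly weaker than the target $n^{1-2\alpha}$. To recover the optimal rate I would exploit that $A \in C^\alpha_t C^{1+\beta}_V$ is in particular Lipschitz in $x$ (the Lipschitz constant of $A_{s,t}$ being controlled by $\|A\|_{\alpha, 1+\beta} |t-s|^\alpha$), and re-run the sewing argument behind~\eqref{sec3.1 euler a priori estimate2} with the pair $(\beta, \gamma) = (1, \alpha)$; this is admissible since $\alpha + 1 \cdot \alpha = 2\alpha > 1$ follows from $\alpha(1+\beta) > 1$ and $\beta \leqslant 1$. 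Together with $\llbracket x^n \rrbracket_\alpha \lesssim 1 + \|A\|^2_{\alpha, 1+\beta}$, this yields
\begin{equation*}
  \llbracket \psi^n \rrbracket_\alpha \lesssim n^{1-2\alpha} (1 + \|A\|^3_{\alpha, 1+\beta}).
\end{equation*}
Plugging this back into the previous display and absorbing the polynomial factor in $\|A\|$ into the exponential gives precisely the stated estimate.
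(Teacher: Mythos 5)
Your proof is correct and follows essentially the same route as the paper: write $x^n$ as the solution of a perturbed YDE, apply Corollary~\ref{sec2 corollary frechet} and Theorem~\ref{sec3.3 thm gronwall estimate YDE} to the error $e = x - x^n$, and obtain the rate $n^{1-2\alpha}$ by running the Euler error estimate with the Lipschitz choice $(\beta,\gamma)=(1,\alpha)$, which is precisely what the paper does via Remark~\ref{sec3.1 remark euler a priori estimates}.
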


\begin{proof}
  Recall that by Theorem~\ref{sec3.1 thm existence}, $x^n$ satisfies the YDE
  \[ x^n_t = x_0 + \int_0^t A (\mathd s, x^n_s) + \psi^n_t, \]
  where by Remark~\ref{sec3.1 remark euler a priori estimates}, for the choice
  $\beta = 1$, it holds
  \[ \llbracket \psi^n \rrbracket \lesssim \, (1 + \| A \|_{\alpha, 1}^{1 + 1
     / \alpha}) n^{1 - 2 \alpha} . \]
  Define $e^n \assign x - x^n$, then by Corollary~\ref{sec2 corollary frechet}
  it satisfies
  \[ e^n_t = \int_0^t A (\mathd s, x^n_s) - A (\mathd s, x_s) + \psi^n_t =
     \int_0^t v^n_{\mathd s} (e^n_s) + \psi^n_t \]
  where again by Remark~\ref{sec3.1 remark euler a priori estimates} it holds
  \[ \llbracket v^n \rrbracket_{\alpha, 1} \lesssim \| A \|_{\alpha, 1 +
     \beta}  (1 + \llbracket x \rrbracket_{\alpha} + \llbracket x^n
     \rrbracket_{\alpha}) \lesssim 1 + \| A \|_{\alpha, 1 + \beta}^3 . \]
  Applying Theorem~\ref{sec3.3 thm gronwall estimate YDE}, we deduce the
  existence of $\kappa_1 = \kappa_1 (\alpha, \beta, T)$ such that
  \[ \| e^n \|_{\alpha} \leqslant \kappa_1 \exp (\kappa_1 \| A \|_{\alpha, 1 +
     \beta}^6)  \llbracket \psi^n \rrbracket_{\alpha}, \]
  which combined with the estimate for $\llbracket \psi^n \rrbracket_{\alpha}$
  yields the conclusion.
\end{proof}

\subsection{The case of continuous $\partial_t A$}\label{sec3.4}

In this section we study how the well-posedness theory changes when, in
addition to the regularity condition $A \in C^{\alpha}_t C^{\beta}_t$, we
impose $\partial_t A : [0, T] \times V \rightarrow V$ to exist continuous and
uniformly bounded (we assume boundedness for simplicity, but it could be
replaced by a growth condition).

The key point is that, by Point~2. from Theorem~\ref{sec2 thm definition young
integral}, any solution to the YDE is also a solution to the classical ODE
associated to $\partial_t A$; as such, it is Lipschitz continuous with
constant $\| \partial_t A \|_{\infty}$. We can exploit this additional time
regularity, combined with nonlinear Young theory, to obtain well-posedness
under weaker conditions than those from Theorem~\ref{sec3 thm wellposedness}.

While the existence of $\partial_t A$ is not a very meaningful requirement for
classical YDEs, i.e. for $A (t, x) = f (x) y_t$, as it would imply that $y \in
C^1_t$, there are other situations in which it becomes a natural assumption.
One example is for perturbed ODEs $\dot{x} = b (x) + \dot{w}$, in which the
associated $A$ is the averaged field
\[ A (t, x) = \int_0^t b (s, x + w_s) \mathd s \]
for which $\partial_t A$ exists continuous as soon as $b$ is continuous field;
still classical wellposedness is not is not guaranteed under the sole
continuity of $b$.

\begin{theorem}
  \label{sec3.4 thm wellposedness}Let $A$ be such that $A \in C^{\alpha}_t
  C^{1 + \beta}_V$ and $\partial_t A \in C_b ([0, T] \times V ; V)$ with
  $\alpha + \beta > 1$. Then for any $x_0 \in V$ there exists a unique global
  solution to the YDE associated to $(x_0, A)$.
\end{theorem}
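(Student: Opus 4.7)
The plan is to adapt the Banach fixed point argument of Theorem \ref{sec3 thm wellposedness}, but on a space of Lipschitz (rather than merely $\alpha$-H\"older) paths. The crucial observation is Point~2 of Theorem \ref{sec2 thm definition young integral}: since $\partial_t A$ exists continuous, any solution $x$ satisfies $x_t = x_0 + \int_0^t \partial_t A(s, x_s)\,\mathd s$, hence is automatically Lipschitz with constant at most $L := \|\partial_t A\|_\infty$. This means that in every Young-type compatibility condition $\alpha + \beta\gamma > 1$, we may take $\gamma$ arbitrarily close to $1$, which under the hypothesis $\alpha + \beta > 1$ leaves a nonempty interval of admissible exponents $\gamma \in ((1-\alpha)/\beta,\, 1)$.

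Fix $\tau \in (0, T]$ to be chosen, and set
\[
E := \{x \in \mathrm{Lip}([0,\tau]; V) \,:\, x(0) = x_0,\ \llbracket x \rrbracket_1 \leq L\},
\]
equipped with the metric $d(x,y) := \llbracket x - y \rrbracket_\alpha$; lower semicontinuity of the Lipschitz seminorm under uniform limits makes $E$ a complete metric space. Define $\mathcal{I}(x)_t := x_0 + \int_0^t A(\mathd s, x_s)$; by Point~2 this equals $x_0 + \int_0^t \partial_t A(s, x_s)\,\mathd s$, so $\llbracket \mathcal{I}(x)\rrbracket_1 \leq L$ and $\mathcal{I}(E) \subset E$. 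For the contraction, fix $x, y \in E$, set $e := x - y$ (so $e_0 = 0$), and pick $\gamma$ in the interval above. Corollary \ref{sec2 corollary frechet} yields $\mathcal{I}(x)_t - \mathcal{I}(y)_t = \int_0^t v_{\mathd s}(e_s)$ with $v \in C^\alpha_t \mathcal{L}(V,V)$ and
\[
\llbracket v \rrbracket_\alpha \leq \kappa_1 \|DA\|_{\alpha, \beta}\bigl(1 + \llbracket x \rrbracket_\gamma + \llbracket y \rrbracket_\gamma\bigr) \leq K,
\]
where $K$ depends only on $\alpha, \beta, T, L, \|A\|_{\alpha, 1+\beta}$ (using $\llbracket x \rrbracket_\gamma \leq T^{1-\gamma} L$). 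The integral is the sewing of $\Gamma_{s,t} := v_{s,t}(e_s)$; since $\alpha > 1/2$ we compute $\|\delta \Gamma_{s,u,t}\|_V = \|v_{u,t}(e_{s,u})\|_V \leq \llbracket v \rrbracket_\alpha \llbracket e \rrbracket_\alpha |t-s|^{2\alpha}$, and combining with $\|\Gamma_{s,t}\|_V \leq \tau^\alpha \llbracket v \rrbracket_\alpha \llbracket e \rrbracket_\alpha |t-s|^\alpha$ (from $\|e_s\|_V \leq s^\alpha \llbracket e\rrbracket_\alpha$), the sewing lemma gives
\[
\llbracket \mathcal{I}(x) - \mathcal{I}(y) \rrbracket_\alpha \leq \kappa_2 K \tau^\alpha \llbracket e \rrbracket_\alpha.
\]

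Choosing $\tau$ with $\kappa_2 K \tau^\alpha < 1$ produces the contraction, so Banach's theorem delivers a unique fixed point, i.e.\ local existence and uniqueness on $[0,\tau]$. Since $\tau$ and $L$ depend on $\|A\|_{\alpha, 1+\beta}, \|\partial_t A\|_\infty, T$ but not on the initial value, the solution extends by iteration to all of $[0,T]$. Finally, any solution in the sense of Section~\ref{sec3} belongs a priori only to $C^\gamma_t V$ for some admissible $\gamma$, but Point~2 again forces it to be Lipschitz with constant $L$, hence to lie in $E$, so the uniqueness obtained in $E$ applies to all such solutions. The main technical balancing act is between fixing a Lipschitz bound strong enough to control $\llbracket v \rrbracket_\alpha$ via Corollary \ref{sec2 corollary frechet}, yet still obtaining a sewing-compatible contraction in the weaker $\alpha$-H\"older seminorm; the Young regime condition $\alpha > 1/2$ is essential for the latter step.
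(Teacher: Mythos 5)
Your argument follows exactly the paper's: a Banach fixed point on the Lipschitz ball $E = \{ x \in \tmop{Lip}([0,\tau];V) : x(0)=x_0,\ \llbracket x\rrbracket_{\tmop{Lip}} \leq L \}$ metrised by $\llbracket x-y\rrbracket_\alpha$, with invariance of $E$ from Point~2 of Theorem~\ref{sec2 thm definition young integral} and contractivity from Corollary~\ref{sec2 corollary frechet}. There is, however, a genuine gap. You invoke ``since $\alpha > 1/2$'' to justify the sewing estimate $\|\delta\Gamma_{s,u,t}\|_V \leq \llbracket v\rrbracket_\alpha\llbracket e\rrbracket_\alpha|t-s|^{2\alpha}$ with exponent $2\alpha>1$, but $\alpha>1/2$ is not among the hypotheses: the theorem assumes only $\alpha+\beta>1$ with $\alpha,\beta\in(0,1)$, which permits $\alpha$ as small as $1-\beta$ (take, e.g., $\alpha=0.3$, $\beta=0.8$). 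The entire point of this result relative to Theorem~\ref{sec3 thm wellposedness}, where $\alpha(1+\beta)>1$ forces $\alpha>1/2$, is to use the Lipschitz regularity of solutions to go \emph{below} the $\alpha>1/2$ threshold; adding $\alpha>1/2$ as an assumption defeats the purpose.

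The natural repair, which is what the Lipschitz structure of $E$ is there for, is to bound $\|e_{s,u}\|_V \leq \llbracket e\rrbracket_{\tmop{Lip}}|u-s|$, giving $\|\delta\Gamma\|_{1+\alpha} \leq \llbracket v\rrbracket_\alpha\llbracket e\rrbracket_{\tmop{Lip}}$, which is always sewable since $1+\alpha>1$. But then the remainder is measured by $\llbracket e\rrbracket_{\tmop{Lip}}$, not $\llbracket e\rrbracket_\alpha$, and the resulting bound $\llbracket\mathcal{I}(x)-\mathcal{I}(y)\rrbracket_\alpha \lesssim \tau^\alpha\llbracket v\rrbracket_\alpha\llbracket e\rrbracket_\alpha + \tau\llbracket v\rrbracket_\alpha\llbracket e\rrbracket_{\tmop{Lip}}$ contains a term that the metric $\llbracket x-y\rrbracket_\alpha$ does not control, so the contraction does not close as written. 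You should note that the paper's own proof performs the same silent substitution $\llbracket x-y\rrbracket_{\tmop{Lip}}\mapsto\llbracket x-y\rrbracket_\alpha$ between its second and third displayed lines, so the difficulty is real; your version simply exposes it by writing $\alpha>1/2$ where no such hypothesis is available. Covering the full regime $\alpha+\beta>1$ requires a further idea (a better-adapted metric, or an interpolation between the sup-norm and Lipschitz bounds on $e$), which neither the proposal nor the paper supplies explicitly.
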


\begin{proof}
  Similarly to Theorem~\ref{sec3 thm wellposedness}, the proof is by Banach
  fixed point theorem. For suitable values of $M, \tau > 0$ to be fixed later,
  consider the space $E \assign \left\{ x \in \tmop{Lip} ([0, \tau] ; V) \, :
  x (0) = x_0, \, \llbracket x \rrbracket_{\tmop{Lip}} \leqslant M \right\}$;
  it is a complete metric space with the metric $d (x, y) = \llbracket x - y
  \rrbracket_{\gamma}$ (the condition $\llbracket x \rrbracket_{\tmop{Lip}}
  \leqslant M$ is essential for this to be true). Define the map $\mathcal{I}$
  by
  \[ \mathcal{I} (x)_t = x_0 + \int_0^t \partial_t A (s, x_s) \mathd s = x_0 +
     \int_0^t A (\tmop{ds}, x_s) \]
  and observe that under the condition $\| \partial_t A \|_{\infty} \leqslant
  M$ it maps $E$ into itself. By the hypothesis and Corollary~\ref{sec2
  corollary frechet}, for any $x, y \in E$ it holds
  
  \begin{align*}
    \| \mathcal{I} (x)_{s, t} - \mathcal{I} (y)_{s, t} \|_V & = \left\|
    \int_s^t v_{\mathd u} (x_u - y_u) \right\|_V\\
    & \leqslant \| v_{s, t} (x_s - y_s) \|_V + \kappa_1  \llbracket v
    \rrbracket_{\alpha} \llbracket x - y \rrbracket_{\tmop{Lip}} | t - s |^{2
    \alpha}\\
    & \leqslant \llbracket v \rrbracket_{\alpha} \llbracket x - y
    \rrbracket_{\alpha} (s^{\alpha} + \kappa_1 | t - s |^{\alpha}) | t - s
    |^{\alpha}\\
    & \leqslant \kappa_2 \tau^{\alpha} \| A \|_{\alpha, 1 + \beta}  (1 +
    \llbracket x \rrbracket_{\tmop{Lip}} + \llbracket y
    \rrbracket_{\tmop{Lip}}) \llbracket x - y \rrbracket_{\alpha} | t - s
    |^{\alpha}
  \end{align*}
  
  which implies
  \[ \llbracket \mathcal{I} (x) - \mathcal{I} (y) \rrbracket_{\alpha}
     \leqslant \kappa_2 \tau^{\alpha} \| A \|_{\alpha, 1 + \beta} (1 + 2 M)
     \llbracket x - y \rrbracket_{\alpha} < \llbracket x - y
     \rrbracket_{\alpha} \]
  as soon as we choose $\tau$ small enough such that $\kappa_2 \tau^{\alpha}
  \| A \|_{\alpha, 1 + \beta} (1 + 2 M) < 1$. Therefore $\mathcal{I}$ is a
  contraction on $E$ and for any $x_0 \in V$ there exists a unique associated
  solution $x \in C^{\gamma} ([0, \tau] ; V)$. Global existence and uniqueness
  then follows from the usual iterative argument.
\end{proof}

We can also establish an analogue of Theorem~\ref{sec3.3 comparison principle}
in this setting.

\begin{theorem}
  \label{sec3.4 thm comparison principle}Let $M > 0$ fixed. For $i = 1, 2$,
  let $A^i \in C^{\alpha}_t C^{\beta}_V$ such that $\partial_t A^i \in C^0
  ([0, T] \times V ; V)$, $\alpha + \beta > 1$ and $\| A^i \|_{\alpha, \beta}
  + \| \partial_t A \|_{\infty} \leqslant M$, as well as $A^1 \in C^{\alpha}_t
  C^{1 + \beta}_V$ with $\| A^1 \|_{\alpha, 1 + \beta} \leqslant M$, and
  $x_0^i \in V$; let $x^i$ be two given solutions associated respectively to
  $(x_0^i, A^i)$. Then it holds
  \[ \llbracket x^1 - x^2 \rrbracket_{\alpha} \leqslant C (\| x^1_0 - x^2_0
     \|_V + \| A^1 - A^2 \|_{\alpha, \beta}) \]
  for a constant $C = C (\alpha, \beta, T, M)$ increasing in the last
  variable. A more explicit formula for $C$ is given by~{\eqref{sec3.4
  constant cor1}}. 
\end{theorem}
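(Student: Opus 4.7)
The plan is to mirror the structure of the proof of Theorem~\ref{sec3.3 comparison principle}, the key difference being that we replace the a priori Hölder bound on $x^1, x^2$ coming from Proposition~\ref{sec3.3 proposition bounds growth condition} with a cleaner Lipschitz bound. Indeed, by Point~2 of Theorem~\ref{sec2 thm definition young integral}, each solution $x^i$ satisfies the classical ODE $\dot x^i_t = \partial_t A^i(t, x^i_t)$, so that $\llbracket x^i \rrbracket_{\mathrm{Lip}} \leqslant \|\partial_t A^i\|_\infty \leqslant M$. In particular every $x^i$ belongs to $C^\gamma_t V$ for $\gamma = 1$, and the condition $\alpha + \beta\gamma > 1$ needed to activate the nonlinear Young calculus reduces exactly to the standing assumption $\alpha + \beta > 1$.

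Next, set $e_t \assign x^1_t - x^2_t$ and decompose
\[
e_t \;=\; e_0 + \int_0^t A^1(\mathd s, x^1_s) - \int_0^t A^1(\mathd s, x^2_s) + \int_0^t (A^1-A^2)(\mathd s, x^2_s).
\]
Applying Corollary~\ref{sec2 corollary frechet} to the first difference (which is licit since $A^1 \in C^\alpha_t C^{1+\beta}_V$), this rewrites as
\[
e_t \;=\; e_0 + \int_0^t v_{\mathd s}(e_s) + \psi_t,
\]
with $v$ and $\psi$ as in the proof of Theorem~\ref{sec3.3 comparison principle}. This puts $e$ in the exact form required by the Gronwall-type Theorem~\ref{sec3.3 thm gronwall estimate YDE}, with $v$ playing the role of the linear field and $\psi$ playing the role of the additive forcing $h$; note that $\psi_0 = 0$.

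It then remains to estimate $\llbracket v \rrbracket_{\alpha,1}$ and $\llbracket \psi \rrbracket_\alpha$ in terms of $M$. The estimate \eqref{sec2 corollary frechet estimate} from Corollary~\ref{sec2 corollary frechet}, applied with $\gamma = 1$, yields
\[
\llbracket v \rrbracket_{\alpha,1} \;\leqslant\; \kappa_1 \|DA^1\|_{\alpha,\beta}\bigl(1 + \llbracket x^1\rrbracket_{\mathrm{Lip}} + \llbracket x^2\rrbracket_{\mathrm{Lip}}\bigr) \;\leqslant\; \kappa_2(1 + M^2),
\]
and the continuity estimate \eqref{sec2 nonlinear integral estimate2} gives
\[
\llbracket \psi \rrbracket_\alpha \;\leqslant\; \kappa_3 \|A^1 - A^2\|_{\alpha,\beta}\bigl(1 + \llbracket x^2\rrbracket_{\mathrm{Lip}}\bigr) \;\leqslant\; \kappa_4(1 + M) \|A^1-A^2\|_{\alpha,\beta}.
\]
Feeding these two bounds into \eqref{sec3.3 affine YDE bound3} applied to $e$ (with $x_0 \leftarrow e_0$, $A \leftarrow v$, $h \leftarrow \psi$) produces an estimate of the form
\begin{equation}\label{sec3.4 constant cor1}
\llbracket x^1 - x^2\rrbracket_\alpha \;\leqslant\; \kappa_5 \exp\bigl(\kappa_5 T(1 + M^4)\bigr)\bigl(\|x^1_0 - x^2_0\|_V + \|A^1 - A^2\|_{\alpha,\beta}\bigr),
\end{equation}
which is exactly the claimed inequality with a constant $C = C(\alpha,\beta,T,M)$ monotone in $M$.

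The only subtle point is the verification that the Lipschitz regularity $\gamma = 1$ of $x^i$ is sufficient to carry through both the sewing-based estimate for $\psi$ and the Corollary~\ref{sec2 corollary frechet} decomposition for $v$; this is precisely where the hypothesis $\alpha + \beta > 1$, weaker than the $\alpha(1+\beta) > 1$ of Theorem~\ref{sec3.3 comparison principle}, enters and suffices. All remaining manipulations are routine applications of the already established linear Gronwall estimate.
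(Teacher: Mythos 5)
Your proof is correct and follows essentially the same route as the paper: decompose $e = x^1 - x^2$ via Corollary~\ref{sec2 corollary frechet}, exploit that the continuity of $\partial_t A^i$ makes each $x^i$ Lipschitz with constant $\|\partial_t A^i\|_\infty$ so that $\gamma = 1$ can be used throughout, and then invoke the affine Gronwall estimate of Theorem~\ref{sec3.3 thm gronwall estimate YDE}. The resulting bounds on $\llbracket v \rrbracket_{\alpha,1}$ and $\llbracket \psi \rrbracket_\alpha$, and the constant $C \sim \exp(\kappa(1+M^4))$, match the paper's.
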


\begin{proof}
  The proof is analogous to that of Theorem~\ref{sec3.3 comparison principle},
  so we will mostly sketch it; it is based on an application of
  Corollary~\ref{sec2 corollary frechet} and Theorem~\ref{sec3.3 thm gronwall
  estimate YDE}.
  
  Given two solutions as above, their difference $e = x^1 - x^2$ satisfies the
  affine YDE
  \[ e_t = e_0 + \int_0^t v_{\mathd s} e_s + \psi_t \]
  with
  \[ v_t = \int_0^t \int_0^1 D A^1 (\mathd s, x^2_s + \lambda e_s) \mathd
     \lambda, \quad \psi_t = \int_0^t (A^1 - A^2) (\mathd s, x^2_s) . \]
  We have the estimates
  \begin{eqnarray*}
    \| v \|_{\alpha, 1} & \lesssim_{\alpha, \beta, T} & \| A^1 \|_{\alpha, 1 +
    \beta} (1 + \llbracket x^1 \rrbracket_{\tmop{Lip}} + \llbracket x^2
    \rrbracket_{\tmop{Lip}}) \lesssim \| A^1 \|_{\alpha, 1 + \beta} (1 + \|
    \partial_t A^1 \|_{\infty} + \| \partial_t A^2 \|_{\infty})\\
    \| \psi_t \|_{\alpha} & \lesssim_{\alpha, \beta, T} & \| A^1 - A^2
    \|_{\alpha, \beta} (1 + \llbracket x^2 \rrbracket_{\tmop{Lip}}) \lesssim
    \| A^1 - A^2 \|_{\alpha, \beta} (1 + \| \partial_t A^2 \|_{\infty})
  \end{eqnarray*}
  which, combined with Theorem~\ref{sec3.3 thm gronwall estimate YDE}, yield
  
  \begin{align*}
    \| e \|_{\alpha} & \leqslant \kappa_1 e^{\kappa_1 (1 + \| A^1
    \|^2_{\alpha, 1 + \beta}) (1 + \| \partial_t A^1 \|_{\infty}^2 + \|
    \partial_t A^2 \|_{\infty}^2)} (\| e_0 \|_V + \| A^1 - A^2 \|_{\alpha,
    \beta} (1 + \| \partial_t A^2 \|_{\infty}))\\
    & \leqslant \kappa_2 e^{\kappa_2 (1 + \| A^1 \|^2_{\alpha, 1 + \beta}) (1
    + \| \partial_t A^1 \|_{\infty}^2 + \| \partial_t A^2 \|_{\infty}^2)} (\|
    e_0 \|_V + \| A^1 - A^2 \|_{\alpha, \beta})
  \end{align*}
  
  for some $\kappa_2 = \kappa_2 (\alpha, \beta, T)$. In particular, $C$ can be
  taken of the form
  \begin{equation}
    C (\alpha, \beta, T, M) = \kappa_3 (\alpha, \beta, T) \exp (\kappa_3
    (\alpha, \beta, T) (1 + M^4)) . \label{sec3.4 constant cor1}
  \end{equation}
\end{proof}

\begin{corollary}
  \label{sec3.4 cor euler convergence}Given $A$ as in Theorem~\ref{sec3.4 thm
  wellposedness}, denote by $x^n$ the element of $C^{\alpha}_t V$ constructed
  by the $n$-step Euler approximation from Theorem~\ref{sec3.1 thm existence}
  and by $x$ the solution associated to $(x_0, A)$. Then there exists a
  constant $C = C (\alpha, \beta, T, \| A \|_{\alpha, 1 + \beta}, \|
  \partial_t A \|_{\infty})$ such that
  \[ \| x - x^n \|_{\alpha} \leqslant C n^{- \alpha} \quad \text{as } n
     \rightarrow \infty . \]
  A more explicit formula for $C$ is given by~{\eqref{sec3.4 constant cor2}}. 
\end{corollary}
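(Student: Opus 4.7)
The plan is to mimic Corollary~\ref{sec3.3 cor euler convergence}, but exploit the stronger time regularity provided by the hypothesis $\partial_t A \in C_b$ to upgrade the convergence rate from $n^{1-2\alpha}$ to $n^{-\alpha}$. The key mechanism is that, with both $x$ and the Euler iterates $x^n$ Lipschitz in time, one may apply the a priori bound~\eqref{sec3.1 euler a priori estimate2} from Remark~\ref{sec3.1 remark euler a priori estimates} at the exponents $\beta = \gamma = 1$ rather than $\beta = 1$, $\gamma = \alpha$.

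First I would establish that both the solution $x$ and the Euler approximants $x^n$ are uniformly Lipschitz in time. For $x$, Point~2 of Theorem~\ref{sec2 thm definition young integral} identifies the YDE with the classical ODE $\dot x_t = \partial_t A(t, x_t)$, whence $\llbracket x \rrbracket_{\mathrm{Lip}} \leq \|\partial_t A\|_\infty$. For $x^n$, recall from the proof of Theorem~\ref{sec3.1 thm existence} that its continuous extension satisfies $x^n_{s,t} = A_{s,t}(x^n_{t^n_k})$ whenever $[s,t] \subset [t^n_k, t^n_{k+1}]$; combining this with the pointwise bound $\|A_{s,t}(z)\|_V \leq \|\partial_t A\|_\infty |t - s|$ (immediate from $\partial_t A \in C_b$) and summing across sub-intervals via a telescoping argument gives $\llbracket x^n \rrbracket_{\mathrm{Lip}} \leq \|\partial_t A\|_\infty$ uniformly in $n$.

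Next I would specialise~\eqref{sec3.1 euler a priori estimate2} to $\beta = \gamma = 1$, which is admissible since $A \in C^\alpha_t C^{1+\beta}_V$ is in particular space-Lipschitz and since $\alpha + 1 > 1$ holds trivially. Inserting the uniform Lipschitz bound on $x^n$ just obtained yields
\begin{equation*}
\llbracket \psi^n \rrbracket_\alpha \leq \kappa_1\, n^{-\alpha}\, \|A\|_{\alpha, 1+\beta}\, \|\partial_t A\|_\infty
\end{equation*}
for some $\kappa_1 = \kappa_1(\alpha, \beta, T)$.

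The remainder follows the linearisation-plus-Gronwall pattern of Corollary~\ref{sec3.3 cor euler convergence}. Setting $e^n := x - x^n$, Corollary~\ref{sec2 corollary frechet} realises $e^n$ as the solution of the affine YDE $e^n_t = \int_0^t v^n_{\mathd s}(e^n_s) - \psi^n_t$, where $v^n_t := \int_0^t \int_0^1 DA(\mathd s, x^n_s + \lambda e^n_s)\,\mathd\lambda$; the same corollary together with the uniform Lipschitz bounds on $x$ and $x^n$ gives $\llbracket v^n \rrbracket_{\alpha, 1} \leq \kappa_2\, \|A\|_{\alpha, 1+\beta}(1 + \|\partial_t A\|_\infty)$ uniformly in $n$. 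Applying Theorem~\ref{sec3.3 thm gronwall estimate YDE} with driver $v^n$ and inhomogeneity $-\psi^n$ then produces
\begin{equation*}
\|x - x^n\|_\alpha \leq \kappa_3 \exp\bigl(\kappa_3\, \llbracket v^n \rrbracket_{\alpha, 1}^2\bigr)\, \llbracket \psi^n \rrbracket_\alpha = O(n^{-\alpha}),
\end{equation*}
with an explicit exponential constant of the same structure as in~\eqref{sec3.4 constant cor1}. The only step requiring any real care is the telescoping argument for the uniform Lipschitz bound on $x^n$; once that is in place, the $n^{-\alpha}$ rate is automatic from~\eqref{sec3.1 euler a priori estimate2}, and no genuine obstacle remains.
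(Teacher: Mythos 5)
Your proposal is correct and follows essentially the same route as the paper's proof: both specialise estimate~\eqref{sec3.1 euler a priori estimate2} to $\beta=\gamma=1$ using the uniform Lipschitz bound on the Euler iterates, then control $e^n = x - x^n$ via the linearisation of Corollary~\ref{sec2 corollary frechet} and the Gronwall-type bound of Theorem~\ref{sec3.3 thm gronwall estimate YDE}. The only cosmetic difference is that the paper packages the last step as ``going through the same proof as in Theorem~\ref{sec3.4 thm comparison principle}'' whereas you unroll that proof inline, and you make the telescoping argument for $\llbracket x^n\rrbracket_{\mathrm{Lip}}\le\|\partial_t A\|_\infty$ explicit where the paper leaves it implicit in Remark~\ref{sec3.1 remark euler a priori estimates}.
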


\begin{proof}
  By Theorem~\ref{sec3.1 thm existence}, $x^n$ satisfies the YDE
  \[ x^n = x_0 + \int_0^t A (\mathd s, x^n_s) + \psi^n_t = x_0 + \int_0^t A^n
     (\mathd s, x_s^n) \]
  where $A^n (t, z) : = A (t, z) + \psi^n_t$ and that by
  estimate~{\eqref{sec3.1 euler a priori estimate2}}, for the choice $\Delta =
  T$, $\beta = \gamma = 1$, we have
  
  \begin{align*}
    \llbracket \psi^n \rrbracket_{\alpha} & \lesssim_{\alpha, T} \| A
    \|_{\alpha, 1} \llbracket x^n \rrbracket_{\tmop{Lip}} n^{- \alpha}
    \lesssim \| A \|_{\alpha, 1} \| \partial_t A \|_{\infty} n^{- \alpha} .
  \end{align*}
  
  Defining $e^n \assign x - x^n$, by the basic estimates $\| A - A^n
  \|_{\alpha, \beta} \lesssim_T \llbracket \psi^n \rrbracket_{\alpha}$ and $\|
  \partial_t A^n \|_{\infty} \lesssim \| \partial_t A \|_{\infty}$, going
  through the same proof as in Theorem~\ref{sec3.4 thm comparison principle}
  we deduce that
  \[ \| e^n \|_{\alpha} \leqslant \kappa_1 e^{\kappa_1 (1 + \| A \|_{\alpha,
     1}^2) (1 + \| \partial_t A \|_{\infty}^2)}  \| A - A^n \|_{\alpha, \beta}
  \]
  and so finally that, for a suitable constant $\kappa_2 = \kappa_2 (\alpha,
  T)$, it holds
  \begin{equation}
    \| e^n \|_{\alpha} \leqslant \kappa_2 \exp (\kappa_2 (1 + \| A \|_{\alpha,
    1}^2) (1 + \| \partial_t A \|_{\infty}^2)) n^{- \alpha} . \label{sec3.4
    constant cor2}
  \end{equation}
\end{proof}

\subsection{Further variants}\label{sec3.5}

Several other kinds of differential equations involving a nonlinear Young
integral term can be studied. In this section we focus on two cases: nonlinear
YDEs involving a classical drift term and fractional YDEs.

\subsubsection{Mixed equations}

Let us consider now an equation of the form
\begin{equation}
  x_t = x_0 + \int_0^t F (s, x_s) \mathd s + \int_0^t A (\mathd s, x_s) .
  \label{sec3.5 mixed YDE}
\end{equation}
where $F : [0, T] \times V \rightarrow V$ is continuous function; the first
integral is meaningful as a classical one.

\begin{proposition}
  \label{sec3.5.1 baby result}Let $A \in C^{\alpha}_t C^{1 + \beta}_V$ with
  $\alpha (1 + \beta) > 1$, $F$ be bounded and globally Lipschitz, namely
  \[ \| F (t, y) \|_V \leqslant C_F, \qquad \| F (t, y) - F (t, z) \|_V
     \leqslant C_F \| y - z \|_V \qquad \text{for all } \, t \in [0, T], \, y,
     z \in V \]
  for some constant $C_F > 0$. Then global well-posedness holds
  for~{\eqref{sec3.5 mixed YDE}}.
\end{proposition}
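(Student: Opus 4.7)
The plan is to mimic the proof of Theorem~\ref{sec3 thm wellposedness}, combining a Banach fixed point argument on the nonlinear Young part with elementary estimates for the additional Lebesgue integral term. For positive parameters $M$, $\tau$ to be chosen, set
\[ E \assign \{ x \in C^{\alpha}([0, \tau]; V) : x(0) = x_0,\ \llbracket x \rrbracket_{\alpha} \leqslant M \}, \]
equipped with the complete metric $d(x, y) = \llbracket x - y \rrbracket_{\alpha}$, and define
\[ \mathcal{I}(x)_t \assign x_0 + \int_0^t F(s, x_s) \mathd s + \int_0^t A(\mathd s, x_s). \]

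To show that $\mathcal{I}$ maps $E$ into itself, I would treat the two integrals separately. The nonlinear Young integral is handled exactly as in Theorem~\ref{sec3 thm wellposedness}, using the sewing bound~\eqref{sec2 nonlinear integral estimate1}; this contributes a term of the form $\| A \|_{\alpha, 1}(1 + \tau^\alpha M)|t-s|^\alpha$. For the Lebesgue part, the global bound $\| F \|_\infty \leqslant C_F$ gives
\[ \left\| \int_s^t F(u, x_u) \mathd u \right\|_V \leqslant C_F | t - s | \leqslant C_F \tau^{1 - \alpha} | t - s |^{\alpha}, \]
which is $x$-uniform. Both contributions are controlled by $M$ provided $\tau$ is chosen small (depending only on $\| A \|_{\alpha, 1}$ and $C_F$) and $M$ is chosen large enough (say $M \geqslant 2(\| A \|_{\alpha, 1} + C_F)$).

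For the contraction property, I would again split into the two pieces. For the nonlinear Young part, Corollary~\ref{sec2 corollary frechet} applied to $A \in C^\alpha_t C^{1+\beta}_V$ yields, exactly as in the proof of Theorem~\ref{sec3 thm wellposedness}, an estimate of the shape
\[ \llbracket \int_0^\cdot A(\mathd s, x_s) - \int_0^\cdot A(\mathd s, y_s) \rrbracket_\alpha \leqslant \kappa_1 \| A \|_{\alpha, 1 + \beta}(1 + 2M) \tau^{\alpha} \llbracket x - y \rrbracket_\alpha. \]
For the Lebesgue part, the global Lipschitz bound on $F$ combined with $x(0) = y(0) = x_0$ gives $\| x - y \|_\infty \leqslant \tau^\alpha \llbracket x - y \rrbracket_\alpha$, hence
\[ \left\| \int_s^t [F(u, x_u) - F(u, y_u)] \mathd u \right\|_V \leqslant C_F \tau^{1 - \alpha} | t - s |^{\alpha} \| x - y \|_\infty \leqslant C_F \tau |t-s|^\alpha \llbracket x - y \rrbracket_\alpha. \]
Choosing $\tau$ small enough that both prefactors are less than $1/4$ yields $d(\mathcal{I}(x), \mathcal{I}(y)) \leqslant \frac{1}{2} d(x, y)$, and Banach's fixed point theorem provides a unique solution on $[0, \tau]$.

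The crucial point is that the chosen $\tau$ depends only on $C_F$ and $\| A \|_{\alpha, 1 + \beta}$, and \emph{not} on $x_0$ (since both $F$ and the space-derivative of $A$ are globally bounded in $x$). The same argument therefore produces a unique solution on $[s, s + \tau] \cap [0, T]$ starting from any $x_s \in V$ at any $s \in [0, T]$, and the usual concatenation gives a unique global solution on $[0, T]$. The main technical subtlety, if any, is the bookkeeping to ensure this uniformity of $\tau$; this is exactly where the \emph{global} Lipschitz assumption on $F$ (as opposed to a merely local one) enters — without it, one would only obtain a local statement in the spirit of Corollary~\ref{sec3.2 cor local uniqueness}, and would need additional a priori bounds (analogous to those in Section~\ref{sec3.2}) to rule out blow-up.
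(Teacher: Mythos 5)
Your proposal is correct and follows essentially the same route as the paper's proof: the same complete metric space $E$, the same map $\mathcal{I}$, the same splitting of the Lebesgue and nonlinear Young contributions, the same use of Corollary~\ref{sec2 corollary frechet} to linearize the Young part for the contraction estimate, and the same observation that $\tau$ depends only on $C_F$ and $\|A\|_{\alpha,1+\beta}$ (not on $x_0$), which licenses the iterative extension to $[0,T]$. Your closing remark correctly identifies where the global boundedness and global Lipschitz hypothesis on $F$ is used, and why one would otherwise need the a priori machinery of Section~\ref{sec3.2}.
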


\begin{proof}
  For simplicity we will use the notation $\| A \| = \| A \|_{\alpha, 1 +
  \beta}$; the proof is analogue to that of Theorem~\ref{sec3 thm
  wellposedness}. Let $M$, $\tau$ be positive parameters to be fixed later and
  define as usual
  \[ E = \left\{ x \in C^{\alpha} ([0, \tau] ; V) : \, x (0) = x_0, \,
     \llbracket x \rrbracket_{\alpha} \leqslant M \right\} . \]
  A path $x$ solves~{\eqref{sec3.5 mixed YDE}} if and only if it belongs to
  $E$ and is a fixed point for the map
  \[ x \mapsto \mathcal{I} (x)_{\cdot} = x_0 + \int_0^{\cdot} F (s, x_s) +
     \int_0^{\cdot} A (\mathd s, x_s) . \]
  We have the estimates
  
  \begin{align*}
    \| \mathcal{I} (x)_{s, t} \|_V & \leqslant \int_s^t \| F (r, x_r) \|_V
    \mathd r + \| A_{s, t} (x_s) \|_V + \kappa_1  | t - s |^{2 \alpha} \| A \|
    \llbracket x \rrbracket_{\alpha}\\
    & \leqslant | t - s | C_F + \| A_{s, t} (x_s) - A_{s, t} (x_0) \|_V + \|
    A_{s, t} (x_0) \|_V + \kappa_1  | t - s |^{2 \alpha} \| A \| \llbracket x
    \rrbracket_{\alpha}\\
    & \leqslant | t - s |^{\alpha}  [C_F \tau^{1 - \alpha} + \| A \|
    \tau^{\alpha} M + \| A \| + \kappa_1 \| A \| \tau^{\alpha} M],
  \end{align*}
  
  which imply
  
  \begin{align*}
    \llbracket \mathcal{I} (x) \rrbracket_{\alpha} & \leqslant C_F \tau^{1 -
    \alpha} + \| A \| + [\tau + \| A \| (1 + \kappa_1) \tau^{\alpha}] M.
  \end{align*}
  
  In order for $\mathcal{I}$ to map $E$ into itself, it suffices to choose
  $\tau$ and $M$ such that
  \[ \tau \leqslant 1, \quad \tau + \| A \| (1 + \kappa_1) \tau^{\alpha}
     \leqslant 1 / 2, \quad M \geqslant 2 (C_F + \| A \|) . \]
  Next we check contractivity of $\mathcal{I}$; given $x, y \in E$, it holds
  
  \begin{align*}
    \| \mathcal{I} (x)_{s, t} - \mathcal{I} (y)_{s, t} \|_V & \leqslant
    \int_s^t \| F (r, x_r) - F (r, y_r) \|_V \, \mathd r + \left\| \int_s^t
    v_{\mathd r} (x_r - y_r) \right\|_V\\
    & \leqslant C_F | t - s | \tau^{\alpha} \llbracket x - y
    \rrbracket_{\alpha} + \| v_{s, t} (x_s - y_s) \|_V + \kappa_2  | t - s
    |^{2 \alpha} \llbracket v \rrbracket_{\alpha} \llbracket x - y
    \rrbracket_{\alpha}\\
    & \leqslant \kappa_3 \tau^{\alpha} [C_F + \| A \| (1 + \llbracket x
    \rrbracket_{\alpha} + \llbracket y \rrbracket_{\alpha})]  \llbracket x - y
    \rrbracket_{\alpha} | t - s |^{\alpha}
  \end{align*}
  
  which implies
  \[ \llbracket \mathcal{I} (x) - \mathcal{I} (y) \rrbracket_{\alpha}
     \leqslant \kappa_3 \tau^{\alpha} [C_F + \| A \| (1 + 2 M)] \]
  thus choosing $\tau$ small enough we deduce contractivity. Therefore
  existence and uniqueness of solutions holds on the interval $[0, \tau]$; as
  the choice of $\tau$ does not depend on $x_0$, we can iterate the reasoning
  to cover the whole interval $[0, T]$.
\end{proof}

\begin{theorem}
  \label{sec3.5.1 main result}Let $A \in C^{\alpha}_t C^{1 + \beta}_{V,
  \tmop{loc}}$ with $\alpha (1 + \beta) > 1$ and $F$ be a continuous locally
  Lipschitz function, in the sense that for any $R > 0$ there exist a constant
  $C_R$ such that
  \[ \| F (t, y) - F (t, z) \|_V \leqslant C_R  \| y - z \|_V \quad \text{for
     all } \, t \in [0, T]  \text{ and } y, z \in V \text{ such that } \| y
     \|_V, \| z \|_V \leqslant R. \]
  Then for any $x_0 \in V$ there exists a unique maximal solution $x$
  to~{\eqref{sec3.5 mixed YDE}}, defined on $[0, T^{\ast}) \subset [0, T]$
  such that either $T = T^{\ast}$ or
  \[ \lim_{t \rightarrow T^{\ast}} \| x_t \|_V = + \infty . \]
  If in addition $A \in C^{\alpha}_t C^{\beta, \lambda}_V$ with $\beta +
  \lambda \leqslant 1$ and $F$ has at most linear growth, i.e. there exists
  $C_F > 0$ s.t.
  \[ \| F (t, z) \|_V \leqslant C_F (1 + \| z \|_V) \quad \forall \, (t, z)
     \in [0, T] \times V, \]
  then global wellposedness holds. Moreover in this case there exists $C = C
  (\alpha, \beta, T)$ such that, setting $\theta = 1 + \frac{1 -
  \alpha}{\alpha \beta}$, any solution to~{\eqref{sec3.5 mixed YDE}} satisfies
  the a priori estimate
  \begin{equation}
    \| x \|_{\alpha} \leqslant C \exp (C (C^{\theta}_F + \| A \|_{\alpha,
    \beta, \lambda}^{\theta}))  (1 + \| x_0 \|_V) . \label{sec3.5 a priori
    estimate 1}
  \end{equation}
\end{theorem}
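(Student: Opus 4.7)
The plan is to handle the three assertions in sequence, each reducing to a result already established in the paper: local well-posedness by truncation and Proposition~\ref{sec3.5.1 baby result}; the blow-up alternative by the standard iterative extension argument; and global existence by first proving the a priori bound~\eqref{sec3.5 a priori estimate 1}, which rules out blow-up.

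First, for local existence and uniqueness, I would fix $x_0 \in V$ and for each $R > \|x_0\|_V$ construct truncations $A^R \in C^{\alpha}_t C^{1+\beta}_V$ and $F^R \in C([0,T]\times V;V)$ that are bounded and globally Lipschitz and coincide with $A$, $F$ on the ball $B_R \subset V$ (vanishing outside $B_{2R}$, say). Proposition~\ref{sec3.5.1 baby result} applied to $(x_0,A^R,F^R)$ produces a unique global solution $x^R$. Setting $\tau^R \assign \inf\{t\in[0,T]:\|x^R_t\|_V\geq R\}$, the restriction of $x^R$ to $[0,\tau^R]$ is a solution of~\eqref{sec3.5 mixed YDE}; conversely any candidate solution of the original equation stays in $B_R$ on a small interval and therefore solves the truncated equation there, where it must agree with $x^R$ by the uniqueness given by the baby result. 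The usual gluing/maximality procedure over $R\to\infty$ then produces the maximal solution on an interval $[0,T^\ast)$.

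Second, the blow-up alternative follows by contradiction exactly as in the proof of Corollary~\ref{sec3.1 cor local existence}. The length of the existence interval produced by the localization procedure around an initial datum $x_s$ depends only on $\|x_s\|_V$, the local norms $\|A\|_{\alpha,1+\beta,R}$, and the local Lipschitz constant $C_R$ of $F$; in particular, if $T^\ast < T$ and $\liminf_{t\uparrow T^\ast}\|x_t\|_V\le M$, then choosing $t_n\uparrow T^\ast$ with $\|x_{t_n}\|_V\le M$ and restarting at $(t_n,x_{t_n})$ yields a uniform extension length $\varepsilon=\varepsilon(M)>0$, contradicting the maximality of $T^\ast$.

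Third, for the a priori estimate I would adapt the proof of Proposition~\ref{sec3.3 proposition bounds growth condition} to the presence of the extra drift term. On a sub-interval $[s,t]\subset[0,T]$ of length at most $\Delta$ and for $[u,r]\subset[s,t]$, the Young part is estimated exactly as there, producing a bound of the form $\|A\|_{\alpha,\beta,\lambda}|r-u|^\alpha[1+\|x_s\|_V+|t-s|^{\alpha\beta}(1+\|x\|^{\lambda}_{\infty;s,t})\llbracket x\rrbracket^\beta_{\alpha;s,t}]$, while the classical drift contributes
\[
\Big\|\int_u^r F(v,x_v)\,\mathd v\Big\|_V \leq C_F |r-u|(1+\|x\|_{\infty;s,t}) \lesssim C_F \Delta^{1-\alpha}|r-u|^\alpha(1+\|x_s\|_V+\Delta^\alpha\llbracket x\rrbracket_{\alpha;s,t}).
\]
After dividing by $|r-u|^\alpha$, taking suprema, applying $a^\beta\leq 1+a$ and $a^\lambda b^\beta\leq a^{\beta+\lambda}+b^{\beta+\lambda}$, and choosing $\Delta$ so that both $\kappa_1\|A\|_{\alpha,\beta,\lambda}\Delta^{\alpha\beta}\leq 1/2$ and $\kappa_2 C_F\Delta\leq 1/2$, the seminorms on the right are absorbed and one obtains $\llbracket x\rrbracket_{\alpha;s,t}\lesssim (C_F+\|A\|_{\alpha,\beta,\lambda})(1+\|x_s\|_V)$. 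Iterating this bound across the $N\sim T\Delta^{-1}$ consecutive intervals of length $\Delta$ as in Proposition~\ref{sec3.3 proposition bounds growth condition} and using $1+x\le e^x$, one gets an exponential control on $\|x\|_\infty$; feeding it back into the Hölder estimate and recalling $\theta=1+(1-\alpha)/(\alpha\beta)$ yields~\eqref{sec3.5 a priori estimate 1}. The a priori bound then excludes the blow-up alternative and gives global well-posedness.

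The main obstacle is calibrating $\Delta$ in the a priori step so that both the classical drift contribution and the Young contribution can be absorbed simultaneously, while still producing the expected exponent $\theta$ in the final exponential. The calculation must keep the $L^1$-in-time scale of $F$ separate from the $|r-u|^{\alpha(1+\beta)}$ scale of the Young integral, so that $\Delta^{-1}$ ends up of order $(C_F+\|A\|_{\alpha,\beta,\lambda})^{1/(\alpha\beta)}$ and the number of iterations $N$ translates cleanly into the exponent $C_F^\theta+\|A\|^\theta_{\alpha,\beta,\lambda}$ rather than into a product.
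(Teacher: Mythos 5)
Your outline of the localisation/truncation argument for local well-posedness and the blow-up alternative is exactly what the paper has in mind (it explicitly omits this step as standard), and your reduction of global well-posedness to the a priori bound is also the paper's strategy. The difference — and the gap — is in the calibration of $\Delta$ for the a priori estimate, the very point you flag as the ``main obstacle.''

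As written, your two absorption conditions $\kappa_1\|A\|_{\alpha,\beta,\lambda}\Delta^{\alpha\beta}\leqslant 1/2$ and $\kappa_2 C_F\Delta\leqslant 1/2$ give $\Delta^{-1}\sim\max\{\|A\|_{\alpha,\beta,\lambda}^{1/(\alpha\beta)},\,C_F\}$, \emph{not} $(C_F+\|A\|_{\alpha,\beta,\lambda})^{1/(\alpha\beta)}$ as you claim. From there the iteration produces an exponent of the schematic form $T(\Delta^{\alpha-1}\|A\|_{\alpha,\beta,\lambda}+C_F)$, and showing that this is $\lesssim 1 + C_F^{\theta}+\|A\|_{\alpha,\beta,\lambda}^{\theta}$ requires a case distinction (which of $\|A\|^{1/(\alpha\beta)}$ and $C_F$ dominates) plus the inequalities $C_F\lesssim 1+C_F^{\theta}$ (using $\theta>1$) and, in the other regime, $C_F^{1-\alpha}\|A\|\leqslant C_F^{1-\alpha+\alpha\beta}\leqslant C_F^{\theta}$ (using $\|A\|\leqslant C_F^{\alpha\beta}$ and $1-\alpha+\alpha\beta\leqslant\theta$). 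This can be carried through, but it is not the ``clean'' translation you assert, and the mismatch between your stated $\Delta^{-1}$ and what your conditions actually yield is a real slip.

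The paper resolves your obstacle by \emph{not} keeping the two time scales separate. After the step
\[
\left\|\int_u^r F(a,x_a)\,\mathd a\right\|_V \lesssim |r-u|^{\alpha}\,C_F\bigl[1+\|x_s\|_V+|t-s|\,\llbracket x\rrbracket_{\alpha;s,t}\bigr],
\]
it uses $|t-s|\lesssim_T |t-s|^{\alpha\beta}$ to put the $F$-contribution on the same $|t-s|^{\alpha\beta}$ scale as the Young term. One then lands on
\[
\llbracket x\rrbracket_{\alpha;s,t}\leqslant\tfrac{\kappa_1}{2}(C_F+\|A\|_{\alpha,\beta,\lambda})(1+\|x_s\|_V)+\tfrac{\kappa_1}{2}(C_F+\|A\|_{\alpha,\beta,\lambda})|t-s|^{\alpha\beta}\llbracket x\rrbracket_{\alpha;s,t},
\]
and the entire remainder of Proposition~\ref{sec3.3 proposition bounds growth condition} applies verbatim with $\|A\|_{\alpha,\beta,\lambda}$ replaced by $C_F+\|A\|_{\alpha,\beta,\lambda}$. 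The single absorption condition then reads $\kappa_1(C_F+\|A\|_{\alpha,\beta,\lambda})\Delta^{\alpha\beta}\sim 1$, so $\Delta^{-1}\sim(C_F+\|A\|_{\alpha,\beta,\lambda})^{1/(\alpha\beta)}$ holds by construction, and the exponent comes out directly as $(C_F+\|A\|_{\alpha,\beta,\lambda})^{\theta}\lesssim C_F^{\theta}+\|A\|_{\alpha,\beta,\lambda}^{\theta}$. Replacing your two-condition calibration with this one-line trick closes the gap and gives the cleanest path to~\eqref{sec3.5 a priori estimate 1}.
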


\begin{proof}
  The first part of the statement, regarding local wellposedness and the
  blow-up alternative, follows from the usual localisation arguments, so we
  omit its proof.
  
  The proof of a priori estimate~{\eqref{sec3.5 a priori estimate 1}} is
  analogue to that of Proposition~\ref{sec3.3 proposition bounds growth
  condition}, so we will mostly sketch it; as before $\| A \| = \| A
  \|_{\alpha, \beta, \lambda}$ for simplicity. Let $x$ be a solution
  to~{\eqref{sec3.5 mixed YDE}} defined on $[0, T^{\ast})$, then for any $[r,
  u] \subset [s, t] \subset [0, T^{\ast})$ it holds
  
  \begin{align*}
    \left\| \int_u^r F (a, x_a) \mathd a \right\|_V & \leqslant C_F | r - u |
    + C_F \int_u^r \| x_a \| \mathd a\\
    & \leqslant | r - u | C_F (1 + \| x_s \|_V) + | r - u | | t - s
    |^{\alpha} C_F \llbracket x \rrbracket_{\alpha ; s, t}\\
    & \lesssim | r - u |^{\alpha} C_F [1 + \| x_s \|_V + | t - s | \llbracket
    x \rrbracket_{\alpha ; s, t}] .
  \end{align*}
  
  Together with the estimates from the proof of Proposition~\ref{sec3.3
  proposition bounds growth condition} and the fact that $| t - s | \lesssim |
  t - s |^{\alpha \beta}$, this implies the existence of $\kappa_1 = \kappa_1
  (\alpha, \beta, T)$ such that any solution $x$ to~{\eqref{sec3.5 mixed YDE}}
  satisfies
  \[ \llbracket x \rrbracket_{\alpha ; s, t} \leqslant \frac{\kappa_1}{2} (C_F
     + \| A \|) (1 + \| x_s \|_V) + \frac{\kappa_1}{2} (C_F + \| A \|) | t - s
     |^{\alpha \beta} \llbracket x \rrbracket_{\alpha ; s, t} . \]
  The rest of the proof is identical, up to replacing $\| A \|$ with $C_F + \|
  A \|$ in all the passages. Specifically, if $T$ is such that $\kappa_1 (C_F
  + \| A \|) T^{\alpha \beta} < 2$, then we obtain a global estimate by
  choosing $s = 0$, $t = T$, which shows that $T^{\ast} = T$ and gives the
  conclusion in this case. Otherwise, taking $\Delta < T$ such that $\kappa_1
  (C_F + \| A \|) \Delta^{\alpha \beta} = 1$ and defining $J_n$ as before, we
  obtain the recurrent estimate
  \[ J_n \leqslant [1 + \kappa_1 \Delta^{\alpha} (C_F + \| A \|)] J_{n - 1} \]
  and going through the same reasoning the conclusion follows.
\end{proof}

\subsubsection{Fractional Young equations}

We restrict in this subsection to the finite dimensional case $V
=\mathbb{R}^d$ for some $d \in \mathbb{N}$; as usual we work on a finite time
interval $[0, T]$. We are interested in studying a fractional type of equation
of the form
\begin{equation}
  D_{0 +}^{\delta} x_t = A (\mathd t, x_t) \quad \forall \, t \in [0, T]
  \label{sec3.5 prototype YDE}
\end{equation}
for a suitable parameter $\delta \in (0, 1)$. Here $D^{\delta}_{0 +}$ denotes
a Riemann--Liouville type of fractional derivative on $[0, T]$; for more
details on fractional derivatives and fractional calculus we refer the reader
to~{\cite{samko}}. In the case $\delta = 1$, formally $D^{\delta} x_s = \mathd
x_s$ and we recover the class of YDEs studied so far.

In order to study~{\eqref{sec3.5 prototype YDE}}, it is more convenient to
write it in integral form, using the fact that $D^{\delta}_{0 +}$ is the
inverse operator of the fractional integral $I^{\delta}_{0 +}$ given by
\[ (I^{\delta}_{0 +} f)_t = \frac{1}{\Gamma (\delta)} \int_0^{\delta} (t -
   s)^{\delta - 1} f_s \mathd s \]
(being interpreted componentwise if $f : [0, T] \rightarrow \mathbb{R}^d$).
From now on we will for simplicity drop the constant $1 / \Gamma (\delta)$,
which can be incorporated in the drift $A$. We need the following lemma.

\begin{lemma}
  \label{sec3.5 lemma fractional}For $\delta \in (0, 1)$, consider the
  functional $\Xi$ defined for smooth $f$ by
  \[ \Xi [f]_t : = (I^{\delta}_{0 +} \dot{f})_t = \int_0^t (t - s)^{\delta -
     1} \dot{f}_s \mathd s. \]
  For any $\alpha \in (0, 1)$ such that $\alpha + \delta > 1$ and any
  $\varepsilon > 0$, $\Xi$ extends uniquely to a continuous linear map from
  $C^{\alpha} ([0, T] ; \mathbb{R}^d)$ to $C^{\alpha + \delta - 1 -
  \varepsilon} ([0, T] ; \mathbb{R}^d)$; in particular, there exists $C = C
  (\alpha, \delta, \varepsilon, T)$, which will be denoted by $\| \Xi \|$,
  such that
  \begin{equation}
    \| \Xi [f] \|_{\alpha + \delta - 1 - \varepsilon} \leqslant \| \Xi \| 
    \llbracket f \rrbracket_{\alpha} \quad \text{for all } f \in C^{\alpha}
    ([0, T] ; \mathbb{R}^d) . \label{sec3.5 continuity fractional}
  \end{equation}
\end{lemma}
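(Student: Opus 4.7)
The plan is to start from smooth $f$, derive an integration-by-parts identity that rewrites $\Xi[f]$ purely in terms of $f$ (not $\dot f$), and adopt it as the definition on all of $C^\alpha$; the continuity estimate then follows by a careful decomposition. For smooth $f$, integrating $\int_0^t (t-s)^{\delta-1}\,\mathd(f_s - f_t)$ by parts yields
\begin{equation*}
\Xi[f]_t = t^{\delta-1}(f_t - f_0) + (1-\delta)\int_0^t (t-s)^{\delta-2}(f_t - f_s)\,\mathd s,
\end{equation*}
with the boundary term at $s=t$ vanishing exactly because $\alpha + \delta > 1$ forces $(t-s)^{\delta-1}|f_t-f_s| \lesssim (t-s)^{\alpha+\delta-1} \to 0$. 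The right-hand side is well-defined for any $f \in C^\alpha$, so I would take it as the definition of the extension, from which linearity is immediate.

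The supremum bound is direct: using $|f_t-f_s| \leqslant \llbracket f \rrbracket_\alpha |t-s|^\alpha$, both terms are dominated by a constant times $t^{\alpha+\delta-1}\llbracket f\rrbracket_{\alpha}$, giving $\|\Xi[f]\|_\infty \lesssim T^{\alpha+\delta-1}\llbracket f\rrbracket_{\alpha}$.

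For the H\"older seminorm, given $0 \leqslant u < t \leqslant T$ with $h = t-u$, I would decompose
\begin{equation*}
\Xi[f]_t - \Xi[f]_u = t^{\delta-1}(f_t-f_u) + (t^{\delta-1}-u^{\delta-1})(f_u - f_0) + (1-\delta)[I(t) - I(u)],
\end{equation*}
where $I(\cdot)$ denotes the integral term. The boundary piece is handled by splitting into the cases $u \geqslant h$ (mean value giving $|t^{\delta-1}-u^{\delta-1}| \lesssim h\, u^{\delta-2} \leqslant h^{\delta-1}$) and $u < h$ (both exponents controlled by $u^{\delta-1}$ while $|f_u-f_0| \lesssim u^\alpha \leqslant h^\alpha$), each yielding the desired $h^{\alpha+\delta-1}$ bound. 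For the integral part, writing
\begin{equation*}
I(t)-I(u) = \int_u^t (t-s)^{\delta-2}(f_t-f_s)\,\mathd s + \int_0^u\Bigl\{(t-s)^{\delta-2}(f_t-f_u) + [(t-s)^{\delta-2}-(u-s)^{\delta-2}](f_u-f_s)\Bigr\}\,\mathd s,
\end{equation*}
the first integral is controlled via the change of variable $r=t-s$ and the bound $|f_t-f_s| \lesssim (t-s)^\alpha$. For the second, a further split at $s=(u-h)\vee 0$ is essential: on the far region $u-s \geqslant h$ the mean-value estimate $|(t-s)^{\delta-2}-(u-s)^{\delta-2}| \lesssim h(u-s)^{\delta-3}$ combines with $|f_u-f_s|\lesssim (u-s)^\alpha$ to give a convergent integral of size $h^{\alpha+\delta-1}$, while on the near region $u-s < h$ one uses the triangle inequality to avoid the non-integrable kernel $(u-s)^{\delta-3}$ and instead exploits that $|f_u-f_s|\lesssim (u-s)^\alpha$ tames the $(u-s)^{\delta-2}$ singularity.

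The main obstacle is the balancing of singular kernels against the available H\"older regularity of $f$ in each sub-region, so that the $(f_\cdot - f_\cdot)$ factor is not wasted and one recovers exactly $h^{\alpha+\delta-1}$. The $\varepsilon$ in the statement serves as a safety margin and is in fact not strictly necessary; uniqueness of the continuous extension follows because smooth functions are dense in $C^\alpha$ with respect to any weaker H\"older topology, and $\Xi$ takes values in the weaker space $C^{\alpha+\delta-1-\varepsilon}$.
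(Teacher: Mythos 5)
Your argument is correct and takes a genuinely different route from the paper's. The paper identifies $\Xi$ with the Riemann--Liouville derivative $D^{1-\delta}_{0+}$ and invokes the fractional-calculus machinery of~\cite{samko}: for $1-\delta < \gamma < \alpha$ one writes $f = I^{\gamma}_{0+}\tilde f$ with $\|\tilde f\|_\infty \lesssim \|f\|_\alpha$, so by the semigroup property $\Xi[f] = I^{\gamma+\delta-1}_{0+}\tilde f$, which lies in $C^{\gamma+\delta-1-\varepsilon}_t$ by the standard embeddings; the extension to general $f$ then proceeds by mollification in the weaker $C^{\alpha-\varepsilon'}_t$ topology. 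You instead derive the Marchaud-type representation of $D^{1-\delta}_{0+}$ by integration by parts and estimate the two resulting singular kernels directly, splitting at the scale $h=t-u$. This is more elementary and self-contained, and, as you observe, it actually delivers the endpoint mapping $\Xi : C^{\alpha}_t \to C^{\alpha+\delta-1}_t$ with no $\varepsilon$ loss --- the $\varepsilon$ in the paper is an artefact of the chain of embeddings used there, not of the operator itself. The price is a lengthier kernel bookkeeping; the paper's proof is shorter once the abstract machinery is taken as given.

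One presentational slip is worth repairing. In the boundary piece for $u \geqslant h$ you reduce $|t^{\delta-1}-u^{\delta-1}| \lesssim h\,u^{\delta-2} \leqslant h^{\delta-1}$ before bringing in $|f_u-f_0|$; but $h^{\delta-1}\cdot u^{\alpha}$ is not bounded by $h^{\alpha+\delta-1}$ when $u \gg h$. The correct order is to keep $h\,u^{\delta-2}$, multiply by $|f_u-f_0| \lesssim u^{\alpha}\llbracket f\rrbracket_\alpha$ first, and only then use $u\geqslant h$ together with $\alpha+\delta-2<0$ to conclude $h\,u^{\alpha+\delta-2} \leqslant h^{\alpha+\delta-1}$. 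This is plainly what you intended, but as written the intermediate reduction throws away the factor needed to close the estimate.
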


\begin{proof}
  Up to multiplicative constant, $\Xi = I^{\alpha}_{0 +} D$. Recall that
  fractional integrals and fractional derivatives, on their domain of
  definition, satisfy the following properties, for $\alpha, \beta, \alpha +
  \beta \in [0, 1]$:
  \begin{enumerateroman}
    \item $I^{\alpha}_{0 +} \circ I^{\beta}_{0 +} = I^{\alpha + \beta}_{0 +}$,
    $I^0_{0 +} = \tmop{Id}$, similarly for $D^{\alpha}_{0 +}$;
    
    \item $I^{\alpha}_{0 +} \circ D^{\alpha}_{0 +} = D^{\alpha}_{0 +} \circ
    I^{\alpha}_{0 +} = \tmop{Id}$, $D^1_{0 +} = D$.
  \end{enumerateroman}
  Let $f$ be a smooth function, then $\Xi [f] = I^{\delta}_{0 +} D f = D^{1 -
  \delta}_{0 +} f$; moreover for any $\gamma < \alpha$, we can write $f$ as $f
  = I^{\gamma}_{0 +} \tilde{f}$ with $\| \tilde{f} \|_{\infty} \lesssim \| f
  \|_{\alpha}$; choosing $\gamma > 1 - \delta$, we obtain $\Xi [f] = I^{\gamma
  + \delta - 1}_{0 +} \tilde{f}$ and so overall $\Xi [f] \in I^{\gamma +
  \delta - 1}_{0 +} (L^{\infty}_t) \hookrightarrow C^{\gamma + \delta - 1 -
  \varepsilon}_t$ with
  \[ \| \Xi [f] \|_{\gamma + \delta - 1 - \varepsilon} \lesssim \| I^{\gamma +
     \delta - 1}_{0 +} \tilde{f} \|_{I^{\gamma + \delta - 1}_{0 +}
     (L^{\infty}_t)} \lesssim \| \tilde{f} \|_{\infty} \lesssim \| f
     \|_{\alpha} . \]
  The conclusion for general $f$ follows from an approximation procedure.
  Indeed, since all inequalities are strict, we can replace $\alpha$ with
  $\alpha - \varepsilon$ and use the fact that functions in $C^{\alpha}_t$ can
  be approximated by smooth functions in the $C^{\alpha -
  \varepsilon}_t$-norm.
  
  The fact that in~{\eqref{sec3.5 continuity fractional}} only the seminorm
  $\llbracket f \rrbracket$ appears is a consequence of the fact that by
  definition $\Xi [1] = 0$ and so we can always shift $f$ in such a way that
  $f_0 = 0$. 
\end{proof}

\begin{remark}
  \label{sec3.5 remark}Let us point out two properties of the operator $\Xi$.
  The first one is that, if $f \equiv g$ on $[0, \tau]$ with $\tau \leqslant
  T$, the same holds for $\Xi [f] \equiv \Xi [g]$; in particular, since we can
  always extend $f \in C^{\alpha} ([0, \tau] ; \mathbb{R}^d)$ to $C^{\alpha}
  ([0, T] ; \mathbb{R}^d)$ by setting $f_t = f_{\tau}$ for all $t \geqslant
  \tau$, we can consider $\Xi$ as an operator from $C^{\alpha} ([0, \tau] ;
  \mathbb{R}^d)$ to $C^{\alpha + \delta - 1 - \varepsilon} ([0, \tau] ;
  \mathbb{R}^d)$. As long as $\tau \leqslant T$, the operator norm of this
  restricted functional is still controlled by $\| \Xi \|$.
  
  The second one is that if $h \equiv 0$ on $[0, \tau]$, then $\Xi [h]_{\cdot
  + \tau} = \Xi [h_{\cdot + \tau}]$. Indeed for $h$ smooth it holds
  
  \begin{align*}
    \Xi [h]_{t + \tau} & = \int_0^{t + \tau} (t + \tau - s)^{\delta - 1} 
    \dot{h}_s \mathd s = \int_{\tau}^{t + \tau} (t + \tau - s)^{\delta - 1}
    \dot{h}_s \mathd s\\
    & = \int_0^t (t - s)^{\delta - 1} \dot{h}_{s + \tau} \mathd s = \Xi
    [h_{\cdot + \tau}]_t .
  \end{align*}
  
  The general case follows from an approximation procedure.
\end{remark}

Thanks to Lemma~\ref{sec3.5 lemma fractional} we can give a proper meaning to
the fractional YDE.

\begin{definition}
  \label{sec3.5 defn solution}We say that $x$ is a solution to~{\eqref{sec3.5
  prototype YDE}} if $\int_0^{\cdot} A (\mathd s, x_s)$ is well defined as a
  nonlinear Young integral in $C^{\alpha}_t$ for some $\alpha > 1 - \delta$
  and $x$ satisfies the identity
  \[ x_{\cdot} = x_0 + \Xi \left[ \int_0^{\cdot} A (\mathd s, x_s) \right] .
  \]
\end{definition}

\begin{proposition}
  \label{sec3.5 prop existence}Let $A \in C^{\alpha}_t C^{\beta}_x$ with
  $\alpha, \beta \in (0, 1)$ satisfying
  \begin{equation}
    \alpha + \delta - 1 > \frac{1 - \alpha}{\beta} . \label{sec3.5 condition
    coefficients}
  \end{equation}
  Then for any $x_0 \in \mathbb{R}^d$ and any $\gamma < \alpha + \delta - 1$
  there exists a solution $x \in C^{\gamma}_t$ to {\eqref{sec3.5 prototype
  YDE}}, in the sense of Definition~\ref{sec3.5 defn solution}.
\end{proposition}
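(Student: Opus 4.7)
The plan is to invoke the Schauder fixed point theorem on a suitable closed convex subset of $C^\gamma ([0,T];\mathbb{R}^d)$, exploiting the finite dimensionality to obtain compactness from a regularity gain. First I fix an exponent $\gamma$ with $(1-\alpha)/\beta < \gamma < \alpha + \delta - 1$; such $\gamma$ exists precisely by hypothesis~\eqref{sec3.5 condition coefficients}. Then I pick $\varepsilon > 0$ small enough that $\gamma < \alpha + \delta - 1 - \varepsilon$, so that Lemma~\ref{sec3.5 lemma fractional} will deliver a regularity strictly better than $\gamma$. The natural candidate map is
\[
\mathcal{I}(x)_t \assign x_0 + \Xi\!\left[\int_0^{\cdot} A(\mathd s, x_s)\right]_t,
\]
which is well defined on $C^\gamma_t$ since $\alpha + \beta\gamma > 1$ (Theorem~\ref{sec2 thm definition young integral}), produces a nonlinear Young integral in $C^\alpha_t$, and thus $\mathcal{I}(x) - x_0 \in C^{\alpha+\delta-1-\varepsilon}_t$ by Lemma~\ref{sec3.5 lemma fractional}; note also $\mathcal{I}(x)_0 = x_0$ since $\Xi$ vanishes at $0$.

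The second step is an invariance estimate. Combining $\|A_{s,t}(x_s)\|\le \|A\|_{\alpha,\beta} |t-s|^\alpha$ with~\eqref{sec2 nonlinear integral estimate1} yields
\[
\left\llbracket \int_0^{\cdot} A(\mathd s, x_s)\right\rrbracket_\alpha \leqslant \kappa_1 \|A\|_{\alpha,\beta}\bigl(1 + T^{\beta\gamma}\llbracket x\rrbracket_\gamma^\beta\bigr),
\]
and then~\eqref{sec3.5 continuity fractional} together with the embedding $\llbracket f \rrbracket_\gamma \leqslant T^{\alpha+\delta-1-\varepsilon-\gamma}\llbracket f\rrbracket_{\alpha+\delta-1-\varepsilon}$ gives
\[
\llbracket \mathcal{I}(x)\rrbracket_\gamma \leqslant \kappa_2 \bigl(1 + \llbracket x\rrbracket_\gamma^\beta\bigr)
\]
for a constant $\kappa_2$ depending on $\alpha,\beta,\delta,T,\varepsilon,\|A\|_{\alpha,\beta}$. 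Since $\beta < 1$, the function $M \mapsto \kappa_2(1+M^\beta)$ lies below the identity for $M$ large enough, hence we may choose $M>0$ with $\kappa_2(1+M^\beta)\leqslant M$ and set
\[
K \assign \bigl\{\, x \in C^\gamma([0,T];\mathbb{R}^d) \,:\, x_0 \text{ fixed},\ \llbracket x\rrbracket_\gamma \leqslant M\,\bigr\},
\]
which is closed (by lower semi-continuity of $\llbracket\cdot\rrbracket_\gamma$), convex, and invariant under $\mathcal{I}$.

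Finally, I verify the compactness and continuity hypotheses of Schauder. By the previous display, $\mathcal{I}(K)$ is bounded in $C^{\alpha+\delta-1-\varepsilon}([0,T];\mathbb{R}^d)$; since the embedding $C^{\alpha+\delta-1-\varepsilon}\hookrightarrow C^\gamma$ on a bounded time interval with values in $\mathbb{R}^d$ is compact by Arzel\`a--Ascoli, $\mathcal{I}(K)$ is relatively compact in $C^\gamma_t$. Continuity of $\mathcal{I}\colon K \to K$ in the $C^\gamma_t$ topology follows by composing the local H\"older continuity of $x\mapsto \int_0^{\cdot} A(\mathd s, x_s)$ from $C^\gamma_t$ to $C^\alpha_t$ (estimate~\eqref{sec2 nonlinear integral estimate3}) with the bounded linear operator $\Xi\colon C^\alpha_t \to C^{\alpha+\delta-1-\varepsilon}_t$ and the continuous embedding into $C^\gamma_t$. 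Schauder's theorem then produces a fixed point $x \in K$, which by construction solves the fractional YDE in the sense of Definition~\ref{sec3.5 defn solution}. The main subtlety is balancing the two constraints on $\gamma$ and $\varepsilon$; everything else is routine once the invariant ball is set up, the sublinearity coming from $\beta<1$ being what makes the Schauder argument succeed without requiring a contraction.
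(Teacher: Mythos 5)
Your proof is correct. The paper reaches the same conclusion via Schaefer's (Leray--Schauder) theorem: it regards $I$ as a compact self-map of the full space $C^{\gamma-\varepsilon}_t$ (with $\alpha+\delta-1 > \gamma > \gamma-\varepsilon > (1-\alpha)/\beta$), obtaining compactness from the gain $C^{\gamma-\varepsilon}_t \to C^\gamma_t$ and closing the argument with an a~priori bound on solutions of the homotoped equation $x = \lambda I(x)$, which follows from the sublinearity $\beta < 1$. You instead apply plain Schauder on an explicitly constructed closed convex invariant ball $K \subset C^\gamma_t$, with compactness drawn from the larger gain $C^\gamma_t \to C^{\alpha+\delta-1-\varepsilon}_t$ and the invariance of $K$ again forced by $\beta < 1$. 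The underlying mechanism (regularity gain through $\Xi$ gives compactness; sublinearity in $\beta$ closes the estimate) is identical, so the two arguments are essentially equivalent --- Schaefer is customarily derived from Schauder by just this kind of ball truncation. What your variant buys is a slightly more elementary and self-contained statement of the fixed-point theorem, at the cost of exhibiting the invariant ball explicitly; the paper's route avoids the ball but needs the uniform estimate along the homotopy. One small point worth recording: the claim is stated for all $\gamma < \alpha+\delta-1$, but a solution produced for some $\gamma_0 \in ((1-\alpha)/\beta,\ \alpha+\delta-1)$ is automatically in $C^{\gamma}_t$ for every smaller $\gamma$, so restricting to $\gamma > (1-\alpha)/\beta$ as you do is indeed without loss of generality.
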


\begin{proof}
  Due to condition~{\eqref{sec3.5 condition coefficients}}, we can find
  $\gamma \in (0, 1)$, $\varepsilon > 0$ sufficiently small satisfying
  \[ \alpha + \delta - 1 > \gamma > \gamma - \varepsilon > \frac{1 -
     \alpha}{\beta} . \]
  The existence of a solution is then equivalent to the existence of a fixed
  point in $C^{\gamma}_t$ for the map
  \[ I (x) \assign x_0 + \Xi \left[ \int_0^{\cdot} A (\mathd s, x_s) \right] .
  \]
  The above conditions imply $\alpha + \beta (\gamma - \varepsilon) > 1$, so
  by Theorem~\ref{sec2 thm definition young integral} the map $x \mapsto A
  (\mathd s, x_s)$, from $C^{\gamma - \varepsilon}_t$ to $C^{\alpha}_t$ is
  continuous and satisfies
  \[ \left\llbracket \int_0^{\cdot} A (\mathd s, x_s)
     \right\rrbracket_{\alpha} \lesssim \| A \|_{\alpha \comma \beta} (1 +
     \llbracket x \rrbracket_{\gamma - \varepsilon}^{\beta}) \]
  which together with estimate~{\eqref{sec3.5 continuity fractional}} implies
  that $I$ is continuous from $C^{\gamma - \varepsilon}_t$ to $C^{\gamma}_t$
  with
  \[ \| I (x) \|_{\gamma} \leqslant \| x_0 \| + \kappa_1 \| \Xi \| \| A
     \|_{\alpha \comma \beta} (1 + \llbracket x \rrbracket_{\gamma -
     \varepsilon}^{\beta}) \]
  for suitable $\kappa_1 = \kappa_1 (T, \alpha + \beta (\gamma -
  \varepsilon))$. It follows by Ascoli-Arzel{\`a} that $I$ is compact from
  $C^{\gamma - \varepsilon}_t$ to itself; for any $\lambda \in (0, 1)$, if $x$
  solves $x = \lambda I (x)$, then
  \[ \| x \|_{\gamma - \varepsilon} \leqslant \| x \|_{\gamma} = \lambda \| T
     (x) \|_{\gamma} \leqslant \| x_0 \| + \kappa_1 \| \Xi \| \| A \|_{\alpha,
     \beta} (1 + \| x \|_{\gamma - \varepsilon}^{\beta}) . \]
  Since $\beta < 1$, any such solution $x$ must satisfy (for instance)
  \[ \| x \|_{\gamma - \varepsilon} \leqslant \max \left\{ 2 (\| x_0 \| +
     \kappa_1 \| \Xi \| \| A \|_{\alpha, \beta}), \, (2 \kappa_1 \| \Xi \| \|
     A \|_{\alpha, \beta})^{\frac{1}{1 - \beta}} \right\} \]
  where the estimate is uniform in $\lambda \in [0, 1]$. We can thus apply
  Schaefer's theorem to deduce the existence of a fixed point for $I$ in
  $C^{\gamma - \varepsilon}_t$, which also belongs to $C^{\gamma}_t$ since $I
  (x)$ does so.
\end{proof}

\begin{theorem}
  \label{sec3.5 main thm 2}Let $A \in C^{\alpha}_t C^{1 + \beta}_x$ with
  $\alpha, \beta, \delta$ satisfying~{\eqref{sec3.5 condition coefficients}}.
  Then for any $x_0 \in \mathbb{R}^d$ there exists a unique solution $x \in
  C^{\gamma}_t$ to~{\eqref{sec3.5 prototype YDE}}, for any $\gamma$ satisfying
  \[ \alpha + \delta - 1 > \gamma > \frac{1 - \alpha}{\beta} . \]
\end{theorem}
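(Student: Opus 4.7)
The plan is to establish wellposedness on a small interval $[0,\tau]$ by a Banach fixed point argument applied to the map $\mathcal{I}(x)_t = x_0 + \Xi\bigl[\int_0^\cdot A(\mathrm{d}s, x_s)\bigr]_t$, and then to cover $[0,T]$ by iteration, exploiting the shift property of $\Xi$ from Remark~\ref{sec3.5 remark}. Given $\gamma \in \bigl((1-\alpha)/\beta,\, \alpha+\delta-1\bigr)$, first choose auxiliary parameters $\gamma' < \gamma$ and $\varepsilon > 0$ such that $\alpha + \beta\gamma' > 1$ (so that Theorem~\ref{sec2 thm definition young integral} applies to paths in $C^{\gamma'}$) and $\alpha + \delta - 1 - \varepsilon > \gamma$ (so that $\Xi$ maps $C^\alpha$ into something strictly more regular than $C^\gamma$). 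For $\tau \in (0,T]$ and $M>0$ to be fixed, set
\[
E = \{x \in C^\gamma([0,\tau]; \mathbb{R}^d) : x(0)=x_0,\ \llbracket x \rrbracket_\gamma \leq M\},
\]
which is complete under the metric $d(x,y) = \llbracket x-y \rrbracket_{\gamma'}$.

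For invariance, combine Theorem~\ref{sec2 thm definition young integral} (giving $\llbracket \int_0^\cdot A(\mathrm{d}s, x_s)\rrbracket_\alpha \lesssim \|A\|_{\alpha,\beta}(1+M^\beta)$) with Lemma~\ref{sec3.5 lemma fractional} (giving $\llbracket \Xi[g]\rrbracket_{\alpha+\delta-1-\varepsilon} \leq \|\Xi\|\,\llbracket g\rrbracket_\alpha$) and the trivial interpolation on $[0,\tau]$, i.e.\ $\llbracket \cdot \rrbracket_\gamma \leq \tau^{\alpha+\delta-1-\varepsilon-\gamma}\llbracket \cdot \rrbracket_{\alpha+\delta-1-\varepsilon}$. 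This yields
\[
\llbracket \mathcal{I}(x)\rrbracket_\gamma \leq \kappa_1 \,\tau^{\alpha+\delta-1-\varepsilon-\gamma}\,\|A\|_{\alpha,\beta}\,(1+M^\beta),
\]
which is bounded by $M$ for $M$ large and $\tau$ small. For contractivity, apply Corollary~\ref{sec2 corollary frechet} to write $\int_0^\cdot A(\mathrm{d}s,x_s) - \int_0^\cdot A(\mathrm{d}s,y_s) = \int_0^\cdot v_{\mathrm{d}s}(x_s-y_s)$ with $\llbracket v\rrbracket_{\alpha,1} \leq \kappa_2 \|A\|_{\alpha,1+\beta}(1+2M)$, and estimate the integral in $C^\alpha$ by the sewing lemma applied to $\Gamma_{s,t} = v_{s,t}(x_s-y_s)$; together with $x_0=y_0$ (so $\|x-y\|_{\infty;[0,\tau]} \leq \tau^{\gamma'} d(x,y)$) this gives a bound of order $\tau^{\gamma'}\llbracket v\rrbracket_\alpha d(x,y)$. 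Applying $\Xi$ and interpolating down to $C^{\gamma'}$ produces $d(\mathcal{I}(x),\mathcal{I}(y)) \leq \kappa_3 \tau^{\rho} M\, d(x,y)$ for some $\rho>0$, which is strictly contractive once $\tau$ is small enough. Banach's theorem then yields a unique $x \in E$.

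To pass from $[0,\tau]$ to $[0,T]$, the key observation is that the choice of $\tau$ above depends only on $\alpha,\beta,\gamma,\delta,T$ and $\|A\|_{\alpha,1+\beta}$, not on $x_0$. Suppose the solution is known on $[0,k\tau]$; for $t \in [k\tau,(k+1)\tau]$ split
\[
\Xi\Bigl[\textstyle\int_0^\cdot A(\mathrm{d}s,x_s)\Bigr]_t = g_t + \Xi[h]_t,
\]
where $g$ depends only on $x|_{[0,k\tau]}$ and thus is a fixed continuous function on $[k\tau,T]$, while $h_t \assign \int_0^t A(\mathrm{d}s,x_s) - \int_0^{t\wedge k\tau} A(\mathrm{d}s,x_s)$ vanishes on $[0,k\tau]$. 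By the shift property of Remark~\ref{sec3.5 remark}, $\Xi[h]_{\cdot + k\tau} = \Xi[h_{\cdot+k\tau}]$, so setting $y_t \assign x_{t+k\tau}$ reduces the problem to a fixed point problem of the same form on $[0,\tau]$, with modified initial value $x_{k\tau}$ and an additional inhomogeneous source $t\mapsto g_{t+k\tau}$ that is frozen and hence does not enter the contractivity estimate. Finitely many iterations cover $[0,T]$.

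The main technical obstacle is precisely this iteration step: unlike a classical ODE, $D^\delta_{0+}$ is non-local, so one cannot naively restart the equation at time $k\tau$. Remark~\ref{sec3.5 remark} is exactly the tool that makes the iteration possible, because it allows a clean separation between the memory from $[0,k\tau]$ (which enters only as a known affine shift) and the ``new'' contribution from $[k\tau,(k+1)\tau]$ (which satisfies a fractional YDE of the same type). A minor subtlety is to verify that all the operator norms involved in the contractivity estimate are controlled uniformly across iterations; this is ensured by Remark~\ref{sec3.5 remark}, which bounds the operator norm of $\Xi$ restricted to any subinterval by the global $\|\Xi\|$.
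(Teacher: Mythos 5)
Your overall strategy differs from the paper's. The paper takes existence for granted from Proposition~\ref{sec3.5 prop existence} (which is proved separately via Schaefer's fixed point theorem, using compactness and requiring only $A \in C^\alpha_t C^\beta_x$), and then proves uniqueness by showing that any two \emph{given} solutions $x, y$ --- both defined on all of $[0,T]$ with some fixed bound $\|x\|_\gamma, \|y\|_\gamma \leq M$ --- must coincide on $[0,\tau]$ for a $\tau$ depending only on $M$, $\|A\|_{\alpha,1+\beta}$ and $\|\Xi\|$, and then iterates using the shift property. You instead try to obtain both existence and uniqueness at once by a Banach fixed point argument, which is in the spirit of Theorem~\ref{sec3 thm wellposedness} but is harder in the fractional setting.

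Your local argument on $[0,\tau]$ is sound: the choice of $\gamma'$, the invariance estimate via Lemma~\ref{sec3.5 lemma fractional}, and the contractivity estimate via Corollary~\ref{sec2 corollary frechet} and interpolation all check out, and you end up with a power $\tau^{\rho}$, $\rho = \alpha+\delta-1-\varepsilon > 0$, on the contraction constant. The gap is in the iteration to $[0,T]$. After the fixed point has been constructed on $[0,k\tau]$, your decomposition $\Xi[\cdot]_t = g_t + \Xi[h]_t$ is correct, and you are right that the inhomogeneous term $g_{\cdot+k\tau}$ drops out of the contractivity estimate. But it does \emph{not} drop out of the invariance estimate: to verify that $\mathcal{I}$ maps $E$ into itself you need $\llbracket g_{\cdot+k\tau} \rrbracket_\gamma$ to be controlled, and $g = \Xi[f]$ with $f_s = \int_0^{s\wedge k\tau} A(\mathrm{d}r, x_r)$, so $\llbracket g_{\cdot+k\tau}\rrbracket_\gamma \lesssim \|\Xi\|\,\|A\|_{\alpha,\beta}\,(1 + \llbracket x\rrbracket_{\gamma;[0,k\tau]}^\beta)$. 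This involves the \emph{global} $\gamma$-Hölder seminorm of the partial solution on $[0,k\tau]$, which from the local bounds $\llbracket x\rrbracket_{\gamma;[i\tau,(i+1)\tau]} \leq M$ can only be controlled by something of order $k^{1-\gamma}M$. This forces $M$ to grow as $k$ increases, hence $\tau$ to shrink, hence $k_{\max}=T/\tau$ to grow, and it is not at all clear that this feedback closes for all admissible $(\alpha,\beta,\delta)$; for parameter choices with $\beta$ close to $1$ it in fact appears not to. You would need to first establish an a~priori estimate on $\|x\|_\gamma$ for any solution on $[0,T]$ --- which is essentially the content of the Schaefer argument in Proposition~\ref{sec3.5 prop existence} --- and feed it into the iteration; as written, you assert without justification that ``the choice of $\tau$ above depends only on $\alpha,\beta,\gamma,\delta,T$ and $\|A\|_{\alpha,1+\beta}$, not on $x_0$,'' and your final remark about uniform operator norms for $\Xi$ addresses a different (and genuinely minor) point while leaving the memory-term growth untouched. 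The paper's separation of existence and uniqueness is precisely what sidesteps this problem.
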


\begin{proof}
  Existence is granted by Proposition~\ref{sec3.5 prop existence}, so we only
  need to check uniqueness. Let $x$ and $y$ be two solutions, say with $\| x
  \|_{\alpha}, \| y \|_{\alpha} \leqslant M$ for suitable $M > 0$; we are
  first going to show that they must coincide on an interval $[0, \tau]$ with
  $\tau$ sufficiently small. It holds
  
  \begin{align*}
    \llbracket x - y \rrbracket_{\gamma ; 0, \tau} & = \left\llbracket \Xi
    \left[ \int_0^{\cdot} A (\mathd s, x_s) - \int_0^{\cdot} A (\mathd s, y_s)
    \right] \right\rrbracket_{\gamma ; 0, \tau}\\
    & \leqslant \| \Xi \|  \left\llbracket \int_0^{\cdot} A (\mathd s, x_s) -
    \int_0^{\cdot} A (\mathd s, y_s) \right\rrbracket_{\alpha ; 0, \tau}\\
    & = \| \Xi \|  \left\llbracket \int_0^{\cdot} v_{\mathd s} (x_s - y_s)
    \right\rrbracket_{\alpha ; 0, \tau}
  \end{align*}
  
  where $v$ is given by
  \[ v_t = \int_0^1 \int_0^t \nabla A (\mathd s, y_s + \lambda (x_s - y_s))
     \mathd \lambda \]
  and satisfies $\| v \|_{\alpha ; 0, T} \leqslant \kappa_1  \| A \|_{\alpha,
  1 + \beta} (1 + M)$. Since $x_0 = y_0$, for any $[s, t] \subset [0, \tau]$
  it holds
  
  \begin{align*}
    \left\| \int_s^t v_{\mathd r} (x_r - y_r) \right\| & \leqslant \| v_{s, t}
    (x_s - y_s) \| + \kappa_2 | t - s |^{\alpha + \gamma} \| v \|_{\alpha}
    \llbracket x - y \rrbracket_{\gamma ; 0, \tau}\\
    & \leqslant | t - s |^{\alpha} \tau^{\gamma}  (1 + \kappa_2)  \| v
    \|_{\alpha}  \llbracket x - y \rrbracket_{\gamma ; 0, \tau} ;
  \end{align*}
  
  combined with the previous estimates we obtain
  
  \begin{align*}
    \llbracket x - y \rrbracket_{\gamma ; 0, \tau} & \leqslant \| \Xi \|
    \tau^{\gamma}  (1 + \kappa_2)  \| v \|_{\alpha}  \llbracket x - y
    \rrbracket_{\gamma ; 0, \tau}\\
    & \leqslant \kappa_3  \| \Xi \| \| A \|_{\alpha, 1 + \beta} (1 + M)
    \tau^{\gamma} \llbracket x - y \rrbracket_{\gamma ; 0, \tau} .
  \end{align*}
  
  Choosing $\tau$ small enough such that $\kappa_3  \| \Xi \| \| A \|_{\alpha,
  1 + \beta} (1 + M) \tau^{\gamma} < 1$, we conclude that $x \equiv y$ on $[0,
  \tau]$.
  
  As a consequence, $\int_0^{\cdot} A (\mathd s, x_s) = \int_0^{\cdot} A
  (\mathd s, y_s)$ on $[0, \tau]$ as well; define $v_t = x_{t + \tau} - y_{t +
  \tau}$, then applying Remark~\ref{sec3.5 remark} to $v$ we obtain
  
  \begin{align*}
    v_t & = \, \Xi \left[ \int_0^{\cdot} A (\mathd s, x_s) - A (\mathd s, y_s)
    \right]_{t + \tau}\\
    & = \, \Xi \left[ \int_{\tau}^{\cdot + \tau} A (\mathd s, x_s) -
    \int_{\tau}^{\cdot + \tau} A (\mathd s, y_s) \right]_t\\
    & = \, \Xi \left[ \int_0^{\cdot} \tilde{A} (\mathd s, x_{s + \tau}) -
    \int_0^{\cdot} \tilde{A} (\mathd s, y_{s + \tau}) \right]_t
  \end{align*}
  
  where $\tilde{A} (t, x) = A (t + \tau, x)$ has the same regularity
  properties of $A$. We can therefore iterate the previous argument, applied
  this time to $\tilde{A}$, $x_{\cdot + \tau}$ and $y_{\cdot + \tau}$, to
  deduce that $x$ and $y$ also coincide on $[\tau, 2 \tau]$; repeating this
  procedure we can cover the whole interval $[0, T]$.
\end{proof}

\section{Flow}\label{sec4}

Having established sufficient conditions for the existence and uniqueness of
solutions to the YDE associated to $(x_0, A)$, it is natural to study their
dependence on the data of the problem. This section is devoted to the study of
the flow, seen as the ensemble of all possible solutions, and its Frech{\'e}t
differentiability w.r.t. both $(x_0, A)$.

In order to avoid technicalities we will only consider the case of $A \in
C^{\alpha}_t C^{1 + \beta}_V$ with global bounds, but everything extends
easily by localisation arguments to $A \in C^{\alpha}_t C^{\beta, \lambda}_V
\cap C^{\alpha}_t C^{1 + \beta}_{V, \tmop{loc}}$; similar results can also be
established for the type of equations considered respectively in
Sections~\ref{sec3.4} and~\ref{sec3.5}.

\subsection{Flow of diffeomorphisms}

We start by giving a proper definition of a flow for the YDE associated to
$A$; recall here that $\Delta_n$ denotes the $n$-simplex on $[0, T]$.

\begin{definition}
  \label{sec4 defn flow}Given $A \in C^{\alpha}_t C^{\beta}_V$ with $\alpha (1
  + \beta) > 1$, we say that $\Phi : \Delta_2 \times V \rightarrow V$ is a
  flow of homeomorphisms for the YDE associated to $A$ if the following hold:
  \begin{enumerateroman}
    \item $\Phi (t, t, x) = x$ for all $t \in [0, T]$ and $x \in V$;
    
    \item $\Phi (s, \cdot, x) \in C^{\alpha} ([s, T] ; V)$ for all $s \in [0,
    T]$ and $x \in V$;
    
    \item for all $(s, t, x) \in \Delta_2 \times \mathbb{R}^d$ it holds
    \[ \Phi (s, t, x) = x + \int_s^t A (\mathd r, \Phi (s, r, x)) ; \]
    \item $\Phi$ satisfies the group property, namely
    \[ \Phi (u, t, \Phi (s, u, x)) = \Phi (s, t, x) \quad \text{for all } (s,
       u, t) \in \Delta_3 \text{ and } x \in V ; \]
    \item for any $(s, t) \in \Delta_2$, the map $\Phi (s, t, \cdot)$ is an
    homeomorphism of $V$, i.e. it is continuous with continuous inverse.
  \end{enumerateroman}
\end{definition}

From now on, whenever talking about a flow $\Phi$, we will use the notation
$\Phi_{s \rightarrow t} (x) = \Phi (s, t, x)$; we will denote by $\Phi_{s
\leftarrow t} (\cdot)$ the inverse of $\Phi_{s \rightarrow t} (\cdot)$ as a
map from $V$ to itself.

\begin{definition}
  \label{sec4 defn regularity flow}Given $A$ as above, $\gamma \in (0, 1)$, we
  say that it admits a locally $\gamma$\mbox{-}H\"{o}lder continuous flow
  $\Phi$, $\Phi$ is $C^{\gamma}_{\tmop{loc}}$ for short, if for any $(s, t)
  \in \Delta_2$ it holds $\Phi_{s \rightarrow t}, \Phi_{s \leftarrow t} \in
  C^{\gamma}_{\tmop{loc}} (V ; V)$; we say that $\Phi$ is a flow of
  diffeomorphisms if $\Phi_{s \rightarrow t}, \Phi_{s \leftarrow t} \in
  C^1_{\tmop{loc}} (V ; V)$ for any $(s, t) \in \Delta_2$. Similar definitions
  hold for a locally Lipschitz flow, or a $C^{n +
  \gamma}_{\tmop{loc}}$\mbox{-}flow with $\gamma \in [0, 1)$ and $n \in
  \mathbb{N}$.
  
  If $V =\mathbb{R}^d$, we say that $\Phi$ is a Lagrangian flow if there
  exists a constant $C$ such that
  \[ C^{- 1} \lambda_d (E) \leqslant \lambda_d (\Phi_{s \leftarrow t} (E))
     \leqslant C \lambda_d (E) \qquad \forall \, E \in \mathcal{B}
     (\mathbb{R}^d), \, \forall \, (s, t) \in \Delta_2, \]
  where $\lambda_d$ denotes the Lebesgue measure on $\mathbb{R}^d$ and
  $\mathcal{B} (\mathbb{R}^d)$ the collection of Borel sets.
\end{definition}

It follows from Remark~\ref{sec3.3 remark lipschitz solution map} that, if $A
\in C^{\alpha}_t C^{1 + \beta}_V$ with $\alpha (1 + \beta) > 1$, then the
solution map $(x_0, t) \mapsto x_t$ is Lipschitz in space, uniformly in time.
However we cannot yet talk about a flow, as we haven't shown the invertibility
of the solution map, nor the flow property; this is accomplished by the
following lemma.

\begin{lemma}
  \label{sec4.1 invariance shift inversion}Let $A \in C^{\alpha}_t
  C^{\beta}_V$ and $x \in C^{\alpha}_t V$ such that $\alpha (1 + \beta) > 1$,
  $x$ be a solution of the YDE associated to $(x_0, A)$. Then setting
  $\tilde{A} (t, z) : = A (T - t, z)$ and $\tilde{x}_t \assign x_{T - t}$,
  $\tilde{x}$ is a solution to the time-reversed YDE
  \[ \tilde{x}_t = \tilde{x}_0 + \int_0^t \tilde{A} (\mathd s, \tilde{x}_s) .
  \]
  Similarly, setting $\tilde{x}_t = x_{t - s}$, $\tilde{A} (t, x) = A (t - s,
  x)$ for $t \in [s, T]$, then $\tilde{x}$ is a solution to the time-shifted
  YDE
  \[ \tilde{x}_t = \tilde{x}_0 + \int_0^t \tilde{A} (\mathd r, \tilde{x}_r)
     \quad \forall \, t \in [s, T] . \]
\end{lemma}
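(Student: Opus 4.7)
The plan is to exploit the uniqueness characterization in the sewing lemma (Lemma~\ref{sec2 sewing lemma} and Remark~\ref{sec2 remark sewing}): to identify a candidate path in $C^\alpha_t V$ with a given nonlinear Young integral, it suffices to bound the difference of their increments by $|u-s|^{\alpha(1+\beta)}$, exploiting $\alpha(1+\beta)>1$. For the time-reversed statement, I would set $f_t := \tilde{x}_t - \tilde{x}_0 = x_{T-t} - x_T$, note that $f_0 = 0$ and $f \in C^\alpha_t V$, and show $f_t = \int_0^t \tilde{A}(\mathd s, \tilde{x}_s) = \mathcal{J}(\Gamma)_t$ for $\Gamma_{s,u} := \tilde{A}_{s,u}(\tilde{x}_s)$, by estimating $\|f_{s,u} - \Gamma_{s,u}\|_V$.

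The key step is a direct increment computation. Writing $a := T-u$ and $b := T-s$ (so $a < b$ and $b - a = u-s$) and using the YDE for $x$, one has $f_{s,u} = x_a - x_b = -\int_a^b A(\mathd r, x_r)$, which by the sewing estimate~\eqref{sec2 nonlinear integral estimate1} equals $-A_{a,b}(x_a) + O(|u-s|^{\alpha(1+\beta)})$. Since $\Gamma_{s,u} = A(a, x_b) - A(b, x_b) = -A_{a,b}(x_b)$ by unfolding the definition of $\tilde{A}$, the difference $f_{s,u} - \Gamma_{s,u}$ reduces to $A_{a,b}(x_b) - A_{a,b}(x_a)$ plus an admissible remainder; the joint H\"older regularity of $A$ and $x$ bounds the former by $\llbracket A\rrbracket_{\alpha,\beta}\llbracket x\rrbracket_\alpha^\beta |u-s|^{\alpha(1+\beta)}$, closing the time-reversal part.

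For the time-shifted statement, the argument is a direct change of variables in the Riemann--Stieltjes sums defining the nonlinear Young integral: for any partition $\{s = t_0 < \cdots < t_n = t\}$ of $[s,t]$, one has $\sum_i \tilde{A}_{t_i,t_{i+1}}(\tilde{x}_{t_i}) = \sum_i A_{t_i-s,t_{i+1}-s}(x_{t_i-s})$, and the shifted partitions cover $[0, t-s]$ with the same mesh; passing to the limit via Theorem~\ref{sec2 thm definition young integral} gives $\int_s^t \tilde{A}(\mathd r, \tilde{x}_r) = x_{t-s} - x_0 = \tilde{x}_t - \tilde{x}_s$, which with $\tilde{x}_s = \tilde{x}_0$ yields the stated identity (interpreting $\tilde{A}$ as extended by zero on $[0,s)$ so that $\int_0^s \tilde{A}(\mathd r, \tilde{x}_r) = 0$).

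There is no substantive obstacle: the entire argument is a routine verification within the sewing framework. The one subtlety worth noting is that time reversal turns left-endpoint Riemann sums into right-endpoint ones, and the resulting discrepancy is precisely the $A_{a,b}(x_b) - A_{a,b}(x_a)$ term shown above, whose admissibility crucially relies on the condition $\alpha(1+\beta) > 1$.
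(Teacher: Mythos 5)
The paper omits its own proof, deferring to the literature, so there is no internal argument to compare against; your proposal is a correct, self-contained verification. The time-reversal step is exactly right: after the substitution $a=T-u$, $b=T-s$, the increment $\tilde{x}_{s,u}-\tilde{A}_{s,u}(\tilde{x}_s)$ reduces to $A_{a,b}(x_b)-A_{a,b}(x_a)$ plus the usual sewing remainder, both of size $|u-s|^{\alpha(1+\beta)}$, and invoking Remark~\ref{sec2 remark sewing} with $\alpha(1+\beta)>1$ closes the identification. You are also right that the endpoint swap is the only subtlety: time reversal turns the left-endpoint sewing germ into a right-endpoint one, and the cost is precisely the $A_{a,b}(x_b)-A_{a,b}(x_a)$ term. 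For the time-shift, the change-of-variables in Riemann sums is clean; a small remark is that the statement's $\int_0^t$ and $\tilde{x}_0$ implicitly either extend $\tilde A$ by $\tilde{A}(t,\cdot)\equiv 0$ on $[0,s)$ (consistent with the standing convention $A(0,\cdot)=0$) or simply mean $\int_s^t$ and $\tilde{x}_s=x_0$ — you correctly identified and resolved this ambiguity.
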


The proof is elementary but a bit tedious, so we omit it; we refer the
interested reader to Lemma~2, Section~6.1 from~{\cite{lejay}} or Lemmas~11
and~12, Section~4.3.1 from~{\cite{galeatigubinelli1}}.

As a consequence, we immediately deduce conditions for the existence of a
Lipschitz flow.

\begin{corollary}
  \label{sec4.1 corollary lipschitz flow}Let $A \in C^{\alpha}_t C^{1 +
  \beta}_V$ with $\alpha (1 + \beta) > 1$, then the associated YDE admits a
  locally Lipschitz flow $\Phi$. Moreover there exists $C = C (\alpha, \beta,
  T, \| A \|_{\alpha, 1 + \beta})$ such that
  \begin{equation}
    \| \Phi_{s \rightarrow \cdot} (x) - \Phi_{s \rightarrow \cdot} (y)
    \|_{\alpha ; s, T} \leqslant C \| x - y \|_V, \quad \llbracket \Phi_{s
    \rightarrow \cdot} (x) \rrbracket_{\alpha ; s, T} \leqslant C \quad
    \forall \, s \in [0, T], \, x, y \in V \label{sec4.1 lipschitz flow}
  \end{equation}
  together with a similar estimate for $\Phi_{\cdot \leftarrow t} (\cdot)$.
\end{corollary}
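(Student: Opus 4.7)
The plan is to combine global wellposedness from Theorem~\ref{sec3 thm wellposedness} with the time-shift/time-reversal invariance of Lemma~\ref{sec4.1 invariance shift inversion} and the quantitative estimates from Remark~\ref{sec3.3 remark lipschitz solution map}. First I would construct the candidate flow: by Theorem~\ref{sec3 thm wellposedness} applied, after the time-translation $\tilde A(r,z) = A(r+s,z)$ (which lies in $C^{\alpha}_t C^{1+\beta}_V$ with the same norm on $[0,T-s]$), for every $(s,x) \in [0,T] \times V$ there exists a unique solution in $C^{\alpha}([s,T];V)$ of the YDE starting at $x$ at time $s$; define $\Phi_{s \to t}(x)$ to be this solution evaluated at~$t$. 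Properties (i), (ii), (iii) of Definition~\ref{sec4 defn flow} are immediate by construction, and the cocycle property (iv) follows from uniqueness on $[u,T]$ applied to the two solutions $r \mapsto \Phi_{s \to r}(x)$ and $r \mapsto \Phi_{u \to r}(\Phi_{s \to u}(x))$, which share the initial datum $\Phi_{s \to u}(x)$ at $r=u$.

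Next I would establish the quantitative bounds. The estimate $\|\Phi_{s \to \cdot}(x) - \Phi_{s \to \cdot}(y)\|_{\alpha; s,T} \leq C\|x - y\|_V$ follows directly from~\eqref{sec3.3 eq lipschitz map} in Remark~\ref{sec3.3 remark lipschitz solution map}, applied to the pair of solutions starting from $x$ and $y$ (after the above time-translation, so the constant $C$ depends on $\alpha, \beta, T$ and $\|A\|_{\alpha,1+\beta}$, but not on $s$). Similarly, the Hölder seminorm bound $\llbracket \Phi_{s \to \cdot}(x) \rrbracket_{\alpha;s,T} \leq C$ is a direct consequence of the a priori estimate~\eqref{sec3.3 a priori estimate1} from Proposition~\ref{sec3.3 prop a priori estimate 1}, again applied to the shifted YDE.

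The main obstacle is verifying property~(v), i.e.\ that $\Phi_{s \to t}$ is actually invertible as a map $V \to V$, and producing $\Phi_{s \leftarrow t}$ with an analogous Lipschitz estimate. For this I would use Lemma~\ref{sec4.1 invariance shift inversion}: fix $(s,t) \in \Delta_2$ and set $\tilde A(r,z) \assign A(t-r+s, z) - A(t,z)$ so that $\tilde A(0,\cdot) \equiv 0$ and $\tilde A \in C^{\alpha}_t C^{1+\beta}_V$ on $[0, t-s]$ with the same norm as $A$. By Theorem~\ref{sec3 thm wellposedness} applied to $\tilde A$ there is, for every $y \in V$, a unique solution $\tilde\phi^y$ on $[0, t-s]$ with $\tilde\phi^y_0 = y$. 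Setting $\Psi_{s \leftarrow t}(y) \assign \tilde\phi^y_{t-s}$, Lemma~\ref{sec4.1 invariance shift inversion} gives that $r \mapsto \tilde\phi^y_{t-r}$ solves the original YDE on $[s,t]$ with terminal value $y$ at time $t$. By uniqueness of forward solutions, $\Phi_{s \to t}(\Psi_{s \leftarrow t}(y)) = y$ and $\Psi_{s \leftarrow t}(\Phi_{s \to t}(x)) = x$, so $\Psi_{s \leftarrow t} = \Phi_{s \leftarrow t}$ and $\Phi_{s \to t}$ is a homeomorphism. The Lipschitz estimate for $\Phi_{\cdot \leftarrow t}$ then follows by applying~\eqref{sec3.3 eq lipschitz map} and~\eqref{sec3.3 a priori estimate1} to the YDE associated to $\tilde A$, exactly as in the forward case.

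The only delicate point is this invertibility step, since Lemma~\ref{sec4.1 invariance shift inversion} is stated for reversal around $T$ and for shifting forward in time; composing the two to reverse on the arbitrary subinterval $[s,t]$, while ensuring that $\tilde A$ still satisfies $\tilde A(0,\cdot) = 0$ (the standing convention before Definition~\ref{sec2 defn field2}) and has the same space-time Hölder norm, is a routine but careful manipulation. Once this is in place, all the flow properties together with the quantitative estimates follow by a direct invocation of the already established wellposedness theory, without any new analytic input.
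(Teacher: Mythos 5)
Your proof takes essentially the same route as the paper's: combine global wellposedness (Theorem~\ref{sec3 thm wellposedness}), the Lipschitz estimate~\eqref{sec3.3 eq lipschitz map} and the a~priori bound~\eqref{sec3.3 a priori estimate1} with the time-shift and time-reversal invariance of Lemma~\ref{sec4.1 invariance shift inversion}, noting that these transformations do not increase $\|\cdot\|_{\alpha,1+\beta}$, so the same wellposedness and stability estimates apply to the shifted/reversed YDE and yield simultaneously the flow property, invertibility, and the estimates for $\Phi_{\cdot\leftarrow t}$. (One small slip, harmless for the argument: when you reverse on $[s,t]$ after shifting to $[0,t-s]$, the drift should be $\tilde A(r,z)=A(t-r,z)-A(t,z)$ rather than $A(t-r+s,z)-A(t,z)$, since the latter pushes the first argument outside $[0,T]$.)
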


\begin{proof}
  The proof is a straightforward application of Remark~\ref{sec3.3 remark
  lipschitz solution map} and Lemma~\ref{sec4.1 invariance shift inversion}.
  In both cases of time reversal and translation we have $\| \tilde{A}
  \|_{\alpha, 1 + \beta} \leqslant \| A \|_{\alpha, 1 + \beta}$ so that
  uniqueness holds also for the reversed/translated YDE, with the same
  continuity estimates; this provides respectively invertibility of the
  solution map and flow property.
\end{proof}

Actually, under the same hypothesis it is possible to prove that the YDE
admits a flow of diffeomorphisms, which satisfies a variational equation.

\begin{theorem}
  \label{sec4.1 main thm infinite dim}Let $A \in C^{\alpha}_t C^{1 + \beta}_V$
  with $\alpha (1 + \beta) > 1$, then the YDE associated to $A$ admits a flow
  of diffeomorphisms. For any $x \in V$, $D_x \Phi_{s \rightarrow t} (x) =
  J^x_{s \rightarrow t}$, where $J^x_{s \rightarrow \cdot} \in C^{\alpha}_t
  \mathcal{L} (V ; V)$ is the unique solution to the variational equation
  \begin{equation}
    J^x_{s \rightarrow t} = I + \int_s^t D A (\mathd r, \Phi_{s \rightarrow r}
    (x)) \circ J^x_{s \rightarrow r} \qquad \forall \, t \in [s, T]
    \label{sec4.1 variational eq infinite dim}
  \end{equation}
  where $\circ$ denotes the composition of linear operators.
\end{theorem}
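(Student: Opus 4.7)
The plan has three steps: (i) construct the candidate derivative $J^x$ as the unique solution of a linear nonlinear YDE; (ii) verify that it is the Fréchet derivative of $\Phi_{s\rightarrow t}$ at $x$ via an affine-Gronwall estimate; (iii) handle continuity of $x\mapsto J^x$ and the inverse flow by a time-reversal argument.

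For step (i), I would first define $v_t := \int_s^t DA(\mathd r, \Phi_{s\rightarrow r}(x))$. By Corollary~\ref{sec4.1 corollary lipschitz flow} the path $\Phi_{s\rightarrow \cdot}(x)$ belongs to $C^\alpha_t V$, and $DA \in C^\alpha_t C^\beta_V$ with values in $\mathcal{L}(V;V)$, so Theorem~\ref{sec2 thm definition young integral} gives $v \in C^\alpha_t \mathcal{L}(V;V)$. Equation~\eqref{sec4.1 variational eq infinite dim} then reads $J^x = I + \int B(\mathd r, J^x_r)$ with $B(r,L) := v_r \circ L$, which is a nonlinear YDE on $\mathcal{L}(V;V)$ whose drift is linear in $L$, hence trivially in $C^\alpha_t C^{1+\beta}_{\mathcal{L}(V;V)}$. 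Global existence and uniqueness of $J^x_{s\rightarrow \cdot} \in C^\alpha_t \mathcal{L}(V;V)$ thus follows from Theorem~\ref{sec3 thm wellposedness}.

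For step (ii), fix $h \in V$ and set $z_t := \Phi_{s\rightarrow t}(x+h) - \Phi_{s\rightarrow t}(x)$ and $e_t := z_t - J^x_{s\rightarrow t}h$. Applying Corollary~\ref{sec2 corollary frechet} to the pair $(\Phi_{s\rightarrow \cdot}(x+h), \Phi_{s\rightarrow \cdot}(x))$ gives $z_t = h + \int_s^t K^h(\mathd r)(z_r)$ with $K^h_t := \int_0^1 \int_s^t DA(\mathd r, \Phi_{s\rightarrow r}(x) + \lambda z_r)\,\mathd\lambda$. Subtracting the equation for $J^x h$, the error solves the affine YDE
$$ e_t = \int_s^t v(\mathd r)(e_r) + g_t, \qquad g_t := \int_s^t (K^h - v)(\mathd r)(z_r), $$
so Theorem~\ref{sec3.3 thm gronwall estimate YDE} yields $\|e\|_\alpha \lesssim \|g\|_\alpha$ with a constant depending only on $\|v\|_{\alpha,1}$. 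Now estimate~\eqref{sec2 nonlinear integral estimate3} applied to $DA$ with paths $(\Phi_{s\rightarrow \cdot}(x), \Phi_{s\rightarrow \cdot}(x) + \lambda z)$ bounds $\|K^h - v\|_\alpha \lesssim \|z\|_\alpha^\delta$ for any $\delta \in (0, (\alpha + \alpha\beta - 1)/\alpha)$, an interval that is nonempty precisely under the hypothesis $\alpha(1+\beta)>1$. Combined with the flow bound $\|z\|_\alpha \lesssim \|h\|_V$ from~\eqref{sec4.1 lipschitz flow} and a one-line sewing estimate for the linear-in-$y$ drift $(r,y) \mapsto (K^h_r - v_r)(y)$ evaluated along $z$, this yields $\|g\|_\alpha \lesssim \|h\|_V^{1+\delta} = o(\|h\|_V)$. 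The bound is uniform in $\|h\|_V$ (not direction-dependent), so Fréchet differentiability with $D_x \Phi_{s\rightarrow t}(x) = J^x_{s\rightarrow t}$ follows at once.

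For step (iii), continuity of $x \mapsto J^x_{s\rightarrow t}$ (needed to upgrade to $C^1_{\tmop{loc}}$) follows from an analogous stability estimate: a perturbation $x \leadsto x'$ moves $v$ continuously in $C^\alpha_t \mathcal{L}(V;V)$ by~\eqref{sec2 nonlinear integral estimate3}, and Theorem~\ref{sec3.3 thm gronwall estimate YDE} propagates this to the affine YDE defining $J^x$. For the inverse, Lemma~\ref{sec4.1 invariance shift inversion} realises $\Phi_{s\leftarrow t}$ as the endpoint map of a YDE driven by a time-reversed $\tilde A$ lying in the same class $C^\alpha_t C^{1+\beta}_V$, so steps (i)--(ii) applied to $\tilde A$ give $\Phi_{s\leftarrow t} \in C^1_{\tmop{loc}}(V;V)$, completing the proof that $\Phi$ is a flow of diffeomorphisms. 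The main technical obstacle is the estimate $\|g\|_\alpha = o(\|h\|_V)$: although each ingredient is standard, extracting the genuine gain $\|h\|_V^\delta$ with $\delta>0$ relies crucially on the strict inequality $\alpha(1+\beta)>1$ encoded in the Hölder-stability exponent of Theorem~\ref{sec2 thm definition young integral}(4).
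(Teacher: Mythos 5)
Your proposal is correct and takes a genuinely different route from the paper. The paper proves this result \emph{after} Theorem~\ref{sec4 differentiability ito map} (Fréchet differentiability of the Itô map $A\mapsto\Phi^A$) and then deduces differentiability in $x$ by a substitution trick: setting $z^\varepsilon = \Phi^A_{0\to\cdot}(x+\varepsilon h)-\varepsilon h$ turns a perturbation of the initial condition into a perturbation of the drift, $A^\varepsilon(t,v)=A(t,v+\varepsilon h)$, and the chain rule applied to the Itô map hands over the variational equation. Your proof, by contrast, is a direct and self-contained error estimate: you write $z=\Phi(x+h)-\Phi(x)$ as solving an affine YDE via Corollary~\ref{sec2 corollary frechet}, subtract the equation for $J^x h$, and control the forcing term $g$ via the H\"older-stability estimate~\eqref{sec2 nonlinear integral estimate3} together with a Young product bound, giving $\|e\|_\alpha\lesssim\|h\|_V^{1+\delta}$ for some $\delta>0$. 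This is essentially the ``finite-dimensional'' strategy of Theorem~\ref{sec4.1 main thm finite dim} carried out without compactness, and it has the merit of avoiding the detour through the Itô map; the paper's route keeps Theorem~\ref{sec4.1 main thm infinite dim} short and emphasises the modularity of the Itô-map machinery, at the cost of front-loading Theorem~\ref{sec4 differentiability ito map} and Lemma~\ref{sec4.2 techlem}. Both arguments are valid.

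One small inaccuracy in your step (i): the drift $B(r,L)=v_r\circ L$ for the variational equation is affine but \emph{unbounded} in $L$, so it does not lie in $C^\alpha_t C^{1+\beta}_{\mathcal{L}(V;V)}$ and Theorem~\ref{sec3 thm wellposedness} does not apply directly. The correct reference is Lemma~\ref{sec4.2 techlem} (which treats precisely this linear operator-valued YDE and additionally gives invertibility of $J^x_t$), or alternatively Corollary~\ref{sec3.2 cor local uniqueness} combined with the a~priori bound of Theorem~\ref{sec3.3 thm gronwall estimate YDE}. This is a citation slip rather than a gap in the argument.
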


We postpone the proof of this result to Section~\ref{sec4.2}, as the variation
equation will follow from a more general result on the differentiability of
the It\^{o} map. Following~{\cite{hu}}, we give an alternative proof in the
case of finite dimensional $V$, in which more precise information on $\Phi$ is
known.

\begin{theorem}
  \label{sec4.1 main thm finite dim}Let $A$ satisfy the hypothesis of
  Theorem~\ref{sec4.1 main thm infinite dim}, $V =\mathbb{R}^d$ for some $d
  \in \mathbb{N}$; then the associated YDE admits a flow of diffeomorphisms
  and the following hold:
  \begin{enumerateroman}
    \item For any $x \in \mathbb{R}^d$ and $s \in [0, T]$, $D_x \Phi_{s
    \rightarrow \cdot} (x)$ corresponds to $J^x_{s \rightarrow \cdot} \in
    C^{\alpha} ([s, T] ; \mathbb{R}^{d \times d})$ satisfying
    \begin{equation}
      J^x_{s \rightarrow t} = I + \int_s^t D A (\mathd r, \Phi_{s \rightarrow
      r} (x)) J^x_{s \rightarrow r} . \label{sec4.1 finite dim variational
      equation}
    \end{equation}
    \item The Jacobian $\jmath_{s \rightarrow t} (x) : = \det (D_x \Phi_{s
    \rightarrow t} (x))$ satisfies the identity
    \begin{equation}
      \jmath_{s \rightarrow t} (x) = \exp \left( \int_s^t \tmop{div} A (\mathd
      r, \Phi_{s \rightarrow r} (x)) \right) \label{sec4.1 formula jacobian}
    \end{equation}
    and there exists a constant $C = C (\alpha, \beta, T, \| A \|_{\alpha, 1 +
    \beta}) > 0$ such that
    \[ C^{- 1} \leqslant \jmath_{s \rightarrow t} (x) \leqslant C \quad
       \forall \, (s, t, x) \in \Delta_2 \times \mathbb{R}^d . \]
    In particular, $\Phi$ is a Lagrangian flow of diffeomorphisms.
  \end{enumerateroman}
\end{theorem}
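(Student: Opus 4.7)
The fact that the YDE admits a flow of diffeomorphisms and that its Jacobian satisfies the variational equation \eqref{sec4.1 finite dim variational equation} is simply the specialisation of Theorem~\ref{sec4.1 main thm infinite dim} to $V=\mathbb{R}^d$: in finite dimension $D_x\Phi_{s\to t}(x)\in\mathcal{L}(V;V)$ is identified with a $d\times d$ matrix and the abstract composition $\circ$ in \eqref{sec4.1 variational eq infinite dim} becomes ordinary matrix multiplication. So the real content of the theorem is the Liouville-type identity \eqref{sec4.1 formula jacobian}, its two-sided bound, and the ensuing Lagrangian property.

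To establish \eqref{sec4.1 formula jacobian}, my plan is to apply the It\^o-type formula \eqref{sec2 ito formula3} of Proposition~\ref{sec2 prop ito formula} to the smooth, time-independent function $F(M):=\det M$ along the matrix-valued path $M_t:=J^x_{s\to t}$. Setting $\Lambda^x_{s,t}:=\int_s^t DA(\mathd r,\Phi_{s\to r}(x))$, which by Theorem~\ref{sec2 thm definition young integral} is a well-defined $\alpha$-H\"older $\mathbb{R}^{d\times d}$-valued path, the variational equation recasts as the linear matrix YDE $J^x_{s\to t}=I+\int_s^t \Lambda^x_{\mathd r}\,J^x_{s\to r}$, driven by $B(t,M):=\Lambda^x_{s,t}M$, which is $C^\infty$ in $M$. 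Combining \eqref{sec2 ito formula3} with the polynomial identity
\[
D\det(M)(\Lambda M)=\operatorname{tr}\bigl(\operatorname{adj}(M)\,\Lambda M\bigr)=\det(M)\,\operatorname{tr}(\Lambda)
\]
(whose second equality holds on the dense set of invertible $M$ and hence on all of $\mathbb{R}^{d\times d}$ by polynomiality), together with the cyclicity of the trace, yields
\[
\jmath_{s\to t}(x)-1=\int_s^t \jmath_{s\to r}(x)\,\operatorname{div} A(\mathd r,\Phi_{s\to r}(x)).
\]

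This is a scalar linear Young equation driven by the $\alpha$-H\"older scalar path $\zeta^x_{s,\cdot}:=\int_s^{\cdot}\operatorname{div} A(\mathd r,\Phi_{s\to r}(x))$. A second application of \eqref{sec2 ito formula3}, now to the exponential function along $\zeta^x_{s,\cdot}$, shows that $t\mapsto \exp(\zeta^x_{s,t})$ solves the same scalar YDE with initial value $1$; uniqueness, as a special case of Theorem~\ref{sec3 thm wellposedness}, then produces \eqref{sec4.1 formula jacobian}. The two-sided bound $C^{-1}\le\jmath\le C$ reduces to a uniform bound on $|\zeta^x_{s,t}|$, which follows from \eqref{sec2 nonlinear integral estimate1} applied to $\operatorname{div} A$ (using $\|\operatorname{div} A\|_{\alpha,\beta}\lesssim\|DA\|_{\alpha,\beta}$) combined with the uniform H\"older bound on $\Phi_{s\to\cdot}(x)$ supplied by Corollary~\ref{sec4.1 corollary lipschitz flow}.

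Finally, the Lagrangian property is a routine change-of-variables argument: since $\Phi_{s\leftarrow t}$ is $C^1$ with Jacobian $\jmath_{s\to t}(\Phi_{s\leftarrow t}(\cdot))^{-1}$ (well-defined because $\jmath>0$), for any Borel $E\subset\mathbb{R}^d$ one has $\lambda_d(\Phi_{s\leftarrow t}(E))=\int_E \jmath_{s\to t}(\Phi_{s\leftarrow t}(y))^{-1}\,\mathd y$, and the two-sided bound on $\jmath$ converts into the desired double inequality. The main delicate point I foresee is the simultaneous interpretation of the variational equation as a nonlinear Young YDE fitting the hypotheses of Proposition~\ref{sec2 prop ito formula} and the algebraic collapse of $D\det(M)(\Lambda M)$ into $\det(M)\operatorname{tr}(\Lambda)$; once this is in place everything reduces to quantitative bookkeeping with estimates already established in Sections~\ref{sec2} and~\ref{sec3}.
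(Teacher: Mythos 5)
Your argument is correct, and it takes a genuinely different route from the paper's in both parts. For part~(i) you invoke Theorem~\ref{sec4.1 main thm infinite dim}; this is legitimate and incurs no circularity, since that theorem is proved in Section~\ref{sec4.2} via the It\^o map and does not rely on the present statement, whereas the paper instead gives a direct finite-dimensional argument via Ascoli--Arzel\`a and the Fr\'echet differentiability of the Young integral (Proposition~\ref{sec2 prop frechet differentiability}). For part~(ii) the difference is more substantive: the paper approximates $A$ by $A^n$ with continuous $\partial_t A^n$ (Lemma~\ref{sec2 remark approximation A}), applies the classical Liouville formula to the resulting ODEs, and passes to the limit using the Comparison Principle and continuity of the nonlinear Young integral. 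You instead prove \eqref{sec4.1 formula jacobian} intrinsically: you recast \eqref{sec4.1 finite dim variational equation} as a linear matrix YDE driven by $\Lambda^x_{s,\cdot}:=\int_s^\cdot DA(\mathd r,\Phi_{s\to r}(x))\in C^{\alpha}_t\mathbb{R}^{d\times d}$, apply the It\^o-type formula \eqref{sec2 ito formula3} with $F=\det$, collapse $D\det(M)(\Lambda M)=\det(M)\,\tmop{tr}(\Lambda)$ via Jacobi's formula and polynomial density, obtain the scalar linear YDE $\mathd\jmath_{s\to\cdot}=\jmath_{s\to\cdot}\,\mathd\zeta^x_{s,\cdot}$ with $\zeta^x_{s,\cdot}=\int_s^\cdot\tmop{div}A(\mathd r,\Phi_{s\to r}(x))$, check by a second application of \eqref{sec2 ito formula3} that $\exp(\zeta^x_{s,\cdot})$ solves the same YDE from the same initial value, and conclude by uniqueness (Corollary~\ref{sec3.2 cor local uniqueness}, since the linear driver lies in $C^{\alpha}_t C^{\beta,\lambda}_{\mathbb{R}}\cap C^{\alpha}_t C^{1+\beta}_{\tmop{loc}}$). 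Both routes are sound; the paper's buys brevity by outsourcing the Liouville algebra to the smooth case, while yours is more self-contained and makes transparent that the identity is purely a consequence of nonlinear Young calculus, at the cost of checking the hypotheses of Proposition~\ref{sec2 prop ito formula} twice (for $\det$ and $\exp$, both handled by $\alpha>1/2$, taking $\beta'=1$). In both treatments the two-sided bound on $\jmath$ and the Lagrangian property are downstream of the uniform control of $\llbracket\Phi_{s\to\cdot}(x)\rrbracket_{\alpha}$ supplied by Corollary~\ref{sec4.1 corollary lipschitz flow}.
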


\begin{proof}
  For simplicity we will prove all the statements for $s = 0$, the general
  case being similar. By Corollary~\ref{sec4.1 corollary lipschitz flow}, the
  existence of a locally Lipschitz flow $\Phi$ is known; to show
  differentiability, it is enough to establish existence and continuity of the
  Gateaux derivatives.
  
  Fix $x, v \in \mathbb{R}^d$ and consider for any $\varepsilon > 0$ the map
  $\eta^{\varepsilon}_t \assign \varepsilon^{- 1} (\Phi_{0 \rightarrow \cdot}
  (x + \varepsilon_n v) - \Phi_{0 \rightarrow \cdot} (x))$; by
  estimate~{\eqref{sec4.1 lipschitz flow}}, the family $\{ \eta^{\varepsilon}
  \}_{\varepsilon > 0}$ is bounded in $C^{\alpha}_t \mathbb{R}^d$. Thus by
  Ascoli-Arzel{\`a} we can extract a subsequence $\varepsilon_n \rightarrow 0$
  such that $\eta^{\varepsilon} \rightarrow \eta$ in $C^{\alpha - \delta}_t$
  for some $\eta \in C^{\alpha}_t$ and any $\delta > 0$. Choose $\delta > 0$
  small enough such that $(\alpha - \delta) (1 + \beta) > 1$; using the fact
  that the map $F (y) = \int_0^{\cdot} A (\mathd s, y_s)$ is differentiable
  from $C^{\alpha - \delta}_t$ to itself by Proposition~\ref{sec2 prop frechet
  differentiability}, with $D F$ given by~{\eqref{sec2 frechet derivative
  young}}, by chain rule we deduce that
  
  \begin{align*}
    \eta_{\cdot} & = \lim_{\varepsilon_n \rightarrow 0} \frac{\Phi_{0
    \rightarrow \cdot} (x + \varepsilon_n v) - \Phi_{0 \rightarrow \cdot}
    (x)}{\varepsilon_n}\\
    & = \, v + \lim_{\varepsilon_n \rightarrow 0} \frac{F (\Phi_{0
    \rightarrow \cdot} (x + \varepsilon_n v)) - F (\Phi_{0 \rightarrow \cdot}
    (x))}{\varepsilon_n}\\
    & = v + D F (\Phi_{0 \rightarrow \cdot} (x)) (\eta_{\cdot}) ;
  \end{align*}
  
  namely, $\eta$ satisfies the YDE
  \begin{equation}
    \eta_t = v + \int_0^t D_x A (\mathd r, \Phi_{0 \rightarrow r} (x)) \eta_r
    \label{sec4.1 proof flow eq1}
  \end{equation}
  whose meaning was defined in Remark~\ref{sec2 remark other integrals}.
  Equation~{\eqref{sec4.1 proof flow eq1}} is an affine YDE, which admits a
  unique solution by Corollary~\ref{sec3.2 cor local uniqueness}; moreover
  it's easy to check that the unique solution must have the form $\eta_t =
  J_{0 \rightarrow t}^x v$, where $J_{0 \rightarrow \cdot}^x \in C^{\alpha}_t
  \mathbb{R}^{d \times d}$ is the unique solution to the affine $\mathbb{R}^{d
  \times d}$-valued YDE
  \[ J^x_{0 \rightarrow t} = I + \int_0^t D_x A (\mathd r, \Phi_{0 \rightarrow
     r} (x)) J^x_{0 \rightarrow r}, \]
  whose global existence and uniqueness follows from Corollary~\ref{sec3.2 cor
  local uniqueness} and Theorem~\ref{sec3.3 thm gronwall estimate YDE}. As the
  reasoning holds for any subsequence $\varepsilon_n$ we can extract and any
  $v \in \mathbb{R}^d$, we conclude that $\Phi_{0 \rightarrow t} (\cdot)$ is
  Gateaux differentiable with $D \Phi_{0 \rightarrow t} (x) = J^x_{0
  \rightarrow t}$ which satisfies~{\eqref{sec4.1 finite dim variational
  equation}}. A similar argument shows that $J_{0 \rightarrow t}^x$ depends
  continuously on $x$, from which Frech{\'e}t differentiability follows.
  
  Part~\tmtextit{ii.} can be established for instance by means of an
  approximation procedure; indeed by Lemma~\ref{sec2 remark approximation A},
  given $A \in C^{\alpha}_t C^{1 + \beta}_x$, we can find $A^n \in C^1_t C^{1
  + \beta}_x$ such that $A^n \rightarrow A$ in $C^{\alpha -}_t C^{1 + \beta
  -}_x$ and by Theorem~\ref{sec3.3 comparison principle}, the solutions
  $y^n_{\cdot} = \Phi_{0 \rightarrow \cdot}^n (x)$ associated to $(x, A^n)$
  converge to $\Phi_{0 \rightarrow \cdot} (x)$ associated to $(x, A)$.
  Moreover for $A^n$ the YDE is meaningful as the more classical ODE
  associated to $\partial_t A^n$, so we can apply to it all the classical
  results from ODE theory; the Jacobian associated to $A^n$ is given by
  \[ \det (D_x \Phi_{0 \rightarrow t}^n (x)) = \exp \left( \int_0^t \tmop{div}
     \partial_t A^n (r, \Phi_{0 \rightarrow r}^n (x)) \mathd r \right) = \exp
     \left( \int_0^t \tmop{div} A^n (\mathd r, \Phi_{0 \rightarrow r}^n (x))
     \right) . \]
  Passing to the limit as $n \rightarrow \infty$, by the continuity of
  nonlinear Young integrals, we obtain~{\eqref{sec4.1 formula jacobian}}.
  Moreover by equation~{\eqref{sec4.1 lipschitz flow}} we have the estimate
  \[ \sup_{t \in [0, T]} \left| \int_0^t \tmop{div} A (\mathd r, \Phi_{0
     \rightarrow r} (x)) \right| \lesssim \| \tmop{div} A \|_{\alpha, \beta}
     (1 + \llbracket \Phi_{0 \rightarrow \cdot} (x) \rrbracket_{\alpha})
     \lesssim \| A \|_{\alpha, 1 + \beta}, \]
  which gives Lagrangianity.
\end{proof}

It's possible to show that the flow inherits regularity from the drift, namely
that to a spatially more regular $A$ corresponds a more regular $\Phi$.

\begin{theorem}
  \label{sec4.1 thm higher regularity}Let $n \in \mathbb{N}$, $\alpha, \beta
  \in (0, 1)$ be such that $\alpha (1 + \beta) > 1$ and assume $A \in
  C^{\alpha}_t C^{n + \beta}_V$. Then the flow $\Phi$ associated to $A$ is
  locally $C^n$-regular.
\end{theorem}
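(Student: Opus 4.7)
My plan is to proceed by induction on $n$, with the base case $n=1$ being exactly Theorem~\ref{sec4.1 main thm infinite dim}: a flow of diffeomorphisms exists and its spatial derivative $D_x \Phi_{s\to t}(x) = J^x_{s\to t}$ solves the variational equation~\eqref{sec4.1 variational eq infinite dim}. For the inductive step, suppose the result holds at level $n$; given $A \in C^{\alpha}_t C^{n+1+\beta}_V$, I want to show $\Phi_{s\to t}$ is locally $C^{n+1}$. The key idea is to view the pair (flow, Jacobian) as the flow of an \emph{augmented} YDE on a product space, whose drift has one less spatial derivative; applying the induction hypothesis to this augmented problem yields $C^n$ regularity of the Jacobian in the initial datum, hence $C^{n+1}$ regularity of $\Phi$ itself.

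Concretely, on the product Banach space $\tilde V := V \oplus \mathcal{L}(V;V)$ define
\[ \tilde A(t, (y,M)) := \bigl( A(t,y),\ D A(t,y)\circ M \bigr). \]
The first component inherits the regularity of $A$, while the second is $C^{n+\beta}$ in $y$ (because $DA \in C^{\alpha}_t C^{n+\beta}_V$) and linear (hence smooth) in $M$, so $\tilde A \in C^{\alpha}_t C^{n+\beta}_{\tilde V,\tmop{loc}}$, with the same exponents $\alpha,\beta$ satisfying $\alpha(1+\beta)>1$. The augmented YDE is globally well-posed: the $y$-component reduces to the original YDE solved by $\Phi_{s\to\cdot}(x)$, and plugging this in, the $M$-component becomes an affine YDE in $\mathcal{L}(V;V)$ to which Theorem~\ref{sec3.3 thm gronwall estimate YDE} applies, yielding the unique global solution $M_t = J^x_{s\to t}\circ M_0$. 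Hence the augmented flow satisfies
\[ \tilde\Phi_{s\to t}(x, M_0) = \bigl( \Phi_{s\to t}(x),\ J^x_{s\to t}\circ M_0 \bigr). \]

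By the inductive hypothesis applied to $\tilde A$, the augmented flow $\tilde\Phi_{s\to t}$ is locally $C^n$ on $\tilde V$. Taking $M_0 = I$ and projecting onto the second component, the map $x \mapsto J^x_{s\to t}$ is therefore locally $C^n$ from $V$ into $\mathcal{L}(V;V)$. Since $J^x_{s\to t} = D_x\Phi_{s\to t}(x)$ by Theorem~\ref{sec4.1 main thm infinite dim}, this says precisely that $D_x\Phi_{s\to t}$ is $C^n$, i.e.\ $\Phi_{s\to t}$ is locally $C^{n+1}$. Regularity of the inverse $\Phi_{s\leftarrow t}$ follows by applying the same argument to the time-reversed drift $\tilde A(t,z) = A(T-t,z)$, which has the same regularity class, using Lemma~\ref{sec4.1 invariance shift inversion}.

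The main obstacle, and the step deserving care, is verifying that $\tilde A$ genuinely lies in $C^{\alpha}_t C^{n+\beta}_{\tilde V,\tmop{loc}}$ in the sense of Definition~\ref{sec2 defn field2}: one needs to check that the Fr\'echet derivatives $D^k_{(y,M)}\tilde A$ up to order $n$ are locally $\beta$-H\"older on $\tilde V$ with the correct time-increment bound, which reduces by the product/linear structure to bounds on $D^{k+1} A$ and $D^k A$ for $k\le n$. A secondary technical point is that $\mathcal{L}(V;V)$ need not be separable when $V$ is infinite-dimensional; this is resolved by restricting the initial datum of the $M$-component to the separable subspace generated by the range of $I$ under the relevant operators, or equivalently by testing against a fixed direction $v \in V$ and running the induction on the linear variational equation~\eqref{sec4.1 proof flow eq1} satisfied by $\eta = J^x_{s\to\cdot} v$ in place of the full operator equation.
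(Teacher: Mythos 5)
The paper does not prove Theorem~\ref{sec4.1 thm higher regularity} itself; it refers to~\cite{galeatigubinelli1},~\cite{perkowski} and~\cite{lejay}, where the argument proceeds (as in Theorem~\ref{sec4.1 main thm finite dim}) by directly differentiating the variational equation~\eqref{sec4.1 variational eq infinite dim} order by order. Your augmented-system induction — folding the pair $(\Phi, J)$ into a single YDE on $V\oplus\mathcal{L}(V;V)$ with drift $\tilde A(t,(y,M))=(A(t,y),\,DA(t,y)\circ M)$, and observing that $\tilde A$ loses exactly one spatial derivative relative to $A$ — is a genuinely different and conceptually cleaner way to organize the induction. The identification $\tilde\Phi_{s\to t}(x,M_0)=(\Phi_{s\to t}(x),J^x_{s\to t}\circ M_0)$ is correct, and the reduction of ``$\Phi$ is $C^{n+1}$'' to ``$x\mapsto J^x_{s\to t}$ is $C^n$'' is precisely what the definition of Fr\'echet $C^{n+1}$ requires, so the skeleton is sound.

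There are, however, two real loose ends beyond the one you flag. First, the induction hypothesis as you state it is Theorem~\ref{sec4.1 thm higher regularity} for $A\in C^{\alpha}_tC^{n+\beta}_V$ with \emph{global} bounds, whereas $\tilde A$ only lies in $C^{\alpha}_tC^{n+\beta}_{\tilde V,\mathrm{loc}}$ with linear growth in $M$ (the $\|\tilde A_{s,t}\|_{n+\beta,R}$ bound carries a factor $1+R$ from $\|M\|\leqslant R$); note this growth exponent $\lambda=1$ violates $\beta+\lambda\leqslant 1$, so Proposition~\ref{sec3.3 proposition bounds growth condition} does not apply directly, and you must lean on the affine structure in $M$ (Theorem~\ref{sec3.3 thm gronwall estimate YDE}) to get global bounds and then cut off $\tilde A$ outside a bounded ball before invoking the inductive hypothesis. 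This localisation is of the kind the paper repeatedly waves at, but you should say it, and one should note that it presupposes the existence of $C^{n+\beta}$ cut-off functions on $\tilde V$, which is automatic for Hilbert $V$ but not for arbitrary Banach spaces. Second, your separability workaround — running the induction on $V\oplus V$ with $\eta=J^x_{s\to\cdot}v$ for fixed $v$ — gives that $(x,v)\mapsto J^x_{s\to t}(v)$ is jointly $C^n$ and linear in $v$, but upgrading this to $C^n$ of $x\mapsto J^x_{s\to t}$ in the operator-norm topology of $\mathcal{L}(V;V)$ requires continuity of $D^k_xJ^x(v)$ in $x$ \emph{uniformly} over $\|v\|\leqslant 1$, which does not follow from joint continuity alone in infinite dimensions; you need to extract this uniformity from the $v$-linearity together with the quantitative bounds of Theorem~\ref{sec3.3 thm gronwall estimate YDE}. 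Neither gap is fatal, but both need an explicit argument before the induction closes.
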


We omit the proof, which follows similar lines to those of
Theorems~\ref{sec4.1 main thm infinite dim} and~\ref{sec4.1 main thm finite
dim} and is mostly technical; we refer the interested reader to
{\cite{galeatigubinelli1}},~{\cite{perkowski}} and the discussion at the end
of Section~3 from~{\cite{lejay}}.

\begin{remark}
  In line with Section~\ref{sec3.4}, one can obtain sufficient conditions for
  the existence of a regular flow under the additional assumption $\partial_t
  A \in C ([0, T] \times V ; V)$; in this case if $A \in C^{\alpha}_t C^{n +
  \beta}_V$, then it has a locally $C^n$-regular flow, see the discussion in
  Section~4.3 from~{\cite{galeatigubinelli1}}. Similar reasonings allow to
  establish existence of a flow also for the equations treated in
  Section~\ref{sec3.5}.
\end{remark}

\subsection{Differentiability of the It\^{o} map}\label{sec4.2}

Denote by $\Phi^A_{s \rightarrow \cdot} (x)$ the solution to the YDE
associated to $(x, A)$; the aim of this section is to study the dependence of
the flow $\Phi^A$ as a function of $A \in C^{\alpha}_t C^{1 + \beta}_V$,
namely to identify $D_A \Phi^A_{s \rightarrow \cdot} (x)$.

For simplicity we will restrict to the case $s = 0$; we will actually fix $A
\in C^{\alpha}_t C^{1 + \beta}_V$, consider $\Phi^{A + \varepsilon B}$ with
$B$ varying and set $X^x_t \assign \Phi^A_{0 \rightarrow t} (x)$.

\begin{theorem}
  \label{sec4 differentiability ito map}Let $\alpha (1 + \beta) > 1$, $x_0 \in
  V$ and consider the It\^{o} map $\Phi^{\cdot}_{0 \rightarrow \cdot} (x) :
  C^{\alpha}_t C^{1 + \beta}_V \rightarrow C^{\alpha}_t V$, $A \mapsto
  \Phi^A_{0 \rightarrow \cdot} (x)$. Then $\Phi^{\cdot}_{0 \rightarrow \cdot}
  (x)$ is Frech{\'e}t differentiable and for any $B \in C^{\alpha}_t C^{1 +
  \beta}_V$ the Gateaux derivative
  \[ D_A \Phi^A_{0 \rightarrow \cdot} (x) (B) = \lim_{\varepsilon \rightarrow
     0} \frac{1}{\varepsilon} (\Phi^{A + \varepsilon B}_{0 \rightarrow \cdot}
     (x) - \Phi_{0 \rightarrow \cdot}^A (x)) \in C^{\alpha}_t V \]
  satisfies the affine YDE
  \begin{equation}
    Y^x_t = \int_0^t D A (\mathd s, X^x_s) (Y^x_s) + \int_0^t B (\mathd s,
    X^x_s) \quad \forall \, t \in [0, T]  \label{sec4 directional derivatives
    ito}
  \end{equation}
  and is given explicitly by
  \begin{equation}
    D_A \Phi^A_{0 \rightarrow t} (x) (B) = J^x_{0 \rightarrow t} \int_0^t
    (J^x_{0 \rightarrow s})^{- 1} B (\mathd s, X^x_s) \quad \forall \, t \in
    [0, T] \label{sec4 directional derivatives2}
  \end{equation}
  where $J^x_{0 \rightarrow \cdot}$ is the unique solution to~{\eqref{sec4.1
  variational eq infinite dim}} and $(J^x_{0 \rightarrow s})^{- 1}$ denotes
  its inverse as an element of $L (V)$.
\end{theorem}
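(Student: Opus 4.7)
The plan is to first identify the candidate Gateaux derivative $Y^x$ by a direct difference-quotient argument, then upgrade to Fréchet differentiability by continuity, and finally derive the explicit formula \eqref{sec4 directional derivatives2} via a variation-of-parameters argument using the invertibility of $J^x_{0\to\cdot}$.

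First, I would fix $A,B\in C^\alpha_t C^{1+\beta}_V$ and define $e^\varepsilon_t \assign \Phi^{A+\varepsilon B}_{0\to t}(x) - X^x_t$. Subtracting the two defining YDEs and applying Corollary~\ref{sec2 corollary frechet} to the pair $(A,\Phi^{A+\varepsilon B},X^x)$ (the drift $A$ is shared), one gets
\[
e^\varepsilon_t \;=\; \int_0^t v^\varepsilon_{\mathd s}(e^\varepsilon_s) + \varepsilon \int_0^t B(\mathd s,\Phi^{A+\varepsilon B}_{0\to s}(x)),
\qquad
v^\varepsilon_t \assign \int_0^t\!\int_0^1 DA(\mathd s, X^x_s+\lambda e^\varepsilon_s)\,\mathd\lambda.
\]
Dividing by $\varepsilon$, the rescaled path $\eta^\varepsilon \assign e^\varepsilon/\varepsilon$ solves an affine YDE with coefficient $v^\varepsilon$ and forcing $\int_0^\cdot B(\mathd s,\Phi^{A+\varepsilon B}_{0\to s}(x))$. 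By the comparison principle Theorem~\ref{sec3.3 comparison principle} applied with $A^1=A^2=A$ and varying initial data/drift, $\Phi^{A+\varepsilon B}_{0\to\cdot}(x)\to X^x$ in $C^\alpha_t V$ as $\varepsilon\to 0$; by Corollary~\ref{sec2 corollary frechet} and Point~4 of Theorem~\ref{sec2 thm definition young integral}, this yields $v^\varepsilon\to v^0$ in $C^\alpha_t \mathcal{L}(V;V)$ with $v^0_t=\int_0^t DA(\mathd s,X^x_s)$, and also convergence of the forcing term to $\int_0^\cdot B(\mathd s,X^x_s)$. Applying Theorem~\ref{sec3.3 thm gronwall estimate YDE} to the difference $\eta^\varepsilon-Y^x$, where $Y^x$ solves \eqref{sec4 directional derivatives ito} (global wellposedness of which follows from Corollary~\ref{sec3.2 cor local uniqueness}), gives $\eta^\varepsilon\to Y^x$ in $C^\alpha_t V$. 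This establishes the Gateaux derivative.

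To upgrade to Fréchet differentiability, I would argue that the map $(A,B)\mapsto Y^x[A,B]$, viewed through \eqref{sec4 directional derivatives ito}, is jointly continuous on bounded sets of $C^\alpha_t C^{1+\beta}_V \times C^\alpha_t C^{1+\beta}_V$ (again via the comparison principle, noting that $DA$ depends continuously on $A$ in $C^\alpha_t C^\beta_{V,\mathcal{L}(V;V)}$) and is obviously linear in $B$. Existence and continuity of the Gateaux derivative together with linearity then gives Fréchet differentiability, exactly as in the end-of-proof paragraph of Proposition~\ref{sec2 prop frechet differentiability}.

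Finally, for the explicit formula \eqref{sec4 directional derivatives2}, I would use variation of parameters. From \eqref{sec4.1 variational eq infinite dim} the Jacobian $J^x_{0\to\cdot}$ lies in $C^\alpha_t \mathcal{L}(V;V)$ and, by Theorem~\ref{sec4.1 main thm infinite dim}, $\Phi_{s\to t}=\Phi_{0\to t}\circ\Phi_{0\leftarrow s}$ is a diffeomorphism, so $K^x_{0\to t}\assign (J^x_{0\to t})^{-1}$ exists in $C^\alpha_t \mathcal{L}(V;V)$. Differentiating the identity $K^x J^x = I$ (applying the product rule from the Remark after Proposition~\ref{sec2 prop ito formula}, or equivalently the Itô formula \eqref{sec2 ito formula3}) yields that $K^x$ satisfies $K^x_{0\to t} = I - \int_0^t K^x_{0\to s}\,DA(\mathd s,X^x_s)$. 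Applying the product rule once more to $t\mapsto K^x_{0\to t}\,Y^x_t$, using \eqref{sec4 directional derivatives ito}, the two $DA$-terms cancel and one obtains
\[
K^x_{0\to t}\,Y^x_t \;=\; \int_0^t K^x_{0\to s}\,B(\mathd s,X^x_s),
\]
which is \eqref{sec4 directional derivatives2} after multiplying by $J^x_{0\to t}$. The main obstacle is justifying the product-rule manipulations rigorously in the Young setting: the integrand $K^x$ is only $\alpha$-Hölder in time, so one must invoke the bilinear variant of the nonlinear Young integral described in Remark~\ref{sec2 remark other integrals} (applied to the composition of operators) to make sense of $\int K^x\,B(\mathd s,X^x_s)$ and to verify the cancellation via the sewing lemma.
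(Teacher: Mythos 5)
Your argument for the existence of the Gateaux derivative is sound and essentially parallel to the paper's, though organized slightly differently: you extract the affine YDE for the unrescaled difference $e^\varepsilon = X^{\varepsilon,x}-X^x$ via Corollary~\ref{sec2 corollary frechet}, divide by $\varepsilon$, and then treat the resulting affine YDEs as a family with $\varepsilon$-dependent coefficient $v^\varepsilon\to v^0$ and forcing $\int_0^\cdot B(\mathd s,X^{\varepsilon,x}_s)\to\int_0^\cdot B(\mathd s,X^x_s)$; the paper instead puts all the $\varepsilon$-dependence into the forcing term of an affine YDE with the fixed coefficient $\int_0^t DA(\mathd s,X^x_s)$ and controls the extra piece $\psi^{\varepsilon,1}$ via the sewing lemma and the interpolation Lemma~\ref{appendix lemma interpolation}. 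Both decompositions work; yours requires an intermediate rewrite of $\eta^\varepsilon-Y^x$ as an affine YDE driven by $v^\varepsilon$ with perturbation $\int_0^\cdot(v^\varepsilon-v^0)_{\mathd s}(Y^x_s)$, which you implicitly assume but do not spell out. The upgrade from Gateaux to Fréchet via continuity and linearity is the same as the paper's.

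There is, however, a genuine circularity in your derivation of the explicit formula~\eqref{sec4 directional derivatives2}. You obtain the invertibility of $J^x_{0\to\cdot}$ by invoking Theorem~\ref{sec4.1 main thm infinite dim}, reasoning that $\Phi$ is a flow of diffeomorphisms so its Jacobian is invertible. But the paper explicitly \emph{postpones} the proof of Theorem~\ref{sec4.1 main thm infinite dim} to Section~\ref{sec4.2}, where it is deduced as a consequence of the very theorem you are proving. Using it here creates the cycle Theorem~\ref{sec4 differentiability ito map} $\Rightarrow$ Theorem~\ref{sec4.1 main thm infinite dim} $\Rightarrow$ Theorem~\ref{sec4 differentiability ito map}. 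Moreover, the "main obstacle" you identify at the end---making rigorous the Young product-rule cancellations for operator-valued paths---is precisely what Lemma~\ref{sec4.2 techlem} is designed to resolve: it shows directly, via the Young product rule, that the solution $N$ to $N_t = \mathrm{Id}-\int_0^t N_s\circ L_{\mathd s}$ is the two-sided inverse of the solution $M$ to $M_t=\mathrm{Id}+\int_0^t L_{\mathd s}\circ M_s$, and then gives the variation-of-constants formula~\eqref{sec4.2 lemma eq4}. Citing that lemma with $L_t=\int_0^t DA(\mathd s,X^x_s)$, $y_0=0$, $\psi_t=\int_0^t B(\mathd s,X^x_s)$ replaces your entire final paragraph, supplies the invertibility of $J^x$ without circularity, and dispenses with the product-rule justifications you flag as the difficulty.
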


The proof requires the following preliminary lemma.

\begin{lemma}
  \label{sec4.2 techlem}For any $L \in C^{\alpha}_t L (V)$, there exists a
  unique solution $M \in C^{\alpha}_t L (V)$ to the YDE
  \begin{equation}
    M_t = \tmop{Id}_V + \int_0^t L_{\mathd s} \circ M_s \qquad \forall \, t
    \in [0, T] ; \label{sec4.2 lemma eq1}
  \end{equation}
  moreover $M_t$ is invertible for any $t \in [0, T]$ and $N_{\cdot} \assign
  (M_{\cdot})^{- 1} \in C^{\alpha}_t L (V)$ is the unique solution to
  \begin{equation}
    N_t = \tmop{Id}_V - \int_0^t N_s \circ L_{\mathd s} \qquad \forall \, t
    \in [0, T] . \label{sec4.2 lemma eq2}
  \end{equation}
  Finally, for any $y_0 \in V$ and any $\psi \in C^{\alpha}_t V$, the unique
  solution to the affine YDE
  \begin{equation}
    y_t = y_0 + \int_0^t L_{\mathd s} y_s + \psi_t \label{sec4.2 lemma eq3}
  \end{equation}
  is given by
  \begin{equation}
    y_t = M_t y_0 + M_t \int_0^t N_s \mathd \psi_s . \label{sec4.2 lemma eq4}
  \end{equation}
\end{lemma}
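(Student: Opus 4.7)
The plan is to solve (\ref{sec4.2 lemma eq1}) and (\ref{sec4.2 lemma eq2}) separately as affine YDEs in the Banach space $L(V)$, then derive both $NM = MN = \tmop{Id}_V$ and the variation-of-constants formula from a single ``Young product rule'' obtained by applying the It\^o-type identity (\ref{sec2 ito formula3}) to the time-dependent functional $F(t, Z) \assign N_t \circ Z$.

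\emph{Step 1 (existence of $M$, $N$).} The drifts $\mathcal{A}_M(t, Z) \assign L_t \circ Z$ and $\mathcal{A}_N(t, Z) \assign -Z \circ L_t$ are linear in $Z$, hence Fr\'echet $C^\infty$, and belong to $C^\alpha_t C^{1+\beta}_{L(V)}$ for any $\beta \in (0,1)$. Since $\alpha > 1/2$ is forced by the very meaningfulness of the nonlinear Young integrals appearing in (\ref{sec4.2 lemma eq1})--(\ref{sec4.2 lemma eq2}), I can pick $\beta$ close to $1$ so that $\alpha(1+\beta) > 1$ and invoke Theorem \ref{sec3 thm wellposedness} to obtain unique global solutions $M, N \in C^\alpha_t L(V)$.

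\emph{Step 2 (inversion).} Apply (\ref{sec2 ito formula3}) with $F(t, Z) \assign N_t \circ Z$ and $x = M$ (which solves the YDE with drift $\mathcal{A}_M$). Since $F$ inherits $C^\alpha$ regularity in $t$ from $N$ and is linear (hence $C^\infty$) in $Z$, and $DF(s, Z)(v) = N_s \circ v$, this yields
\[ N_t \circ M_t - \tmop{Id}_V = \int_0^t F(\mathd s, M_s) + \int_0^t N_s \circ L_{\mathd s} \circ M_s. \]
The first summand is by construction the sewing of $F_{s,t}(M_s) = N_{s,t} \circ M_s$; estimate (\ref{sec2 nonlinear integral estimate1}) applied to (\ref{sec4.2 lemma eq2}) gives $\| N_{s,t} + N_s \circ L_{s,t} \| \lesssim |t-s|^{2\alpha}$, and since $2\alpha > 1$, Remark \ref{sec2 remark sewing} identifies this sewing with that of $-N_s \circ L_{s,t} \circ M_s$, i.e.\ with $-\int_0^t N_s \circ L_{\mathd s} \circ M_s$. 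The two terms cancel, so $N_t \circ M_t = \tmop{Id}_V$. For the reverse direction, the analogous computation (applying the same formula with $G(t, Z) = Z \circ N_t$) shows that $P_t \assign M_t \circ N_t$ satisfies the commutator-type affine YDE $P_t = \tmop{Id}_V + \int_0^t (L_{\mathd s} \circ P_s - P_s \circ L_{\mathd s})$; as $P \equiv \tmop{Id}_V$ is an obvious solution and the drift is smooth and linear in $P$, uniqueness from Theorem \ref{sec3 thm wellposedness} forces $M_t \circ N_t = \tmop{Id}_V$. Thus $N_t = M_t^{-1}$.

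\emph{Step 3 (variation of constants).} Existence and uniqueness of $y$ follow directly from Theorem \ref{sec3 thm wellposedness} applied to (\ref{sec4.2 lemma eq3}), viewed as a YDE with drift $A_y(t, z) \assign L_t z + \psi_t$ so that $A_y(\mathd s, y_s) = L_{\mathd s} y_s + \mathd \psi_s$. To derive (\ref{sec4.2 lemma eq4}), apply (\ref{sec2 ito formula3}) once more, this time with $F(t, z) = N_t z$ and $A = A_y$; the same cancellation as in Step 2 between $\int_0^t F(\mathd s, y_s)$ and the $L$-part of $\int_0^t DF(s, y_s)(A_y(\mathd s, y_s))$ leaves
\[ N_t y_t - y_0 = \int_0^t N_s \, \mathd \psi_s, \]
where the right-hand side is a classical Young integral of $N, \psi \in C^\alpha_t$ with $\alpha > 1/2$. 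Multiplying by $M_t$ produces (\ref{sec4.2 lemma eq4}).

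\emph{Main obstacle.} The crux is the sewing-lemma identification $\int_0^t F(\mathd s, M_s) = -\int_0^t N_s \circ L_{\mathd s} \circ M_s$, equivalently the Young product rule $\mathd(N \circ M) = (\mathd N) \circ M + N \circ (\mathd M)$. The only delicate step is controlling the quadratic remainder arising when $N_{s,t}$ is replaced by its ``linear part'' $-N_s \circ L_{s,t}$; this is absorbed by the sewing lemma precisely because $\alpha > 1/2$ renders $|t-s|^{2\alpha}$-errors invisible. Once this product rule is available, the rest is bookkeeping and repeated invocation of Theorem \ref{sec3 thm wellposedness}.
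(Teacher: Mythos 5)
Your proof is correct and follows the same overall route as the paper: construct $M$ and $N$ as solutions to linear (hence $C^\infty$-coefficient) YDEs, use a Young product rule to show the compositions are constant, and derive variation of constants by differentiating $N_t y_t$. The two places where you diverge from the paper are both to the good.

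First, the paper simply invokes ``the product rule for Young integrals'' without proof, whereas you derive it cleanly from the It\^{o}-type formula~\eqref{sec2 ito formula3} together with Remark~\ref{sec2 remark sewing}: writing $F(t,Z)=N_t\circ Z$, the term $\int_0^t F(\mathd s, M_s)$ is the sewing of $N_{s,t}\circ M_s$, and since $N$ solves~\eqref{sec4.2 lemma eq2}, estimate~\eqref{sec2 nonlinear integral estimate1} gives $N_{s,t}=-N_s\circ L_{s,t}+O(|t-s|^{2\alpha})$; as $2\alpha>1$, the sewings agree. This is exactly the verification that lies behind the phrase ``Young product rule'' and is worth spelling out.

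Second, and more substantively, you close a genuine gap. The paper establishes only $N_t\circ M_t=\tmop{Id}_V$ and then asserts $N_t=M_t^{-1}$, but in an infinite-dimensional $V$ a left inverse need not be a two-sided inverse (think of the shift on $\ell^2$). Your additional step — observing that $P_t\assign M_t\circ N_t$ solves the commutator-type affine YDE $P_t=\tmop{Id}_V+\int_0^t(L_{\mathd s}\circ P_s-P_s\circ L_{\mathd s})$, which has $P\equiv\tmop{Id}_V$ as the unique solution — is precisely what is needed to conclude that $M_t$ is invertible with inverse $N_t$. This is a correct and necessary supplement to the argument as written in the paper.
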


\begin{proof}
  Setting $A (t, M) \assign L_t \circ M$, $A \in C^{\alpha}_t C^2_{L (V),
  \tmop{loc}}$ and so existence and uniqueness of a global solution
  to~{\eqref{sec4.2 lemma eq1}} follows from Corollary~\ref{sec3.2 cor local
  uniqueness} and Theorem~\ref{sec3.3 thm gronwall estimate YDE}; similarly
  for~{\eqref{sec4.2 lemma eq2}} with $\tilde{A} (t, N) = N \circ L_t$. Let
  $M_{\cdot}, N_{\cdot} \in C^{\alpha}_t L (V)$ be solution respectively
  to~{\eqref{sec4.2 lemma eq1}},~{\eqref{sec4.2 lemma eq2}}, we claim that
  they are inverse of each other. Indeed by the product rule for Young
  integrals it holds
  \[ \mathd (N_t \circ M_t) = (\mathd N_t) \circ M_t + N_t \circ (\mathd M_t)
     = - N_t \circ L_{\mathd t} \circ M_t + N_t \circ L_{\mathd t} \circ M_t =
     0 \]
  which implies $N_t \circ M_t = N_0 \circ M_0 = \tmop{Id}_V$ and thus $N_t =
  (M_t)^{- 1}$. Let $y_{\cdot} \in C^{\alpha}_t V$ be the unique solution
  to~{\eqref{sec4.2 lemma eq3}}, whose global existence and uniqueness follows
  as above, and set $z_t = N_t y_t$; then again by Young product rule it holds
  $\mathd z_t = N_t \mathd \psi_t$ and thus
  \[ N_t y_t = z_t = z_0 + \int_0^t \mathd z_s = y_0 + \int_0^t N_s \mathd
     \psi_s \]
  which gives~{\eqref{sec4.2 lemma eq4}}.
\end{proof}

\begin{proof}[of Theorem~\ref{sec4 differentiability ito map}]
  Given $A, B \in C^{\alpha}_t C^{1 + \beta}_V$, it is enough to show that
  \[ \lim_{\varepsilon \rightarrow 0} \frac{\Phi^{A + \varepsilon B}_{0
     \rightarrow \cdot} (x) - \Phi_{0 \rightarrow \cdot}^A (x)}{\varepsilon} 
     \text{ exists in } C_t^{\alpha} V \]
  and that it is a solution to~{\eqref{sec4 directional derivatives ito}}.
  Once this is shown, we can apply Lemma~\ref{sec4.2 techlem} for the choice
  $L_t = \int_0^t D_x A (\mathd s, X^x_s)$, $y_0 = 0$ and $\psi_t = \int_0^t B
  (\mathd s, X^x_s)$ to deduce that the limit is given by formula~{\eqref{sec4
  directional derivatives2}}, which is meaningful since $J^x_{0 \rightarrow
  \cdot}$ is defined as the solution to~{\eqref{sec4.2 lemma eq1}} for such
  choice of $L$ and is therefore invertible. The explicit formula~{\eqref{sec4
  directional derivatives2}} for the Gateaux derivatives readily implies
  existence and continuity of the Gateux differential $D_A \Phi^A_{0
  \rightarrow \cdot} (x)$ and thus also Frech{\'e}t differentiability.
  
  In order to prove the claim, let $Y^x \in C^{\alpha}_t V$ be the solution
  to~{\eqref{sec4 directional derivatives ito}}, which exists and is unique by
  Lemma~\ref{sec4.2 techlem}; then we need to show that
  \[ \lim_{\varepsilon \rightarrow 0} \left\| \frac{\Phi^{A + \varepsilon
     B}_{0 \rightarrow \cdot} (x) - X^x_{\cdot}}{\varepsilon} - Y^x_{\cdot}
     \right\|_{\alpha} = 0. \]
  Set $X^{\varepsilon, x}_{\cdot} \assign \Phi^{A + \varepsilon B}_{0
  \rightarrow \cdot} (x)$; recall that by the Comparison Principle
  (Theorem~\ref{sec3.3 comparison principle}), we have
  \begin{equation}
    \| X^{\varepsilon, x} - X^x \|_{\alpha} \lesssim \varepsilon \| B
    \|_{\alpha, \beta} . \label{sec4 frechet proof eq1}
  \end{equation}
  Setting $e^{\varepsilon} \assign \varepsilon^{- 1} [X^{\varepsilon, x} -
  X^x] - Y^x$, it holds
  
  \begin{align*}
    e^{\varepsilon}_t & = \, \frac{1}{\varepsilon} \left[ \int_0^t (A +
    \varepsilon B) (\mathd s, X^{\varepsilon, x}_s) - A (\mathd s, X^x_s)
    \right] - \int_0^t D A (\mathd s, X^x_s) (Y^x_s) - \int_0^t B (\mathd s,
    X^x_s)\\
    & = \int_0^t \left[ \frac{A (\mathd s, X^{\varepsilon, x}_s) - A (\mathd
    s, X^x_s)}{\varepsilon} - D A (\mathd s, X^x_s) (Y_s) \right] + \int_0^t
    [B (\mathd s, X^{\varepsilon, x}_s) - B (\mathd s, X^x_s)]\\
    & = \int_0^t D A (\mathd s, X^x_s) (e^{\varepsilon}_s) +
    \psi^{\varepsilon}_t
  \end{align*}
  
  where $\psi^{\varepsilon}$ is given by
  
  \begin{align*}
    \psi^{\varepsilon}_t & = \int_0^t \frac{A (\mathd s, X^{\varepsilon, x}_s)
    - A (\mathd s, X^x_s) - D A (\mathd s, X^x_s) (X^{\varepsilon, x}_s -
    X^x_s)}{\varepsilon} + \int_0^t B (\mathd s, X^{\varepsilon, x}_s) - B
    (\mathd s, X^x_s)\\
    & \backassign \psi^{\varepsilon, 1}_t + \psi^{\varepsilon, 2}_t .
  \end{align*}
  
  In order to conclude, it is enough to show that $\| \psi^{\varepsilon}
  \|_{\alpha} \rightarrow 0$ as $\varepsilon \rightarrow 0$, since then we can
  apply the usual a priori estimates from Theorem~\ref{sec3.3 thm gronwall
  estimate YDE} to $e^{\varepsilon}$, which solves an affine YDE starting at
  $0$. We already know that $X^{\varepsilon, x} \rightarrow X^x$ as
  $\varepsilon \rightarrow 0$, which combined with the continuity of nonlinear
  Young integrals implies that $\psi^{\varepsilon, 2}_t \rightarrow 0$ as
  $\varepsilon \rightarrow 0$. Observe that $\psi^{\varepsilon, 1} =
  \mathcal{J} (\Gamma^{\varepsilon})$ for
  \[ \Gamma^{\varepsilon}_{s, t} = \varepsilon^{- 1} [A_{s, t}
     (X^{\varepsilon, x}_s) - A_{s, t} (X^x_s) - D A_{s, t} (X^x_s)
     (X^{\varepsilon, x}_s - X^x_s)] \]
  which by virtue of~{\eqref{sec4 frechet proof eq1}} satisfies
  \[ \| \Gamma^{\varepsilon}_{s, t} \|_V \lesssim \varepsilon^{- 1} \| A_{s,
     t} \|_{C^{1 + \beta}_V}  \| X^{\varepsilon, x}_s - X^x_s \|_V^{1 + \beta}
     \lesssim \varepsilon^{\beta} | t - s |^{\alpha} \| A \|_{\alpha, 1 +
     \beta} \]
  which implies that $\| \Gamma^{\varepsilon} \|_{\alpha} \rightarrow 0$ as
  $\varepsilon \rightarrow 0$. On the other hand we have
  \begin{eqnarray*}
    \| \delta \Gamma^{\varepsilon}_{s, u, t} \|_V & = & \varepsilon^{- 1}
    \| \int_0^1 [D A_{u, t} (X^x_s + \lambda (X^{\varepsilon, x}_s -
    X^x_s)) - D A_{u, t} (X^x_s)] (X^{\varepsilon, x}_s - X^x_s) \mathd
    \lambda\\
    &  & - \int_0^1 [D A_{u, t} (X^x_u + \lambda (X^{\varepsilon, x}_u -
    X^x_u)) - D A_{u, t} (X^x_u)] (X^{\varepsilon, x}_u - X^x_u) \mathd
    \lambda \|_V\\
    & \leqslant & \varepsilon^{- 1} \left\| \int_0^1 [D A_{u, t} (X^x_s +
    \lambda (X^{\varepsilon, x}_s - X^x_s)) - D A_{u, t} (X^x_s)]
    (X^{\varepsilon, x}_{s, u} - X^x_{s, u}) \mathd \lambda \right\|_V\\
    &  & + \varepsilon^{- 1} \left\| \int_0^1 [D A_{u, t} (X^x_u + \lambda
    (X^{\varepsilon, x}_u - X^x_u)) - D A_{u, t} (X^x_s + \lambda
    (X^{\varepsilon, x}_s - X^{\varepsilon}_s))] (X^{\varepsilon, x}_u -
    X^x_u) \mathd \lambda \right\|_V\\
    &  & + \varepsilon^{- 1} \left\| \int_0^1 [D A_{u, t} (X^x_u) - D A_{u,
    t} (X^x_s)] (X^{\varepsilon, x}_u - X^x_u) \mathd \lambda \right\|_V\\
    & \lesssim & \varepsilon^{- 1} | t - s |^{\alpha (1 + \beta)} \| A
    \|_{\alpha, 1 + \beta}  \llbracket X^{\varepsilon, x} - X^x
    \rrbracket_{\alpha} (1 + \llbracket X^{\varepsilon, x} - X^x
    \rrbracket_{\alpha} + \llbracket X^x \rrbracket_{\alpha})\\
    & \lesssim & | t - s |^{\alpha (1 + \beta)} \| A \|_{\alpha, 1 + \beta} 
    (1 + \llbracket X^x \rrbracket_{\alpha})
  \end{eqnarray*}
  which implies that $\| \delta \Gamma^{\varepsilon} \|_{\alpha (1 + \beta)}$
  are uniformly bounded in $\varepsilon$. We can therefore apply
  Lemma~\ref{appendix lemma interpolation} from the Appendix to conclude.
\end{proof}

\begin{remark}
  Although $A \mapsto \Phi^A$ is defined only on $C^{\alpha}_t C^{1 +
  \beta}_V$, observe that $(A, B) \mapsto D_A \Phi^A_{0 \rightarrow \cdot} (x)
  (B)$ as given by formula~{\eqref{sec4 directional derivatives2}} is well
  defined and continuous for any $(A, B) \in C^{\alpha}_t C^{1 + \beta}_V
  \times C^{\alpha}_t C^{\beta}_V$.
\end{remark}

We can use Theorem~\ref{sec4 differentiability ito map} to complete the proof
of Theorem~\ref{sec4.1 main thm infinite dim}.

\begin{proof}[of Theorem~\ref{sec4.1 main thm infinite dim}]
  The existence of a Lipschitz flow $\Phi$ is granted by Corollary~\ref{sec4.1
  corollary lipschitz flow}, so it suffices to show its differentiability and
  the variational equation; for simplicity we take $s = 0$. Existence of a
  unique solution $J^x_{0 \rightarrow \cdot} \in C^{\alpha}_t L (V)$
  to~{\eqref{sec4.1 variational eq infinite dim}} follows from
  Lemma~\ref{sec4.2 techlem} applied to
  \[ L_t = \int_0^t D A (\mathd r, \Phi_{0 \rightarrow r} (x)) \]
  and by linearity it's easy to check that for any $h \in V$, $Y^h_t \assign
  J^x_{0 \rightarrow t} (h)$ is the unique solution to
  \begin{equation}
    Y^h_t = h + \int_0^t D A (\mathd r, \Phi_{0 \rightarrow r} (x)) (Y^h_r) .
    \label{sec4.2 proof eq2}
  \end{equation}
  Therefore in order to conclude it suffices to show that the directional
  derivatives
  \[ D_x \Phi^A_{0 \rightarrow \cdot} (x) (h) = \lim_{\varepsilon \rightarrow
     0} \frac{\Phi^A_{0 \rightarrow \cdot} (x + \varepsilon h) - \Phi^A_{0
     \rightarrow \cdot} (x)}{\varepsilon} \]
  exist in $C^{\alpha}_t V$ and are solutions to~{\eqref{sec4.2 proof eq2}},
  as this implies that $D_x \Phi^A_{0 \rightarrow \cdot} (x) = J^x_{0
  \rightarrow \cdot}$. Now fix $x, h \in V$ and let $y^{\varepsilon} =
  \Phi^A_{0 \rightarrow \cdot} (x + \varepsilon h)$, then $z^{\varepsilon}
  \assign y^{\varepsilon} - \varepsilon h$ solves
  \[ z^{\varepsilon}_t = x + \int_0^t A^{\varepsilon} (\mathd s,
     z^{\varepsilon}_s) \]
  with $A^{\varepsilon} (t, v) = A (t, v + \varepsilon h)$, i.e.
  $z^{\varepsilon}_{\cdot} = \Phi^{A^{\varepsilon}}_{0 \rightarrow \cdot}
  (x)$. It's easy to see that, if the first limit below exists, then
  \[ \lim_{\varepsilon \rightarrow 0} \frac{z^{\varepsilon} -
     z^0}{\varepsilon} = \lim_{\varepsilon \rightarrow 0}
     \frac{y^{\varepsilon} - y^0}{\varepsilon} - h, \quad \lim_{\varepsilon
     \rightarrow 0} \frac{A^{\varepsilon} - A}{\varepsilon} = B, \quad B (t,
     x) = D A (t, x) (h) . \]
  By the Frech{\'e}t differentiability of $A \mapsto \Phi^A_{0 \rightarrow
  \cdot} (x)$ and the chain rule, it holds
  \[ \lim_{\varepsilon \rightarrow 0} \frac{z^{\varepsilon} -
     z^0}{\varepsilon} = \lim_{\varepsilon \rightarrow 0}
     \frac{\Phi^{A^{\varepsilon}}_{0 \rightarrow \cdot} (x) - \Phi^A_{0
     \rightarrow \cdot} (x)}{\varepsilon} = D_A \Phi^A_{0 \rightarrow \cdot}
     (x) (B) \]
  which is characterized as the unique solution $Z^h$ to
  \[ Z^h_t = \int_0^t D A (\mathd r, \Phi^A_{0 \rightarrow r} (x)) (Z^h_r) +
     \int_0^t D A (\mathd r, \Phi^A_{0 \rightarrow r} (x)) (h) . \]
  This implies by linearity that $Y^h = Z^h_t + h = \lim_{\varepsilon}
  \varepsilon^{- 1} (y^{\varepsilon} - y) = D_x \Phi^A_{0 \rightarrow \cdot}
  (x) (h)$ solves exactly~{\eqref{sec4.2 proof eq2}}. The conclusion follows.
\end{proof}

\begin{example}
  Here are some examples of applications of Theorem~\ref{sec4
  differentiability ito map}.
  \begin{enumerateroman}
    \item Consider the simple case of an additive perturbation, i.e. for fixed
    $(x_0, A)$ we want to understand how the solution $x$ of
    \[ x_t = x_0 + \int_0^t A (\mathd s, x_s) + \psi_t \]
    depends on $\psi$, where $\psi \in C^{\alpha}_t V$ with $\psi_0 = 0$.
    Identifying $\psi$ with $B^{\psi} (t, z) = \psi_t$ for all $z \in V$, it
    holds $x_{\cdot} = \Phi^{A + B^{\psi}}_{0 \rightarrow \cdot} (x_0)
    \backassign F (\psi)$, which implies that $F$ is Frech{\'e}t
    differentiable in $0$ with
    \[ D F (0) (\psi)_{\cdot} = J^x_{0 \rightarrow \cdot} \int_0^{\cdot}
       (J^x_{0 \rightarrow s})^{- 1} \mathd \psi_s . \]

    \item Consider the classical Young case, namely $V =\mathbb{R}^d$, with
    \[ A (t, z) = A [\omega] (t, z) = \sigma (z) \omega_t = \sum_{i = 1}^m
       \sigma_i (z) \omega_t^i, \quad (t, z) \in [0, T] \times \mathbb{R}^d \]
    for regular vector fields $\sigma_i : \mathbb{R}^d \rightarrow
    \mathbb{R}^d$ and $\omega \in C^{\alpha}_t \mathbb{R}^m$, $\alpha > 1 /
    2$; assume $\sigma_i$ are fixed and we are interested in the dependence on
    the drivers $\omega$, namely the map $\Phi^{\omega}_{0 \rightarrow \cdot}
    (x) \assign \Phi^{A [\omega]}_{0 \rightarrow \cdot} (x)$. For fixed
    $\omega \in C_t^{\alpha} \mathbb{R}^m$ and $x \in \mathbb{R}^d$, setting
    $X^x_t : = \Phi^{A [\omega]}_{0 \rightarrow t} (x)$, $J^x_{0 \rightarrow
    t} : = D_x \Phi^{A [\omega]}_{0 \rightarrow t} (x)$, $\Phi^{A [\cdot]}_{0
    \rightarrow \cdot} (x)$ is Frech{\'e}t differentiable at $\omega$ with
    directional derivatives
    \begin{equation}
      D_{\omega} \Phi^{A [\cdot]}_{0 \rightarrow t} (x) (\psi) = J^x_{0
      \rightarrow t} \int_0^t \sum_{i = 1}^m (J^x_{0 \rightarrow r})^{- 1}
      \sigma_i (X^x_r) \mathd \psi^i_r . \label{sec4.2 malliavin eq1}
    \end{equation}
    The above formula uniquely extends by continuity to the case $\psi \in
    W^{1, 1}_t$, in which case we can write it in compact form as
    \begin{equation}
      D_{\omega} \Phi^{A [\cdot]}_{0 \rightarrow t} (x) (\psi) = \int_0^T K
      (t, r) \dot{\psi}_r \mathd r, \quad K (t, r) = 1_{r \leqslant t} J^x_{0
      \rightarrow t} (J^x_{0 \rightarrow r})^{- 1} \sigma (X^x_r) .
      \label{sec4.2 malliavin eq2}
    \end{equation}
    Formulas~{\eqref{sec4.2 malliavin eq1}} and~{\eqref{sec4.2 malliavin eq2}}
    are well known by Malliavin calculus, mostly in the case $\omega$ is
    sampled as an fBm of parameter $H > 1 / 2$, see Section 11.3
    from~{\cite{frizhairer}}; formula~{\eqref{sec4 directional derivatives2}}
    can be regarded as a generalisation of them.
  \end{enumerateroman}
\end{example}

\section{Conditional uniqueness}\label{sec5}

This section provides several criteria for uniqueness of the YDE, under
additional assumptions on the properties of the associated solutions.
Typically such properties can't be established directly, at least not under
mild regularity assumptions on $A$; yet the criteria are rather useful in
application to SDEs, where the analytic theory can be combined with more
probabilistic techniques.

\subsection{A Van Kampen type result for YDEs}\label{sec5.1}

The following result is inspired by the analogue results for ODEs in the style
of van Kampen and Shaposhnikov, see~{\cite{vankampen}},~{\cite{shaposhnikov}}.

\begin{theorem}
  Suppose $A \in C^{\alpha}_t C^{\beta, \lambda}_V$ with $\alpha (1 + \beta) >
  1$, $\beta + \lambda \leqslant 1$ and that the associated YDE admits a
  spatially locally $\gamma$-H\"{o}lder continuous flow. If
  \[ \alpha \gamma (1 + \beta) > 1, \]
  then for any $x_0 \in V$ there exists a unique solution to the YDE in the
  class $x \in C^{\alpha}_t V$.
\end{theorem}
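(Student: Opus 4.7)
The strategy is the standard van Kampen/Shaposhnikov one: given any solution $x \in C^{\alpha}_t V$ with $x_0 = x_0^{\ast}$, I would introduce
\[ \varphi_t \assign \Phi_{0 \leftarrow t}(x_t), \qquad t \in [0, T], \]
where $\Phi$ is the given locally $\gamma$-H\"older flow, and show that $\varphi$ is constant; since $\varphi_0 = x_0^{\ast}$, this forces $x_t = \Phi_{0 \to t}(x_0^{\ast})$ and hence uniqueness. Using the flow property from Definition~\ref{sec4 defn flow}, one checks $\Phi_{0 \leftarrow t}(\Phi_{s \to t}(x_s)) = \Phi_{0 \leftarrow s}(x_s) = \varphi_s$ for $s \leq t$, so that
\[ \varphi_t - \varphi_s = \Phi_{0 \leftarrow t}(x_t) - \Phi_{0 \leftarrow t}(\Phi_{s \to t}(x_s)) . \]
Since $\Phi_{0 \leftarrow t}$ is locally $\gamma$-H\"older uniformly in $t$, and both arguments range over a bounded subset of $V$ (as $x \in C^{\alpha}_t V$ and $\Phi$ is continuous), this yields
\[ \|\varphi_t - \varphi_s\|_V \lesssim \|x_t - \Phi_{s \to t}(x_s)\|_V^{\gamma}. \]

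The heart of the argument is then a Davie-type estimate for $\|x_t - \Phi_{s \to t}(x_s)\|_V$. Both $x$ and $y_{\cdot} \assign \Phi_{s \to \cdot}(x_s)$ are $\alpha$-H\"older solutions to the YDE on $[s, t]$ starting from $x_s$, with $\alpha$-seminorms bounded uniformly in $s$ (for $x$ by hypothesis, for $y$ by the uniform estimates for the flow encoded in Definition~\ref{sec4 defn regularity flow}). Applying part~3.\ of Theorem~\ref{sec2 thm definition young integral} separately to each gives
\[ \|x_t - x_s - A_{s,t}(x_s)\|_V \lesssim (t-s)^{\alpha(1+\beta)}, \qquad \|y_t - x_s - A_{s,t}(x_s)\|_V \lesssim (t-s)^{\alpha(1+\beta)}, \]
with constants depending only on $\|A\|_{\alpha, \beta, R}$ and the above uniform seminorms. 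Subtracting yields $\|x_t - \Phi_{s \to t}(x_s)\|_V \lesssim (t-s)^{\alpha(1+\beta)}$, and therefore
\[ \|\varphi_t - \varphi_s\|_V \lesssim (t-s)^{\gamma \alpha (1+\beta)} \qquad \text{uniformly in } (s,t) \in \Delta_2 . \]

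Since $\gamma \alpha (1+\beta) > 1$ by hypothesis, partitioning $[0, t]$ into $n$ intervals of equal length and summing the telescoping increments,
\[ \|\varphi_t - \varphi_0\|_V \lesssim n \cdot (t/n)^{\gamma \alpha(1+\beta)} = t^{\gamma \alpha(1+\beta)}\, n^{1 - \gamma\alpha(1+\beta)} \xrightarrow[n \to \infty]{} 0, \]
so $\varphi \equiv \varphi_0 = x_0^{\ast}$ and uniqueness follows. The main technical point I anticipate is ensuring that the local H\"older constant of $\Phi_{0 \leftarrow t}$ and the $\alpha$-seminorm of $\Phi_{s \to \cdot}(x_s)$ can be controlled uniformly in $(s,t) \in \Delta_2$; this is what "locally $\gamma$-H\"older continuous flow" should morally mean, and in all concrete constructions of flows (e.g.\ Corollary~\ref{sec4.1 corollary lipschitz flow}) these uniform bounds are part of the output. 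No smoothness or invertibility of $D \Phi$ is needed, which is what makes the van Kampen approach so well adapted to this low-regularity situation.
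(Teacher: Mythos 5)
Your proposal is correct and takes essentially the same approach as the paper: both rely on the flow property together with a Davie-type estimate $\|x_t - \Phi_{s\to t}(x_s)\|_V \lesssim |t-s|^{\alpha(1+\beta)}$ and the local $\gamma$-H\"older continuity of the flow to show that a pulled-back (resp.\ pushed-forward) version of $x$ has increments of order $|t-s|^{\gamma\alpha(1+\beta)} = |t-s|^{1+\varepsilon}$ and hence is constant. The only cosmetic difference is that you pull $x_t$ back to time $0$ via $\Phi_{0\leftarrow t}$, while the paper pushes it forward to time $T$ via $\Phi(t,T,\cdot)$; the localisation issue you flag at the end is handled in the paper via the a priori estimate~\eqref{sec3.3 a priori estimates growth}, which lets one reduce to globally bounded $A$ and a globally $\gamma$-H\"older flow.
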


\begin{proof}
  Let $x_0 \in V$ and $x$ be a given solution to the YDE starting at $x_0$. By
  the a priori estimate~{\eqref{sec3.3 a priori estimates growth}}, we can
  always find $R = R (x_0)$ big enough such that
  \[ \sup_{s \in [0, T]} \{ \| x \|_{\alpha} + \| \Phi (s, \cdot, x_s)
     \|_{\alpha ; s, T} \} \leqslant R ; \]
  therefore in the following computations, up to a localisation argument, we
  can assume without loss of generality that $A \in C^{\alpha}_t C^{\beta}_V$
  and that $\Phi$ is globally $\gamma$-H\"{o}lder.
  
  It suffices to show that $f_t \assign \Phi (t, T, x_t) - \Phi (0, T, x_0)$
  satisfies $\| f_{s, t} \|_V \lesssim | t - s |^{1 + \varepsilon}$ for some
  $\varepsilon > 0$; if that's the case, then $f \equiv 0$, $\Phi (t, T, x_t)
  = \Phi (0, T, x_0)$ for all $t \in [0, T]$ and so inverting the flow $x_t =
  \Phi (0, t, x_0)$, which implies that $\Phi (0, \cdot, x_0)$ is the unique
  solution starting from $x_0$.
  
  By the flow property
  
  \begin{align*}
    \| f_{s, t} \|_V & = \| \Phi (t, T, x_t) - \Phi (s, T, x_s) \|_V\\
    & = \| \Phi (t, T, x_t) - \Phi (t, T, \Phi (s, t, x_s)) \|_V\\
    & \lesssim \| x_t - \Phi (s, t, x_s) \|_V^{\gamma} .
  \end{align*}
  
  Since both $x$ and $\Phi (s, \cdot, x_s)$ are solutions to the YDE starting
  from $x_s$, it holds
  
  \begin{align*}
    \| x_t - \Phi (s, t, x_s) \|_V & = \left\| \int_s^t A (\mathd r, x_r) -
    \int_s^t A (\mathd r, \Phi (s, r, x_s)) \right\|_V\\
    & \lesssim \| A_{s, t} (x_s) - A_{s, t} (\Phi (s, s, x_s)) \|_V + | t - s
    |^{\alpha (1 + \beta)} \| A \|_{\alpha, \beta} (1 + \llbracket x
    \rrbracket_{\alpha} + \llbracket \Phi (s, \cdot, x_s)
    \rrbracket_{\alpha})\\
    & \lesssim | t - s |^{\alpha (1 + \beta)}
  \end{align*}
  
  and so overall we obtain $\| f_{s, t} \|_V \lesssim | t - s |^{\gamma \alpha
  (1 + \beta)}$, which implies the conclusion.
\end{proof}

\begin{remark}
  The assumption can be weakened in several ways. For instance, the existence
  of a $\gamma$-H\"{o}lder regular semiflow is enough to establish that $\Phi
  (t, T, x_t) = \Phi (0, T, x_0)$, even when $\Phi$ is not invertible.
  Uniqueness only requires $\Phi (t, T, \cdot)$ to be invertible for $t \in
  D$, $D$ dense subset of $[0, T]$; indeed this implies $x_t = \Phi (0, t,
  x_0)$ on $D$ and then by continuity the equality can be extended to the
  whole $[0, T]$. Similarly, it is enough to require
  \[ \sup_{t \in D} \| \Phi (t, T, \cdot) \|_{\gamma, R} < \infty \quad
     \text{for all } R \geqslant 0 \]
  for $D$ dense subset of $[0, T]$ as before.
\end{remark}

\subsection{Averaged translations and Conditional Comparison
Principle}\label{sec5.2}

The concept of averaged translation has been introduced
in~{\cite{catelliergubinelli}}, Definition~2.13. We provide here a different
construction based on the sewing lemma (although with the same underlying
idea).

\begin{definition}
  Let $A \in C^{\alpha}_t C^{\beta}_V$, $y \in C^{\gamma}_t V$ with $\alpha +
  \beta \gamma > 1$. The averaged translation $\tau_x A$ is defined as
  \[ \tau_y A (t, x) = \int_0^t A (\mathd s, z + y_s) \quad \forall \, t \in
     [0, T], \, z \in V. \]
\end{definition}

\begin{lemma}
  Let $A \in C^{\alpha}_t C^{n + \beta}_V$, $y \in C^{\gamma}_t V$ with
  $\alpha + \beta \gamma > 1$, $\eta \in (0, 1)$ satisfying $\eta < n +
  \beta$, $\alpha + \eta \gamma > 1$. The operator $\tau_y$ is continuous from
  $C^{\alpha}_t C^{n + \beta}_V$ to $C^{\alpha}_t C^{n + \beta - \eta}_V$ and
  there exists $C = C (\alpha, \beta, \gamma, \eta, T)$ s.t.
  \begin{equation}
    \| \tau_y A \|_{\alpha, n + \beta - \eta} \leqslant C \| A \|_{\alpha, n +
    \beta} (1 + \llbracket y \rrbracket_{\gamma}) . \label{sec conditional
    average translation}
  \end{equation}
\end{lemma}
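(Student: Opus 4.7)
For every $z \in V$ the path $s \mapsto z + y_s$ lies in $C^\gamma_t V$ with H\"older seminorm $\llbracket y\rrbracket_\gamma$, so the integral $\tau_y A(t,z) = \int_0^t A(\mathd s, z+y_s)$ is well defined by Theorem~\ref{sec2 thm definition young integral}. Combined with the trivial bound $\|A_{s,t}\|_\infty \leq |t-s|^\alpha \|A\|_{\alpha,\beta}$, the sewing estimate~\eqref{sec2 nonlinear integral estimate1} gives the uniform-in-$z$ control $\sup_z \|(\tau_y A)_{s,t}(z)\|_W \lesssim |t-s|^\alpha \|A\|_{\alpha,\beta}(1+\llbracket y\rrbracket_\gamma)$, which settles the sup-norm part of~\eqref{sec conditional average translation}.

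The analytic heart of the proof is the case $n = 0$. Fixing $z_1, z_2 \in V$ and $s < t$, I would write $(\tau_y A)_{s,t}(z_1) - (\tau_y A)_{s,t}(z_2) = \mathcal{J}(\Gamma)_{s,t}$ for $\Gamma_{u,v} \assign A_{u,v}(z_1+y_u) - A_{u,v}(z_2+y_u)$, and estimate $\delta\Gamma$ in two competing ways. First, a ``crude'' bound exploiting only spatial H\"older regularity of $A_{u,t}$,
\[
\|\delta\Gamma_{s,u,t}\| \leq 2|t-u|^\alpha \llbracket A\rrbracket_{\alpha,\beta}|z_1-z_2|^\beta,
\]
and second, a ``fine'' bound obtained by regrouping the four terms as $[A_{u,t}(z_1+y_s)-A_{u,t}(z_1+y_u)] - [A_{u,t}(z_2+y_s)-A_{u,t}(z_2+y_u)]$, which yields
\[
\|\delta\Gamma_{s,u,t}\| \leq 2|t-u|^\alpha \llbracket A\rrbracket_{\alpha,\beta}\llbracket y\rrbracket_\gamma^\beta |u-s|^{\beta\gamma}.
\]
Geometric interpolation with weight $\theta = \eta/\beta \in [0,1]$ produces $\|\delta\Gamma\|_{\alpha+\eta\gamma} \leq 2\llbracket A\rrbracket_{\alpha,\beta}\llbracket y\rrbracket_\gamma^\eta|z_1-z_2|^{\beta-\eta}$, and the hypothesis $\alpha+\eta\gamma > 1$ legitimates the sewing lemma. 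Combined with the direct bound $\|\Gamma_{s,t}\| \leq |t-s|^\alpha\llbracket A\rrbracket_{\alpha,\beta}|z_1-z_2|^\beta$, this gives
\[
\|\mathcal{J}(\Gamma)_{s,t}\| \leq |t-s|^\alpha\|A\|_{\alpha,\beta}\bigl(|z_1-z_2|^\beta + C\llbracket y\rrbracket_\gamma^\eta|z_1-z_2|^{\beta-\eta}\bigr).
\]
A case-split on $|z_1-z_2| \leq 1$ versus $|z_1-z_2| > 1$---the first using $|z_1-z_2|^\beta \leq |z_1-z_2|^{\beta-\eta}$, the second using the uniform sup bound from the first paragraph---converts this into a genuine $C^{\beta-\eta}_z$ estimate with the required constant $C|t-s|^\alpha\|A\|_{\alpha,\beta}(1+\llbracket y\rrbracket_\gamma)$.

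For general $n$ (in the main regime $\eta < \beta$) I would iterate. By Proposition~\ref{sec2 prop frechet differentiability} combined with the chain rule along the affine map $z \mapsto z + y$ from $V$ into $C^\gamma_t V$ (whose differential sends $h \in V$ to the constant path $h$), one shows inductively that $D_z^k \tau_y A(t,z) = \tau_y(D^k A)(t,z)$ for $k = 0, 1, \dots, n$, where $D^k A \in C^\alpha_t C^\beta_{V,\mathcal{L}^k(V;W)}$ has norm bounded by $\|A\|_{\alpha,n+\beta}$. Applying the $n=0$ estimate to each $D^k A$ and summing over $k$ yields~\eqref{sec conditional average translation}.

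The main technical obstacle is converting the raw sewing output into a genuine H\"older bound on the unbounded space $V$: because $|z_1-z_2|^\beta$ dominates $|z_1-z_2|^{\beta-\eta}$ for large separations, a uniform $(\beta-\eta)$-H\"older norm cannot be read off directly, and the case-split combined with the boundedness of $(\tau_y A)_{s,t}$ in $z$ is indispensable; a secondary subtlety is the careful justification that the Fr\'echet derivative of $F_A: C^\gamma_t V \to C^\alpha_t W$ from Proposition~\ref{sec2 prop frechet differentiability}, composed with the affine shift, produces precisely $\tau_y(DA)$.
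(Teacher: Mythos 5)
Your argument for $n = 0$ is correct and constitutes a genuinely different route from the paper's. The paper views $\Gamma_{s,t} := A_{s,t}(\cdot + y_s)$ as a map $\Delta_2 \to C^{n+\beta-\eta}_V$ and sews \emph{once in the H\"older function space}, delegating the interpolation to a separate Appendix lemma on translation operators; you instead sew pointwise in the base space $W$ for each pair $(z_1, z_2)$, interpolate the two bounds on $\delta\Gamma$ inside the sewing estimate, and then rebuild the $C^{\beta-\eta}_V$ norm via the $|z_1 - z_2| \lessgtr 1$ case-split. Both routes rely on the same geometric interpolation ($|z_1-z_2|^\beta$ against $\llbracket y\rrbracket_\gamma^\beta|u-s|^{\beta\gamma}$), but yours avoids introducing the auxiliary lemma at the cost of the case-split bookkeeping. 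For $n = 0$ the two approaches cover the same range, since the lemma's hypothesis $\eta < n + \beta$ reduces to $\eta < \beta$.

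For $n \geq 1$, however, your iteration leaves a genuine gap. You restrict to $\eta < \beta$, but the lemma allows $\eta < \min(1, n+\beta)$, which for $n \geq 1$ means any $\eta \in (0,1)$; the range $\beta \leq \eta < 1$ is not covered. This range is precisely where your reduction $D^k\tau_y A = \tau_y(D^k A)$ breaks down: if $\eta \geq \beta$ then $n + \beta - \eta \leq n$, so $\tau_y A$ is only required to be $(n-1)$-times differentiable with $D^{n-1}\tau_y A \in C^{1+\beta-\eta}$, and $D^n\tau_y A$ need not exist at all. Your crude and fine bounds on $\delta\Gamma$ both use only the $C^\beta$-modulus of $A_{u,t}$, so their interpolation saturates at $\eta = \beta$; pushing $\eta$ further requires trading against the Lipschitz (or $C^{n}$) modulus of $A_{u,t}$, which is exactly what the paper's Lemma~\ref{appendix conditional lemma translations} does in its $n = 1$ case via the fundamental-theorem-of-calculus decomposition
\[
f(x+z_1) - f(y+z_1) - f(x+z_2) + f(y+z_2) = \int_0^1 \bigl[Df(z_1 + y + \theta(x-y)) - Df(z_2 + y + \theta(x-y))\bigr](x-y)\,\mathd\theta,
\]
which gives the two competing bounds $\|x-y\|\cdot\|z_1-z_2\|^\beta$ and $\|z_1-z_2\|\cdot\|x-y\|^\beta$ to be interpolated. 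To complete your pointwise-sewing route for $\eta \geq \beta$, you would need analogous second-order bounds on $\delta\Gamma$ exploiting $DA_{u,t}$; this is doable but is not sketched in your proposal, and the concluding norm would then live on $D^{n-1}\tau_y A$ rather than $D^n\tau_y A$, requiring a modified bookkeeping of the sum over $k$.
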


\begin{proof}
  Observe that $\tau_y A$ corresponds to the sewing of $\Gamma : \Delta_2
  \rightarrow C^{n + \beta}_V$ given by
  \[ \Gamma_{s, t} : = A_{s, t} \left( \, \cdot \, + y_s \right) . \]
  It holds $\| \Gamma_{s, t} \|_{n + \beta} \leqslant | t - s |^{\alpha}  \| A
  \|_{\alpha, n + \beta}$; moreover by Lemma~\ref{appendix conditional lemma
  translations} in Appendix~\ref{appendixA} it holds
  
  \begin{align*}
    \| \delta \Gamma_{s, u, t} \|_{n + \beta - \eta} & = \left\| A_{u, t}
    \left( \, \cdot \, + y_s \right) - A_{u, t} \left( \, \cdot \, + y_u
    \right) \right\|_{n + \beta - \eta}\\
    & \lesssim \| y_s - y_u \|_V^{\eta}  \| A_{u, t} \|_{n + \beta}\\
    & \lesssim | t - s |^{\alpha + \gamma \eta}  \llbracket y
    \rrbracket_{\gamma}  \| A \|_{\alpha, n + \beta} .
  \end{align*}
  
  Since $\alpha + \gamma \eta > 1$, by the sewing lemma we deduce that
  $\mathcal{J} (\Gamma) = \tau_y A \in C^{\alpha}_t C^{n + \beta - \eta}_V$,
  together with estimate~{\eqref{sec conditional average translation}}.
\end{proof}

Young integrals themselves can indeed be regarded as averaged translations
evaluated at $z = 0$. Moreoveor iterating translations is a consistent
procedure, as the following lemma shows.

\begin{lemma}
  \label{sec5.2 basic lemma}Assume that $\alpha + \beta \gamma > 1$ and $A \in
  C^{\alpha}_t C^{\beta}_V$, $x \in C^{\gamma}_t V$ and $\tau_x A \in
  C^{\alpha}_t C^{\beta}_V$. Then for any $y \in C^{\gamma}_t V$ it holds
  \[ \int_0^t (\tau_x A) (\mathd s, y_s) = \int_0^t A (\mathd s, x_s + y_s)
     \quad \forall \, t \in [0, T] . \]
\end{lemma}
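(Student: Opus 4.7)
The strategy is to realize both sides of the claimed identity as sewings and to show that the two underlying two-parameter maps differ by a remainder of exponent strictly greater than $1$, so that the uniqueness part of the sewing lemma (as recorded in Remark~\ref{sec2 remark sewing}) forces the two sewings to coincide.

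More concretely, I would introduce
\[
  \Gamma^1_{s,t} \assign A_{s,t}(x_s + y_s), \qquad
  \Gamma^2_{s,t} \assign (\tau_x A)_{s,t}(y_s) = (\tau_x A)(t,y_s) - (\tau_x A)(s,y_s).
\]
Under the assumptions, $x + y \in C^\gamma_t V$ and both $A$ and $\tau_x A$ lie in $C^\alpha_t C^\beta_V$, so Theorem~\ref{sec2 thm definition young integral} gives
\[
  \mathcal{J}(\Gamma^1) = \int_0^{\cdot} A(\mathd s, x_s + y_s), \qquad
  \mathcal{J}(\Gamma^2) = \int_0^{\cdot} (\tau_x A)(\mathd s, y_s).
\]
Thus the identity to be proved is simply $\mathcal{J}(\Gamma^1) = \mathcal{J}(\Gamma^2)$.

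The key estimate is obtained by applying the definition of $\tau_x A$ itself as a nonlinear Young integral. For any fixed $z \in V$, the path $r \mapsto z + x_r$ belongs to $C^\gamma_t V$ with the same $\gamma$-H\"older seminorm as $x$, and
\[
  (\tau_x A)(t,z) - (\tau_x A)(s,z) = \int_s^t A(\mathd r, z + x_r).
\]
Applying estimate~\eqref{sec2 nonlinear integral estimate1} to the right-hand side (noting that the field $(r,w) \mapsto A(r, z+w)$ has the same $C^\alpha_t C^\beta_V$ norm as $A$, independently of $z$) yields
\[
  \bigl\| (\tau_x A)_{s,t}(z) - A_{s,t}(z + x_s) \bigr\|_V
  \leqslant C_1 | t - s |^{\alpha + \beta\gamma}\, \llbracket A \rrbracket_{\alpha,\beta}\, \llbracket x \rrbracket_\gamma^\beta,
\]
with $C_1$ independent of $z$. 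Specializing to $z = y_s$ gives exactly $\| \Gamma^2_{s,t} - \Gamma^1_{s,t} \|_V \lesssim | t - s |^{\alpha + \beta\gamma}$.

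To conclude, I combine this with the sewing-lemma bound for $\Gamma^1$, namely $\| \mathcal{J}(\Gamma^1)_{s,t} - \Gamma^1_{s,t} \|_V \lesssim | t - s |^{\alpha + \beta\gamma}$, to obtain $\| \mathcal{J}(\Gamma^1)_{s,t} - \Gamma^2_{s,t} \|_V \lesssim | t - s |^{\alpha + \beta\gamma}$ with $\alpha + \beta\gamma > 1$. Since $\mathcal{J}(\Gamma^1) \in C^\alpha_t V$ starts at $0$, Remark~\ref{sec2 remark sewing} identifies it with $\mathcal{J}(\Gamma^2)$, giving the claim. I do not expect any genuine obstacle here; the only mild subtlety is verifying that the constant in the Young estimate applied to $A(\cdot,\cdot + x_{\cdot})$ is uniform in the shift $z$, which is immediate from the fact that translation in the spatial variable preserves all $C^{\beta}_V$-type norms of $A$.
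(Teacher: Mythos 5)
Your proof is correct and follows essentially the same route as the paper's: both arguments reduce the statement to the estimate $\|(\tau_x A)_{s,t}(y_s) - A_{s,t}(x_s + y_s)\|_V \lesssim |t-s|^{\alpha+\beta\gamma}$ obtained from the fact that $(\tau_x A)_{s,t}(z) = \int_s^t A(\mathd r, z + x_r)$ differs from its germ $A_{s,t}(z + x_s)$ by $O(|t-s|^{\alpha+\beta\gamma})$ uniformly in $z$, and then invoke the uniqueness part of the sewing lemma. Your write-up is simply a more explicit version of the paper's terse three-line chain of inequalities, and your remark that the uniformity in $z$ comes from the global $C^\beta_V$ bound on $A$ is exactly the point the paper leaves implicit.
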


\begin{proof}
  The statement follows immediately from the observation that for any $s
  \leqslant t$ it holds
  
  \begin{align*}
    \left\| \int_s^t (\tau_x A) (\mathd r, y_r) - \int_s^t A (\mathd r, x_r +
    y_r) \right\| & \lesssim \| (\tau_x A)_{s, t} (y_s) - A_{s, t} (x_s + y_s)
    \| + | t - s |^{\alpha + \beta \gamma}\\
    & \lesssim \left\| \left( A_{s, t} \left( \cdot \, + x_s \right) \right)
    (y_s) - A_{s, t} (x_s + y_s) \right\| + | t - s |^{\alpha + \beta
    \gamma}\\
    & \lesssim | t - s |^{\alpha + \beta \gamma}
  \end{align*}
  
  so that the two integrals must coincide.
\end{proof}

The main reason for introducing averaged translations is the following key
result.

\begin{theorem}[Conditional Comparison Principle]
  \label{sec5.2 condition comparison thm}Let $A^1, A^2 \in C^{\alpha}_t
  C^{\beta}_V$ with $\alpha (1 + \beta) > 1$ for some $\alpha, \beta \in (0,
  1)$ and let $x^i \in C^{\alpha}_t V$ be given solutions respectively to the
  YDE associated to $(x_0^i, A^i)$. Suppose in addition that $x^1$ is such
  that $\tau_{x^1} A^1 \in C^{\alpha}_t \tmop{Lip}_V$. Then there exists $C =
  C (\alpha, \beta, T)$ s.t.
  \begin{equation}
    \| x^1 - x^2 \|_{\alpha} \leqslant C \exp (C \| \tau_{x^1} A^1 \|^{1 /
    \alpha}_{\alpha, 1}) (1 + \| A^2 \|_{\alpha, \beta}^2)  (\| x_0^1 - x_0^2
    \| + \| A^1 - A^2 \|_{\alpha, \beta}) . \label{sec conditional comparison
    inequality}
  \end{equation}
  In particular, uniqueness holds in the class $C^{\alpha}_t V$ to the YDE
  associated to $(x_0^1, A^1)$.
\end{theorem}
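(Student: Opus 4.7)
The plan is to reduce the problem to the affine setting of Theorem~\ref{sec3.3 thm gronwall estimate YDE} via the averaged translation, and then estimate the inhomogeneous term by the continuity of the nonlinear Young integral together with the a priori bound from Proposition~\ref{sec3.3 prop a priori estimate 1}.

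First I would set $y_t \assign x^2_t - x^1_t$, so $y_0 = x_0^2 - x_0^1$, and rewrite
\[ y_t = y_0 + \int_0^t (A^2 - A^1)(\mathd s, x^2_s) + \left[\int_0^t A^1(\mathd s, x^2_s) - \int_0^t A^1(\mathd s, x^1_s)\right]. \]
Since $x^2_s = x^1_s + y_s$, Lemma~\ref{sec5.2 basic lemma} applied with the roles of $x = x^1$ and $y = y$ (resp.\ $y \equiv 0$) yields
\[ \int_0^t A^1(\mathd s, x^2_s) - \int_0^t A^1(\mathd s, x^1_s) = \int_0^t (\tau_{x^1} A^1)(\mathd s, y_s) - \int_0^t (\tau_{x^1} A^1)(\mathd s, 0) = \int_0^t \tilde{A}(\mathd s, y_s), \]
where $\tilde{A}(t, z) \assign (\tau_{x^1} A^1)(t, z) - (\tau_{x^1} A^1)(t, 0)$ belongs to $C^{\alpha}_t \tmop{Lip}_V$ with $\tilde{A}(t, 0) = 0$ and $\llbracket \tilde{A} \rrbracket_{\alpha, 1} \leqslant \llbracket \tau_{x^1} A^1 \rrbracket_{\alpha, 1}$. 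Setting $\psi_t \assign \int_0^t (A^2 - A^1)(\mathd s, x^2_s)$, we then have the affine YDE
\[ y_t = y_0 + \int_0^t \tilde{A}(\mathd s, y_s) + \psi_t \quad \forall\, t \in [0, T]. \]

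Next I would invoke Theorem~\ref{sec3.3 thm gronwall estimate YDE}, combining the bounds \eqref{sec3.3 affine YDE bound1} and \eqref{sec3.3 affine YDE bound2} (rather than \eqref{sec3.3 affine YDE bound3}, in order to match the exponent $1/\alpha$ in the target inequality), to obtain
\[ \| y \|_{\alpha} \leqslant \kappa_1 \exp \left( \kappa_1 \| \tau_{x^1} A^1 \|^{1 / \alpha}_{\alpha, 1} \right) \bigl( \| y_0 \|_V + \llbracket \psi \rrbracket_{\alpha} \bigr) \]
with $\kappa_1 = \kappa_1(\alpha, T)$. For $\psi$, estimate~\eqref{sec2 nonlinear integral estimate2} applied with $R = \| x^2 \|_{\infty}$ gives
\[ \llbracket \psi \rrbracket_{\alpha} \lesssim \| A^1 - A^2 \|_{\alpha, \beta} \bigl( 1 + \llbracket x^2 \rrbracket_{\alpha} \bigr), \]
and Proposition~\ref{sec3.3 prop a priori estimate 1} applied to $x^2$ bounds $\llbracket x^2 \rrbracket_{\alpha} \lesssim 1 + \| A^2 \|_{\alpha, \beta}^2$. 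Putting these ingredients together yields exactly~\eqref{sec conditional comparison inequality}, and the uniqueness claim follows by taking $A^1 = A^2$ and $x_0^1 = x_0^2$.

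The main obstacle is really only bookkeeping: checking that the hypotheses of Lemma~\ref{sec5.2 basic lemma} are satisfied — in particular that $\tau_{x^1} A^1$ has enough regularity for the nonlinear Young integrals involved to exist and for the algebraic identity to hold — and that the exponent $1/\alpha$ in the exponential (rather than the cruder quadratic exponent of \eqref{sec3.3 affine YDE bound3}) is obtained by using the sharper pair of estimates \eqref{sec3.3 affine YDE bound1}--\eqref{sec3.3 affine YDE bound2}. Everything else is an assembly of the tools already developed in Sections~\ref{sec2} and~\ref{sec3}.
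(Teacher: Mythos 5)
Your proof is correct and essentially reproduces the paper's argument: both decompose the difference of solutions into the affine YDE driven by $\tau_{x^1}A^1(\cdot,\cdot) - \tau_{x^1}A^1(\cdot,0)$ (via Lemma~\ref{sec5.2 basic lemma}) plus the inhomogeneous term $\psi = \int_0^\cdot (A^2-A^1)(\mathd s, x^2_s)$, and then apply Theorem~\ref{sec3.3 thm gronwall estimate YDE} together with estimates~\eqref{sec2 nonlinear integral estimate2} and~\eqref{sec3.3 a priori estimate1}. Your observation that one must combine~\eqref{sec3.3 affine YDE bound1} with~\eqref{sec3.3 affine YDE bound2}, rather than simply invoke~\eqref{sec3.3 affine YDE bound3}, to obtain the exponent $1/\alpha$ is exactly the bookkeeping point the paper elides by only citing the theorem as a whole.
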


\begin{proof}
  The final uniqueness claim immediately follows from inequality~{\eqref{sec
  conditional comparison inequality}}, since in that case we can consider $A^1
  = A^2$, $x^1_0 = x^2_0$. Now let $x^i$ be two solutions as above, then their
  difference $v = x^1 - x^2$ satisfies
  
  \begin{align*}
    v_t & = v_0 + \int_0^t A^1 (\mathd s, x^1_s) - \int_0^t A^2 (\mathd s,
    x^2_s)\\
    & = v_0 + \int_0^t A^1 (\mathd s, x^1_s) - \int_0^t A^1 (\mathd s, v_s +
    x^1_s) + \int_0^t (A^2 - A^1) (\mathd s, x^2_s)\\
    & = v_0 - \int_0^t \tau_{x^1} A^1 (\mathd s, v_s) + \int_0^t \tau_{x^1}
    A^1 (\mathd s, 0) + \int_0^t (A^2 - A^1) (\mathd s, x^2_s)\\
    & = v_0 + \int_0^t B (\mathd s, v_s) + \psi_t
  \end{align*}
  
  where in the third line we applied Lemma~\ref{sec5.2 basic lemma} and we
  take
  \[ B (t, z) = - \tau_{x_1} A^1 (t, z) + \tau_{x_1} A^1 (t, 0), \qquad
     \psi_{\cdot} = \int_0^{\cdot} (A^2 - A^1) (\mathd s, x^2_s) . \]
  By the hypothesis, $B \in C^{\gamma}_t \tmop{Lip}_V$ with $B (t, 0) = 0$ for
  all $t \in [0, T]$, while $\psi \in C^{\alpha}_t V$. Therefore from
  Theorem~\ref{sec3.3 thm gronwall estimate YDE} applied to $v$ we deduce the
  existence of a constant $\kappa_1 = \kappa_1 (\alpha, T)$ such that
  \[ \| x^1 - x^2 \|_{\alpha} \leqslant \kappa_1 \exp (\kappa_1 \llbracket
     \tau_{x^1} A^1 \rrbracket_{1, \alpha}^{1 / \alpha}) (\| x_0^1 - x_0^2
     \|_V + \llbracket \psi \rrbracket_{\alpha}) . \]
  On the other hand, estimates~{\eqref{sec2 nonlinear integral estimate2}}
  and~{\eqref{sec3.3 prop a priori estimate 1}} imply that
  \[ \llbracket \psi \rrbracket_{\alpha} \leqslant \kappa_2  \| A^1 - A^2
     \|_{\alpha, \beta}  (1 + \| A^2 \|_{\alpha, \beta}^2) \]
  for some $\kappa_2 = \kappa_2 (\alpha, \beta, T)$. Combining the above
  estimates the conclusion follows.
\end{proof}

Remarkably, the hypothesis $\tau_x A \in C^{\alpha}_t \tmop{Lip}_V$ allows not
only to show that this is the unique solution starting at $x_0$, but also that
any other solution will not get too close to it. In the next lemma, in order
to differentiate $\| \cdot \|_V$, we assume for simplicity $V$ to be a Hilbert
space, but a uniformly smooth Banach space would suffice.

\begin{lemma}
  Let $V$ be a Hilbert space, $A \in C^{\alpha}_t C^{\beta}_V$ with $\alpha (1
  + \beta) > 1$, $x, y \in C^{\alpha}_t V$ solutions respectively to the YDEs
  associated to $(x_0, A)$, $(y_0, A)$ and assume that $\tau_x A \in
  C^{\alpha}_t \tmop{Lip}_V$. Then there exists $C = C (\alpha, T)$ s.t.
  \[ \sup_{t \in [0, T]} \frac{\| x_t - y_t \|_V}{\| x_0 - y_0 \|_V} \leqslant
     C \exp (C \| \tau_x A \|_{\alpha, 1}^{1 / \alpha}), \quad \sup_{t \in [0,
     T]} \frac{\| x_0 - y_0 \|_V}{\| x_t - y_t \|_V} \leqslant C \exp (C \|
     \tau_x A \|_{\alpha, 1}^{1 / \alpha}) . \]
\end{lemma}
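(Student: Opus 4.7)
The difference $v := x - y$ satisfies the affine YDE $v_t = v_0 + \int_0^t B(\mathd s, v_s)$ on $[0, T]$, where $B(t, z) := \tau_x A(t, 0) - \tau_x A(t, z)$, by the very same computation used in the proof of Theorem~\ref{sec5.2 condition comparison thm} (specialised to $A^1 = A^2 = A$). By hypothesis $B \in C^{\alpha}_t \tmop{Lip}_V$ with $B(t, 0) = 0$ identically in $t$ and $\llbracket B \rrbracket_{\alpha, 1} = \llbracket \tau_x A \rrbracket_{\alpha, 1}$.

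The upper bound is then immediate from Theorem~\ref{sec3.3 thm gronwall estimate YDE} applied with $h \equiv 0$: one obtains $\|v\|_\infty \leq C \exp(CT \llbracket \tau_x A \rrbracket_{\alpha,1}^{1/\alpha}) \|v_0\|_V$ (both sides vanishing in the degenerate case $v_0 = 0$, by uniqueness), hence the first inequality after dividing by $\|v_0\|_V$.

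For the lower bound, fix $t \in (0, T]$ and reverse time on the interval $[0, t]$. By the affine-case specialisation of Lemma~\ref{sec4.1 invariance shift inversion}, $\tilde v_s := v_{t-s}$ is a solution on $[0, t]$ of the YDE with drift $\tilde B(s, z) := B(t-s, z)$; the key observation is that this drift inherits $\tilde B \in C^{\alpha}_s \tmop{Lip}_V$, $\tilde B(\cdot, 0) \equiv 0$, and $\llbracket \tilde B \rrbracket_{\alpha, 1} \leq \llbracket B \rrbracket_{\alpha, 1}$ directly from its definition. Applying Theorem~\ref{sec3.3 thm gronwall estimate YDE} to $\tilde v$ on $[0, t]$ yields $\|v_0\|_V = \|\tilde v_t\|_V \leq \|\tilde v\|_{\infty; [0, t]} \leq C \exp(CT \llbracket \tau_x A \rrbracket_{\alpha, 1}^{1/\alpha}) \|v_t\|_V$, which after rearranging and taking the supremum in $t$ gives the second inequality.

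Notably, this route does not require the Hilbert structure of $V$. The Hilbertness (or uniform smoothness) assumption mentioned in the statement appears natural instead for an It\^{o}-based approach: applying formula~\eqref{sec2 ito formula3} to $F(z) = \|z\|_V^2$ would yield $\|v_t\|_V^2 - \|v_0\|_V^2 = \int_0^t 2\langle v_s, B(\mathd s, v_s)\rangle$, from which both inequalities could in principle be extracted via sewing estimates and a Young-type Gronwall argument. The main practical obstacle in that alternative is the lower bound, which cannot be derived as directly from such a scalar YDE and typically requires either the substitution $\log \|v_s\|_V^2$ (with extra care when $v_s$ approaches zero) or, again, an appeal to time reversal.
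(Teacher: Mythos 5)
Your argument is correct, and for the lower bound it takes a genuinely different route from the paper. The paper uses the Hilbert structure (or uniform smoothness) to differentiate $z_t := \|v_t\|_V^{-1}$ via the Young chain rule, obtaining a scalar YDE for $z$ valid only on the stopping interval $[0, T^\varepsilon]$ where $\|v_t\|_V > \varepsilon$; it then runs a Gronwall-type iteration to produce an $\varepsilon$-independent bound on $\|z\|_\infty$, which a posteriori forces $T^\varepsilon = T$. Your time-reversal argument bypasses all of this: by Lemma~\ref{sec4.1 invariance shift inversion} applied on $[0,t]$, $\tilde v_s := v_{t-s}$ solves the affine YDE driven by $\tilde B(s,z) := B(t-s,z)$, which has the same $C^\alpha_t\tmop{Lip}_V$ seminorm and vanishes at $z=0$, and one further application of Theorem~\ref{sec3.3 thm gronwall estimate YDE} gives the lower bound. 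As you correctly point out, this dispenses with the Hilbert hypothesis entirely, so the lemma holds for any Banach $V$; an additional implicit gain is that $\|v_0\|_V \leq C\exp(\cdot)\|v_t\|_V$ directly shows $v_t$ cannot vanish when $v_0 \neq 0$, removing the need for the stopping time. One small sign slip, immaterial to the argument: with your convention $v = x - y$ one has $y_s = x_s - v_s$, so the affine drift should read $B(t,z) = \tau_x A(t,0) - \tau_x A(t,-z)$ rather than $\tau_x A(t,0) - \tau_x A(t,z)$ (the paper takes $v = y - x$); either choice satisfies $B(\cdot,0)\equiv 0$ and $\llbracket B\rrbracket_{\alpha,1} = \llbracket \tau_x A\rrbracket_{\alpha,1}$, so nothing changes downstream.
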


\begin{proof}
  The first inequality is an immediate consequence of Theorem~\ref{sec5.2
  condition comparison thm}, so we only need to prove the second one. By the
  same computation as in Theorem~\ref{sec5.2 condition comparison thm}, the
  map $v = y - x$ satisfies
  \[ \mathd v_t = A (\mathd t, y_t) - A (\mathd t, x_t) = \tau_x A (\mathd t,
     v_t) - \tau_x A (\mathd t, 0) = B (\mathd t, v_t) \]
  where $B (t, z) : = \tau_x A (t, z) - \tau_x A (t, 0)$, which by hypothesis
  belongs to $C^{\alpha}_t \tmop{Lip}_V$ with $\llbracket B
  \rrbracket_{\alpha, 1} = \llbracket \tau_x A \rrbracket_{\alpha, 1}$;
  moreover $B (t, 0) = 0$ for all $t \in [0, T]$.
  
  Now for $0 < \varepsilon < \| x_0 - y_0 \|_V$, define $T^{\varepsilon} =
  \inf \{ t \in [0, T] : \| x_t - y_t \|_V \leqslant \varepsilon \}$, with the
  convention that $\inf \emptyset = T$; then on $[0, \tau_{\varepsilon}]$ the
  map $z_t \assign \| y_t - x_t \|_V^{- 1} = \| v_t \|_V^{- 1}$ is in
  $C^{\alpha}_t \mathbb{R}$ and by Young chain rule
  \[ \mathd z_t = - \| v_t \|_V^{- 3} \langle v_t, \tilde{A} (\mathd t, v_t)
     \rangle_V . \]
  We are going to show that $z$ satisfies a bound from above which does not
  depend on the interval $[0, T^{\varepsilon}]$; as a consequence, for all
  $\varepsilon > 0$ small enough it must hold $T^{\varepsilon} = T$, which
  yields the conclusion.
  
  For any $[u, r] \subset [s, t] \subset [0, T^{\varepsilon}]$ it holds
  
  \begin{align*}
    | z_{u, r} | & \leqslant \, \| v_u \|_V^{- 3} | \langle v_u, B_{u, r}
    (v_u) \rangle_V | + \kappa_1 \llbracket z \rrbracket_{\alpha ; s, t} 
    \llbracket B \rrbracket_{\alpha, 1} | u - r |^{2 \alpha}\\
    & \leqslant \, \| v_u \|_V^{- 1} \llbracket B \rrbracket_{\alpha, 1} | u
    - r |^{\alpha} + \kappa_1 \llbracket z \rrbracket_{\alpha ; s, t}
    \llbracket B \rrbracket_{\alpha, 1} | t - s |^{\alpha} | u - r
    |^{\alpha}\\
    & \leqslant | z_u | \llbracket \tau_x A \rrbracket_{\alpha, 1} | u - r
    |^{\alpha} + \kappa_1 \llbracket z \rrbracket_{\alpha ; s, t} \llbracket
    \tau_x A \rrbracket_{\alpha, 1} | t - s |^{\alpha} | u - r |^{\alpha}\\
    & \leqslant | u - r |^{\alpha} \llbracket \tau_x A \rrbracket_{\alpha, 1}
    [| z_s | + (1 + \kappa_1) \llbracket z \rrbracket_{\alpha ; s, t} | t - s
    |^{\alpha}] ;
  \end{align*}
  
  dividing by $| u - r |^{\alpha}$ and taking the supremum we obtain
  \[ \llbracket z \rrbracket_{\alpha ; s, s + \Delta} \leqslant \llbracket
     \tau_x A \rrbracket_{\alpha, 1} | z_s | + \kappa_2 \Delta^{\alpha}
     \llbracket \tau_x A \rrbracket_{\alpha, 1} \llbracket z
     \rrbracket_{\alpha} . \]
  The rest of the proof follows exactly the same calculations as in the proof
  of Theorem~\ref{sec3.3 thm gronwall estimate YDE}: taking $\Delta$ such that
  $\kappa_2 \Delta^{\alpha} \llbracket \tau_x A \rrbracket_{\alpha, 1}
  \leqslant 1 / 2$, $\kappa_2 \Delta^{\alpha} \llbracket \tau_x A
  \rrbracket_{\alpha, 1} \sim 1$, we deduce that
  \[ \llbracket z \rrbracket_{\alpha ; s, s + \Delta} \leqslant 2 \llbracket
     \tau_x A \rrbracket_{\alpha, 1} | z_s | ; \]
  setting $J_n = \| z \|_{\infty ; I_n}$ with $I_n = [(n - 1) \Delta, n
  \Delta] \cap [0, T^{\varepsilon}]$, $J_0 = | z_0 |$, it holds
  \[ J_n \leqslant J_{n - 1} + \Delta^{\alpha} \llbracket z \rrbracket_{\alpha
     ; I_n} \leqslant (1 + 2 \Delta^{\alpha} \llbracket \tau_x A
     \rrbracket_{\alpha, 1}) J_{n - 1}, \]
  which implies recursively
  \[ \| z \|_{\infty ; 0, T^{\varepsilon}} = \sup_n J_n \leqslant (1 + 2
     \Delta^{\alpha} \llbracket \tau_x A \rrbracket_{\alpha, 1})^N | z_0 |
     \leqslant \exp (2 N \Delta^{\alpha} \llbracket \tau_x A
     \rrbracket_{\alpha, 1}) | z_0 | . \]
  Since $T^{\varepsilon} \leqslant T$, it takes at most $N \sim T / \Delta$
  intervals of size $\Delta$ to cover $[0, T^{\varepsilon}]$, and $\Delta \sim
  \llbracket \tau_x A \rrbracket_{\alpha, 1}^{1 / \alpha}$, therefore overall
  we have found a constant $C = C (\alpha, T)$ such that
  \[ \sup_{t \in [0, T^{\varepsilon}]} \frac{1}{\| x_s - y_s \|_V} = \sup_{t
     \in [0, T^{\varepsilon}]} | z_t | \leqslant C \exp (C \llbracket \tau_x A
     \rrbracket_{\alpha, 1}^{1 / \alpha}) | z_0 | = C \exp (C \llbracket
     \tau_x A \rrbracket_{\alpha, 1}^{1 / \alpha}) \frac{1}{\| x_0 - y_0 \|_V}
     . \]
  As the estimate does not depend on $\varepsilon$, the conclusion follows.
\end{proof}

\subsection{Conditional rate of convergence for the Euler
scheme}\label{sec5.3}

Remarkably, under the assumption of regularity of $\tau_x A$, convergence of
the Euler scheme to the unique solution can be established, with the same rate
$2 \alpha - 1$ as in the more regular case of $A \in C^{\alpha}_t C^{1 +
\beta}_V$. The following results are direct analogues of
Corollaries~\ref{sec3.3 cor euler convergence} and~\ref{sec3.4 cor euler
convergence}.

\begin{corollary}
  \label{sec5.2 euler convergence1}Let $A \in C^{\alpha}_t \tmop{Lip}_V$ with
  $\alpha > 1 / 2$, $x_0 \in V$ and suppose there exists a solution $x$
  associated to $(x_0, A)$ such that $\tau_x A \in C^{\alpha}_t \tmop{Lip}_V$
  (which is therefore the unique solution); denote by $x^n$ the element of
  $C^{\alpha}_t V$ constructed by the $n$-step Euler approximation from
  Theorem~\ref{sec3.1 thm existence}. Then there exists $C = C (\alpha, T)$
  such that
  \[ \| x - x^n \|_{\alpha} \leqslant C \exp (C \| \tau_x A \|^{1 /
     \alpha}_{\alpha, 1}) (1 + \| A \|_{\alpha, 1}^3) n^{1 - 2 \alpha} \quad
     \text{as } n \rightarrow \infty . \]
\end{corollary}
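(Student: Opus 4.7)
The plan is to adapt the proof of Corollary~\ref{sec3.3 cor euler convergence} by replacing the Fr\'echet-type linearisation of Corollary~\ref{sec2 corollary frechet} (which requires $A \in C^\alpha_t C^{1+\beta}_V$) with the translation identity of Lemma~\ref{sec5.2 basic lemma} (which only demands the conditional hypothesis $\tau_x A \in C^\alpha_t \tmop{Lip}_V$), in the spirit of the proof of Theorem~\ref{sec5.2 condition comparison thm}.

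First, I would import the Euler defect bound from Remark~\ref{sec3.1 remark euler a priori estimates}, specialised to $\beta = 1$. By Theorem~\ref{sec3.1 thm existence}, $x^n$ satisfies $x^n_t = x_0 + \int_0^t A(\mathd s, x^n_s) + \psi^n_t$, and the Remark furnishes
\[ \llbracket \psi^n \rrbracket_\alpha \leqslant \kappa_1 \, n^{1-2\alpha} \bigl(1 + \|A\|_{\alpha, 1}^{(1+\alpha)/\alpha} \bigr) \leqslant \kappa_2 \, n^{1-2\alpha} \bigl( 1 + \|A\|_{\alpha, 1}^3 \bigr), \]
the last step using $(1+\alpha)/\alpha \leqslant 3$ for $\alpha > 1/2$ together with the trivial bound $a^p \leqslant 1 + a^3$ for $p \in [0, 3]$.

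Next, setting $e^n \assign x - x^n$ and applying Lemma~\ref{sec5.2 basic lemma} to rewrite $\int_0^\cdot A(\mathd s, x^n_s) = \int_0^\cdot A(\mathd s, x_s - e^n_s) = \int_0^\cdot \tau_x A(\mathd s, -e^n_s)$, subtracting the YDE satisfied by $x$ expresses $e^n$ as the unique solution to the affine YDE
\[ e^n_t = \int_0^t B(\mathd s, e^n_s) - \psi^n_t, \qquad B(t, z) \assign \tau_x A(t, 0) - \tau_x A(t, -z), \]
with $B \in C^\alpha_t \tmop{Lip}_V$, $B(\cdot, 0) \equiv 0$ and $\llbracket B \rrbracket_{\alpha, 1} = \llbracket \tau_x A \rrbracket_{\alpha, 1}$. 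The crucial observation is that, because the perturbation $A^n - A = \psi^n_t$ is here \emph{independent} of $z$, the source term on the right collapses to $-\psi^n_t$ without the $(1 + \|A^n\|_{\alpha, 1}^2)$ prefactor that a generic estimation of a nonlinear Young integral via Theorem~\ref{sec2 thm definition young integral}(4) would produce.

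Finally, the Young--Gronwall Theorem~\ref{sec3.3 thm gronwall estimate YDE} applied to this affine YDE, combining bounds~\eqref{sec3.3 affine YDE bound1} and~\eqref{sec3.3 affine YDE bound2} and absorbing the polynomial prefactor $\llbracket \tau_x A \rrbracket_{\alpha, 1}$ into the exponential, yields $\|e^n\|_\alpha \leqslant \kappa_3 \exp\bigl(\kappa_3 \|\tau_x A\|_{\alpha, 1}^{1/\alpha}\bigr) \llbracket \psi^n \rrbracket_\alpha$; multiplying by the previous bound on $\psi^n$ concludes. The main obstacle is exactly this bookkeeping: a naive black-box invocation of Theorem~\ref{sec5.2 condition comparison thm} would raise the homogeneity in $\|A\|_{\alpha, 1}$ from $3$ to $5$, so one must reopen the comparison argument and exploit the $z$-independence of the Euler defect to obtain the sharper constant announced in the statement.
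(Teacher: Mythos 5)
Your argument is correct and follows the paper's proof essentially verbatim: use the Euler-defect bound from Remark~\ref{sec3.1 remark euler a priori estimates} with $\beta=1$, rewrite $x-x^n$ via Lemma~\ref{sec5.2 basic lemma} as solving an affine YDE with drift $B$ built from $\tau_x A$ (vanishing at $0$) and inhomogeneity $\psi^n$, then close with Theorem~\ref{sec3.3 thm gronwall estimate YDE}; the opposite sign convention ($e^n=x-x^n$ versus the paper's $v^n=x^n-x$) is immaterial. Your closing observation — that a black-box appeal to Theorem~\ref{sec5.2 condition comparison thm} would pick up an extraneous $(1+\|A\|_{\alpha,1}^2)$ factor from the a priori bound on $\llbracket x^2\rrbracket_\alpha$, and that this is avoided precisely because the $z$-independent Euler defect enters as the additive term $h$ in the Gronwall theorem rather than through the nonlinear Young integral estimate — correctly explains why the argument must be reopened, and makes explicit a point the paper leaves unstated.
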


\begin{proof}
  As in the proof of Corollary~\ref{sec3.3 cor euler convergence}, recall that
  $x^n$ satisfies the YDE
  \[ x^n = x_0 + \int_0^t A (\mathd s, x^n_s) + \psi^n_t, \qquad \llbracket
     \psi^n \rrbracket_{\alpha} \lesssim_{\alpha, T} \, (1 + \| A \|_{\alpha,
     1}^3) n^{1 - 2 \alpha} . \]
  Therefore $v^n = x^n - x$ satisfies
  \[ v^n_t = \int_0^t B (\mathd s, v^n_s) + \psi^n_t, \quad B (t, z) = \tau_x
     A (t, z) - \tau_x A (t, 0), \quad \llbracket B \rrbracket_{\alpha, 1} =
     \llbracket \tau_x A \rrbracket_{\alpha, 1} . \]
  Applying Theorem~\ref{sec3.3 thm gronwall estimate YDE} we obtain that, for
  suitable $\kappa = \kappa (\alpha, T)$ it holds
  \[ \| x - x^n \|_{\alpha} \leqslant \kappa \exp (\kappa \| \tau_x A \|^{1 /
     \alpha}_{\alpha, 1})  \llbracket \psi^n \rrbracket_{\alpha} \]
  which combined with the above inequality for $\llbracket \psi^n
  \rrbracket_{\alpha}$ gives the conclusion.
\end{proof}

\begin{corollary}
  \label{sec5.2 euler convergence2}Let $A$ be such that $A \in C^{\alpha}_t
  C^{\beta}_V$ and $\partial_t A \in C^0 ([0, T] \times V ; V)$ with $\alpha
  (1 + \beta) > 1$, $x_0 \in V$ and suppose there exists a solution $x$
  associated to $(x_0, A)$ such that $\tau_x A \in C^{\alpha}_t \tmop{Lip}_V$
  (which is therefore the unique solution); denote by $x^n$ the element of
  $C^{\alpha}_t V$ constructed by the $n$-step Euler approximation from
  Theorem~\ref{sec3.1 thm existence}. Then there exists $C = C (\alpha, T)$
  such that
  \[ \| x - x^n \|_{\alpha} \leqslant C \exp (C \| \tau_x A \|^{1 /
     \alpha}_{\alpha, 1}) \| A \|_{\alpha, 1} \| \partial_t A \|_{\infty} n^{-
     \alpha} \quad \text{as } n \rightarrow \infty . \]
\end{corollary}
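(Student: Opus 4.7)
The proof combines the two ingredients already developed: the sharper Euler remainder estimate from Section~\ref{sec3.4} (available because $\partial_t A$ is continuous) and the conditional Gronwall-type argument from Theorem~\ref{sec5.2 condition comparison thm} (available because $\tau_x A$ is Lipschitz). The plan is to follow verbatim the structure of Corollary~\ref{sec5.2 euler convergence1}, but replace the Euler remainder bound $\llbracket \psi^n \rrbracket_{\alpha} \lesssim n^{1-2\alpha}$ with the faster bound $\llbracket \psi^n \rrbracket_{\alpha} \lesssim n^{-\alpha}$, which is available precisely because $\partial_t A \in C^0$ forces Lipschitz regularity on the Euler iterates.

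First I would recall from Theorem~\ref{sec3.1 thm existence} that the $n$-step Euler iterate $x^n$ satisfies the perturbed YDE
\[
x^n_t = x_0 + \int_0^t A(\mathrm{d} s, x^n_s) + \psi^n_t,
\]
and observe that since $\partial_t A$ is continuous (hence by Point~2 of Theorem~\ref{sec2 thm definition young integral} each piece $A_{t^n_k, t \wedge t^n_{k+1}}(x^n_{t^n_k}) = \int_{t^n_k}^{t \wedge t^n_{k+1}} \partial_t A(r, x^n_{t^n_k})\,\mathrm{d} r$), the piecewise-defined path $x^n$ is Lipschitz with $\llbracket x^n \rrbracket_{\mathrm{Lip}} \leqslant \|\partial_t A\|_{\infty}$. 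Plugging this into the a~priori estimate~\eqref{sec3.1 euler a priori estimate2} with $\Delta = T$ and $\gamma = 1$ gives $\llbracket \psi^n \rrbracket_{\alpha} \lesssim_{\alpha,T} \|A\|_{\alpha,1}\,\|\partial_t A\|_{\infty}\,n^{-\alpha}$, exactly as in the first display of the proof of Corollary~\ref{sec3.4 cor euler convergence}.

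Next, setting $v^n \assign x^n - x$ and repeating the averaging manipulation of Theorem~\ref{sec5.2 condition comparison thm} (invoking Lemma~\ref{sec5.2 basic lemma} to rewrite $\int_0^t A(\mathrm{d} s, x^n_s) - \int_0^t A(\mathrm{d} s, x_s) = \int_0^t \tau_x A(\mathrm{d} s, v^n_s) - \int_0^t \tau_x A(\mathrm{d} s, 0)$), I obtain the affine YDE
\[
v^n_t = \int_0^t B(\mathrm{d} s, v^n_s) + \psi^n_t, \qquad B(t,z) \assign \tau_x A(t,z) - \tau_x A(t,0),
\]
with $v^n_0 = 0$, where $B \in C^{\alpha}_t \mathrm{Lip}_V$ with $B(t,0) \equiv 0$ and $\llbracket B \rrbracket_{\alpha,1} = \llbracket \tau_x A \rrbracket_{\alpha,1}$. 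Applying Theorem~\ref{sec3.3 thm gronwall estimate YDE} (specifically the bound~\eqref{sec3.3 affine YDE bound3}) to $v^n$ yields
\[
\|v^n\|_{\alpha} \leqslant \kappa \exp\bigl(\kappa \|\tau_x A\|_{\alpha,1}^{1/\alpha}\bigr)\,\llbracket \psi^n \rrbracket_{\alpha},
\]
for some $\kappa = \kappa(\alpha,T)$, and combining this with the Euler remainder bound above gives the claimed inequality.

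There is no genuinely hard step here: the proof is an assembly of Corollary~\ref{sec3.4 cor euler convergence} and Theorem~\ref{sec5.2 condition comparison thm}. The only point requiring some care is verifying that the Lipschitz continuity of $x^n$ (forced by the continuity of $\partial_t A$ together with its implicit boundedness) is uniform in $n$, so that the improved rate $n^{-\alpha}$ from~\eqref{sec3.1 euler a priori estimate2} is available with $\gamma = 1$ rather than with $\gamma = \alpha$, which would only give the slower rate $n^{1-2\alpha}$ of Corollary~\ref{sec5.2 euler convergence1}.
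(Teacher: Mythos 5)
Your proof is correct and follows essentially the same route as the paper: the paper's proof simply cites the improved remainder bound $\llbracket \psi^n \rrbracket_{\alpha} \lesssim \|A\|_{\alpha,1}\,\|\partial_t A\|_{\infty}\,n^{-\alpha}$ from the argument of Corollary~\ref{sec3.4 cor euler convergence} and then states that the rest is identical to Corollary~\ref{sec5.2 euler convergence1}, which is exactly the assembly you carry out. One small bookkeeping remark: the exponential factor $\exp(\kappa\|\tau_x A\|_{\alpha,1}^{1/\alpha})$ comes from combining~\eqref{sec3.3 affine YDE bound1} and~\eqref{sec3.3 affine YDE bound2} rather than from~\eqref{sec3.3 affine YDE bound3} as you cite, though this does not affect the conclusion.
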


\begin{proof}
  Recall that $x^n$ satisfies the YDE
  \[ x^n = x_0 + \int_0^t A (\mathd s, x^n_s) + \psi^n_t, \qquad \llbracket
     \psi^n \rrbracket_{\alpha} \lesssim_{\alpha, T} \| A \|_{\alpha, 1} \|
     \partial_t A \|_{\infty} n^{- \alpha} . \]
  The rest of the proof is mostly identical to that of Corollary~\ref{sec5.2
  euler convergence1}.
\end{proof}

\section{Young transport equations}\label{sec6}

This section is devoted to the study of Young transport equations of
the form
\begin{equation}
  u_{\mathd t} + A_{\mathd t} \cdot \nabla u_t + c_{\mathd t} u_t = 0.
  \label{sec6 temp YTE}
\end{equation}
which we will refer to as the YTE associated to $(A, c)$.

We restrict here to the case $V =\mathbb{R}^d$; as in Section~\ref{sec4} for
simplicity we will assume on $A$ global bounds like $A \in C^{\alpha}_t C^{1 +
\beta}_x$, but slightly more tedious localisation arguments allow to relax
them to growth conditions and local regularity requirements.

\

Classical results on weak solutions to~{\eqref{sec6 temp YTE}} in the case
$A_{\mathd t} = b_t \mathd t$, $c_{\mathd t} = \tilde{c}_t \mathd t$ can be
found in~{\cite{diperna}},~{\cite{ambrosio}}. Our approach here mostly follows
the one given in~{\cite{galeatigubinelli1}}, although slightly less based on
the method of characteristics and more on a duality approach; other works
concerning transport equations in the Young (or ``level-1'') regime are given
by~{\cite{catellier}},~{\cite{hu}} and Chapter~9 from~{\cite{maurelli}}. Let
us also mention on a different note the
works~{\cite{bailleul}}~{\cite{diehl2017}},~{\cite{bellingeri}} which treat
with different techniques and in various regularity regimes rough trasnport
equations of ``level-2'' or higher (namely corresponding to a time regularity
$\alpha \leq 1 / 2$).

\

Before explaining the meaning of~{\eqref{sec6 temp YTE}}, we need some
preparations. Given any compact $K \subset \mathbb{R}^d$, we denote by
$C^{\beta}_K = C^{\beta}_K (\mathbb{R}^d)$ the Banach space of $f \in
C^{\beta} (\mathbb{R}^d)$ with $\tmop{supp} f \subset K$; $C^{\beta}_c =
C^{\beta}_c (\mathbb{R}^d)$ is the set of all compactly supported
$\beta$-H\"{o}lder continuous functions. $C^{\beta}_c$ is a direct limit of
Banach spaces and thus it is locally convex; we denote its topological dual by
$(C^{\beta}_c)^{\ast}$. Given $\gamma, \beta \in (0, 1)$, we say that $f \in
C^{\alpha}_t C^{\beta}_c$ if there exists a compact $K$ such that $f \in
C^{\alpha}_t C_K^{\beta}$; similarly, a distribution $u \in C^{\gamma}_t
(C^{\beta}_c)^{\ast}$ if $u \in C^{\gamma}_t (C^{\beta}_K)^{\ast}$ for all
compact $K \subset \mathbb{R}^d$. We will use the bracket $\langle \cdot,
\cdot \rangle$ to denote both the classical $L^2$-pairing and the one between
$C^{\beta}_c$ and its dual. Finally, $M_{\tmop{loc}}$ denotes the space of
Radon measures on $\mathbb{R}^d$, $M_K$ the space of finite signed measure
supported on $K$; observe that the above notation is consistent with
$M_{\tmop{loc}} = (C^0_c)^{\ast}$.

We are now ready to give a notion of solution to the YTE.

\begin{definition}
  \label{sec6 temp defn solution}Let $\alpha, \beta \in (0, 1)$ such that
  $\alpha (1 + \beta) > 1$.We say that $u \in L^{\infty}_t M_{\tmop{loc}} \cap
  C^{\alpha \beta}_t (C_c^{\beta})^{\ast}$ is a weak solution to the YTE
  associated to $A \in C^{\alpha}_t C^{\beta}_x$, $c \in C^{\alpha}_t
  C^{\beta}_x$ with $\tmop{div} A \in C^{\alpha}_t C^{\beta}_x$ if
  \begin{equation}
    \langle u_t, \varphi \rangle - \langle u_0, \varphi \rangle = \int_0^t
    \langle A_{\mathd s} \cdot \nabla \varphi + (\tmop{div} A_{\mathd s} -
    c_{\mathd s}) \varphi, u_s \rangle \quad \forall \, \varphi \in
    C^{\infty}_c . \label{sec6 temp YTE2}
  \end{equation}
\end{definition}

Observe that under the above assumptions, for any $\varphi \in C^{\infty}_c$,
$A \cdot \nabla \varphi$ and $(\tmop{div} A - c) \varphi$ belong to
$C^{\alpha}_t C^{\beta}_c$; since $u \in C^{\alpha \beta}_t
(C^{\beta}_c)^{\ast}$ with $\alpha (1 + \beta) > 1$, the integral appearing
in~{\eqref{sec6 temp YTE2}} is meaningful as a functional Young integral.

\begin{remark}
  \label{sec6 temp remark defn}For practical purposes, it is useful to
  consider the following equivalent characterization of solutions: under the
  above regularity assumptions on $u$, $A$, $c$, $u$ is a solution if and only
  if for any compact $K \subset \mathbb{R}^d$ and $\varphi \in C^{\infty}_K$
  it holds
  \begin{eqnarray}
    | \langle u_{s, t}, \varphi \rangle - \langle A_{s, t} \cdot \nabla
    \varphi + (\tmop{div} A_{s, t} - c_{s, t}) \varphi, u_s \rangle | &
    \lesssim_K & \| \varphi \|_{C^{1 + \beta}_K} | t - s |^{\alpha (1 +
    \beta)} \llbracket u \rrbracket_{C^{\alpha \beta}_t (C^{\beta}_K)^{\ast}}
    \times \nonumber\\
    &  & \times (\| A \|_{\alpha, \beta} + \| \tmop{div} A - c \|_{\alpha,
    \beta}) . \label{sec6 temp YTE characterization} 
  \end{eqnarray}
  Clearly in the l.h.s. above one can replace $u_s$ with $u_t$ to get a
  similar estimate.
\end{remark}

\begin{remark}
  The presence of $c$ in~{\eqref{sec6 temp YTE}} allows to also consider
  nonlinear Young continuity equations (YCE for short) of the form
  \[ v_{\mathd t} + \nabla \cdot (A_{\mathd t} v_t) + c_{\mathd t} v_t = 0 ;
  \]
  weak solutions to the above equation must be understood as weak solutions to
  the YTE associated to $(A, \tilde{c})$ with $\tilde{c} = c + \nabla \cdot
  A$.
\end{remark}

Let us quickly recall some results from Section~\ref{sec4}: given $A \in
C^{\alpha}_t C^{1 + \beta}_x$, the YDE admits a flow of diffeomorphisms
$\Phi_{s \rightarrow t} (x)$ and there exists $C = C (\alpha, \beta, T, \| A
\|_{\alpha, 1 + \beta})$ such that
\begin{eqnarray*}
  \| \Phi_{s \rightarrow \cdot} (x) - \Phi_{s \rightarrow \cdot} (y)
  \|_{\alpha ; s, T} & \leqslant & C | x - y |\\
  | \Phi_{s \rightarrow t} (x) - x | & \leqslant & C | t - s |^{\alpha}\\
  \llbracket \Phi_{s \rightarrow \cdot} (x) \rrbracket_{\alpha ; s, T} + | D_x
  \Phi_{s \rightarrow t} (x) | & \leqslant & C
\end{eqnarray*}
for all $x, y \in \mathbb{R}^d$, $(s, t) \in \Delta_2$, together with similar
estimates for $\Phi_{\cdot \leftarrow t}$. Moreover
\[ \det D \Phi_{s \rightarrow t} (x) = \exp \left( \int_s^t \tmop{div} A
   (\mathd r, \Phi_{s \rightarrow r} (x)) \right) \]
and similarly
\[ \det D \Phi_{s \leftarrow t} (x) = (\det D \Phi_{s \rightarrow t} (\Phi_{s
   \leftarrow t} (x)))^{- 1} = \exp \left( - \int_s^t \tmop{div} A (\mathd r,
   \Phi_{r \leftarrow t} (x)) \right) . \]
\begin{proposition}
  \label{sec6 temp prop existence}Let $A \in C^{\alpha}_t C^{1 + \beta}_x$, $c
  \in C^{\alpha}_t C^{\beta}_x$. Then for any $\mu_0 \in M_{\tmop{loc}}$, a
  solution to the YTE is given by the formula
  \begin{equation}
    \langle u_t, \varphi \rangle = \int \varphi (\Phi_{0 \rightarrow t} (x))
    \exp \left( \int_0^t (\tmop{div} A - c) (\mathd s, \Phi_{0 \rightarrow s}
    (x)) \right) \mu_0 (\mathd x) \quad \forall \, \varphi \in C^{\infty}_c .
    \label{sec6 temp candidate solution}
  \end{equation}
  If $\mu_0 (\mathd x) = u_0 (x) \mathd x$ for $u_0 \in L^p_{\tmop{loc}}$,
  then $u_t$ corresponds to the measurable function
  \begin{equation}
    u (t, x) = u_0 (\Phi_{0 \leftarrow t} (x)) \exp \left( - \int_0^t c
    (\mathd s, \Phi_{s \leftarrow t} (\nobracket x)) \nobracket \right)
    \label{sec6 temp candidate solution2}
  \end{equation}
  which belongs to $L^{\infty}_t L^p_{\tmop{loc}}$ and satisfies
  \[ \int_K | u (t, x) |^p \mathd x = \int_{\Phi_{0 \leftarrow t} (K)} | u_0
     (x) |^p \exp \left( \int_0^t (\tmop{div} A - c) (\mathd s, \Phi_{0
     \rightarrow s} (x)) \right) . \]
  If in addition $c \in C^{\alpha}_t C^{1 + \beta}_x$, then for any $u_0 \in
  C^1_{\tmop{loc}}$ it holds $u \in C^{\alpha}_t C^0_{\tmop{loc}} \cap C^0_t
  C^1_{\tmop{loc}}$.
\end{proposition}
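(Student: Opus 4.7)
The plan is to define $u_t$ directly by the candidate formula \eqref{sec6 temp candidate solution} and verify each claim in turn, using the nonlinear Young It\^o formula \eqref{sec2 ito formula3} together with a Fubini-type exchange between sewing integrals and the $\mu_0$-integration. First I would check that $u$ has the claimed regularity: the exponential factor $E_t(x) \assign \exp \bigl( \int_0^t (\tmop{div} A - c) (\mathd s, \Phi_{0 \rightarrow s}(x)) \bigr)$ is uniformly bounded on compacts by the a priori estimate \eqref{sec2 nonlinear integral estimate1}, combined with the uniform Lipschitz control of $\Phi_{0 \rightarrow \cdot}(x)$ from Corollary~\ref{sec4.1 corollary lipschitz flow}, so each $u_t$ is a Radon measure with locally bounded mass. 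For the temporal $\alpha\beta$-H\"older regularity in $(C^\beta_c)^\ast$, testing against $\varphi \in C^\beta_K$ reduces to the interpolation-type estimates $|\varphi(\Phi_{0 \rightarrow t}(x)) - \varphi(\Phi_{0 \rightarrow s}(x))| \lesssim \llbracket \varphi \rrbracket_\beta |t-s|^{\alpha\beta}$ and $|E_t(x) - E_s(x)| \lesssim |t-s|^\alpha$, which combine to yield the required bound.

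To establish the weak formulation \eqref{sec6 temp YTE2}, fix $\varphi \in C^\infty_c$ and, for each $x$, set $X^x_t \assign \Phi_{0 \rightarrow t}(x)$, $Z^x_t \assign \int_0^t (\tmop{div} A - c)(\mathd r, X^x_r)$ and $Y^x_t \assign e^{Z^x_t}$. Applying \eqref{sec2 ito formula3} to the smooth functions $\varphi$ and $\exp$ yields
\begin{align*}
\varphi(X^x_t) &= \varphi(x) + \int_0^t \nabla \varphi(X^x_r) \cdot A(\mathd r, X^x_r),\\
Y^x_t &= 1 + \int_0^t Y^x_r \, (\tmop{div} A - c)(\mathd r, X^x_r),
\end{align*}
and the Young product rule from the Remark following Proposition~\ref{sec2 prop ito formula} then gives
\[ \varphi(X^x_t) Y^x_t - \varphi(x) = \int_0^t Y^x_r \, \nabla \varphi(X^x_r) \cdot A(\mathd r, X^x_r) + \int_0^t \varphi(X^x_r) Y^x_r \, (\tmop{div} A - c)(\mathd r, X^x_r). \]
Integrating both sides against $\mu_0(\mathd x)$, the LHS becomes $\langle u_t - u_0, \varphi \rangle$ by the defining formula, and I would exchange the sewing integrals in $r$ with the $\mu_0$-integration using the Fubini-type result for sewing (Lemma~\ref{appendix lemma fubini}), viewing the $x$-parametrized families as taking values in a suitable Banach space of $\mu_0$-integrable functions. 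This identifies the RHS with $\int_0^t \langle A_{\mathd s} \cdot \nabla \varphi + (\tmop{div} A_{\mathd s} - c_{\mathd s}) \varphi, u_s \rangle$, giving exactly \eqref{sec6 temp YTE2}.

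For the density case, the change of variables $y = \Phi_{0 \rightarrow t}(x)$ in \eqref{sec6 temp candidate solution}, combined with the group property $\Phi_{0 \rightarrow s} \circ \Phi_{0 \leftarrow t} = \Phi_{s \leftarrow t}$ and the Jacobian identity $|\det D \Phi_{0 \leftarrow t}(y)| = \exp\bigl(-\int_0^t \tmop{div} A(\mathd r, \Phi_{r \leftarrow t}(y))\bigr)$ from Theorem~\ref{sec4.1 main thm finite dim}, causes the $\tmop{div} A$ contributions to cancel and produces formula \eqref{sec6 temp candidate solution2}; applying the same change of variables to $\int_K |u(t,x)|^p \mathd x$ yields the stated $L^p_{\tmop{loc}}$-bound. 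Under the stronger hypothesis $c \in C^\alpha_t C^{1+\beta}_x$ and $u_0 \in C^1_{\tmop{loc}}$, \eqref{sec6 temp candidate solution2} exhibits $u(t, \cdot)$ as a product of $C^1$ functions, the spatial $C^1$-regularity of $x \mapsto \int_0^t c(\mathd s, \Phi_{s \leftarrow t}(x))$ following by differentiation under the sewing integral as in Corollary~\ref{sec2 corollary frechet}, while temporal continuity in $C^0_{\tmop{loc}}$ follows from joint continuity of the nonlinear Young integral in its data. The main obstacle will be the Fubini-for-sewing step: it requires identifying the right Banach-valued framework in which to apply the sewing lemma to the $x$-parametrized integrands, and checking uniform-in-$x$ control on the two-parameter norms of $\Gamma^{x,1}_{s,t} \assign Y^x_s \nabla \varphi(X^x_s) \cdot A_{s,t}(X^x_s)$ and its analogue, for which the uniform flow bounds from Corollary~\ref{sec4.1 corollary lipschitz flow} are essential.
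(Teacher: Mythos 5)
Your approach matches the paper's quite closely: the same Itô-plus-product-rule identity for $\varphi(\Phi_{0\to t}(x))\,e^{\int_0^t(\operatorname{div}A-c)(\mathrm{d}s,\Phi_{0\to s}(x))}$, the same change-of-variables/Jacobian argument for the density formula, and the same flow bounds driving the regularity claims. One small stylistic difference: rather than appealing to the abstract Fubini-for-sewing lemma (Lemma~\ref{appendix lemma fubini}), the paper sidesteps it by showing directly that $z_{s,t}(x)$ agrees with the first-order germ up to $O(|t-s|^{\alpha(1+\beta)})$ \emph{uniformly in $x$}, integrating this pointwise estimate against $\mu_0$, and invoking the characterization of solutions from Remark~\ref{sec6 temp remark defn}; this avoids having to check the measurability hypotheses of the Banach-valued Fubini, though your route is valid too.

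The one place your proposal is genuinely too glib is the claim that the temporal $C^\alpha_t C^0_{\mathrm{loc}}$-regularity of $g(t,x)\assign\int_0^t c(\mathrm{d}r,\Phi_{r\leftarrow t}(x))$ ``follows from joint continuity of the nonlinear Young integral in its data.'' Here $t$ enters in two distinct ways---as the upper limit \emph{and} inside the integrand through the backward flow $\Phi_{\cdot\leftarrow t}(x)$---and the claimed regularity is quantitative $\alpha$-H\"older, not mere continuity. What is actually needed is the decomposition $g(t,x)-g(s,x)=\int_s^t c(\mathrm{d}r,\Phi_{r\leftarrow t}(x))+\int_0^s\bigl[c(\mathrm{d}r,\Phi_{r\leftarrow t}(x))-c(\mathrm{d}r,\Phi_{r\leftarrow s}(x))\bigr]$, the preliminary flow estimate $\|\Phi_{\cdot\leftarrow t}(x)-\Phi_{\cdot\leftarrow s}(x)\|_\alpha\lesssim|t-s|^\alpha$ (itself deduced from the group property and the Lipschitz bound on $\Phi_{\cdot\leftarrow s}$), and then Corollary~\ref{sec2 corollary frechet} applied to the second term---this last step is precisely where the extra hypothesis $c\in C^\alpha_t C^{1+\beta}_x$ is used. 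As written, your argument does not surface why that stronger hypothesis on $c$ is required.
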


\begin{proof}
  Since $| \Phi_{0 \rightarrow t} (x) - x | \lesssim T^{\alpha}$, it is always
  possible to find $R \geqslant 0$ big enough such that $\tmop{supp} \varphi
  (\Phi_{0 \rightarrow t} (\cdot)) \subset \tmop{supp} \varphi + B_R$ for all
  $t \in [0, T]$; by estimates~{\eqref{sec2 nonlinear integral estimate2}}
  and~{\eqref{sec3.3 a priori estimate1}}, it holds
  \[ \sup_{(t, x) \in [0, T] \times \mathbb{R}^d} \left| \int_0^t (\tmop{div}
     A - c) (\mathd s, \Phi_{0 \rightarrow s} (x)) \right| \lesssim \|
     \tmop{div} A - c \|_{\alpha, \beta} \sup_{x \in \mathbb{R}^d} (1 +
     \llbracket \Phi_{0 \rightarrow \cdot} (x) \rrbracket_{\alpha}) < \infty .
  \]
  It is therefore clear that $u_t$ defined as in~{\eqref{sec6 temp candidate
  solution}} belongs to $L^{\infty}_t (C^0_c)^{\ast}$. Similarly, combining
  the estimates
  
  \begin{align*}
    | \varphi (\Phi_{0 \rightarrow t} (x)) - \varphi (\Phi_{0 \rightarrow s}
    (x)) | & \leqslant | t - s |^{\alpha \beta} \llbracket \varphi
    \rrbracket_{\beta} \llbracket \Phi_{0 \rightarrow \cdot} (x)
    \rrbracket_{\alpha}^{\beta} \lesssim | t - s |^{\alpha \beta} \llbracket
    \varphi \rrbracket_{\beta}\\
    \left| \int_s^t (\tmop{div} A - c) (\mathd s, \Phi_{0 \rightarrow s} (x))
    \right| & \lesssim | t - s |^{\alpha}  \| \tmop{div} A - c \|_{\alpha,
    \beta} (1 + \llbracket \Phi_{0 \rightarrow \cdot} (x) \rrbracket_{\alpha})
    \lesssim | t - s |^{\alpha},
  \end{align*}
  
  it is easy to check that $u \in C^{\alpha \beta}_t (C^{\beta}_c)^{\ast}$.
  
  Let us show that it is a solution to the YTE in the sense of
  Definition~\ref{sec6 temp defn solution}. Given $\varphi \in C^{\infty}_K$
  and $x \in \mathbb{R}^d$, define
  \[ z_t (x) \assign \varphi (\Phi_{0 \rightarrow t} (x)) \exp \left( \int_0^t
     (\tmop{div} A - c) (\mathd s, \Phi_{0 \rightarrow s} (x)) \right) . \]
  By It\^{o} formula, $z$ satisfies
  \begin{eqnarray*}
    z_{s, t} (x) & = & \int_s^t \varphi (\Phi_{0 \rightarrow r} (x)) \exp
    \left( \int_0^r (\tmop{div} A - c) (\mathd s, \Phi_{0 \rightarrow s} (x))
    \right) (\tmop{div} A - c) (\mathd r, \Phi_{0 \rightarrow r} (x))\\
    &  & + \int_s^t \exp \left( \int_0^r (\tmop{div} A - c) (\mathd s,
    \Phi_{0 \rightarrow s} (x)) \right) \nabla \varphi (\Phi_{0 \rightarrow r}
    (x)) \cdot A (\mathd r, \Phi_{0 \rightarrow r} (x)) .
  \end{eqnarray*}
  By the properties of Young integrals and the above estimates, which are
  uniform in $x$, it holds
  \begin{eqnarray*}
    z_{s, t} (x) & \sim & \exp \left( \int_0^s (\tmop{div} A - c) (\mathd r,
    \Phi_{0 \rightarrow r} (x)) \right) \times\\
    &  & \times [\varphi (\Phi_{0 \rightarrow s} (x)) (\tmop{div} A - c)_{s,
    t} (\Phi_{0 \rightarrow s} (x)) + \nabla \varphi (\Phi_{0 \rightarrow s}
    (x)) \cdot A_{s, t} (\Phi_{0 \rightarrow s} (x))]
  \end{eqnarray*}
  in the sense that the two quantities differ by $O (| t - s |^{\alpha (1 +
  \beta)})$, uniformly in $x \in \mathbb{R}^d$. Therefore
  
  \begin{align*}
    \langle u_{s, t}, \varphi \rangle & = \int_{K + B_R} z_{s, t} (x) \mu_0
    (\mathd x)\\
    & \sim \int_{K + B_R} [A_{s, t} \cdot \nabla \varphi + (\tmop{div} A -
    c)_{s, t} \varphi] (\Phi_{0 \rightarrow t} (x)) \exp \left( \int_0^s
    (\tmop{div} A - c) (\mathd r, \Phi_{0 \rightarrow r} (x)) \right) \mu_0
    (\mathd x)\\
    & \sim \langle u_s, A_{s, t} \cdot \nabla \varphi + (\tmop{div} A -
    c)_{s, t} \varphi \rangle
  \end{align*}
  
  where the two quantities differ by $O (\| \varphi \|_{C^{1 + \beta}_K} | t -
  s |^{\alpha (1 + \beta)})$. By Remark~\ref{sec6 temp remark defn} we deduce
  that $u$ is indeed a solution.
  
  The statements for $u_0 \in L^p_{\tmop{loc}}$ are an easy application of
  formula~{\eqref{sec4.1 formula jacobian}}; it remains to prove the claims
  for $u_0 \in C^1_{\tmop{loc}}$, under the additional assumption $c \in
  C^{\alpha}_t C^{1 + \beta}_x$. First of all observe that, for any $(s, t)
  \in \Delta_2$, it holds
  \begin{equation}
    \| \Phi_{\cdot \leftarrow t} (x) - \Phi_{\cdot \leftarrow s} (x)
    \|_{\alpha} = \| \Phi_{\cdot \leftarrow s} (\Phi_{s \leftarrow t} (x)) -
    \Phi_{\cdot \leftarrow s} (x) \|_{\alpha} \lesssim | \Phi_{s \leftarrow t}
    (x) - x | \lesssim | t - s |^{\alpha} ; \label{sec6 temp existence proof
    eq1}
  \end{equation}
  as a consequence, the map $(t, x) \mapsto u_0 (\Phi_{0 \leftarrow t} (x))$
  belongs to $C^{\alpha}_t C^0_{\tmop{loc}}$. Consider now the map
  \[ g (t, x) \assign \int_0^t c (\mathd r, \Phi_{r \leftarrow t} (x)) . \]
  It holds
  
  \begin{align*}
    \int_0^t c (\mathd r, \Phi_{r \leftarrow t} (x)) - \int_0^s c (\mathd r,
    \Phi_{r \leftarrow s} (x)) & = \int_s^t c (\mathd r, \Phi_{r \leftarrow t}
    (x)) + \int_0^s [c (\mathd r, \Phi_{r \leftarrow t} (x)) - c (\mathd r,
    \Phi_{r \leftarrow s} (x))] ;
  \end{align*}
  
  by Corollary~\ref{sec2 corollary frechet} and estimate~{\eqref{sec6 temp
  existence proof eq1}} we have
  \begin{eqnarray*}
    \left\| \int_0^{\cdot} [c (\mathd r, \Phi_{r \leftarrow t} (x)) - c
    (\mathd r, \Phi_{r \leftarrow s} (x))] \right\|_{\alpha} & \lesssim & \| c
    \|_{\alpha, 1 + \beta} (1 + \llbracket \Phi_{\cdot \leftarrow t} (x)
    \rrbracket_{\alpha} + \llbracket \Phi_{\cdot \leftarrow s} (x)
    \rrbracket_{\alpha}) \times\\
    &  & \times \| \Phi_{\cdot \leftarrow t} (x) - \Phi_{\cdot \leftarrow s}
    (x) \|_{\alpha}\\
    & \lesssim & | t - s |^{\alpha} .
  \end{eqnarray*}
  As a consequence, $g \in C^{\alpha}_t C^0_{\tmop{loc}}$ and so does $u$. The
  verification that $u \in C^0_t C^1_{\tmop{loc}}$ is similar and thus
  omitted.
\end{proof}

\begin{remark}
  \label{sec6 temp remark backward solution}Analogous computations show that a
  solution to the YTE with terminal condition $u (T, \cdot) = \mu_T (\cdot)$
  is given by
  \[ \langle u_t, \varphi \rangle = \int \varphi (\Phi_{t \leftarrow T} (x))
     \exp \left( \int_t^T (c - \tmop{div} A) (\mathd s, \Phi_{s \leftarrow T}
     (x)) \right) \mu_T (\mathd x) \quad \forall \, \varphi \in C^{\infty}_c ;
  \]
  in the case $\mu_T (\mathd x) = u_T (x) \mathd x$ with $u_T \in
  L^p_{\tmop{loc}}$ it corresponds to
  \[ u_t (x) = u_T (\Phi_{t \rightarrow T} (x)) \exp \left( \int_t^T c (\mathd
     s, \Phi_{t \rightarrow s} (x)) \right) . \]
  This solution satisfies the same space-time regularity as in
  Proposition~\ref{sec6 temp prop existence}. Moreover by the properties of
  the flow, if $\mu_0$ (resp. $\mu_T$) has compact support, then it's possible
  to find $K \subset \mathbb{R}^d$ compact such that $\tmop{supp} u_t \subset
  K$ uniformly in $t \in [0, T]$. In particular if $c \in C^{\alpha}_t C^{1 +
  \beta}_x$ and $u_0 \in C^1_c$ (resp. $u_T \in C^1_c$), then the associated
  solution belongs to $C^{\alpha}_t C^0_c \cap C^0_t C^1_c$.
\end{remark}

The following result is at the heart of the duality approach and our main tool
to establish uniqueness.

\begin{proposition}
  \label{sec6 temp prop duality}Let $u \in C^{\alpha}_t C^0_c \cap C^0_t
  C^1_c$ be a solution of the YTE
  \begin{equation}
    u_{\mathd t} + A_{\mathd t} \cdot \nabla u_t + c_{\mathd t} u_t = 0
    \label{sec6 temp YTE dual}
  \end{equation}
  and let $v \in L^{\infty}_t (C^0_c)^{\ast} \cap C^{\alpha \beta}_t
  (C^{\beta}_c)^{\ast}$ be a solution to the YCE
  \begin{equation}
    v_{\mathd t} + \nabla \cdot (A_{\mathd t} v_t) - c_{\mathd t} v_t = 0.
    \label{sec6 temp YCE dual}
  \end{equation}
  Then it holds $\langle v_t, u_t \rangle = \langle v_s, u_s \rangle$ for all
  $(s, t) \in \Delta_2$. A similar statement holds for $u \in C^{\alpha}_t
  C^0_{\tmop{loc}} \cap C^0_t C^1_{\tmop{loc}}$ and $v$ as above and compactly
  supported uniformly in time.
\end{proposition}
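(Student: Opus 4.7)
The statement is a Young counterpart of the identity $\tfrac{d}{dt}\langle v_t, u_t\rangle = \langle dv_t, u_t\rangle + \langle v_t, du_t\rangle = 0$, where the vanishing is formal: substituting the YCE for $dv_t$ and the YTE for $du_t$, and integrating by parts in space, cancels all contributions. My plan is to make this rigorous via a sewing/partition argument. Set $G(t) := \langle v_t, u_t\rangle$, which is well-defined as the pairing of $u_t \in C^1_c \subset C^\beta_c$ with $v_t \in (C^\beta_c)^*$; the goal is to show $|G(t) - G(s)| \leq C|t-s|^{1+\varepsilon}$ for some $\varepsilon > 0$, from which telescoping over a vanishing-mesh partition forces $G$ to be constant.

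The starting point is the decomposition
\[ G(t) - G(s) = \langle v_{s,t}, u_s\rangle + \langle v_s, u_{s,t}\rangle + \langle v_{s,t}, u_{s,t}\rangle. \]
The first term I would treat by the YCE for $v$ tested against $u_s$: the sewing bound (the YCE analogue of Remark \ref{sec6 temp remark defn}) identifies it, modulo a sewing remainder, with $\Psi_{s,t} := \langle v_s, A_{s,t}\cdot\nabla u_s + c_{s,t} u_s\rangle$. The second term I would treat dually: the YTE for $u$, combined with integration by parts using $u_s \in C^1_c$, identifies $u_{s,t}$ (as an element of $(C^{1+\beta}_c)^*$) with $-A_{s,t}\cdot\nabla u_s - c_{s,t} u_s$ up to a remainder; pairing with $v_s$ then gives $-\Psi_{s,t}$ modulo an error. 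These two $\pm\Psi_{s,t}$ contributions cancel. Finally, the bracket $\langle v_{s,t}, u_{s,t}\rangle$ I would estimate by substituting the approximate YTE/YCE identities for each factor: the leading piece is bilinear in $(A_{s,t}, c_{s,t})$ and hence of size $|t-s|^{2\alpha}$, which is subleading to $|t-s|$ since $\alpha > 1/2$ (as follows from $\alpha(1+\beta)>1$).

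The principal technical obstacle is that the natural sewing estimate for both the YCE and the YTE demands a test function in $C^{1+\beta}_c$, whereas $u_s$ lives only in $C^1_c$ and $v_s$ is merely a distribution in $(C^\beta_c)^*$; both couplings therefore require a regularisation with a carefully tuned parameter. I would approximate $u_s$ by $u_s * \rho_\eta \in C^\infty_c$ (respectively $v_s$ by $v_s * \rho_\eta$), then balance the mollification error in $C^\beta_c$ (resp. $(C^\beta_c)^*$), which scales polynomially in $\eta$, against the sewing error, which scales like $\eta^{-\beta}|t-s|^{\alpha(1+\beta)}$; choosing $\eta$ to be an appropriate power of $|t-s|$ yields a net remainder of order $|t-s|^{1+\varepsilon}$. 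The compact support of $u$ uniformly in $t$ ensures that all integration by parts and Fubini-type exchanges (e.g.\ Lemma \ref{appendix lemma fubini}) are justified without boundary terms. For the second part of the statement, in which $u$ has only local spatial regularity but $v$ is compactly supported uniformly in $t$, the same argument applies after localising with a smooth cutoff equal to one on a fixed compact containing $\mathrm{supp}(v_t)$ for all $t$.
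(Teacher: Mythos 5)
Your high-level idea (show $G_t := \langle v_t, u_t\rangle$ is $(1+\varepsilon)$-Hölder, hence constant) and the sewing/partition framework are the right ones, but your proposed estimates do not close, and the missing ingredient is precisely the DiPerna--Lions commutator estimate (Lemma~\ref{sec6 temp commutator lemma}), on which the paper's proof hinges.

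Two concrete problems. First, the third term $\langle v_{s,t}, u_{s,t}\rangle$ is not of size $|t-s|^{2\alpha}$. The direct bound, using $\|v_{s,t}\|_{(C^\beta_K)^\ast}\lesssim |t-s|^{\alpha\beta}$ together with the interpolation $\|u_{s,t}\|_{C^\beta}\lesssim |t-s|^{\alpha(1-\beta)}$ (from $u\in C^\alpha_t C^0_c\cap C^0_t C^1_c$), gives only $O(|t-s|^\alpha)$, which is useless. Substituting the approximate YTE/YCE identities into each factor, after mollifying $u_{s,t}$ at scale $\eta=|t-s|^\gamma$, produces error terms whose exponents in $|t-s|$ cannot all be pushed above $1$: the constraints $\alpha\beta + (1-\beta)\gamma > 1$ and $2\alpha - \gamma > 1$ are incompatible whenever $\alpha < 1$. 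The paper avoids this trap by never estimating the cross term at all; it writes $\langle u^\varepsilon_t,v_t\rangle - \langle u^\varepsilon_s,v_s\rangle = \langle u^\varepsilon_{s,t},v_s\rangle + \langle u^\varepsilon_t, v_{s,t}\rangle$ (a two-term split, with $u^\varepsilon_t$ rather than $u^\varepsilon_s$ in the second summand), so that the $A_{s,t}\cdot\nabla u^\varepsilon_t$ contributions from the two terms cancel \emph{exactly}, not just up to error.

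Second, the claimed cancellation of the $\pm\Psi_{s,t}$ contributions is not exact either. If you mollify $u_s$ to handle the YCE pairing and mollify $v_s$ to handle the YTE pairing, the resulting leading terms $\langle v_s, A_{s,t}\cdot\nabla u_s^\eta + \cdots\rangle$ and $\langle v_s^\eta, A_{s,t}\cdot\nabla u_s + \cdots\rangle$ differ by $\langle v_s, R^\eta(A_{s,t}, u_s)\rangle$ plus a $c$-commutator --- precisely the commutator of Lemma~\ref{sec6 temp commutator lemma}. These go to zero as $\eta\to 0$ by the DiPerna--Lions estimate, but not with any polynomial rate in $\eta$ (indeed $\|\nabla u_s - (\nabla u_s)^\eta\|_{C^0}\to 0$ without rate since $\nabla u_s$ is merely continuous), so no power-law balancing of $\eta$ against $|t-s|$ can recover what you need. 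The paper gets around this by a different mechanism: with the $\varepsilon$-\emph{dependent} estimates one shows only that $f^\varepsilon_t := \langle u^\varepsilon_t, v_t\rangle$ is the sewing of $\Gamma^\varepsilon_{s,t} := \langle c_{s,t}u^\varepsilon_t - (c_{s,t}u_t)^\varepsilon - R^\varepsilon(A_{s,t},u_t), v_s\rangle$; the sewing lemma (specifically Remark~\ref{sec2 remark sewing}) then upgrades this to the \emph{uniform} bound $|f^\varepsilon_{s,t} - \Gamma^\varepsilon_{s,t}|\lesssim \|\delta\Gamma^\varepsilon\|_{\alpha(1+\beta)}|t-s|^{\alpha(1+\beta)}$, where $\|\delta\Gamma^\varepsilon\|$ is bounded uniformly in $\varepsilon$, and one concludes by sending $\varepsilon\to 0$ at fixed $(s,t)$ using the pointwise convergence $\Gamma^\varepsilon_{s,t}\to 0$. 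Your proposal should be reoriented to this structure: a single mollification parameter, the two-term increment decomposition, exact cancellation of the transport terms, and the commutator lemma together with the sewing lemma's uniform remainder estimate.
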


The proof requires some preparations. Let $\{ \rho_{\varepsilon}
\}_{\varepsilon > 0}$ be a family of standard spatial mollifiers (say $\rho_1$
supported on $B_1$ for simplicity) and define the $R^{\varepsilon}$, for
sufficiently regular $g$ and $h$, as the following bilinear operator:
\begin{equation}
  R^{\varepsilon} (g, h) = (g \cdot \nabla h)^{\varepsilon} - g \cdot \nabla
  h^{\varepsilon} = \rho^{\varepsilon} \ast (g \cdot \nabla h) - g \cdot
  \nabla (\rho^{\varepsilon} \ast h) ; \label{sec6 defn commutator}
\end{equation}
the following commutator lemma is a slight variation on Lemma~16, Section~5.2
from~{\cite{galeatigubinelli1}}, which in turn is inspired by the general
technique first introduced in~{\cite{diperna}}.

\begin{lemma}
  \label{sec6 temp commutator lemma}The operator $R^{\varepsilon} :
  C_{\tmop{loc}}^{1 + \beta} \times C_{\tmop{loc}}^1 \rightarrow
  C_{\tmop{loc}}^{\beta}$ defined by~{\eqref{sec6 defn commutator}} satisfies
  the following.
  \begin{enumerateroman}
    \item There exists a constant $C$ independent of $\varepsilon$ and $R$
    such that
    \[ \| R^{\varepsilon} (g, h) \|_{\beta, R} \leqslant C \| g \|_{1 + \beta,
       R + 1} \| h \|_{\beta, R + 1} . \]
    \item For any fixed $g \in C^{1 + \beta}_{\tmop{loc}}, h \in
    C^{\beta}_{\tmop{loc}}$ it holds $R^{\varepsilon} (g, h) \rightarrow 0$ in
    $C^{\beta'}_{\tmop{loc}}$ as $\varepsilon \rightarrow 0$, for any $\beta'
    < \beta$.
  \end{enumerateroman}
\end{lemma}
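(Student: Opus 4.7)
The plan is to follow the classical DiPerna--Lions commutator trick, adapted to the H\"older scale. The starting point is the identity
\[
R^\varepsilon(g,h)(x) = \int \rho^\varepsilon(x-y)\,[g(y)-g(x)]\cdot \nabla h(y)\, \mathd y,
\]
which follows immediately from the definition after moving the gradient off $\rho^\varepsilon$ in the term $g(x)\cdot\nabla h^\varepsilon(x)$. I would then change variables $y = x - \varepsilon z$ and use the identity $\nabla h(x-\varepsilon z) = -\varepsilon^{-1}\nabla_z[h(x-\varepsilon z)]$ followed by integration by parts in $z$ to eliminate $\nabla h$ entirely, producing the representation
\[
R^\varepsilon(g,h)(x) = \frac{1}{\varepsilon}\int \nabla \rho(z)\cdot [g(x-\varepsilon z)-g(x)]\, h(x-\varepsilon z)\, \mathd z \;-\; \int \rho(z)\,(\nabla\!\cdot g)(x-\varepsilon z)\,h(x-\varepsilon z)\, \mathd z.
\]
This is the workhorse formula: $\nabla h$ no longer appears, so only $g$, $\nabla g$, and $h$ itself enter; in particular it gives meaning to $R^\varepsilon(g,h)$ for $h$ merely in $C^\beta_{\mathrm{loc}}$, which will be crucial for~(ii).

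For~(i), I would estimate each piece separately. The apparent singularity $\varepsilon^{-1}$ in the first integral is absorbed by $|g(x-\varepsilon z)-g(x)| \leqslant \varepsilon |z|\,\|\nabla g\|_\infty$, while $|z|\,|\nabla \rho(z)|$ is integrable; this gives at once the $L^\infty$ bound. For the H\"older seminorm one takes the difference at two points $x,x'$ and expands $g(x-\varepsilon z)-g(x)$ via the fundamental theorem of calculus: the $\beta$-H\"older regularity of $\nabla g$ handles differences in the ``$g$-part'', while the $\beta$-H\"older regularity of $h$ handles differences in the ``$h$-part''. The shift of radius from $R$ to $R+1$ comes from assuming (wlog) $\mathrm{supp}\,\rho\subset B_1$, so that the integrand only sees values of $g,h$ in a unit neighbourhood of $x$. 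Combining yields the bound $\|R^\varepsilon(g,h)\|_{\beta, R}\leqslant C\,\|g\|_{1+\beta, R+1}\|h\|_{\beta, R+1}$ uniformly in $\varepsilon\in(0,1]$.

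For~(ii), the central step is pointwise convergence $R^\varepsilon(g,h)(x)\to 0$. By dominated convergence the second integral tends to $-(\nabla\!\cdot g)(x)\,h(x)$. For the first, $\varepsilon^{-1}[g(x-\varepsilon z)-g(x)]\to -\nabla g(x)\,z$ with a uniform domination by $|z|\,\|\nabla g\|_\infty$, so the limit equals $-h(x)\int (\nabla\rho(z))^{T}\nabla g(x)\,z\, \mathd z$; using the identity $\int z_j\,\partial_i\rho(z)\,\mathd z = -\delta_{ij}$ (an integration by parts against $\rho$, which has unit mass) this rewrites as $+(\nabla\!\cdot g)(x)\,h(x)$. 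The two pieces cancel exactly, giving pointwise convergence to $0$. Combined with the uniform $C^\beta_K$ bound from~(i) and the compact embedding $C^\beta_K\hookrightarrow C^{\beta'}_K$ for $\beta'<\beta$, an Arzel\`a--Ascoli argument upgrades this to convergence in $C^{\beta'}_{\mathrm{loc}}$.

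The main obstacle I anticipate is not conceptual but bookkeeping: carefully splitting the double difference $[g(x-\varepsilon z)-g(x)]\,h(x-\varepsilon z) - [g(x'-\varepsilon z)-g(x')]\,h(x'-\varepsilon z)$ into contributions that each produce a clean $|x-x'|^\beta$ factor, while never exploiting more than $C^{1+\beta}$-regularity of $g$ and $C^\beta$-regularity of $h$. The algebraic cancellation behind the pointwise limit in~(ii), while a standard computation once indices are written out, is also the only place where the specific commutator structure $R^\varepsilon = (g\cdot\nabla h)^\varepsilon - g\cdot \nabla h^\varepsilon$ is genuinely used.
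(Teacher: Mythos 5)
Your proposal is correct and follows essentially the same route as the paper: after an integration by parts in the mollifier variable you arrive at exactly the decomposition $R^\varepsilon(g,h) = \tilde R^\varepsilon(g,h) - (h\,\mathrm{div}\,g)^\varepsilon$ used in the text, and the uniform $C^\beta_R$ bound is obtained by the same splitting of differences at two points. The only difference is that for~(ii) you prove the $C^0_{\tmop{loc}}$ convergence directly via the cancellation $\int z_j\,\partial_i\rho\,\mathd z = -\delta_{ij}$, whereas the paper cites this step from an external lemma; your self-contained computation is a mild improvement but does not change the argument's structure.
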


\begin{proof}
  It holds
  
  \begin{align*}
    R^{\varepsilon} (g, h) (x) & = \int_{B_1} h (x - \varepsilon z)  \frac{g
    (x - \varepsilon z) - g (x)}{\varepsilon} \cdot \nabla \rho (z) \mathd z -
    (h \tmop{div} g)^{\varepsilon} (x)\\
    & \backassign \tilde{R}^{\varepsilon} (g, h) (x) - (h \tmop{div}
    g)^{\varepsilon} (x) .
  \end{align*}
  
  Thus claim~\tmtextit{i.} follows from $\| (h \tmop{div} g)^{\varepsilon}
  \|_{\beta, R} \leqslant \| h \|_{1, R + 1} \| g \|_{1 + \beta, R + 1}$ and
  \begin{eqnarray*}
    | \tilde{R}^{\varepsilon} (g, h) (x) - \tilde{R}^{\varepsilon} (g, h) (y)
    | & \leqslant & \left| \int_{B_1} [h (x - \varepsilon z) - h (y -
    \varepsilon z)] \frac{g (x - \varepsilon z) - g (x)}{\varepsilon} \cdot
    \nabla \rho (z) \mathd z \right|\\
    &  & + \, \left| \int_{B_1} h (x - \varepsilon z) \left[ \frac{g (x -
    \varepsilon z) - g (x)}{\varepsilon} - \frac{g (y - \varepsilon z) - g
    (y)}{\varepsilon} \right] \cdot \nabla \rho (z) \mathd z \right|\\
    & \leqslant & | x - y |^{\beta} \| h \|_{\beta, R + 1} \| g \|_{1, R + 1}
    \| \nabla \rho \|_{L^1}\\
    &  & + \| h \|_{0, R + 1} \int_{B_1} \left| \int_0^1 [\nabla g (x -
    \varepsilon \theta z) - \nabla g (x) - \nabla g (y - \varepsilon \theta z)
    + \nabla g (y)] \right| \times\\
    &  & \quad \times | z | | \nabla \rho (z) | \mathd z\\
    & \lesssim & | x - y |^{\beta} \| h \|_{\beta, R + 1} \| g \|_{1 + \beta,
    R + 1}
  \end{eqnarray*}
  where the estimate is uniform in $x, y \in B_R$ and in $\varepsilon > 0$.
  Claim~\tmtextit{ii.} follows from the above uniform estimate, the fact that
  $R^{\varepsilon} (g, h) \rightarrow 0$ $C^0_{\tmop{loc}}$ by Lemma~16
  from~{\cite{galeatigubinelli1}} and an interpolation argument.
\end{proof}

\begin{proof}[of Proposition~\ref{sec6 temp prop duality}]
  We only treat the case $u \in C^{\alpha}_t C^0_c \cap C^0_t C^1_c$, $v \in
  L^{\infty}_t (C^0_c)^{\ast} \cap C^{\alpha \beta}_t (C^{\beta}_c)^{\ast}$,
  the other one being similar. Applying a mollifier $\rho^{\varepsilon}$ on
  both sides of~{\eqref{sec6 temp YTE dual}}, it holds
  \[ u^{\varepsilon}_{\mathd t} + A_{\mathd t} \cdot \nabla u^{\varepsilon}_t
     + (c_{\mathd t} u_t)^{\varepsilon} + R^{\varepsilon} (A_{\mathd t}, u_t)
     = 0 \]
  where we used the definition of $R^{\varepsilon}$; equivalently by
  Remark~\ref{sec6 temp remark defn}, the above expression can be interpreted
  as
  \[ \| u^{\varepsilon}_{s, t} + A_{s, t} \cdot \nabla u^{\varepsilon}_s +
     (c_{s, t} u_s)^{\varepsilon} + R^{\varepsilon} (A_{s, t}, u_s) \|_{C^0}
     \lesssim_{\varepsilon} | t - s |^{\alpha (1 + \beta)} \quad
     \text{uniformly in } (s, t) \in \Delta_2 \]
  Since $v$ is a weak solution to~{\eqref{sec6 temp YCE dual}}, it holds
  
  \begin{align*}
    \langle u^{\varepsilon}_t, v_t \rangle - \langle u^{\varepsilon}_s, v_s
    \rangle & = \, \langle u^{\varepsilon}_{s, t}, v_s \rangle + \langle
    u^{\varepsilon}_t, v_{s, t} \rangle\\
    & \sim_{\varepsilon} \, - \langle A_{s, t} \cdot \nabla u^{\varepsilon}_t
    + (c_{s, t} u_t)^{\varepsilon} + R^{\varepsilon} (A_{s, t}, u_t), v_s
    \rangle + \langle A_{s, t} \cdot \nabla u^{\varepsilon}_t + c_{s, t}
    u^{\varepsilon}_t, v_s \rangle\\
    & \sim \, \langle c_{s, t} u_t^{\varepsilon} - (c_{s, t}
    u_t)^{\varepsilon} - R^{\varepsilon} (A_{s, t}, u_t), v_s \rangle
  \end{align*}
  
  where by $a \sim_{\varepsilon} b$ we mean that $| a - b |
  \lesssim_{\varepsilon} | t - s |^{\alpha (1 + \beta)}$. As a consequence,
  defining $f^{\varepsilon}_t \assign \langle u^{\varepsilon}_t, v_t \rangle$,
  we deduce that $f^{\varepsilon}_t - f^{\varepsilon}_0 = J
  (\Gamma^{\varepsilon}_{s, t})$ for the choice
  \[ \Gamma^{\varepsilon}_{s, t} \assign \langle c_{s, t} u_t^{\varepsilon} -
     (c_{s, t} u_t)^{\varepsilon} - R^{\varepsilon} (A_{s, t}, u_t), v_s
     \rangle . \]
  Our aim is to show that $J (\Gamma^{\varepsilon}_{s, t}) \rightarrow 0$ as
  $\varepsilon \rightarrow 0$; to this end, we start estimating $\|
  \Gamma^{\varepsilon} \|_{\alpha, \alpha (1 + \beta)}$.
  
  It holds
  \begin{eqnarray*}
    \delta \Gamma^{\varepsilon}_{s, r, t} & = & \langle c_{s, r}
    u^{\varepsilon}_{r, t}, v_s \rangle - \langle c_{r, t} u^{\varepsilon}_t,
    v_{s, r} \rangle\\
    &  & + \langle c_{r, t} u_{s, r}, v^{\varepsilon}_t \rangle - \langle
    c_{s, r} u_s, v^{\varepsilon}_{r, t} \rangle\\
    &  & + \langle R^{\varepsilon} (A_{r, t}, u_t), v_{s, r} \rangle -
    \langle R^{\varepsilon} (A_{s, r}, u_{r, t}), v_s \rangle .
  \end{eqnarray*}
  Therefore, up to choosing a suitable compact $K \subset \mathbb{R}^d$, we
  have the estimates
  
  \begin{align*}
    | \Gamma^{\varepsilon}_{s, t} | & \leqslant \, (\| c_{s, t}
    u^{\varepsilon}_t \|_{C^0_K} + \| (c_{s, t} u^{\varepsilon}_t) \|_{C^0_K}
    + \| R^{\varepsilon} (A_{s, t}, u_t) \|_{C^0_K}) \| v_s
    \|_{(C^0_K)^{\ast}}\\
    & \lesssim \, | t - s |^{\alpha} (\| c \|_{\alpha, \beta} + \| A
    \|_{\alpha, 1}) \| u \|_{C^0_t C^0_c} \| v_s \|_{(C^0_K)^{\ast}}
  \end{align*}
  
  as well as
  \begin{eqnarray*}
    | \delta \Gamma^{\varepsilon}_{s, r, t} | & \leqslant & \| c_{s, r}
    u^{\varepsilon}_{r, t} \|_{C^0_K}  \| v_s \|_{(C^0_K)^{\ast}} + \| c_{r,
    t} u^{\varepsilon}_t \|_{C^{\beta}_K}  \| v_{s, r}
    \|_{(C^{\beta}_K)^{\ast}}\\
    &  & + \| c_{r, t} u_{s, r} \|_{C^0_K}  \| v^{\varepsilon}_t
    \|_{(C^0_K)^{\ast}} + \| c_{s, r} u_s \|_{C^{\beta}_K}  \|
    v^{\varepsilon}_{r, t} \|_{(C^{\beta}_K)^{\ast}}\\
    &  & + \| R^{\varepsilon} \|  \| A_{r, t} \|_{1 + \beta}  \| u_t
    \|_{C^1_K} \| v_{s, r} \|_{(C^{\beta}_K)^{\ast}} + \| R^{\varepsilon} \| 
    \| A_{s, r} \|_{1 + \beta}  \| u_{r, t} \|_{C^0_K} \| v_s
    \|_{(C^0_K)^{\ast}}\\
    & \lesssim & | t - s |^{\alpha (1 + \beta)} (\| c \|_{\alpha, \beta} + \|
    R^{\varepsilon} \| \| A \|_{\alpha, 1 + \beta}) \times\\
    &  & \times (\| u \|_{C^0_t C^1_K} \| v \|_{L^{\infty}_t (C^0_K)^{\ast}}
    + \| u \|_{C^{\alpha}_t C^0_K} \| v \|_{C^{\alpha \beta}_t
    (C^{\beta}_K)^{\ast}}) .
  \end{eqnarray*}
  Overall we deduce that $\| \Gamma^{\varepsilon} \|_{\alpha}$ and $\| \delta
  \Gamma^{\varepsilon} \|_{\alpha (1 + \beta)}$ are bounded uniformly in
  $\varepsilon > 0$; moreover by properties of convolutions and
  Lemma~\ref{sec6 temp commutator lemma}, it holds $\Gamma^{\varepsilon}_{s,
  t} \rightarrow 0$ as $\varepsilon \rightarrow 0$ for any $(s, t) \in
  \Delta_2$ fixed. By Lemma~\ref{sec2 sewing lemma} it holds
  \[ | f^{\varepsilon}_{s, t} - \Gamma^{\varepsilon}_{s, t} | \lesssim | t - s
     |^{\alpha (1 + \beta)} \]
  uniformly in $\varepsilon > 0$ and so passing to the limit as $\varepsilon
  \rightarrow 0$ we deduce that
  \[ | \langle u_t, v_t \rangle - \langle u_s, v_s \rangle | \lesssim | t - s
     |^{\alpha (1 + \beta)} \quad \forall \, (s, t) \in \Delta_2 \]
  which implies the conclusion.
\end{proof}

We are now ready to establish uniqueness of solutions to the YTE and YCE under
suitable regularity conditions on $(A, c)$.

\begin{theorem}
  \label{sec6 temp main thm}Let $A \in C^{\alpha}_t C^{1 + \beta}_x$, $c \in
  C^{\alpha}_t C^{1 + \beta}_x$ with $\alpha (1 + \beta) > 1$. Then for any
  $u_0 \in C^1_{\tmop{loc}}$ there exists a unique solution to the
  YTE~{\eqref{sec6 temp YTE dual}} with initial condition $u_0$ in the class
  $C^{\alpha}_t C^0_{\tmop{loc}} \cap C^0_t C^1_{\tmop{loc}}$, which is given
  by formula~{\eqref{sec6 temp candidate solution2}}; similarly, for any
  $\mu_0 \in M_{\tmop{loc}}$ there exists a unique solution to the
  YCE~{\eqref{sec6 temp YCE dual}} with initial condition $\mu_0$ in the class
  $L^{\infty}_t (C^0_c)^{\ast} \cap C^{\alpha \beta}_t (C^{\beta}_c)^{\ast}$,
  which is given by formula~{\eqref{sec6 temp candidate solution}}.
\end{theorem}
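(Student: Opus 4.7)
Existence is already delivered by Proposition \ref{sec6 temp prop existence} and its backward counterpart Remark \ref{sec6 temp remark backward solution}, which produce solutions in exactly the regularity classes named in the statement. So the plan is to focus on uniqueness and, by linearity of both equations, reduce it to showing that any solution with zero initial datum is identically zero — one such argument for the YCE class and one for the YTE class. Both will proceed by duality against a ``test'' solution of the other equation, via Proposition \ref{sec6 temp prop duality}.

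For uniqueness of the YCE in the class $L^\infty_t (C^0_c)^\ast \cap C^{\alpha\beta}_t (C^\beta_c)^\ast$, I would take a solution $v$ with $v_0 = 0$, fix $t^\ast \in (0, T]$ and $\varphi \in C^\infty_c$, and use the hypothesis $c \in C^\alpha_t C^{1+\beta}_x$ together with Remark \ref{sec6 temp remark backward solution} on $[0, t^\ast]$ to produce a YTE solution $u$ with terminal datum $u_{t^\ast} = \varphi$ in the class $C^\alpha([0,t^\ast]; C^0_c) \cap C^0([0,t^\ast]; C^1_c)$. The first case of Proposition \ref{sec6 temp prop duality} applied to the pair $(u, v)$ on $[0, t^\ast]$ would then give
\[
\langle v_{t^\ast}, \varphi \rangle \;=\; \langle v_{t^\ast}, u_{t^\ast} \rangle \;=\; \langle v_0, u_0 \rangle \;=\; 0,
\]
which, by arbitrariness of $\varphi \in C^\infty_c$ and of $t^\ast$, forces $v \equiv 0$ in $(C^0_c)^\ast$.

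For uniqueness of the YTE in the class $C^\alpha_t C^0_{\tmop{loc}} \cap C^0_t C^1_{\tmop{loc}}$, I would symmetrically take a solution $u$ with $u_0 = 0$, fix $t^\ast \in (0,T]$ and $\psi \in C^\infty_c$, and rewrite the YCE as the YTE associated to $(A, \tilde c)$ with $\tilde c := \nabla \cdot A - c \in C^\alpha_t C^\beta_x$. Applied backwards on $[0, t^\ast]$ with terminal datum $\psi(x)\,dx \in M_{\tmop{loc}}$, Proposition \ref{sec6 temp prop existence} (in its backward formulation) yields a YCE solution $v$ with $v_{t^\ast} = \psi$; compact support of $\psi$ together with the uniformly bounded displacement of the flow on $[0,T]$ guarantees that $v$ is compactly supported uniformly in time, so $v \in L^\infty_t (C^0_c)^\ast \cap C^{\alpha\beta}_t (C^\beta_c)^\ast$. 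The second case of Proposition \ref{sec6 temp prop duality} would then give
\[
\langle u_{t^\ast}, \psi \rangle \;=\; \langle u_{t^\ast}, v_{t^\ast} \rangle \;=\; \langle u_0, v_0 \rangle \;=\; 0,
\]
whence $u_{t^\ast} = 0$ in $C^0_{\tmop{loc}}$ and $u \equiv 0$.

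The hardest part of the overall argument — the Di Perna–Lions type commutator estimate underpinning the duality identity — has already been carried out inside Proposition \ref{sec6 temp prop duality}, so it presents no new obstacle. The only delicate point is to verify that the backward ``test'' solutions constructed in each step actually land in the precise function class required by the corresponding case of that proposition; both verifications follow directly from the hypothesis $c \in C^\alpha_t C^{1+\beta}_x$ (which feeds the smoothness of the backward YTE solution in Step 2) and from the uniform-in-time compact-support propagation under the flow $\Phi$ recalled at the start of Section \ref{sec6} (which feeds Step 3).
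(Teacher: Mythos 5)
Your proposal is correct and takes essentially the same approach as the paper: existence from Proposition~\ref{sec6 temp prop existence} and Remark~\ref{sec6 temp remark backward solution}, and uniqueness by duality against a backward solution of the adjoint equation via Proposition~\ref{sec6 temp prop duality}. The only cosmetic difference is that the paper proves YTE uniqueness in detail (testing against an arbitrary compactly supported $\mu \in M$ rather than $\psi\,\mathd x$) and dismisses YCE uniqueness as ``follows similarly,'' whereas you spell out both halves; your observation that the adjoint YCE is the YTE with drift $\tilde c = \nabla\cdot A - c \in C^\alpha_t C^\beta_x$ is exactly what the paper uses implicitly to invoke Proposition~\ref{sec6 temp prop existence}.
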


\begin{proof}
  Existence follows from Proposition~\ref{sec6 temp prop existence}, so we
  only need to establish uniqueness. By linearity of YTE, it suffices to show
  that the only solution $u$ to~{\eqref{sec6 temp YTE dual}} in the class
  $C^{\alpha}_t C^0_{\tmop{loc}} \cap C^0_t C^1_{\tmop{loc}}$ with $u_0 \equiv
  0$ is given by $u \equiv 0$. Let $u$ be such a solution and fix $\tau \in
  [0, T]$; since $(\tmop{div} A - c) \in C^{\alpha}_t C^{\beta}_x$, by
  Proposition~\ref{sec6 temp prop existence} and Remark~\ref{sec6 temp remark
  backward solution}, for any compactly supported $\mu \in M$ there exists a
  solution $v \in L^{\infty}_t M_K \cap C^{\alpha \beta}_t
  (C^{\beta}_c)^{\ast}$ to~{\eqref{sec6 temp YCE dual}} with terminal
  condition $v_{\tau} = \mu$, up to taking a suitable compact set $K$. By
  Proposition~\ref{sec6 temp prop duality} it follows that
  \[ \langle u_{\tau}, \mu \rangle = \langle u_{\tau}, v_{\tau} \rangle =
     \langle u_0, v_0 \rangle = 0 ; \]
  as the reasoning holds for any compactly supported $\mu \in M$, $u_{\tau}
  \equiv 0$ and thus $u \equiv 0$.
  
  Uniqueness of solutions to YCE~{\eqref{sec6 temp YCE dual}} in the class
  $L^{\infty}_t (C^0_c)^{\ast} \cap C^{\alpha \beta}_t (C^{\beta}_c)^{\ast}$
  follows similatly.
\end{proof}

\section{Parabolic nonlinear Young PDEs}\label{sec7}

We present in this section a generalization to the nonlinear Young setting of
some of the results contained in~{\cite{gubinellilejay}}. Specifically, we are
interested in studying a parabolic nonlinear evolutionary problem of the form
\begin{equation}
  \mathd x_t = - A x_t \mathd t + B (\mathd t, x_t) \label{sec7 preliminary
  parabolic yde}
\end{equation}
where $- A$ is the generator of an analytical semigroup.

\

In order not to create confusion, in this section the nonlinear Young term
will be always denoted by $B$. As we will use a one-parameter family of spaces
$\{ V_{\alpha} \}_{\alpha \in \mathbb{R}}$, the regularity of $B$ will be
denoted by $B \in C^{\gamma}_t C^{\beta}_{W, U}$, with $W$ and $U$ being taken
from that family; whenever it doesn't create confusion, we will still denote
the associated norm by $\| B \|_{\gamma, \beta}$.

\

Let us first recall the functional setting from~{\cite{gubinellilejay}},
Section~2.1. It is based on the theory of analytical semigroups and
infinitesimal generators, see~{\cite{pazy}} for a general reference, but the
reader not acquainted with the topic may consider for simplicity $A = I -
\Delta$, $V = L^2 (\mathbb{R}^d)$ and $V_{\alpha} = H^{2 \alpha}
(\mathbb{R}^d)$ fractional Sobolev spaces.

\

Let $(V, \| \cdot \|_V)$ be a separable Banach space, $(A, \tmop{Dom} (A))$
be an unbounded linear operator on $V$, $\tmop{rg} (A)$ be its range; suppose
its resolvent set is contained in $\Sigma = \{ z \in \mathbb{C}: | \arg (z) |
> \pi / 2 - \delta \} \cup U$ for some $\delta > 0$ and some neighbourhood $U$
of $0$ and that there exist positive constants $C, \eta$ such that its
resolvent $R_{\alpha}$ satisfies
\[ \| R_{\alpha} \|_{\mathcal{L} (V ; V)} \leqslant C (\eta + | \alpha |)^{-
   1} \quad \forall \, \alpha \in \Sigma . \]
Under these assumptions, $- A$ is the infinitesimal generator of an analytical
semigroup $(S (t))_{t \geqslant 0}$ and there exist positive constants $M,
\lambda$ such that
\[ \| S (t) \|_{\mathcal{L} (V ; V)} \leqslant M e^{- \lambda t} \quad \forall
   \, t \geqslant 0. \]
Moreover, $- A$ is one-to-one from $\tmop{Dom} (A)$ to $V$ and the fractional
powers $(A^{\alpha}, \tmop{Dom} (A^{\alpha}))$ of $A$ can be defined for any
$\alpha \in \mathbb{R}$; if $\alpha < 0$, then $\tmop{Dom} (A^{\alpha}) = V$
and $A^{\alpha}$ is a bounded operator, while for $\alpha \geqslant 0$
$(A^{\alpha}, \tmop{Dom} (A^{\alpha}))$ is a closed operator with $\tmop{Dom}
(A^{\alpha}) = \tmop{rg} (A^{- \alpha})$ and $A^{\alpha} = (A^{- \alpha})^{-
1}$.

For $\alpha \geqslant 0$, let $V_{\alpha}$ be the space $\tmop{Dom}
(A^{\alpha})$ with norm $\| x \|_{V_{\alpha}} = \| A^{\alpha} x \|_V$; for
$\alpha = 0$ it holds $A^0 = \tmop{Id}$ and $V_0 = V$. For $\alpha < 0$, let
$V_{\alpha}$ be the completion of $V$ w.r.t. the norm $\| x \|_{V_{\alpha}} =
\| A^{\alpha} x \|_V$, which is thus a bigger space than $V$. The
one-parameter family of spaces $\{ V_{\alpha} \}_{\alpha \in \mathbb{R}}$ is
such that $V_{\delta}$ embeds continuously in $V_{\alpha}$ whenever $\delta
\geqslant \alpha$ and $A^{\alpha} A^{\delta} = A^{\alpha + \delta}$ on the
common domain of definition; moreover $A^{- \delta}$ maps $V_{\alpha}$ onto
$V_{\alpha + \delta}$ for all $\alpha \in \mathbb{R}$ and $\delta \geqslant
0$.

The operator $S (t)$ can be extended to $V_{\alpha}$ for all $\alpha < 0$ and
$t > 0$ and maps $V_{\alpha}$ to $V_{\delta}$ for all $\alpha \in \mathbb{R}$,
$\delta \geqslant 0$, $t > 0$; finally, it satisfies the following properties:
\begin{equation}
  \| A^{\alpha} S (t) \|_{\mathcal{L} (V ; V)} \leqslant M_{\alpha} t^{-
  \alpha} e^{- \lambda t} \text{ for all $\alpha \geqslant 0$, $t > 0$} ;
  \label{sec7 semigroup 1}
\end{equation}
\begin{equation}
  \| S (t) x - x \|_V \leqslant C_{\alpha} t^{\alpha} \| A^{\alpha} x \|_V
  \text{ for all $x \in V_{\alpha}$, $\alpha \in (0, 1]$} . \label{sec7
  semigroup 2}
\end{equation}
\begin{remark}
  \label{sec7 remark initial data}It follows from the statements above and the
  semigroup property of $S (t)$ that for any $\alpha \in \mathbb{R}$, $\delta
  > 0$, $x \in V_{\alpha}$ and any $s \leqslant t$ it holds
  \[ \| S (t) x - S (s) x \|_{V_{\alpha}} = \| S (s) [S (t - s) x - x]
     \|_{V_{\alpha}} \lesssim_{\alpha, \delta} | t - s |^{\delta} \| x
     \|_{V_{\alpha + \delta}} \]
  which implies that $\| S (t) - S (s) \|_{\mathcal{L} (V_{\alpha + \delta} ;
  V_{\alpha})} \lesssim | t - s |^{\delta}$, equivalently $S (\cdot) \in
  C^{\delta}_t \mathcal{L} (V_{\alpha + \delta} ; V_{\alpha})$. It also
  follows that for any given $x_0 \in V_{\alpha + \delta}$, the map $t \mapsto
  S (t) x_0$ belongs to $C^{\delta}_t V_{\alpha}$ with
  \begin{equation}
    \llbracket S (\cdot) x_0 \rrbracket_{\delta, V_{\alpha}} \lesssim_{\alpha,
    \delta} \| x_0 \|_{V_{\alpha + \delta}} .
  \end{equation}
\end{remark}

The following result shows that the mild solution formula for the linear
equation
\[ \mathd x_t = - A x_t \mathd t + \mathd y_t, \]
which is formally given by
\[ x_t = S (t) x_0 + \int_0^t S (t - s) \mathd y_s, \]
can be extended by continuity to suitable non differentiable functions $y \in
C ([0, T] ; V)$.

\begin{theorem}
  \label{sec7 thm linear equation}Let $\alpha \in \mathbb{R}$ and consider the
  map $\Xi$ defined for any $y \in C^1_t V_{- \alpha}$ by
  \[ \Xi (y)_t = \int_0^t S (t - s) \dot{y}_s \mathd s. \]
  Then for any $\gamma > \alpha$, $\Xi$ extends uniquely to a map $\Xi \in
  \mathcal{L} (C^{\gamma}_t V_{- \alpha} ; C^{\kappa}_t V_{\delta})$ for all
  $\delta \in (0, \gamma - \alpha)$ and all $\kappa \in (0, (\gamma - \alpha -
  \delta) \wedge 1)$. Moreover there exists a constant $C = C (\alpha, \kappa,
  \delta, \gamma)$ such that
  \begin{equation}
    \llbracket \Xi (y) \rrbracket_{\kappa, V_{\delta}} \leqslant C \llbracket
    y \rrbracket_{\gamma, V_{- \alpha}}, \quad \sup_{t \in [0, T]} \| \Xi
    (y)_t \|_{V_{\delta}} \leqslant C T^{\gamma - \delta - \alpha} \llbracket
    y \rrbracket_{\gamma, V_{- \alpha}} . \label{sec7 thm linear equation
    estimates}
  \end{equation}
\end{theorem}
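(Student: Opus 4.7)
The plan is to reformulate $\Xi$ via an integration by parts that eliminates $\dot{y}$, and take the resulting formula as a definition on the full Hölder class. For $y \in C^1_t V_{-\alpha}$, using $\partial_s S(t-s) = AS(t-s)$ and integrating by parts after replacing $y_s$ by $y_s - y_t$ (so that the boundary term at $s=t$ vanishes), one readily obtains
\[
\Xi(y)_t = S(t)(y_t - y_0) + \int_0^t AS(t-s)(y_t - y_s)\,\mathrm{d}s.
\]
I would take this as the definition of $\Xi(y)$ for arbitrary $y \in C^\gamma_t V_{-\alpha}$; linearity is immediate, and uniqueness of the extension follows at the end by density of $C^1_t V_{-\alpha}$ in $C^\gamma_t V_{-\alpha}$ with respect to any strictly weaker Hölder norm, in which $\Xi$ is equally continuous.

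For the sup-norm bound, both pieces of the formula are controlled via the identity $\|A^\delta S(r) x\|_V = \|A^{\delta+\alpha} S(r) A^{-\alpha} x\|_V \lesssim r^{-(\delta+\alpha)} \|x\|_{V_{-\alpha}}$ coming from \eqref{sec7 semigroup 1} (with the trivial modification when $\delta + \alpha \leq 0$), combined with $\|y_t - y_s\|_{V_{-\alpha}} \leq |t-s|^\gamma \llbracket y \rrbracket_{\gamma, V_{-\alpha}}$. A direct computation then yields
\[
\|\Xi(y)_t\|_{V_\delta} \lesssim t^{\gamma-\delta-\alpha} \llbracket y \rrbracket_{\gamma, V_{-\alpha}} + \int_0^t (t-s)^{\gamma - 1 - \delta - \alpha}\,\mathrm{d}s \cdot \llbracket y \rrbracket_{\gamma, V_{-\alpha}} \lesssim T^{\gamma - \delta - \alpha} \llbracket y \rrbracket_{\gamma, V_{-\alpha}},
\]
and integrability of the kernel forces precisely $\delta < \gamma - \alpha$, matching the stated range.

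The main work is the Hölder-in-time estimate. For $s < t$ I would decompose
\[
\Xi(y)_t - \Xi(y)_s = [S(t) - S(s)](y_t - y_0) + S(s)(y_t - y_s) + \int_s^t AS(t-r)(y_t - y_r)\,\mathrm{d}r + \int_0^s AS(t-r)(y_t - y_s)\,\mathrm{d}r + \int_0^s [AS(t-r) - AS(s-r)](y_s - y_r)\,\mathrm{d}r,
\]
and bound each piece by $|t-s|^\kappa \llbracket y \rrbracket_{\gamma, V_{-\alpha}}$. The two semigroup-difference terms will be handled through either $S(t) - S(s) = -\int_s^t AS(r)\,\mathrm{d}r$ or $S(s)[S(t-s) - I]$, together with \eqref{sec7 semigroup 1}--\eqref{sec7 semigroup 2} and an interpolation between the ``cheap'' bound of order $(t-s)$ and the direct estimate coming from the smoothing. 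The hard part is precisely this bookkeeping: one must balance the integrable singularities of $AS(t-r)$ near $r=s$ and $r=t$ against the $\gamma$-Hölder regularity of $y$, and verify that after interpolation every resulting exponent remains strictly below $(\gamma-\alpha-\delta)\wedge 1$, which is what forces the strict inequality $\kappa < (\gamma - \alpha - \delta) \wedge 1$ in the final statement.
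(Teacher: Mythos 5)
Note first that the paper itself \emph{omits} this proof, referring the reader to Theorem~1 of Gubinelli--Lejay and offering only a heuristic for the exponent relations, so there is no in-paper argument to compare against. Your plan is the right one: the integration-by-parts reformulation
\[
\Xi(y)_t = S(t)(y_t - y_0) + \int_0^t AS(t-s)(y_t - y_s)\,\mathrm{d}s
\]
is exactly what gives a definition on the full H\"older class, and your computation of the sup-norm bound (and the resulting constraint $\delta < \gamma-\alpha$) is correct. The uniqueness-of-extension argument via density of $C^1_t V_{-\alpha}$ in $C^{\gamma'}_t V_{-\alpha}$ for $\gamma'<\gamma$ is also fine.

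The gap is that the H\"older-in-time estimate --- which is the actual content of the theorem and is where the constraint $\kappa < (\gamma-\alpha-\delta)\wedge 1$ is produced --- is relegated to ``bookkeeping'' and not carried out, and your five-term decomposition is not the one that makes that bookkeeping close cleanly. Several of your pieces (notably $[S(t)-S(s)](y_t-y_0)$, $S(s)(y_t-y_s)$, and $\int_0^s[AS(t-r)-AS(s-r)](y_s-y_r)\,\mathrm{d}r$) develop factors $s^{-\eta}$, $\eta>0$, once the smoothing~\eqref{sec7 semigroup 1} is invoked, and these cannot be bounded by $(t-s)^\kappa$ uniformly without an additional device: one has to split into $s\leq t-s$ (where one falls back on the already-proved sup-norm bound applied to $\Xi(y)_s$ and $\Xi(y)_t$ separately) and $s> t-s$ (where $s^{-\eta}\leq (t-s)^{-\eta}$ can be absorbed). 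This case split is essential and absent from the proposal. A tidier route, and the one that makes the constraint on $\kappa$ transparent, is to start instead from the semigroup identity $\Xi(y)_t = \int_s^t S(t-r)\dot y_r\,\mathrm{d}r + S(t-s)\Xi(y)_s$, so that
\[
\Xi(y)_{s,t} \;=\; S(t-s)(y_t - y_s) \;+\; \int_s^t AS(t-r)(y_t - y_r)\,\mathrm{d}r \;+\; [S(t-s)-I]\,\Xi(y)_s.
\]
The first two terms depend only on $y|_{[s,t]}$ and on $t-s$, and are bounded by $(t-s)^{\gamma-\delta-\alpha}$ uniformly in $s$ with no singularity issues; the third is bounded, via~\eqref{sec7 semigroup 2}, by $(t-s)^\kappa \|\Xi(y)_s\|_{V_{\delta+\kappa}}$, and the latter is controlled by the sup-norm estimate already proved, with $\delta$ replaced by $\delta+\kappa$, which is admissible precisely when $\kappa < \gamma-\alpha-\delta$; the restriction $\kappa\leq 1$ in~\eqref{sec7 semigroup 2} is what produces the $\wedge\,1$. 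One last caveat worth recording: in either decomposition the piece $S(t-s)(y_t-y_s)$ saturates at $(t-s)^{\gamma - (\delta+\alpha)^+}$, so the claimed $\kappa$-range genuinely requires $\delta+\alpha\geq 0$ (equivalently $\delta\geq -\alpha$), which holds in the paper's application (Theorem~\ref{sec7 main thm}, where $\rho\leq\delta$) but is not spelled out in the statement; your term-by-term estimates would not close for $\kappa>\gamma$ when $\delta+\alpha<0$, and it is worth being aware of this when you do carry out the computation.
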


We omit the proof, for which we refer to Theorem~1
from~{\cite{gubinellilejay}}. Let us only provide an heuristic derivation of
the relation between the parameters $\alpha, \kappa, \delta, \gamma$ based on
a regularity counting argument. It follows from Remark~\ref{sec7 remark
initial data} that $\| S (t - s) \|_{\mathcal{L} (V_{- \alpha} ; V_{\delta})}
\lesssim | t - s |^{- \delta - \alpha}$; if it's possible to define the map
$\Xi (y)$ taking values in $V_{\delta}$, then we would expect its time
regularity to be analogue to that of
\begin{equation}
  g_t \assign \int_0^t | t - s |^{- \delta - \alpha} \mathd f_s, \label{sec7
  heuristic eq}
\end{equation}
where now $f, g$ are real valued functions, $f \in C^{\gamma}_t$; indeed,
considering a fixed $y_0 \in V_{- \alpha}$, the result should also apply to
$y_t \assign f_t y_0$. The integral in~{\eqref{sec7 heuristic eq}} is a type
of fractional integral of order $1 - \delta - \alpha$ and by hypothesis
$\mathd f \in C^{\gamma - 1}_t$, therefore $g$ should have regularity $\gamma
- \delta - \alpha$, which is exactly the threshold parameter for $\kappa$
(this is because H\"{o}lder spaces do not behave well under fractional
integration and one must always give up an $\varepsilon$ of regularity by
embedding them in nicer spaces).

\begin{definition}
  \label{sec7 defn solution}Given $A$ as above and $B \in C^{\gamma}_t
  C^{\beta}_{V_{\delta}, V_{\rho}}$, $\rho \leqslant \delta$, we say that $x
  \in C^{\kappa}_t V_{\delta}$ is a mild solution to equation~{\eqref{sec7
  preliminary parabolic yde}} with initial data $x_0 \in V_{\delta}$ if
  $\gamma + \beta \kappa > 1$, so that $\int_0^{\cdot} B (\mathd s, x_s)$ is
  well defined as a nonlinear Young integral, and if $x$ satisfies
  \begin{equation}
    x_t = S (t) x_0 + \int_0^t S (t - s) B (\mathd s, x_s) = S (t) x_0 + \Xi
    \left( \int_0^{\cdot} B (\mathd s, x_s) \right)_t \quad \forall \, t \in
    [0, T]
  \end{equation}
  where $\Xi$ is the map defined by Theorem~\ref{sec7 thm linear equation} and
  the equality holds in $V_{\alpha}$ for suitable $\alpha$.
\end{definition}

We are now ready to prove the main result of this section.

\begin{theorem}
  \label{sec7 main thm}Assume $A$ as above, $B \in C^{\gamma}_t C^{1 +
  \beta}_{V_{\delta}, V_{\rho}}$ with $\rho > \delta - 1$ and suppose there
  exists $\kappa \in (0, 1)$ such that
  \begin{equation}
    \left\{\begin{array}{l}
      \gamma + \beta \kappa > 1\\
      \kappa < \gamma + \rho - \delta
    \end{array}\right. . \label{sec7 condition k}
  \end{equation}
  Then for any $x_0 \in V_{\delta + \kappa}$ there exists a unique solution
  with initial data $x_0$ to~{\eqref{sec7 preliminary parabolic yde}}, in the
  sense of Definition~\ref{sec7 defn solution}, in the class $C^{\kappa}_t
  V_{\delta} \cap C^0_t V_{\delta + \kappa}$.
  
  Moreover, the solution depends in a Lipschitz way on $(x_0, B)$, in the
  following sense: for any $R > 0$ exists a constant $C = C (\beta, \gamma,
  \delta, \rho, \kappa, T, R)$ such that for any $(x_0^i, B^i)$, $i = 1, 2$,
  satisfying $\| x_0^i \|_{V_{\delta + \kappa}} \vee \| B^i \|_{\gamma, 1 +
  \beta} \leqslant R$, denoting by $x^i$ the associated solutions, it holds
  \[ \llbracket x^1 - x^2 \rrbracket_{\kappa, V_{\rho}} \leqslant C (\| x_0^1
     - x_0^2 \|_{V_{\delta + \kappa}} + \| B^1 - B^2 \|_{\gamma, 1 + \beta}) .
  \]
\end{theorem}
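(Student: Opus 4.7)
The plan is to prove existence, uniqueness, and the Lipschitz dependence through a single Banach fixed point argument applied to the mild solution map, mirroring the proof of Theorem~\ref{sec3 thm wellposedness} but with Theorem~\ref{sec7 thm linear equation} playing the role of the trivial embedding when converting the Young integral output back to an H\"older path in $V_\delta$. For $M, \tau > 0$ to be chosen later, the natural complete metric space is
\[
E_\tau := \{ y \in C^\kappa([0,\tau]; V_\delta) \, : \, y_0 = x_0, \, \llbracket y \rrbracket_{\kappa; 0, \tau} \leqslant M \}
\]
with distance $d(y, z) = \llbracket y - z \rrbracket_{\kappa; 0, \tau}$, on which one considers the map $\mathcal{I}(y)_t := S(t) x_0 + \Xi\bigl( \int_0^\cdot B(\mathrm{d}s, y_s) \bigr)_t$. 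The three hypotheses fit together naturally: $x_0 \in V_{\delta + \kappa}$ guarantees via Remark~\ref{sec7 remark initial data} that $S(\cdot) x_0 \in C^\kappa_t V_\delta$ with bound only in terms of $\|x_0\|_{V_{\delta + \kappa}}$; the condition $\gamma + \beta \kappa > 1$ allows Theorem~\ref{sec2 thm definition young integral} to produce $\int_0^\cdot B(\mathrm{d}s, y_s) \in C^\gamma_t V_\rho$ with seminorm controlled by $C \|B\|_{\gamma, \beta}(1 + M^\beta)$; and $\kappa < \gamma + \rho - \delta$ is exactly the condition (with parameter $\alpha := -\rho$) needed so that Theorem~\ref{sec7 thm linear equation} sends $C^\gamma_t V_\rho$ continuously into $C^\kappa_t V_\delta$.

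Invariance $\mathcal{I}(E_\tau) \subseteq E_\tau$ then follows by chaining these three estimates: the $M^\beta$ growth with $\beta < 1$ is absorbed by choosing $M$ sufficiently large in terms of $\|x_0\|_{V_{\delta+\kappa}}$ and $\|B\|_{\gamma,\beta}$, independently of $\tau$. For the contraction step I plan to invoke the two-space analogue of Corollary~\ref{sec2 corollary frechet}: since $B \in C^\gamma_t C^{1+\beta}_{V_\delta, V_\rho}$, for $y, z \in E_\tau$ one has
\[
\int_0^\cdot B(\mathrm{d}s, y_s) - \int_0^\cdot B(\mathrm{d}s, z_s) = \int_0^\cdot w^{y, z}_{\mathrm{d}s}(y_s - z_s)
\]
with $w^{y,z} \in C^\gamma_t \mathcal{L}(V_\delta; V_\rho)$ satisfying $\llbracket w^{y,z} \rrbracket_\gamma \leqslant C \|B\|_{\gamma, 1+\beta}(1 + M)$. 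Since $y_0 = z_0 = x_0$, the bound $\|y_s - z_s\|_{V_\delta} \leqslant s^\kappa d(y, z)$ together with the sewing lemma extract a factor $\tau^\kappa$, yielding
\[
\llbracket \textstyle\int_0^\cdot B(\mathrm{d}s, y_s) - \int_0^\cdot B(\mathrm{d}s, z_s) \rrbracket_{\gamma} \leqslant C \|B\|_{\gamma, 1+\beta}(1 + M)\, \tau^\kappa\, d(y, z);
\]
composition with the bound for $\Xi$ then gives strict contraction for $\tau$ sufficiently small. Since $\tau$ depends only on $\|B\|_{\gamma, 1+\beta}$ and $M$, and hence only on $\|x_0\|_{V_{\delta+\kappa}}$ and $\|B\|_{\gamma, 1+\beta}$, iteration on a uniform partition extends the local solution to all of $[0, T]$; membership in $C^0_t V_{\delta + \kappa}$ follows from the pointwise-in-time $V_\delta$-bound in Theorem~\ref{sec7 thm linear equation}, exploiting the strict inequality $\gamma + \rho - \delta - \kappa > 0$.

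For the Lipschitz dependence on $(x_0, B)$, the difference $e := x^1 - x^2$ of two solutions satisfies the affine nonlinear Young equation
\[
e_t = S(t)(x_0^1 - x_0^2) + \Xi\left( \textstyle\int_0^\cdot w_{\mathrm{d}s}(e_s) \right)_t + \Xi\left( \textstyle\int_0^\cdot (B^1 - B^2)(\mathrm{d}s, x^2_s) \right)_t,
\]
where $w$ is the averaged differential of $B^1$ along the segment joining $x^1$ to $x^2$. Treating this as a fixed point equation for $e$ in $C^\kappa_t V_\delta$, applying the same contraction estimate with prefactor $\tau^\kappa$, and then iterating on a partition of $[0, T]$ whose mesh depends only on $R$, produces the claimed inequality. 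The main technical obstacle is formulating the Banach-space analogue of Corollary~\ref{sec2 corollary frechet} in this two-space setting (with $B$ sending $V_\delta$ into the larger space $V_\rho$) and carefully tracking which norms of $B$ enter at each passage of the sewing argument, so that all constants can be collected into a single $R$-dependent prefactor; once this is in place, the rest is a mechanical transcription of the classical YDE arguments adapted to the parabolic regularity trade-off encoded in Theorem~\ref{sec7 thm linear equation}.
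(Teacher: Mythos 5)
Your overall strategy — fixed point for $\mathcal{I}(x)_t = S(t)x_0 + \Xi\bigl(\int_0^\cdot B(\mathrm{d}s,x_s)\bigr)_t$ on a ball of $C^\kappa_t V_\delta$, using Theorem~\ref{sec7 thm linear equation} to convert the $C^\gamma_t V_\rho$-valued Young integral back into a $C^\kappa_t V_\delta$ path — is the paper's. But your invariant set $E_\tau$ only constrains $\llbracket y\rrbracket_{\kappa,V_\delta}$, whereas the paper's set $E$ additionally imposes $\sup_{t\le\tau}\|x_t\|_{V_{\delta+\kappa}}\le M$, and this is not cosmetic. Your $M$ (and hence the contraction time $\tilde\tau$) depends on $\|x_0\|_{V_{\delta+\kappa}}$; to restart at $\tau$ you need $\|x_\tau\|_{V_{\delta+\kappa}}$ controlled, and as you iterate the bound on $\|x_{k\tau}\|_{V_{\delta+\kappa}}$ can drift. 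Your claim that "iteration on a uniform partition extends the local solution" is therefore circular as written: the partition size depends on a global bound on $\|x_t\|_{V_{\delta+\kappa}}$ which you have not yet established. The paper resolves this by (i) choosing an invariance window $\tau$ that is genuinely independent of $x_0$ (only the contraction sub-window $\tilde\tau\le\tau$ depends on $M$), and (ii) proving the a priori estimate~\eqref{sec7 main thm proof eq2}, $\llbracket x\rrbracket_{\kappa,V_\delta;0,\tilde\tau}+\sup_{t\le\tilde\tau}\|x_t\|_{V_{\delta+\kappa}}\le\kappa_4(\|x_0\|_{V_{\delta+\kappa}}+\|B\|_{\gamma,1})$, which is linear in the initial data and therefore propagates across the $O(T/\tau)$ iteration steps. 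You gesture at the right supremum estimate from Theorem~\ref{sec7 thm linear equation} (the $\gamma+\rho-\delta-\kappa>0$ inequality), but you invoke it only to recover $C^0_t V_{\delta+\kappa}$ membership a posteriori; you need to build it into the invariant set, or at least record the linear a priori estimate, to close the loop.

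The Lipschitz dependence in your write-up goes through the affine equation for $e=x^1-x^2$ (a Comparison-Principle-style argument), which is the route the paper explicitly declines in favor of the "two maps" inequality $d_E(\mathcal{I}_{B^1}x^1,\mathcal{I}_{B^2}x^2)\le d_E(\mathcal{I}_{B^1}x^1,\mathcal{I}_{B^1}x^2)+d_E(\mathcal{I}_{B^1}x^2,\mathcal{I}_{B^2}x^2)\le\kappa_6\llbracket x^1-x^2\rrbracket+\kappa_7(1+M)\|B^1-B^2\|_{\gamma,\beta}$ and absorption of $\kappa_6<1$. Both routes are sound and of comparable length; the paper's variant has the advantage of not needing a parabolic analogue of the Gronwall-type Theorem~\ref{sec3.3 thm gronwall estimate YDE}, which your route implicitly requires and which is not formulated anywhere in the paper, so if you keep your route you should state and prove that lemma.
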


\begin{remark}
  If $B \in C^{\gamma}_t C^2_{V_{\delta}, V_{\rho}}$, then it is possible to
  find $\kappa$ satisfying~{\eqref{sec7 condition k}} if and only if
  \[ 2 \gamma + \rho - \delta > 1. \]
\end{remark}

\begin{proof}
  The basic idea is to apply a Banach fixed point argument to the map
  \begin{equation}
    x \mapsto \mathcal{I} (x)_t : = S (t) x_0 + \Xi \left( \int_0^{\cdot} B
    (\mathd s, x_s) \right)_t \label{sec7 main thm proof eq1}
  \end{equation}
  defined on a suitable domain.
  
  By Remark~\ref{sec7 remark initial data}, if $x_0 \in V_{\delta + \kappa}$,
  then $S (\cdot) x_0 \in C^{\kappa}_t V_{\delta}$; moreover $B \in
  C^{\gamma}_t C^1_{V_{\delta}, V_{\rho}}$, so under the condition $\gamma +
  \kappa > 1$ the nonlinear Young integral in~{\eqref{sec7 main thm proof
  eq1}} is well defined for $x \in C^{\kappa}_t V_{\delta}$, $y_t = \int_0^t B
  (\mathd s, x_s) \in C^{\gamma}_t V_{\rho}$ and then $\Xi (y) \in
  C^{\kappa}_t V_{\delta}$ under the condition $\kappa < \gamma + \rho -
  \delta$. So under our assumptions $\mathcal{I}$ maps $C^{\kappa}_t
  V_{\delta}$ into itself; our first aim is to find a closed bounded subset
  which is invariant under $I$.
  
  For suitable $\tau, M$ to be fixed later, consider the set
  \[ E : = \{ x \in C^{\kappa} ([0, \tau] ; V_{\delta}) : x (0) = x_0,
     \llbracket x \rrbracket_{\kappa, V_{\delta}} \leqslant M, \sup_{t \in [0,
     \tau]} \| x_t \|_{V_{\delta + \kappa}} \leqslant M \} ; \]
  $E$ is a complete metric space endowed with the distance $d_E (x_1, x_2) =
  \llbracket x_1 - x_2 \rrbracket_{\kappa, V_{\delta}}$. It holds
  
  \begin{align*}
    \llbracket \mathcal{I} (x) \rrbracket_{\kappa, V_{\delta}} & \leqslant
    \llbracket S (\cdot) x_0 \rrbracket_{\kappa, V_{\delta}} + \left\llbracket
    \Xi \left( \int_0^{\cdot} B (\mathd s, x_s) \right)
    \right\rrbracket_{\kappa, V_{\delta}} \lesssim \| x_0 \|_{V_{\delta +
    \rho}} + \left\llbracket \int_0^{\cdot} B (\mathd s, x_s)
    \right\rrbracket_{\gamma, V_{\rho}} ;
  \end{align*}
  
  for the nonlinear Young integral we have the estimate
  
  \begin{align*}
    \left\| \int_s^t B (\mathd r, x_r) \right\|_{V_{\rho}} & \lesssim \| B_{s,
    t} (x_s) \|_{V_{\rho}} + | t - s |^{\gamma + \kappa} \llbracket B
    \rrbracket_{\gamma, 1} \llbracket x \rrbracket_{\kappa, V_{\delta}}\\
    & \lesssim \| B_{s, t} (x_s) - B_{s, t} (x_0) \|_{V_{\rho}} + | t - s
    |^{\gamma} \| B \|_{\gamma, 0} + | t - s |^{\gamma} \tau^{\kappa}
    \llbracket B \rrbracket_{\gamma, 1} \llbracket x \rrbracket_{\kappa}\\
    & \lesssim | t - s |^{\gamma} \| B \|_{\gamma, 1} (1 + \tau^{\kappa}
    \llbracket x \rrbracket_{\kappa, V_{\delta}})
  \end{align*}
  
  and so
  \[ \left\llbracket \int_0^{\cdot} B (\mathd r, x_r)
     \right\rrbracket_{\gamma, V_{\rho}} \lesssim \| B \|_{\gamma, 1} (1 +
     \tau^{\kappa} \llbracket x \rrbracket_{\kappa, V_{\delta}}) . \]
  Overall, we can find a constant $\kappa_1$ such that
  \[ \llbracket \mathcal{I} (x) \rrbracket_{\kappa, V_{\delta}} \leqslant
     \kappa_1 \| x_0 \|_{V_{\delta + \kappa}} + \kappa_1 \| B \|_{\gamma, 1}
     (1 + \tau^{\kappa}  \llbracket x \rrbracket_{\kappa, V_{\delta}}) . \]
  Similar computations, together with estimate~{\eqref{sec7 thm linear
  equation estimates}}, show the existence of $\kappa_2$ such that
  \[ \sup_{t \in [0, \tau]} \| I (x)_t \|_{V_{\delta + \kappa}} \leqslant
     \kappa_2 \| x_0 \|_{V_{\delta + \kappa}} + \kappa_2 \| B \|_{\gamma, 1}
     \tau^{\gamma - \delta + \rho} (1 + \tau^{\kappa} \llbracket x
     \rrbracket_{\kappa, V_{\delta}}) . \]
  Therefore takng $\tau \leqslant 1$, $\kappa_3 = \kappa_1 \vee \kappa_2$, in
  order for $\mathcal{I}$ to map $E$ into itself it suffices
  \[ \kappa_3 \| x_0 \|_{V_{\delta + \kappa}} + \kappa_3 \| B \|_{\gamma, 1}
     (1 + \tau^{\kappa} M) \leqslant M, \]
  which is always possible, for instance by requiring
  \[ 2 \kappa_3 \| B \|_{\gamma, 1} \tau^{\kappa} \leqslant 1, \quad 2
     \kappa_3 \| x_0 \|_{V_{\delta + \kappa}} + 2 \kappa_3 \| B \|_{\gamma, 1}
     \leqslant M. \]
  Observe that $\tau$ can be chosen independently of $\| x_0 \|_{V_{\delta +
  \kappa}}$; moreover for the same choice of $\tau$, analogous computations
  show that any solution $x$ to~{\eqref{sec7 preliminary parabolic yde}}
  defined on $[0, \tilde{\tau}]$ with $\tilde{\tau} \leqslant \tau$ satisfies
  the a priori estimate
  \begin{equation}
    \llbracket x \rrbracket_{\kappa, V_{\delta} ; 0, \tilde{\tau}} + \sup_{t
    \in [0, \tilde{\tau}]} \| x_t \|_{V_{\delta + \kappa}} \leqslant \kappa_4 
    (\| x_0 \|_{V_{\delta + \kappa}} + \| B \|_{\gamma, 1}) \label{sec7 main
    thm proof eq2}
  \end{equation}
  for another constant $\kappa_4$, independent of $x_0$.
  
  We now want to find $\tilde{\tau} \in [0, \tau]$ such that $I$ is a
  contraction on $\tilde{E}$, $\tilde{E}$ being defined as $E$ in terms of
  $\tilde{\tau}, M$. Given $x^1, x^2 \in \tilde{E}$, it holds
  
  \begin{align*}
    d_E (\mathcal{I} (x^1), \mathcal{I} (x^2)) & = \left\llbracket \Xi \left(
    \int_0^{\cdot} B (\mathd s, x^1_s) - \int_0^{\cdot} B (\mathd s, x^2_s)
    \right) \right\rrbracket_{\kappa, V_{\delta}}\\
    & \lesssim \left\llbracket \left( \int_0^{\cdot} B (\mathd s, x^1_s) -
    \int_0^{\cdot} B (\mathd s, x^2_s) \right) \right\rrbracket_{\kappa,
    V_{\rho}}
  \end{align*}
  
  and under the assumptions we can apply Corollary~\ref{sec2 corollary
  frechet}, so we have
  
  \begin{align*}
    \left\| \int_s^t B (\mathd r, x_r^1) - \int_s^t B (\mathd r, x^2_r)
    \right\|_{V_{\rho}} & = \left\| \int_s^t v_{\mathd r} (x^1_r - x^2_r)
    \right\|_{V_{\rho}}\\
    & \lesssim | t - s |^{\gamma} \llbracket v \rrbracket_{\gamma,
    \mathcal{L}} \| x^1_s - x^2_s \|_{V_{\rho}} + | t - s |^{\gamma + \kappa}
    \llbracket v \rrbracket_{\gamma, \mathcal{L}} \llbracket x^1 - x^2
    \rrbracket_{\kappa, V_{\rho}}\\
    & \lesssim | t - s |^{\gamma} \| B \|_{\gamma, 1 + \beta} (1 + M)
    \tilde{\tau}^{\kappa} \llbracket x^1 - x^2 \rrbracket_{\kappa, V_{\rho}} .
  \end{align*}
  
  This implies
  \[ \left\llbracket \int_0^{\cdot} B (\mathd r, x^1_r) - B (\mathd r, x^2_r)
     \right\rrbracket_{\gamma, V_{\rho}} \lesssim \| B \|_{\gamma, 1 + \beta}
     (1 + M) \tilde{\tau}^{\kappa} \llbracket x^1 - x^2 \rrbracket_{\kappa,
     V_{\rho}} \]
  and so overall, for a suitable constant $\kappa_5$,
  \[ d_E (\mathcal{I} (x^1), \mathcal{I} (x^2)) \leqslant \kappa_5 \| B
     \|_{\gamma, 1 + \beta} (1 + M) \tilde{\tau}^{\kappa} d_E (x^1, x^2) . \]
  Choosing $\tilde{\tau}$ small enough such that $\kappa_5 \| B \|_{\gamma, 1
  + \beta} (1 + M) \tilde{\tau}^{\kappa} < 1$, we deduce that there exists a
  unique solution to~{\eqref{sec7 preliminary parabolic yde}} defined on $[0,
  \tilde{\tau}]$. Since we have the uniform estimate~{\eqref{sec7 main thm
  proof eq2}}, we can iterate the contraction argument to construct a unique
  solution on $[0, \tau]$; but since the choice of $\tau$ does not depend on
  $x_0$ and $x_{\tau} \in V_{\delta + \kappa}$, we can iterate further to
  cover the whole interval $[0, T]$ with subintervals of size $\tau$.
  
  To check the Lipschitz dependence on $(x_0, B)$, one can reason using the
  Comparison Principle as usual, but let us give an alternative proof; we only
  check Lipschitz dependence on $B$, as the proof for $x_0$ is similar.
  
  Given $B^i$, $i = 1, 2$ as above, denote by $\mathcal{I}_{B^i}$ the map
  associated to $B^i$ defined as in~{\eqref{sec7 main thm proof eq1}}; we can
  choose $\tilde{\tau}$ and $M$ such that they are both strict contractions of
  constant $\kappa_6 < 1$ on $E$ defined as before. Observe that for any $z
  \in E$ it holds
  
  \begin{align*}
    d_E (\mathcal{I}_{B^1} (z), \mathcal{I}_{B^2} (z)) & = \left\llbracket \Xi
    \left( \int_0^{\cdot} B^1 (\mathd s, z_s) - \int_0^{\cdot} B^2 (\mathd s,
    z_s) \right) \right\rrbracket_{\kappa, V_{\delta}}\\
    & \lesssim \left\llbracket \int_0^{\cdot} B^1 (\mathd s, z_s) -
    \int_0^{\cdot} B^2 (\mathd s, z_s) \right\rrbracket_{\gamma, V_{\rho}}\\
    & \lesssim (1 + M) \| B^1 - B^2 \|_{\gamma, \beta} .
  \end{align*}
  
  Denote by $x^i$ the unique solutions on $E$ associated to $B^i$, then by the
  above computation we get
  
  \begin{align*}
    \llbracket x^1 - x^2 \rrbracket_{\kappa, V_{\delta}} & = d_E
    (\mathcal{I}_{B^1} (x^1), \mathcal{I}_{B^2} (x^2))\\
    & \leqslant d_E (\mathcal{I}_{B^1} (x^1), \mathcal{I}_{B^1} (x^2)) + d_E
    (\mathcal{I}_{B^1} (x^2), \mathcal{I}_{B^2} (x^2))\\
    & \leqslant \kappa_6  \llbracket x^1 - x^2 \rrbracket_{\kappa,
    V_{\delta}} + \kappa_7 (1 + M) \| B^1 - B^2 \|_{\gamma, \beta}
  \end{align*}
  
  which implies that
  \[ \llbracket x^1 - x^2 \rrbracket_{\kappa, V_{\delta}} \leqslant
     \frac{\kappa_7}{1 - \kappa_6} (1 + M) \| B^1 - B^2 \|_{\gamma, \beta} \]
  which shows Lipschitz dependence on $B^i$ on the interval $[0,
  \tilde{\tau}]$. As before, a combination of a priori estimates and iterative
  arguments allows to extend the estimate to a global one.
\end{proof}

By the usual localization and blow-up alternative arguments, we obtain the
following result.

\begin{corollary}
  Assume $A$ as above, $B \in C^{\gamma}_t C^{1 + \beta}_{V_{\delta},
  V_{\rho}, \tmop{loc}}$ with $\rho > \delta - 1$ and suppose there exists
  $\kappa \in (0, 1)$ satisfying~{\eqref{sec7 condition k}}. Then for any $x_0
  \in V_{\delta + \kappa}$ there exists a unique maximal solution $x$ starting
  from $x_0$, defined on an interval $[0, T^{\ast}) \subset [0, T]$, such that
  either $T^{\ast} = T$ or
  \[ \lim_{t \uparrow T^{\ast}} \| x_t \|_{V_{\delta + \kappa}} = + \infty .
  \]
\end{corollary}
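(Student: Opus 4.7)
The proof follows the standard localisation plus blow-up alternative scheme already carried out in Corollaries~\ref{sec3.1 cor local existence} and~\ref{sec3.2 cor local uniqueness}, so I will only outline the main steps and emphasise the points specific to the parabolic setting.

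First I would pass from the local assumption on $B$ to a globally bounded field. Given $R>0$, pick a smooth cutoff $\chi_R \in C^{\infty}(V_{\delta+\kappa};[0,1])$ equal to $1$ on the ball of radius $R$ of $V_{\delta+\kappa}$ and supported on the ball of radius $2R$, and set $B^R(t,z) \assign \chi_R(z)B(t,z)$. Using the continuous embedding $V_{\delta+\kappa}\hookrightarrow V_{\delta}$ one checks that $B^R \in C^{\gamma}_t C^{1+\beta}_{V_{\delta},V_{\rho}}$ with a norm $\|B^R\|_{\gamma,1+\beta}$ controlled by a quantity depending only on $R$ and on the local seminorms of $B$. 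Applying Theorem~\ref{sec7 main thm} to $B^R$, I obtain a unique global solution $x^R \in C^{\kappa}_t V_{\delta} \cap C^0_t V_{\delta+\kappa}$ starting from $x_0$. By the a priori bound~\eqref{sec7 main thm proof eq2}, valid on each subinterval of size $\tau=\tau(R,\|B^R\|_{\gamma,1+\beta})$, for $\|x_0\|_{V_{\delta+\kappa}} \leq R/2$ the solution $x^R$ stays in the ball where $B^R\equiv B$ on an interval $[0,\tau_R]$ with $\tau_R>0$ depending only on $R$; on such interval $x^R$ is a solution of the original equation.

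Next I would glue these truncated solutions. Given any two solutions $x^1,x^2$ of~\eqref{sec7 preliminary parabolic yde} starting from the same $x_0 \in V_{\delta+\kappa}$ and defined on $[0,\tilde\tau_i]$, both remain in some large ball of $V_{\delta+\kappa}$ (hence of $V_{\delta}$) on a common subinterval $[0,\sigma]$ with $\sigma>0$, so both solve the YDE associated to $B^R$ for $R$ large enough; uniqueness from Theorem~\ref{sec7 main thm} then gives $x^1\equiv x^2$ on $[0,\sigma]$. A standard connectedness argument upgrades this to $x^1\equiv x^2$ on $[0,\tilde\tau_1\wedge\tilde\tau_2]$. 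In particular the family of all solutions starting at $x_0$ has a well-defined supremum $T^{\ast}\in (0,T]$ and a unique maximal representative $x$ defined on $[0,T^{\ast})$, satisfying $x_t\in V_{\delta+\kappa}$ for every $t<T^{\ast}$.

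Finally I would prove the blow-up alternative by contradiction. Suppose $T^{\ast}<T$ and $\liminf_{t\uparrow T^{\ast}}\|x_t\|_{V_{\delta+\kappa}}<\infty$; pick a sequence $t_n\uparrow T^{\ast}$ with $\|x_{t_n}\|_{V_{\delta+\kappa}}\leq M$. Applying the local existence result, started from $x_{t_n}\in V_{\delta+\kappa}$ at time $t_n$ and with $B$ truncated at scale $R=2M$, I obtain a solution $y^n$ defined on $[t_n, t_n+\varepsilon]$ where the lifespan $\varepsilon>0$ can be taken uniform in $n$ since it only depends on $M$ and on the local norm of $B$ around the ball of radius $2M$ (here I use the fact, stressed in the proof of Theorem~\ref{sec7 main thm}, that $\tau$ is independent of the initial datum provided the latter is bounded). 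For $n$ large enough $t_n+\varepsilon>T^{\ast}$, and uniqueness guarantees that $y^n$ coincides with $x$ on $[t_n,T^{\ast})$ and hence extends $x$ strictly beyond $T^{\ast}$, contradicting the maximality of $T^{\ast}$. The only delicate point of the whole argument is precisely this uniformity of the local lifespan with respect to initial data in a bounded set of $V_{\delta+\kappa}$, but this is built into the proof of Theorem~\ref{sec7 main thm} through the choice of $\tau$ depending only on $\|B\|_{\gamma,1+\beta}$ and the a priori estimate~\eqref{sec7 main thm proof eq2}.
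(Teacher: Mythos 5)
Your proof follows exactly the localization-plus-blow-up scheme the paper invokes with the phrase ``by the usual localization and blow-up alternative arguments,'' and structurally it is the intended argument. There is, however, a technical slip in the truncation step. You take the cutoff $\chi_R$ to be a smooth function on $V_{\delta+\kappa}$, but $B$ is a map on $V_\delta$, and $V_{\delta+\kappa}$ is a \emph{proper dense} subspace of $V_\delta$: the product $B^R = \chi_R B$ is therefore not defined on all of $V_\delta$, and extending $\chi_R$ by zero to $V_\delta \setminus V_{\delta+\kappa}$ produces a discontinuous function, so $B^R$ does not land in $C^{\gamma}_t C^{1+\beta}_{V_\delta, V_\rho}$ as required to apply Theorem~\ref{sec7 main thm}. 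The cutoff must instead be taken on $V_\delta$ (exactly as in the proof of Corollary~\ref{sec3.1 cor local existence}, where it is taken on $V$): choose $\chi_R$ smooth on $V_\delta$, equal to $1$ on the $V_\delta$-ball of radius $R$ and vanishing outside radius $2R$, so that $B^R$ is globally bounded on $V_\delta$ with the right regularity.

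With this correction the rest of your argument goes through unchanged. The a priori bound~\eqref{sec7 main thm proof eq2} on $\sup_t \|x_t\|_{V_{\delta+\kappa}}$ controls $\|x_t\|_{V_\delta}$ via the continuous embedding $V_{\delta+\kappa}\hookrightarrow V_\delta$, which is what guarantees the truncated solution stays in the region where $B^R\equiv B$. The blow-up criterion is, correctly, in the stronger $V_{\delta+\kappa}$-norm rather than the $V_\delta$-norm, precisely because bounded $V_{\delta+\kappa}$-data is what is needed to restart Theorem~\ref{sec7 main thm} with a lifespan uniform over the bounded set; your last paragraph identifies this correctly as the crux.
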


\begin{remark}
  For simplicity we have only treated here uniqueness results, but if the
  embedding $V_{\delta} \hookrightarrow V_{\alpha}$ for $\delta > \alpha$ is
  compact, as is often the case, one can use compactness arguments to deduce
  existence of solutions under weaker regularity conditions on $B$, in analogy
  with Theorem~\ref{sec3.1 thm existence}. Once can also consider equations of
  the form
  \[ \mathd x_t = - A x_t \mathd t + F (x_t) \mathd t + B (\mathd t, x_t), \]
  in which case uniqueness can be achieved under the same conditions on $B$ as
  above and a Lipschitz condition on $F$, see also Remark~1
  from~{\cite{gubinellilejay}}.
\end{remark}

\appendix\section{Appendix}
\subsection{Some useful lemmas}\label{appendixA}

We collect in this appendix some basic tools; we start with a Fubini-type
result for the sewing map. In the following, the separable Banach space $V$ is
endowed with its Borel $\sigma$-algebra, the space $C^{\alpha, \beta}_2 V$
with the $\sigma$-algebra induced by the norm $\| \cdot \|_{\alpha, \beta}$;
recall that by the sewing lemma, $\mathcal{J} : C^{\alpha, \beta}_2 V
\rightarrow C^{\alpha}_t V$ is linear and continuous.

\begin{lemma}[Fubini for sewing map]
  \label{appendix lemma fubini}Let $V$ as above, $(S, \mathcal{A}, \mu)$ a
  measure space and consider a measurable map $\Gamma : S \rightarrow
  C^{\alpha, \beta}_2 V$, $\theta \mapsto \Gamma (\theta)$, such that
  \[ \int_S \| \Gamma (\theta) \|_{\alpha, \beta} \mu (\mathd \theta) < \infty
     . \]
  Then the map $\mathcal{J} \circ \Gamma : S \rightarrow C^{\alpha}_t V$ is
  measurable and it holds
  \begin{equation}
    \mathcal{J} \left( \int_S \Gamma (\theta) \mu (\mathd \theta) \right) =
    \int_S \mathcal{J} (\Gamma (\theta)) \mu (\mathd \theta) . \label{appendix
    fubini identity}
  \end{equation}
\end{lemma}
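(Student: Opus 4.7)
The plan is to reduce the identity to a finite-sum Fubini exchange at the level of Riemann--Stieltjes approximations, and then take the limit via dominated convergence; everything else is bookkeeping about measurability and Bochner integrability. I would never need to worry about the separability of $C^{\alpha,\beta}_2 V$ itself, because all the operational identities can be checked pointwise in $(s,t) \in \Delta_2$ against the separable target $V$.

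First I would set $\bar{\Gamma}_{s,t} \assign \int_S \Gamma(\theta)_{s,t}\, \mu(\mathd\theta)$, which is a well-defined Bochner integral in the separable Banach space $V$: indeed $\|\Gamma(\theta)_{s,t}\|_V \leq \|\Gamma(\theta)\|_{\alpha,\beta} |t-s|^{\alpha}$ and the right-hand side is $\mu$-integrable by hypothesis. Taking norms under the integral and using Minkowski yields $\|\bar{\Gamma}\|_{\alpha,\beta} \leq \int_S \|\Gamma(\theta)\|_{\alpha,\beta}\, \mu(\mathd\theta) < \infty$, so $\bar{\Gamma} \in C^{\alpha,\beta}_2 V$; this is what one means by $\bar{\Gamma} = \int_S \Gamma(\theta) \mu(\mathd\theta)$ on the left-hand side of \eqref{appendix fubini identity}. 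Measurability of $\mathcal{J}\circ\Gamma$ is a triviality: $\mathcal{J}$ is bounded linear by the sewing lemma, hence Borel-continuous, so $\mathcal{J}\circ\Gamma : S \to C^{\alpha}_t V$ is Borel-measurable. The map $\theta \mapsto \mathcal{J}(\Gamma(\theta))_t$ is therefore measurable into $V$ for each fixed $t$, and dominated by $C_2 \|\Gamma(\theta)\|_{\alpha,\beta}$ by \eqref{sec2 sewing property 2}, so the right-hand side of \eqref{appendix fubini identity} is also a well-defined Bochner integral in $C^{\alpha}_t V$.

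For the identity itself, I fix $t \in [0,T]$ and a sequence of partitions $\Pi_n$ of $[0,t]$ with $|\Pi_n| \to 0$. By the Riemann-sum characterization in Lemma~\ref{sec2 sewing lemma} applied to $\bar{\Gamma}$,
\[
\mathcal{J}(\bar{\Gamma})_t = \lim_{n\to\infty} \sum_i \bar{\Gamma}_{t^n_i,t^n_{i+1}} = \lim_{n\to\infty} \int_S \sum_i \Gamma(\theta)_{t^n_i,t^n_{i+1}}\, \mu(\mathd\theta),
\]
where the second equality is the elementary finite-sum Fubini for the Bochner integral. For each fixed $\theta$, the inner sum converges to $\mathcal{J}(\Gamma(\theta))_t$ by the same Lemma. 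To interchange limit and integral I invoke dominated convergence: by the sewing lemma estimate \eqref{sec2 sewing property 1} applied telescopically to any partition $\Pi$ of $[0,t]$, one has
\[
\Bigl\| \sum_i \Gamma(\theta)_{t_i,t_{i+1}} \Bigr\|_V \leq \|\mathcal{J}(\Gamma(\theta))_t\|_V + C_1 \|\delta\Gamma(\theta)\|_\beta\, t^\beta \leq \kappa(t,\alpha,\beta)\, \|\Gamma(\theta)\|_{\alpha,\beta},
\]
uniformly in the partition $\Pi$, and the right-hand side is $\mu$-integrable. Thus
\[
\mathcal{J}(\bar{\Gamma})_t = \int_S \mathcal{J}(\Gamma(\theta))_t\, \mu(\mathd\theta) \qquad \forall\, t \in [0,T],
\]
which is \eqref{appendix fubini identity} evaluated pointwise; since both sides are continuous paths in $V$ and we already know the right-hand side is the Bochner integral of $\mathcal{J}\circ\Gamma$ in $C^\alpha_t V$, the identity lifts to $C^\alpha_t V$ automatically.

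The only delicate step, and the one I would write out carefully, is the uniform Riemann-sum bound used to dominate the integrand: it must be stated and justified at the level of arbitrary partitions (not just dyadic ones), so that dominated convergence can be applied along \emph{any} partition sequence $\Pi_n$. This is a direct consequence of \eqref{sec2 sewing property 1} once one notices that $\sum_i \Gamma(\theta)_{t_i,t_{i+1}}$ is precisely the telescoping that the sewing lemma compares to $\mathcal{J}(\Gamma(\theta))_t$; no additional work beyond a careful quoting of \eqref{sec2 sewing property 1} and \eqref{sec2 sewing property 2} is required.
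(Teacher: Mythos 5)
Your proof is correct, but it takes a genuinely different route from the paper's. The paper leans on the \emph{uniqueness} part of the sewing lemma (Remark~\ref{sec2 remark sewing}): it observes that $f_t := \int_S \mathcal{J}(\Gamma(\theta))_t\,\mu(\mathd\theta)$ vanishes at $0$, lies in $C^\alpha_t V$, and satisfies
\[
\left\| f_{s,t} - \Bigl(\int_S\Gamma(\theta)\mu(\mathd\theta)\Bigr)_{s,t}\right\|_V \leqslant \int_S\|\Gamma(\theta)_{s,t}-\mathcal{J}(\Gamma(\theta))_{s,t}\|_V\,\mu(\mathd\theta)\lesssim |t-s|^\beta,
\]
which forces $f=\mathcal{J}(\bar\Gamma)$ in one line. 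You instead go through the \emph{Riemann-sum characterization}: finite-sum Fubini followed by dominated convergence, with the uniform domination $\|\sum_i\Gamma(\theta)_{t_i,t_{i+1}}\|_V\lesssim\|\Gamma(\theta)\|_{\alpha,\beta}$ obtained by inserting $\mathcal{J}(\Gamma(\theta))$ telescopically and using $\sum_i|t_{i+1}-t_i|^\beta\leq t^\beta$ (since $\beta>1$). Both arguments are valid. The paper's is shorter because uniqueness of the sewing avoids the need to justify the DCT exchange at all; yours is more constructive, since it exhibits the integral on the left directly as a limit of integrated Riemann sums, but it requires the extra (correct, and as you note, crucial) work of establishing the domination bound uniformly over partitions. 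You also correctly identify that the pointwise-in-$t$ identity upgrades to an identity in $C^\alpha_t V$ because evaluation at a fixed $t$ is a bounded linear functional commuting with the Bochner integral.
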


\begin{proof}
  Since $\mathcal{J}$ is continuous, in particular it is measurable, and so is
  $\mathcal{J} \circ \Gamma$ being a composition of measurable functions; it
  also follows that for any fixed $(s, t) \in \Delta_2$, the map $\theta
  \mapsto \mathcal{J} (\Gamma (\theta))_{s, t}$ is measurable from $S$ to $V$.
  We can therefore define both integrals appearing in~{\eqref{appendix fubini
  identity}} as Bochner integrals, by considering them for any fixed pair $(s,
  t) \in \Delta_2$. For instance it holds
  \[ \left\| \int_S \Gamma (\theta)_{s, t} \mu (\mathd \theta) \right\|_V
     \leqslant \int_S \| \Gamma (\theta)_{s, t} \|_V \mu (\mathd \theta)
     \leqslant | t - s |^{\alpha}  \int_S \| \Gamma (\theta) \|_{\alpha,
     \beta} \mu (\mathd \theta) < \infty \]
  which also shows that the map $(s, t) \mapsto \int_S \Gamma (\theta)_{s, t}
  \mu (\mathd \theta)$ belongs to $C^{\alpha, \beta}_2 V$ with
  \[ \left\| \int_S \Gamma (\theta) \mu (\mathd \theta) \right\|_{\alpha,
     \beta} \leqslant \int_S \| \Gamma (\theta) \|_{\alpha, \beta} \mu (\mathd
     \theta) . \]
  In order to show that~{\eqref{appendix fubini identity}} holds, by the
  sewing lemma it suffices to prove that
  \[ \left\| \left( \int_S \Gamma (\theta) \mu (\mathd \theta) \right)_{s, t}
     - \int_S \mathcal{J} (\Gamma (\theta))_{s, t} \mu (\mathd \theta)
     \right\|_V \lesssim | t - s |^{\beta} \quad \forall \, (s, t) \in
     \Delta_2 ; \]
  from the properties of $\mathcal{J} (\Gamma (\theta))$, we have the estimate
  \begin{eqnarray*}
    \left\| \left( \int_S \Gamma (\theta) \mu (\mathd \theta) \right)_{s, t} -
    \int_S \mathcal{J} (\Gamma (\theta))_{s, t} \mu (\mathd \theta) \right\|_V
    & \leqslant & \int_S \| \Gamma (\theta)_{s, t} - \mathcal{J} (\Gamma
    (\theta))_{s, t} \|_V \mu (\mathd \theta)\\
    & \lesssim & | t - s |^{\beta}  \int_S \| \Gamma (\theta) \|_{\alpha,
    \beta} \mu (\mathd \theta)
  \end{eqnarray*}
  and the conclusion follows.
\end{proof}

\begin{lemma}
  \label{appendix lemma interpolation}Let $\{ \Gamma^n \}_n \subset C^{\alpha,
  \beta}_2 V$ be a sequence such that $\sup_n \| \delta \Gamma^n \|_{\beta}
  \leqslant R$ and $\lim_n \| \Gamma^n \|_{\alpha} \rightarrow 0$. Then
  $\mathcal{J} \Gamma^n \rightarrow 0$ in $C^{\alpha}_t V$ and for all $n$ big
  enough it holds
  \begin{equation}
    \llbracket \mathcal{J} \Gamma^n \rrbracket_{\alpha} \lesssim_{T, \beta} (1
    + R)  \| \Gamma^n \|_{\alpha}^{(\beta - 1) / (\beta - \alpha)} .
    \tmcolor{blue}{} \label{appendix interpolation estimate}
  \end{equation}
\end{lemma}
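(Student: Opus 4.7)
The plan is to combine the sewing bound with a telescoping/subdivision argument and then optimize a single length parameter $\tau$. The starting point is inequality \eqref{sec2 sewing property 1}: for every $(s,t) \in \Delta_2$,
\[
\|\mathcal{J}\Gamma^n_{s,t}\|_V \leq \|\Gamma^n_{s,t}\|_V + C_1\|\delta\Gamma^n\|_\beta |t-s|^\beta \leq \|\Gamma^n\|_\alpha |t-s|^\alpha + C_1 R \,|t-s|^\beta.
\]
Dividing by $|t-s|^\alpha$ immediately yields, for any $\tau \in (0,T]$, the short-scale estimate
\[
\llbracket \mathcal{J}\Gamma^n \rrbracket_{\alpha,\tau} \leq \|\Gamma^n\|_\alpha + C_1 R \,\tau^{\beta-\alpha}.
\]
Since $\|\Gamma^n\|_\alpha \to 0$, I may restrict to $n$ large enough that the quantity $\tau_n := (\|\Gamma^n\|_\alpha/R)^{1/(\beta-\alpha)}$ is smaller than $T$, so that this threshold choice is admissible.

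Next I would pass from the short-scale seminorm $\llbracket \cdot \rrbracket_{\alpha,\tau}$ to the global $\llbracket \cdot \rrbracket_\alpha$ by a standard telescoping argument (Exercise~4.24 in~\cite{frizhairer}, as used in Section~\ref{sec3.1}): for arbitrary $s<t$, partition $[s,t]$ into $\lceil (t-s)/\tau \rceil$ subintervals of length $\leq \tau$, apply additivity $\mathcal{J}\Gamma^n_{s,t} = \sum_i \mathcal{J}\Gamma^n_{t_i,t_{i+1}}$, and estimate each increment by $\llbracket \mathcal{J}\Gamma^n \rrbracket_{\alpha,\tau} \,|t_{i+1}-t_i|^\alpha$. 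This produces the inequality
\[
\llbracket \mathcal{J}\Gamma^n \rrbracket_\alpha \lesssim_T \tau^{\alpha-1}\llbracket \mathcal{J}\Gamma^n \rrbracket_{\alpha,\tau} \lesssim_T \tau^{\alpha-1}\bigl(\|\Gamma^n\|_\alpha + C_1 R\,\tau^{\beta-\alpha}\bigr).
\]

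Finally I would optimize in $\tau$. Choosing $\tau=\tau_n$ as above balances the two terms and gives
\[
\llbracket \mathcal{J}\Gamma^n \rrbracket_\alpha \lesssim_{T,\beta} R^{(1-\alpha)/(\beta-\alpha)}\|\Gamma^n\|_\alpha^{(\beta-1)/(\beta-\alpha)} \lesssim (1+R)\,\|\Gamma^n\|_\alpha^{(\beta-1)/(\beta-\alpha)},
\]
where the last inequality uses that $(1-\alpha)/(\beta-\alpha)<1$ since $\beta>1$. Because the exponent $(\beta-1)/(\beta-\alpha)$ is strictly positive and $\|\Gamma^n\|_\alpha \to 0$, this yields $\llbracket \mathcal{J}\Gamma^n \rrbracket_\alpha \to 0$, and convergence in $C^\alpha_t V$ follows from $\mathcal{J}\Gamma^n_0 = 0$ via $\|\mathcal{J}\Gamma^n\|_\infty \leq T^\alpha \llbracket \mathcal{J}\Gamma^n \rrbracket_\alpha$.

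The only delicate step is bookkeeping the exponents in the optimization and verifying that the chosen $\tau_n$ stays in $(0,T]$; this is automatic for large $n$ thanks to $\|\Gamma^n\|_\alpha \to 0$, and explains the qualifier ``for all $n$ big enough'' in the statement. The remainder is routine once the interpolation is set up.
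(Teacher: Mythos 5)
Your proof is correct and follows essentially the same route as the paper: apply the sewing bound, subdivide to pass from a small-scale to the global H\"older seminorm, and then optimize the scale to balance the two terms. The only cosmetic difference is the parametrization: you fix a length scale $\tau$ and subdivide each interval into pieces of size at most $\tau$, whereas the paper subdivides every interval $[s,t]$ into exactly $m$ pieces and optimizes over $m$; working out the exponents (balancing at $\theta=(1-\alpha)/(\beta-\alpha)$ in the paper's version, or $\tau\sim(\|\Gamma^n\|_\alpha/R)^{1/(\beta-\alpha)}$ in yours) yields the identical bound $R^{(1-\alpha)/(\beta-\alpha)}\|\Gamma^n\|_\alpha^{(\beta-1)/(\beta-\alpha)}\lesssim(1+R)\|\Gamma^n\|_\alpha^{(\beta-1)/(\beta-\alpha)}$. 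If anything, your version is slightly cleaner: the paper's intermediate claim that $\|\delta\Gamma^n\|_\beta/\|\Gamma^n\|_\alpha\to\infty$ need not hold (the hypotheses only give an upper bound $R$ on $\|\delta\Gamma^n\|_\beta$, which could also tend to zero), whereas you use the upper bound $R$ directly and avoid that side claim; both arguments nonetheless deliver the stated conclusion, since smaller $\|\delta\Gamma^n\|_\beta$ only improves the estimate.
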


\begin{proof}
  Fix an interval $[s, t] \subset [0, T]$. By hypothesis, it holds
  \[ \| (\mathcal{J} \Gamma^n)_{s, t} \|_V \leqslant \| \Gamma^n \|_{\alpha} 
     | t - s |^{\alpha} + \kappa_{\beta} \| \delta \Gamma^n \|_{\beta}  | t -
     s |^{\beta} ; \]
  splitting the interval in $m$ subintervals of size $| t - s | / m$, applying
  the estimate to each of them and summing over we also have
  \begin{equation}
    \| (\mathcal{J} \Gamma^n)_{s, t} \|_V \leqslant \| \Gamma^n \|_{\alpha}
    m^{1 - \alpha}  | t - s |^{\alpha} + \kappa_{\beta} \| \delta \Gamma^n
    \|_{\beta} m^{1 - \beta}  | t - s |^{\beta} . \label{appendix lemma eq1}
  \end{equation}
  By hypothesis it holds
  \[ \lim_{n \rightarrow \infty} \frac{\| \delta \Gamma^n \|_{\beta}}{\|
     \Gamma^n \|_{\alpha}} = + \infty, \]
  therefore for all $n$ big enough we can choose $m \in \mathbb{N}$ such that
  $m^{1 - \alpha} \sim (\| \delta \Gamma^n \|_{\beta} / \| \Gamma^n
  \|_{\alpha})^{\theta}$ for some $\theta \in (0, 1)$. Then in
  estimate~{\eqref{appendix lemma eq1}}, diving by $| t - s |^{\alpha}$ and
  taking the supremum, we obtain
  
  \begin{align*}
    \llbracket \mathcal{J} \Gamma^n \rrbracket_{\alpha} & \lesssim_{T, \beta}
    \| \Gamma^n \|_{\alpha}^{1 - \theta}  \| \delta \Gamma^n
    \|_{\beta}^{\theta} + \| \Gamma^n \|_{\alpha}^{\theta (\beta - 1) / (1 -
    \alpha)}  \| \delta \Gamma^n \|_{\beta}^{1 - \theta (\beta - 1) / (1 -
    \alpha)}\\
    & \lesssim_{T, \beta} (1 + R)  [\| \Gamma^n \|^{1 - \theta}_{\alpha} + \|
    \Gamma^n \|_{\alpha}^{\theta (\beta - 1) / (1 - \alpha)}] .
  \end{align*}
  
  The conclusion follows choosing $\theta = (1 - \alpha) / (\beta - \alpha)$.
\end{proof}

The following basic result was used in Section~\ref{sec5.2}.

\begin{lemma}
  \label{appendix conditional lemma translations}Let $f \in C^{n + \beta}_V$,
  $z_1, z_2 \in V$. Then for any $\eta \in (0, 1)$ with $\eta < n + \beta$ it
  holds
  \[ \left\| f \left( \, \cdot \, + z_1 \right) - f \left( \, \cdot \, + z_2
     \right) \right\|_{n + \beta - \eta} \lesssim \| z_1 - z_2 \|_V^{\eta} \,
     \| f \|_{n + \beta} . \]
\end{lemma}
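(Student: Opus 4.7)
The plan is to derive the bound by interpolating between two elementary estimates on the translation difference $g := f(\cdot + z_1) - f(\cdot + z_2)$. A key simplification is that Fr\'echet differentiation commutes with translation, so that $D^k g = (D^k f)(\cdot + z_1) - (D^k f)(\cdot + z_2)$ for each $k \leq n$; this reduces the problem to a level-by-level estimate on the derivatives of $g$.

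First I would treat the regime $\eta \in (0, \beta)$, where the H\"older exponent splits as $n + \beta - \eta = n + (\beta - \eta)$ with $\beta - \eta \in (0,1)$, so that $\|g\|_{n+\beta-\eta} = \sum_{k=0}^n \|D^k g\|_{\beta-\eta}$. Each summand reduces to the base case $n=0$ applied to $D^k f \in C^\beta_V$. For the base case, I would combine the two elementary bounds
\[
\|g\|_0 \leq \llbracket f \rrbracket_\beta \|z_1 - z_2\|_V^\beta, \qquad \llbracket g \rrbracket_\beta \leq 2 \llbracket f \rrbracket_\beta,
\]
coming from H\"older continuity of $f$ and the triangle inequality respectively. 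Together they yield $\|g(x) - g(y)\|_V \leq 2 \llbracket f \rrbracket_\beta \min(\|z_1 - z_2\|_V^\beta, \|x-y\|_V^\beta)$; dividing by $\|x-y\|_V^{\beta - \eta}$ and splitting according to whether $\|x-y\|_V \leq \|z_1 - z_2\|_V$ yields $\llbracket g \rrbracket_{\beta - \eta} \lesssim \llbracket f \rrbracket_\beta \|z_1 - z_2\|_V^\eta$. The sup-norm contribution is absorbed using the H\"older estimate when $\|z_1-z_2\|_V \leq 1$ and the crude bound $\|g\|_0 \leq 2\|f\|_0$ otherwise.

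For $\eta \in [\beta, 1)$ (which forces $n \geq 1$ since $\eta < n + \beta$), the correct decomposition is $n+\beta-\eta = (n-1) + \gamma$ with $\gamma := 1+\beta-\eta \in (\beta, 1]$. For each $k \leq n-1$, I would obtain two bounds on $D^k g$ via the fundamental theorem of calculus: integrating $D^{k+1}f$ along the segment from $z_2$ to $z_1$ gives $\|D^k g(x) - D^k g(y)\|_V \leq \|z_1-z_2\|_V \llbracket D^{k+1}f \rrbracket_\beta \|x-y\|_V^\beta$, while integrating $D^{k+1}g$ from $y$ to $x$ and using $\|D^{k+1}g\|_0 \leq \llbracket D^{k+1}f \rrbracket_\beta \|z_1 - z_2\|_V^\beta$ gives $\|D^k g(x) - D^k g(y)\|_V \leq \llbracket D^{k+1}f \rrbracket_\beta \|z_1-z_2\|_V^\beta \|x-y\|_V$. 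A geometric mean of these bounds with weight $\theta = (\eta - \beta)/(1-\beta) \in [0,1]$ simultaneously produces $\|x-y\|_V$-exponent $\gamma$ and $\|z_1-z_2\|_V$-exponent $\eta$, giving $\llbracket D^k g \rrbracket_\gamma \lesssim \|z_1-z_2\|_V^\eta \|f\|_{n+\beta}$.

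The main obstacle will be the algebraic bookkeeping in the second case, in particular verifying that the interpolation weight $\theta$ lies in $[0,1]$ and simultaneously yields the correct exponents of $\|z_1 - z_2\|_V$ and $\|x-y\|_V$. The rest is routine: handling the sup-norm on $D^k g$ exactly as in the base case, and assembling the summed estimate via the paper's convention $\|h\|_{m+\gamma} = \sum_{k=0}^m \|D^k h\|_\gamma$.
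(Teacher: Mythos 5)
Your proof is essentially the paper's own argument: for $\eta<\beta$ you interpolate the two elementary bounds $\|g\|_\infty\lesssim\llbracket f\rrbracket_\beta\|z_1-z_2\|^\beta$ and $\llbracket g\rrbracket_\beta\lesssim\llbracket f\rrbracket_\beta$ on $g=f(\cdot+z_1)-f(\cdot+z_2)$, and for $\eta\in[\beta,1)$ you interpolate the two mixed bounds $\|z_1-z_2\|\,\|x-y\|^\beta$ and $\|z_1-z_2\|^\beta\,\|x-y\|$ coming from integrating $Df$, which is exactly the paper's $n=0$/$n=1$ split. Your reduction via $D^kg=(D^kf)(\cdot+z_1)-(D^kf)(\cdot+z_2)$ simply makes the paper's ``the others being similar'' explicit, and the exponent bookkeeping works out (the weight $\theta=(\eta-\beta)/(1-\beta)$ must be placed on the first mixed bound so the $\|z_1-z_2\|$-exponent is $\theta\cdot 1+(1-\theta)\beta=\eta$).
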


\begin{proof}
  It is enough to prove the claim in the cases $n = 0$ and $n = 1$, the others
  being similar.
  
  Assume first $n = 0$, then we have the elementary estimates
  
  \begin{align*}
    \| f (x + z_1) - f (y + z_1) - f (x + z_2) + f (y + z_2) \|_V & \leqslant
    2 \| f \|_{\beta}  \| x - y \|_V^{\beta},\\
    \| f (x + z_1) - f (y + z_1) - f (x + z_2) + f (y + z_2) \|_V & \leqslant
    2 \| f \|_{\beta} \| z_1 - z_2 \|^{\beta}_{C^{\beta}_V}
  \end{align*}
  
  which interpolated together give the conclusion.
  
  Now consider $n = 1$ and $\eta \in (\beta, 1 + \beta)$, then
  
  \begin{align*}
    \|f (x + z_1) - f (y + z_1) & - f (x + z_2) + f (y + z_2) \|_V\\
    & = \left\| \int_0^1 [D f (z_1 + y + \theta (x - y), x - y) - D f (z_2 +
    y + \theta (x - y), x - y)] \mathd \theta \right\|_V\\
    & \lesssim \| x - y \|_V  \| z_1 - z_2 \|^{\beta}  \| f \|_{1 + \beta} ;
  \end{align*}
  
  inverting the roles of $z_1$ and $x$ (respectively $z_2$ and $y$) we also
  obtain
  \[ \| f (x + z_1) - f (y + z_1) - f (x + z_2) + f (y + z_2) \|_V \lesssim \|
     z_1 - z_2 \|_V  \| x - y \|^{\beta}  \| f \|_{1 + \beta} . \]
  Interpolating the two inequalities again yields the conclusion.
\end{proof}

\subsection{Alternative constructions of Young integrals}\label{appendixB}

We collect in this appendix several other constructions of the nonlinear Young
integral, although mostly equivalent to the one from Section~\ref{sec2}.

\

In Section~\ref{sec2} we constructed the nonlinear Young integral following
the modern approach based on an application of the sewing lemma, but this is
not how it was first introduced in~{\cite{catelliergubinelli}}. The approach
therein was instead based on combining property~\tmtextit{4.} of
Theorem~\ref{sec2 thm definition young integral} with estimate~{\eqref{sec2
nonlinear integral estimate1}}; namely, the classical integral $\int_0^{\cdot}
\partial_t A (s, x_s) \mathd s$ can be controlled by $\| A \|_{\alpha, \beta}$
and $\| x \|_{\gamma}$, and thus its definition can be extended by an
approximation procedure, as the following lemma shows.

\begin{lemma}
  \label{sec2 remark approximation A}Any $A \in C^{\alpha}_t C^{\beta}_{V, W}$
  can be approximated in $C^{\alpha -}_t C^{\beta -}_{V, W}$ by a sequence
  $A^n$ such that $\partial_t A^n$ exists and is continuous.
\end{lemma}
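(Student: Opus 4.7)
The plan is to use a time mollification, which requires a small amount of care because $A$ lives only on $[0,T]$ and must satisfy the convention $A(0,\cdot)=0$. First, extend $A$ to all of $\mathbb{R}$ by constant extension, namely $\bar A(t,x) := A(\pi_T(t),x)$ where $\pi_T$ projects onto $[0,T]$. A case check (both endpoints in $[0,T]$, one endpoint outside, both outside) shows that the increments satisfy $\|\bar A_{s,t}\|_\beta \leq \|A\|_{\alpha,\beta}|t-s|^\alpha$ for all $s<t$ in $\mathbb{R}$, so $\bar A \in C^\alpha_{\mathbb{R}} C^\beta_{V,W}$ with norm comparable to $\|A\|_{\alpha,\beta}$.

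Second, fix a non-negative $\rho \in C^\infty_c(\mathbb{R})$ with $\int \rho = 1$ and set $\rho_n(\tau) = n\rho(n\tau)$. Define
\[
A^n(t,x) := \int_\mathbb{R} \rho_n(\tau)\bar A(t-\tau,x)\,\mathrm{d}\tau \;-\; \int_\mathbb{R} \rho_n(\tau)\bar A(-\tau,x)\,\mathrm{d}\tau .
\]
The subtracted term depends only on $x$, so it does not affect increments; at the same time it ensures $A^n(0,x)=0$. Hence $A^n$ respects the convention, and $A^n_{s,t}(x) = \int \rho_n(\tau)\bar A_{s-\tau,t-\tau}(x)\,\mathrm{d}\tau$. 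Since $\rho_n \in C^\infty_c$, differentiation under the integral gives $\partial_t A^n(t,x) = \int \rho_n'(\tau)\bar A(t-\tau,x)\,\mathrm{d}\tau$, which is continuous in $(t,x)$ because $\bar A \in C^0_t C^\beta_{V,W}$ and $\rho_n'$ is a compactly supported continuous function; this establishes the desired regularity.

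Third, I prove convergence $A^n \to A$ in $C^{\alpha'}_t C^\beta_{V,W}$ for every $\alpha' < \alpha$, which is strictly stronger than what the lemma claims. The key algebraic identity is
\[
\bar A_{s-\tau,t-\tau}(x) - A_{s,t}(x) = \bar A_{s-\tau,s}(x) - \bar A_{t-\tau,t}(x),
\]
which follows from Chasles additivity of increments. Integrating against $\rho_n$ and applying $\|\bar A_{a,b}\|_\beta \leq \|A\|_{\alpha,\beta}|b-a|^\alpha$ yields the uniform bound
\[
\|(A^n-A)_{s,t}\|_\beta \;\lesssim\; \|A\|_{\alpha,\beta}\int \rho_n(\tau)|\tau|^\alpha\,\mathrm{d}\tau \;\lesssim\; n^{-\alpha}\|A\|_{\alpha,\beta},
\]
while the triangle inequality and the $\alpha$-Hölder regularity of both $A$ and $A^n$ give $\|(A^n-A)_{s,t}\|_\beta \lesssim \|A\|_{\alpha,\beta}|t-s|^\alpha$. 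Splitting on whether $|t-s|\leq n^{-1}$ or $|t-s|>n^{-1}$, each regime yields $\|(A^n-A)_{s,t}\|_\beta/|t-s|^{\alpha'} \lesssim n^{-(\alpha-\alpha')}\|A\|_{\alpha,\beta}$, so $\llbracket A^n - A\rrbracket_{\alpha',\beta} \to 0$.

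The only point that requires any attention is the interaction between the constant extension and the convention $A(0,\cdot)=0$: without the corrective subtraction, $A^n$ would fail this convention, while without the extension the convolution would be ill-defined near the endpoints. Everything else is bookkeeping around the swap identity and a routine two-regime split, so no genuine obstacle arises.
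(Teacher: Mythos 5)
Your proposal is correct and uses the same construction as the paper: constant extension of $A$ to $\mathbb{R}$ in time followed by mollification with $\rho_n$. Where you diverge is in the convergence argument. The paper proves uniform convergence $\sup_{(t,x)}\|A-A^\varepsilon\|\to 0$, separately establishes the uniform bound $\llbracket A^\varepsilon\rrbracket_{\alpha,\beta}\leqslant\llbracket A\rrbracket_{\alpha,\beta}$, and then invokes interpolation, which gives up a $\delta$ in both the time and space indices. You instead exploit the exact cancellation $\bar A_{s-\tau,t-\tau}-\bar A_{s,t}=\bar A_{s-\tau,s}-\bar A_{t-\tau,t}$ to get the uniform small bound $\|(A^n-A)_{s,t}\|_\beta\lesssim n^{-\alpha}\|A\|_{\alpha,\beta}$ directly at the $C^\beta$ level, and a two-regime split in $|t-s|$ then yields convergence in $C^{\alpha'}_tC^\beta_{V,W}$ with the explicit rate $n^{-(\alpha-\alpha')}$ and no loss at all in $\beta$. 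This is strictly stronger than the stated claim and also quantitative. A further small improvement over the paper: you subtract the constant $\int\rho_n(\tau)\bar A(-\tau,\cdot)\,\mathrm{d}\tau$ to restore the standing convention $A^n(0,\cdot)=0$, a point the paper's proof passes over silently even though the convention was declared globally in Section~\ref{sec2}. Both fixes are worth keeping.
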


\begin{proof}
  Extend $A$ to $t \in (- \infty, \infty)$ by
  \[ A (t, x) = A (0, x) \mathbbm{1}_{t < 0} + A (t, x) \mathbbm{1}_{t \in [0,
     T]} + A (T, x) \mathbbm{1}_{t > T} \]
  and consider $\rho \in C^{\infty}_c (\mathbb{R})$ s.t. $\rho \geqslant 0$,
  $\rho (0) = 1$ and $\int \rho (t) \mathd t = 1$. Setting $\rho^{\varepsilon}
  (t) = \varepsilon^{- 1} \rho (t / \varepsilon)$ and
  \[ A^{\varepsilon} (t, x) = \int_{\mathbb{R}} \rho^{\varepsilon} (t - s) A
     (s, x) \mathd s, \]
  it's immediate to check that $\sup_{(t, x)} \| A - A^{\varepsilon} \|
  \rightarrow 0$ as $\varepsilon \rightarrow 0$ by the uniform continuity of
  $A$ (which is granted from the fact that $A \in C^{\alpha}_t C^{\beta}_{V,
  W}$). We also have the uniform bound $\llbracket A^{\varepsilon}
  \rrbracket_{\alpha, \beta} \leqslant \llbracket A \rrbracket_{\alpha,
  \beta}$, since
  
  \begin{align*}
    \| A^{\varepsilon}_{s, t} (x) - A^{\varepsilon}_{s, t} (y) \|_W & =
    \left\| \int_{\mathbb{R}} \rho^{\varepsilon} (u) [A (t - u, x) - A (s - u,
    x) - A (t - u, y) + A (s - u, y)] \mathd u \right\|_W\\
    & \leqslant \llbracket A \rrbracket_{\alpha, \beta}  | t - s |^{\alpha}
    \| x - y \|^{\beta}_V,
  \end{align*}
  
  as well as similar uniform bounds for $\| A_{s, t} \|_{\beta}$, etc.
  Interpolating these estimates together, convergence of $A^{\varepsilon}$ to
  $A$ in $C^{\alpha - \delta}_t C^{\beta - \delta}_{V, W}$ as $\varepsilon
  \rightarrow 0$, for any $\delta > 0$, immediately follows.
\end{proof}

Observe that in the above giving up a $\delta$ of regularity is not an issue
in terms of defining $\int_0^{\cdot} A (\mathd s, x_s)$, since we can always
find $\delta > 0$ small enough such that it still holds $\alpha - \delta +
(\beta - \delta) \gamma > 1$.

\

Another more functional way to define nonlinear Young integrals is the
following one: for any $\beta > 0$, consider the map $J : V \rightarrow
\mathcal{L} (C^{\beta}_{V, W} ; W)$ given by $x \mapsto \delta_x$; such a map
is trivially $\beta$-H\"{o}lder regular, since
\[ \| J x - J y \|_{\mathcal{L} (C^{\beta}_{V, W} ; W)} = \sup_{g \in
   C^{\beta}_{V, W}} \frac{\| \langle J x - J y, g \rangle \|_W}{\| g
   \|_{\beta}} = \sup_{g \in C^{\beta}_{V, W}} \frac{\| g (x) - g (y) \|_W}{\|
   g \|_{\beta}} \leqslant \| x - y \|_V^{\beta} . \]
where we denoted by $\langle \cdot, \cdot \rangle$ the pairing between
$\mathcal{L} (C^{\beta}_{V, W} ; W)$ and $C^{\beta}_{V, W}$. Therefore for any
$x \in C^{\gamma}_t V$, the map $t \mapsto J x_t = \delta_{x_t}$ is now an
element of $C^{\gamma \beta}_t \mathcal{L} (C^{\beta}_{V, W} ; W)$. If on the
other hand $A \in C^{\alpha}_t C^{\beta}_{V, W}$ and $\alpha + \gamma \beta >
1$, then we can define the (linear) Young integral
\[ \int_0^t \langle \delta_{x_s}, A_{\mathd s} \rangle = \lim_{| \Pi |
   \rightarrow 0} \sum_i \langle \delta_{x_{t_i}}, A_{t_i, t_{i + 1}} \rangle
   = \lim_{| \Pi | \rightarrow 0} \sum_i A_{t_i, t_{i + 1}} (x_{t_i}) \]
which immediately shows that it coincides with the definition from
Section~\ref{sec2}.

While this construction might seem unnecessarily abstract, it shows that
nonlinear Young integrals can be regarded as linear ones, after the nonlinear
transformation $x \mapsto \delta_x$ has been applied. It also allows to give
intuitive derivations of several integral relations: for instance by Young
product rule it must hold
\[ \langle \delta_{x_t}, A_t \rangle - \langle \delta_{x_0}, A_0 \rangle =
   \int_0^t \langle \delta_{x_s}, A_{\mathd s} \rangle + \int_0^t \langle
   \mathd \delta_{x_s}, A_s \rangle \]
which is the abstract analogue of the It\^{o}-like formula from
Proposition~\ref{sec2 prop ito formula}.

\

We finally mention a third construction of nonlinear Young integrals, given
in~{\cite{hu2}} by means of fractional calculus, in the spirit of the results
by Z\"{a}hle~{\cite{zahle}} for the classical Young integral. Fractional
calculus is a powerful tool in the study of detailed properties of solutions
to classical YDEs, see~{\cite{hunualart1}},~{\cite{hunualart2}} and the
references therein.

The statement therein is restricted to the case $V =\mathbb{R}^d$, although we
believe the same proof extends to more general Banach spaces.

\begin{theorem}
  Let $A \in C^{\alpha}_t C^{\beta}_{\tmop{loc}}$, $x \in C^{\gamma}_t$ with
  $\alpha + \beta \gamma > 1$ and $\delta \in (1 - \alpha, \beta \gamma)$.
  Then the following identity holds:
  
  \begin{align*}
    \int_0^T A (\mathd s, x_s) & = - \frac{1}{\Gamma (\delta) \Gamma (1 -
    \delta)} \left\{ \int_0^T \right. \frac{A_{T -} (t, x_t)}{(T - t)^{1 -
    \delta}} \mathd t\\
    & + \delta \int_0^T \int_0^t \frac{A_{T -} (t, x_t) - A_{T -} (t,
    x_r)}{(T - t)^{1 - \delta} (t - r)^{1 + \delta}} \mathd r \mathd t\\
    & + (1 - \delta) \int_0^T \int_s^T \frac{A (t, x_t) - A (s, x_t)}{(t -
    s)^{2 - \delta} s^{\delta}} \mathd t \mathd s\\
    & - \delta (1 - \delta) \int_0^T \int_0^s \int_s^T \frac{A_{s, t} (x_t) -
    A_{s, t} (x_r)}{(t - s)^{2 - \delta} (t - r)^{1 + \delta}} \mathd t \mathd
    r \mathd s
  \end{align*}
  
  where $A_{T -} (t, z) : = A (t, z) - A (T, z)$.
\end{theorem}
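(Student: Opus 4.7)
The plan is to establish the identity first in the smooth regime, where the nonlinear Young integral reduces to a classical Lebesgue integral, and then to extend by density using continuity of both sides in $A$. By Lemma~\ref{sec2 remark approximation A} we may approximate $A \in C^\alpha_t C^\beta_{\tmop{loc}}$ by a sequence $A^n$ with $\partial_t A^n$ continuous and $A^n \to A$ in $C^{\alpha'}_t C^{\beta'}_{\tmop{loc}}$ for every $\alpha' < \alpha$, $\beta' < \beta$; since $\alpha + \beta\gamma > 1$ and $\delta \in (1-\alpha, \beta\gamma)$ are open conditions, we may fix $\alpha', \beta'$ close enough to $\alpha, \beta$ that the analogous strict inequalities persist, so the statement is meaningful in the same topology for every $A^n$.

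\noindent For each $A^n$, property~2 of Theorem~\ref{sec2 thm definition young integral} reduces the left-hand side to the Lebesgue integral $\int_0^T \partial_s A^n(s, x_s)\,\mathd s$. The identity in this smooth regime is then verified by direct calculation: one expands $A^n(t, x_t) - A^n(s, x_t) = \int_s^t \partial_r A^n(r, x_t)\,\mathd r$ (and similarly for the second-order increment $A^n_{s,t}(x_t) - A^n_{s,t}(x_r)$), substitutes into each of the four iterated integrals on the right-hand side, rearranges by Fubini, and evaluates the remaining integrals in the kernel variables (which are essentially Beta-function integrals). The four contributions collapse to $\int_0^T \partial_s A^n(s, x_s)\,\mathd s$ up to the factor $-1/(\Gamma(\delta)\Gamma(1-\delta))$, which arises precisely as the product of the two Marchaud normalising constants $1/\Gamma(1-\delta)$ and $(-1)^{1-\delta}/\Gamma(\delta)$. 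Equivalently, by linearity one may reduce to product fields $A^n(s, z) = f(z)g(s)$ with $g \in C^1_t$, for which $\int_0^T A^n(\mathd s, x_s) = \int_0^T f(x_s)\,\mathd g_s$ is a classical Young integral and the four terms on the right-hand side factor exactly as the four Marchaud cross-products appearing in Zähle's fractional integration by parts.

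\noindent It remains to check continuity of both sides in $A$ with respect to the $C^{\alpha'}_t C^{\beta'}_{\tmop{loc}}$ topology (with $x$ fixed), so that the limit $n \to \infty$ may be taken. The left-hand side is controlled by estimate~\eqref{sec2 nonlinear integral estimate2}. For the right-hand side, each of the four terms is bounded by combining the Hölder estimates $|A_{T-}(t, x_t)| \lesssim |T-t|^{\alpha'}$, $|A_{T-}(t, x_t) - A_{T-}(t, x_r)| \lesssim |T-t|^{\alpha'}|t-r|^{\beta'\gamma}$, $|A_{s,t}(x_t)|\lesssim |t-s|^{\alpha'}$, and $|A_{s,t}(x_t) - A_{s,t}(x_r)|\lesssim |t-s|^{\alpha'}|t-r|^{\beta'\gamma}$ with the singular kernels; the two constraints $\delta > 1-\alpha'$ and $\delta < \beta'\gamma$ are precisely what renders the weights $(T-t)^{\alpha'+\delta-1}$, $|t-r|^{\beta'\gamma-1-\delta}$, $(t-s)^{\alpha'+\delta-2}$ and $s^{-\delta}$ integrable after the pointwise cancellations, yielding for each term a bound linear in $\|A\|_{\alpha',\beta',R}$ for any $R \ge \|x\|_\infty$. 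The main obstacle is the smooth-case computation in the second paragraph: one must track carefully how the boundary terms of Zähle's formula are absorbed into the ``reduced'' increments $A_{T-}$ and $A_{s,t}$ so that the four iterated integrals, with the specific exponents and sign, come out exactly as stated; once this combinatorial step is carried out, the density/continuity extension is routine.
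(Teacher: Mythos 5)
The paper itself gives no argument for this theorem—it simply cites Theorem~1 of the Hu--L\^e reference, whose proof works \emph{directly} with compensated Marchaud fractional derivatives of $A$ in its time variable and never passes through a smooth approximation. Your route (regularize $A$, reduce to a Lebesgue/classical Young integral via property~2 of Theorem~\ref{sec2 thm definition young integral}, then pass to the limit) is therefore genuinely different in spirit; if carried out, it would give a pleasantly self-contained alternative that re-uses only tools already built in Section~\ref{sec2}.

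However, as written there is a real gap exactly where you flag it. The entire content of the theorem is the specific four-term decomposition with the coefficients $1,\delta,1-\delta,-\delta(1-\delta)$ and the prefactor $-1/(\Gamma(\delta)\Gamma(1-\delta))$, and your second paragraph never verifies it even in the smooth case. The claim that the four iterated integrals ``collapse to $\int_0^T\partial_s A^n(s,x_s)\,\mathd s$'' after Fubini and Beta-function evaluations is plausible-sounding but not at all obvious: in $T_1$ and $T_3$ the integrand features $\partial_r A^n(r,x_t)$ rather than $\partial_r A^n(r,x_r)$, and it is precisely the pairing with the ``correction'' terms $T_2,T_4$ (which carry the differences $A_{T-}(t,x_t)-A_{T-}(t,x_r)$ and $A_{s,t}(x_t)-A_{s,t}(x_r)$) that must produce the diagonal evaluation. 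Until that bookkeeping is done, nothing certifies the signs, the coefficients, or even that all four pieces are needed; this is not a detail that can be deferred.

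Two smaller points. The parenthetical reduction to product fields $A^n(s,z)=f(z)g(s)$ ``by linearity'' does not work: a general $A^n\in C^1_tC^\beta_{\tmop{loc}}$ is not a finite (nor any obviously useful) linear combination of such products, and it is not clear that a density argument in the relevant $C^{\alpha'}_tC^{\beta'}$ topologies preserves the product structure. If you want to invoke Z\"ahle's classical identity as a sanity check, it is exactly that—a check on a subfamily—not a proof mechanism. Second, Lemma~\ref{sec2 remark approximation A} is stated for the globally bounded class $C^{\alpha}_tC^{\beta}_{V,W}$, while the theorem allows $A\in C^\alpha_t C^\beta_{\tmop{loc}}$; you should first cut $A$ off outside a ball of radius $R>\|x\|_\infty$ before mollifying, since the final identity only sees $A$ through $x$ and the relevant increments. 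Once you fix these points and actually carry out the smooth-case computation, the continuity step in your third paragraph (linear bounds in $\|A\|_{\alpha',\beta',R}$ under the open constraints $\delta>1-\alpha'$, $\delta<\beta'\gamma$) is sound and essentially routine, matching estimate~\eqref{sec2 nonlinear integral estimate2} on the left-hand side.
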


See Theorem~1 from~{\cite{hu2}} for a proof.

\subsection{The set of solutions to nonlinear YDEs}\label{appendixC}

We restrict here to the case $V =\mathbb{R}^d$. Inspired by a series of
results by Stampacchia, Vidossich, Browder, Gupta and others
(see~{\cite{vidossich}} and the references therein), we want to study the
topological structure of the set
\[ C (x_0, A) = \left\{ x \in C^{\alpha}_t \text{ such that } \, x_t = x_0 +
   \int_0^t A (\mathd s, x_s)  \text{ for all } \: t \in [0, T] \right\} \]
where $A \in C^{\alpha}_t C^{\beta, \lambda}_x$ with $\alpha (1 + \beta) > 1$
and $\beta + \lambda \leqslant 1$; namely, $C (x_0, A)$ is the set of
solutions to the Cauchy problem associated to $(x_0, A)$. Recall that by
Corollary~\ref{sec3.1 cor local existence} and Proposition~\ref{sec3.3
proposition bounds growth condition}, existence of global solutions is
granted, but uniqueness is not unless $A \in C^{\alpha}_t C^{1 +
\beta}_{\tmop{loc}}$; therefore $C (x_0, A)$ may not consist of a singleton.
The following result is an extension of Proposition~43
from~{\cite{galeatiharang}}, where the structure of the set $C (x_0 ; A)$ was
already addressed.

\begin{theorem}
  \label{appendixA3 main thm}Let $A \in C^{\alpha}_t C^{\beta, \lambda}_x$
  with $\alpha, \beta, \lambda$ as above, $x_0 \in \mathbb{R}^n$; then the set
  $C (x_0, A)$ is nonempty, compact and simply connected. Moreover, for any
  fixed $y \in \mathbb{R}^d$, the map
  \[ \mathbb{R}^d \times C^{\alpha}_t C^{\beta, \lambda}_x \ni (x_0, A)
     \mapsto d (y, C (x_0, A)) \in \mathbb{R} \]
  is lower semincontinuous.
\end{theorem}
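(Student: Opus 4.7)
The plan is to handle the four claims in sequence: nonemptiness, compactness, simple connectedness (via an $R_\delta$ structure), and lower semicontinuity of the distance function. Throughout, fix $\varepsilon>0$ small enough so that $\alpha-\varepsilon>0$ and $\alpha+\beta(\alpha-\varepsilon)>1$, and regard $C(x_0,A)$ as a subset of $C^{\alpha-\varepsilon}_t$.

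Nonemptiness is immediate from Corollary~\ref{sec3.1 cor local existence} combined with estimate~\eqref{sec3.3 a priori estimates growth} of Proposition~\ref{sec3.3 proposition bounds growth condition}, which rules out blow-up in finite time. For compactness, the same a priori estimate yields a uniform bound $\|x\|_\alpha\leq M$ on $C(x_0,A)$, depending only on $\alpha,\beta,T,\|x_0\|$ and $\|A\|_{\alpha,\beta,\lambda}$; Ascoli--Arzelà then gives precompactness in $C^{\alpha-\varepsilon}_t$. Closedness in the same topology follows from Theorem~\ref{sec2 thm definition young integral}(4): given $x^k\to x$ in $C^{\alpha-\varepsilon}_t$ with $x^k\in C(x_0,A)$, the nonlinear Young integrals $\int_0^\cdot A(\mathd s,x^k_s)$ converge to $\int_0^\cdot A(\mathd s,x_s)$, so the fixed-point equation passes to the limit.

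For simple connectedness the plan is to realise $C(x_0,A)$ as an $R_\delta$--set in $C^{\alpha-\varepsilon}_t$, i.e.\ a decreasing intersection of compact contractible sets; such sets are cell-like and in particular simply connected. Combining the time-mollification of Lemma~\ref{sec2 remark approximation A} with a spatial mollification yields a sequence $A_n\in C^\alpha_t C^{1+\beta,\lambda}_V$ converging to $A$ in $C^{\alpha'}_t C^{\beta',\lambda}_V$ for all $\alpha'<\alpha$, $\beta'<\beta$, with $\|A_n\|_{\alpha,\beta,\lambda}$ uniformly bounded; by Corollary~\ref{sec3.2 cor local uniqueness} the YDE associated to $(x_0,A_n)$ has a unique solution $x^n$. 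The Browder--Gupta--Aronszajn scheme is then applied to the compact map $\mathcal{I}(x)_\cdot:=x_0+\int_0^\cdot A(\mathd s,x_s)$ on the closed ball $\{\|x\|_\alpha\leq M\}$ in $C^{\alpha-\varepsilon}_t$, using as approximants $\mathcal{I}_n(x)_\cdot:=x_0+\int_0^\cdot A_n(\mathd s,x_s)$, which converge uniformly to $\mathcal{I}$ on this ball by Theorem~\ref{sec2 thm definition young integral}(4) and each of which has unique fixed point $x^n$ (with Picard contractivity on a short time interval, iterated in the spirit of Theorem~\ref{sec3 thm wellposedness}). The main obstacle is the quantitative verification that each level set $K_n:=\{x:\|x\|_\alpha\leq M,\ \|x-\mathcal{I}(x)\|_\infty\leq 1/n\}$ is contractible, which requires showing that a suitably rescaled Picard iteration of $\mathcal{I}_n$ defines a continuous homotopy of $K_n$ onto $\{x^n\}$ without leaving $K_n$; this is where the stability estimates of Theorem~\ref{sec3.3 comparison principle} and Remark~\ref{sec3.3 remark lipschitz solution map}, together with the continuity estimate~\eqref{sec2 nonlinear integral estimate2}, are essential to control $\|\mathcal{I}_n^k(x)-\mathcal{I}(\mathcal{I}_n^k(x))\|_\infty$ in terms of $\|A-A_n\|_{\alpha,\beta,\lambda}+\|x-\mathcal{I}(x)\|_\infty$ uniformly in the iteration count $k$.

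Lower semicontinuity of $(x_0,A)\mapsto d(y,C(x_0,A))$ (in the $C^{\alpha-\varepsilon}_t$ distance) reduces to upper semicontinuity of the set-valued solution map in Kuratowski's sense. Given $(x_0^n,A^n)\to(x_0,A)$ in $\mathbb{R}^d\times C^\alpha_t C^{\beta,\lambda}_V$, extract a subsequence realising $L:=\liminf_n d(y,C(x_0^n,A^n))$ and pick $x^n\in C(x_0^n,A^n)$ with $\|y-x^n\|_{\alpha-\varepsilon}\leq d(y,C(x_0^n,A^n))+1/n$. Since $\|x_0^n\|$ and $\|A^n\|_{\alpha,\beta,\lambda}$ are bounded, estimate~\eqref{sec3.3 a priori estimates growth} provides a uniform bound on $\|x^n\|_\alpha$; by Ascoli--Arzelà one further extracts $x^{n_k}\to x$ in $C^{\alpha-\varepsilon}_t$. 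Two successive applications of Theorem~\ref{sec2 thm definition young integral}(4)---first with $A^{n_k}-A$ in the integrand, then with $x^{n_k}-x$ in the path variable---let one pass to the limit in $x^{n_k}=x_0^{n_k}+\int_0^\cdot A^{n_k}(\mathd s,x^{n_k}_s)$, yielding $x\in C(x_0,A)$ and hence $d(y,C(x_0,A))\leq\|y-x\|_{\alpha-\varepsilon}=\lim_k\|y-x^{n_k}\|_{\alpha-\varepsilon}=L$.
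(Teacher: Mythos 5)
Your treatment of nonemptiness, compactness, and lower semicontinuity is correct and matches the paper's argument step for step (a priori bound via Proposition~\ref{sec3.3 proposition bounds growth condition}, Ascoli--Arzel\`a, continuity of the Young integral in the $C^{\alpha-\varepsilon}_t$ topology, then passage to the limit).

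The gap is in the proof of simple connectedness. You correctly identify the Browder--Gupta/Aronszajn machinery as the right tool and propose the mollified approximants $\mathcal{I}_n$, but you then frame the argument as if you still need to prove from scratch that the level sets $K_n$ are contractible via a Picard homotopy, and you explicitly flag this as an unresolved ``obstacle.'' The paper avoids exactly this issue by invoking Theorem~\ref{appendixA3 browder gupta thm} (Gorniewicz's version of Browder--Gupta) as a black box. Its hypotheses are: $f$ proper, $f_\varepsilon$ proper, $\|f_\varepsilon-f\|\leqslant\varepsilon$, and --- crucially --- that $f_\varepsilon(x)=u$ has a \emph{unique} solution for every $u$ with $\|u\|\leqslant\varepsilon$, not merely that $f_\varepsilon$ has a unique zero. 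Your proposal only observes that $\mathcal{I}_n$ has a unique fixed point, i.e.\ uniqueness for $u=0$; this is strictly weaker and not enough to apply the theorem, which is why you end up trying to reprove the contractibility by hand. The missing ingredient is Lemma~\ref{appendixA3 preliminary lemma}: since $A^\varepsilon\in C^\alpha_t C^{1+\beta}_{\mathrm{loc}}$, the perturbed YDE $x_t = x_0 + u_t + \int_0^t A^\varepsilon(\mathd s,x_s)$ is equivalent to the YDE driven by $\tilde A^\varepsilon(t,z)=A^\varepsilon(t,z)+u_t$, which still belongs to $C^\alpha_t C^{1+\beta}_{\mathrm{loc}}$ and hence has a unique solution for every $u\in C^\alpha_t$, not just $u=0$. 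Once this is noted, condition (ii) of the theorem is verified and the $R^\delta$ conclusion (hence simple connectedness) follows without any homotopy construction.

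A secondary issue: your candidate level sets $K_n=\{x:\|x\|_\alpha\leqslant M,\ \|x-\mathcal{I}(x)\|_\infty\leqslant 1/n\}$ mix the $\alpha$-H\"older norm with the sup norm. If you did want to run the direct contractibility argument, the defect $x-\mathcal{I}(x)$ should be measured in the same H\"older scale in which the approximation $\|\mathcal{I}_n-\mathcal{I}\|$ is controlled (here $C^\alpha_t$, via~\eqref{sec2 nonlinear integral estimate2}), otherwise the stability estimates you invoke do not close the loop. This point disappears once you use the Gorniewicz theorem instead.
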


Here we recall that for $y \in C^{\alpha}_t$, $K \subset C^{\alpha}_t$, the
distance of an element from a set is defined by
\[ d (y, K) = \inf_{z \in K} \| y - z \|_{\alpha} . \]
A main tool in the proof of Theorem~\ref{appendixA3 main thm} is the use of
the Browder--Gupta theorem from~{\cite{browder}}; we recite here a slight
modification due to Gorniewicz.

\begin{theorem}[Theorem~69.1, Chapter~VI from {\cite{gorniewicz}}]
  \label{appendixA3 browder gupta thm}Let $X$ be a metric space, $(E, \| \cdot
  \|)$ a Banach space and $f : X \rightarrow E$ a proper map, i.e. f is
  continuous and for every compact $K \subset E$ the set $f^{- 1} (K)$ is
  compact. Assume further that for each $\varepsilon > 0$ a proper map
  $f_{\varepsilon} : X \rightarrow E$ is given and the following two
  conditions are satisfied:
  \begin{enumerateroman}
    \item $\| f_{\varepsilon} (x) - f (x) \| \leqslant \varepsilon$ for all $x
    \in X$;
    
    \item for any $\varepsilon > 0$ and $u \in E$ such that $\| u \| \leqslant
    \varepsilon$, the equation $f_{\varepsilon} (x) = u$ has exactly one
    solution.
  \end{enumerateroman}
  Then the set $S = f^{- 1} (0)$ is $R^{\delta}$ in the sense of Aronszajn.
\end{theorem}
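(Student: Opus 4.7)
The plan is to construct, for each $n$, a compact contractible set $S_n \subset X$ containing $S := f^{-1}(0)$ and converging to $S$ in Hausdorff distance, then to invoke a standard characterization of Aronszajn's $R_\delta$-sets via one-sided contractible neighborhoods.

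First I would establish that $S$ is non-empty and compact. Compactness is immediate from $S = f^{-1}(\{0\})$ and the properness of $f$. For non-emptiness, hypothesis (ii) applied with $u = 0$ gives, for each $n$, a unique $x_n \in X$ with $f_{1/n}(x_n) = 0$; then (i) yields $\|f(x_n)\| \leq 1/n$, so $\{x_n\} \subset f^{-1}(\overline{B_1(0)})$ is precompact by properness of $f$, and any accumulation point $x^\ast$ satisfies $f(x^\ast) = 0$ by continuity of $f$.

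Next, for each $n$ set $S_n := f_{1/n}^{-1}(\overline{B_{1/n}(0)})$. By properness of $f_{1/n}$ this is compact, and by hypothesis (ii) the restriction $f_{1/n}|_{S_n} : S_n \to \overline{B_{1/n}(0)}$ is a continuous bijection between compacta, hence a homeomorphism onto a closed ball in a Banach space; in particular each $S_n$ is a compact absolute retract (and contractible). One checks $S \subset S_n$ using (i): if $f(x)=0$ then $\|f_{1/n}(x)\| \leq 1/n$. Conversely $S_n \subset \{x \in X : \|f(x)\| \leq 2/n\}$, and a properness argument exactly analogous to Step~1 shows $S_n \subset B_{\varepsilon_n}(S)$ for some $\varepsilon_n \to 0$: otherwise a sequence $x_n \in S_n$ with $\mathrm{dist}(x_n, S) \geq \delta > 0$ would be precompact by properness (since $\|f(x_n)\| \to 0$) and its limit points would lie in $S$, a contradiction.

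Finally, to conclude that $S$ is $R_\delta$ in Aronszajn's sense, I would invoke the standard characterization (see Górniewicz, Ch.~VI, or Hyman's theorem): a compact metric subset $S$ is $R_\delta$ if and only if for every $\varepsilon > 0$ there exists a compact AR $C_\varepsilon$ with $S \subset C_\varepsilon \subset B_\varepsilon(S)$. Our $S_n$'s provide exactly such a one-sided approximation via the subsequence $\varepsilon_n \to 0$. The main obstacle is precisely this last step: the naive family $\{S_n\}$ is not automatically monotone decreasing, so writing $S$ as a decreasing intersection of compact ARs -- which is the literal Aronszajn definition -- cannot be done directly from the $S_n$'s (an intersection of finitely many compact ARs need not be contractible). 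The characterization above is designed precisely to bypass this subtlety, reducing $R_\delta$-ness to the existence of arbitrarily close contractible outer neighborhoods, which is what our construction of $S_n$ provides.
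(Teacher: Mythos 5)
This statement is quoted from Górniewicz's book and the paper gives no proof of it, so there is nothing internal to compare against; I assess your argument on its own terms. Your outline follows the classical Browder--Gupta strategy (nonemptiness and compactness of $S$ via properness, approximating $S$ from outside by compact ARs obtained from the bijectivity hypothesis, then a Hyman-type characterization of $R^{\delta}$-sets to bypass the non-nestedness of the approximants), and the last of these steps is correctly identified and legitimately quotable. But there is a genuine gap at the central construction: you claim that $S_n := f_{1/n}^{-1}(\overline{B}_{1/n}(0))$ is compact ``by properness of $f_{1/n}$''. Properness only gives compactness of preimages of \emph{compact} sets, and the closed ball $\overline{B}_{1/n}(0)$ is not compact when $E$ is infinite dimensional. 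Your own next sentence makes the failure visible: $f_{1/n}|_{S_n}$ is a continuous bijection onto $\overline{B}_{1/n}(0)$, so if $S_n$ were compact the closed ball would be a continuous image of a compactum, forcing $\dim E < \infty$. Thus in infinite dimensions $S_n$ is a closed contractible set but not a compact AR, and the characterization you invoke needs compact approximants. The same misuse of properness occurs in your Step 1 (``$f^{-1}(\overline{B_1(0)})$ is precompact''), though there it is harmless since $\{f(x_n)\}_n \cup \{0\}$ is compact and one can pull back that set instead, exactly as you do correctly in your squeezing argument. The infinite-dimensional case is the one actually needed in Appendix~\ref{appendixC}, where $E = C^{\alpha}_t$.

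The repair is the one used in the classical proof. Since $S$ is compact and $\| f_{1/n}(x) \| = \| f_{1/n}(x) - f(x) \| \leqslant 1/n$ for $x \in S$, the set $K_n := \overline{\operatorname{co}}\, (f_{1/n}(S))$ is, by Mazur's theorem, a compact convex subset of $\overline{B}_{1/n}(0)$. Put $C_n := f_{1/n}^{-1}(K_n)$: this is compact by properness, contains $S$, and by hypothesis (ii) the restriction $f_{1/n}|_{C_n} : C_n \rightarrow K_n$ is a continuous bijection between compacta, hence a homeomorphism onto a compact convex set, so $C_n$ is a compact AR. Your squeezing argument then applies verbatim to $C_n \subset f^{-1}(\overline{B}_{2/n}(0))$ to give $C_n$ inside arbitrarily small neighbourhoods of $S$, and the characterization lemma finishes the proof. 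With this single replacement your argument becomes the standard one.
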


Recall that an $R^{\delta}$ set is the intersection of a decreasing sequence
of compact absolute retracts, thus always simply connected.

In order to prove Theorem~\ref{appendixA3 main thm} we need the a preliminary
lemma.

\begin{lemma}
  \label{appendixA3 preliminary lemma}For $A$ as above and for any $y \in
  C^{\alpha}_t$, there exists at least one solution $x \in C^{\alpha}_t$ to
  \begin{equation}
    x_t = y_t + \int_0^t A (\mathd s, x_s) \qquad \forall \: t \in [0, T] ;
    \label{appendixA3 YDE}
  \end{equation}
  moreover, there exists $C = C (\alpha, \beta, T)$ such that any solution
  satisfies the a priori estimate
  \begin{equation}
    \| x \|_{\alpha} \leqslant C \exp (C \| A \|_{\alpha, \beta, \lambda}^2 +
    \| y \|_{\alpha}^2) (1 + | y_0 |) . \label{appendixA3 a priori}
  \end{equation}
  If in addition $A \in C^{\alpha}_t C^{1 + \beta}_{\tmop{loc}}$, then the
  solution is unique.
\end{lemma}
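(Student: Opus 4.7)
The strategy is to adapt the existence and a priori estimates of Sections~\ref{sec3.1} and~\ref{sec3.2} to accommodate the additive forcing $y$, which changes neither the qualitative nor the quantitative structure of the arguments. For existence, I would run a variant of the Euler scheme of Theorem~\ref{sec3.1 thm existence}, defining
\[ x^n_{k+1} \assign x^n_k + y_{t^n_k,t^n_{k+1}} + A_{t^n_k,t^n_{k+1}}(x^n_k) \]
and embedding into $C^\alpha_t$ by interpolating with the exact $y$-increment plus the frozen drift increment on each subinterval. Since the $y$-contribution is exact (and not sampled), the residual $\psi^n_t$ arising from the decomposition $A_{s,t}(x^n_s) = \int_s^t A(\mathd r, x^n_r) + [A(\mathd r, x^n_s) - A(\mathd r, x^n_r)]$ is identical to the one in Theorem~\ref{sec3.1 thm existence} and still satisfies $\llbracket \psi^n \rrbracket_\alpha \to 0$. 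Finite dimensionality makes the compactness assumption of Theorem~\ref{sec3.1 thm existence} trivial; Ascoli--Arzel\`a extracts a subsequence converging in $C^{\alpha-\varepsilon}_t$, and continuity of the nonlinear Young integral (Theorem~\ref{sec2 thm definition young integral}) lets one pass to the limit in the scheme to obtain a solution of~\eqref{appendixA3 YDE}.

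For the a priori bound~\eqref{appendixA3 a priori} I would mimic the proof of Proposition~\ref{sec3.3 proposition bounds growth condition}. Given a solution $x$ and $[u,r] \subset [s,t]$, the triangle inequality applied to~\eqref{appendixA3 YDE} produces
\[ \|x_{u,r}\|_V \leq \llbracket y\rrbracket_\alpha |r-u|^\alpha + \|A_{u,r}(x_u)\|_V + \kappa_1 |r-u|^{\alpha(1+\beta)}\|A\|_{\alpha,\beta,\lambda}\llbracket x\rrbracket^\beta_{\alpha;s,t}(1+\|x\|_{\infty;s,t}^\lambda), \]
which differs from the estimate in Proposition~\ref{sec3.3 proposition bounds growth condition} only by the additional $\llbracket y \rrbracket_\alpha |r-u|^\alpha$ forcing. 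Splitting $\|A_{u,r}(x_u)\|_V$ as in that proof, using $\beta+\lambda\leq 1$ and Young's inequality, and choosing $\Delta$ so that $\kappa_1 \|A\|\Delta^{\alpha\beta} \leq 1/2$ yields
\[ \llbracket x\rrbracket_{\alpha;s,s+\Delta} \leq \kappa_2(\|A\|_{\alpha,\beta,\lambda}+\llbracket y\rrbracket_\alpha)(1+\|x_s\|_V). \]
Chopping $[0,T]$ into intervals $I_n = [(n-1)\Delta,n\Delta]$ and setting $J_n = 1 + \|x\|_{\infty;I_n}$, this gives the recursion $J_n \leq (1+\kappa_3 \Delta^\alpha(\|A\| +\llbracket y\rrbracket_\alpha))J_{n-1}$, hence
\[ J_N \leq \exp\bigl(\kappa_3 N\Delta^\alpha(\|A\|+\llbracket y\rrbracket_\alpha)\bigr)(1+|y_0|). \]
With $N\Delta \sim T$ and $\Delta^{-\alpha\beta} \sim \|A\|$, one has $N\Delta^\alpha \sim \|A\|^{(1-\alpha)/(\alpha\beta)}$, so the exponent is $\lesssim \|A\|^{1+(1-\alpha)/(\alpha\beta)} + \|A\|^{(1-\alpha)/(\alpha\beta)}\llbracket y\rrbracket_\alpha$; a final Young-type splitting $\|A\|^{(1-\alpha)/(\alpha\beta)}\llbracket y\rrbracket_\alpha \leq \tfrac12\|A\|^{2(1-\alpha)/(\alpha\beta)} + \tfrac12\llbracket y\rrbracket_\alpha^2$, combined with $1+(1-\alpha)/(\alpha\beta)\leq 2$ (which follows from $\alpha(1+\beta)>1$), yields the claimed exponent $\|A\|_{\alpha,\beta,\lambda}^2 + \|y\|_\alpha^2$.

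For uniqueness under $A \in C^\alpha_t C^{1+\beta}_{\tmop{loc}}$, the forcing $y$ cancels in the difference of two solutions: if $x^1,x^2$ both solve~\eqref{appendixA3 YDE}, then $e = x^1-x^2$ satisfies the affine YDE $e_t = \int_0^t v_{\mathd s}\, e_s$, where by Corollary~\ref{sec2 corollary frechet}
\[ v_t = \int_0^t\!\int_0^1 DA(\mathd s, x^2_s+\lambda(x^1_s-x^2_s))\mathd\lambda \]
belongs to $C^\alpha_t \mathcal{L}(V;V)$ with norm controlled through~\eqref{appendixA3 a priori}. Theorem~\ref{sec3.3 thm gronwall estimate YDE} applied to this affine YDE with zero initial condition forces $e\equiv 0$. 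The main obstacle I anticipate is simply bookkeeping to extract the neat $\exp(C(\|A\|^2+\|y\|_\alpha^2))$ exponent from the product $\|A\|^{(1-\alpha)/(\alpha\beta)}\llbracket y\rrbracket_\alpha$ rather than a messier mixed expression; everything else is a direct transcription of Sections~\ref{sec3.1}--\ref{sec3.3} with the additional affine term carried along.
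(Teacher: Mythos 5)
Your argument is correct, but it is considerably longer than the paper's. The paper observes that the forcing $y$ can be absorbed directly into the drift: defining $\tilde{A}(t,z) := A(t,z) + y_t$, a path $x$ solves $x_t = y_t + \int_0^t A(\mathrm{d}s, x_s)$ if and only if it solves the unforced YDE $x_t = y_0 + \int_0^t \tilde{A}(\mathrm{d}s, x_s)$, and clearly $\tilde{A} \in C^{\alpha}_t C^{\beta,\lambda}_x$ with $\|\tilde{A}\|_{\alpha,\beta,\lambda} \leq \|A\|_{\alpha,\beta,\lambda} + \|y\|_{\alpha}$ (and $\tilde A$ inherits the $C^{1+\beta}_{\mathrm{loc}}$ regularity as well). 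Existence, the a priori estimate, and uniqueness then drop out of Corollary~\ref{sec3.1 cor local existence}, Proposition~\ref{sec3.3 proposition bounds growth condition} and Corollary~\ref{sec3.2 cor local uniqueness} applied to $\tilde A$; in particular the exponent $\|\tilde A\|^{1+(1-\alpha)/(\alpha\beta)} \lesssim (\|A\|+\|y\|_\alpha)^2 \lesssim \|A\|^2 + \|y\|_\alpha^2$ is immediate. You instead rerun the Euler scheme and the Gronwall-type iteration from scratch, carrying the extra $\llbracket y\rrbracket_\alpha |r-u|^\alpha$ term through each step. Your bookkeeping is accurate — the choice $\Delta^{\alpha\beta}\|A\|\sim 1$, the recursion $J_n \leq (1+\kappa\Delta^\alpha(\|A\|+\llbracket y\rrbracket_\alpha))J_{n-1}$, and the final Young splitting of $\|A\|^{(1-\alpha)/(\alpha\beta)}\llbracket y\rrbracket_\alpha$ all check out — so the proof is sound. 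What the paper's reformulation buys is that no estimates have to be re-derived at all: the lemma is literally a corollary of statements already on record, which is both shorter and less error-prone. Your direct approach has the virtue of making explicit where each term comes from, but at the cost of duplicating several pages of earlier work. One small caveat in your uniqueness step: you apply Corollary~\ref{sec2 corollary frechet} and Theorem~\ref{sec3.3 thm gronwall estimate YDE} to $e = x^1 - x^2$, but $A$ is only locally $C^{1+\beta}$; you should note that the a priori bound already confines both solutions to a fixed ball so that one can work with $\|DA\|_{\alpha,\beta,R}$ for $R$ large enough, which is exactly what the appeal to Corollary~\ref{sec3.2 cor local uniqueness} handles silently in the paper's version.
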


\begin{proof}
  Set $\tilde{A} (t, x) = A (t, x) + y_t$, then $x$ is a solution
  to~{\eqref{appendixA3 YDE}} if and only if it solves
  \[ x_t = y_0 + \int_0^t \tilde{A} (\mathd s, x_s) \]
  where $\tilde{A} \in C^{\alpha}_t C^{\beta, \lambda}_x$ with $\| \tilde{A}
  \|_{\alpha, \beta, \lambda} \leqslant \| A \|_{\alpha, \beta, \lambda} + \|
  y \|_{\alpha}$. Existence and the estimate~{\eqref{appendixA3 a priori}}
  then follow from Corollary~\ref{sec3.1 cor local existence} and
  Proposition~\ref{sec3.3 proposition bounds growth condition}; $A \in
  C^{\alpha}_t C^{1 + \beta}_{\tmop{loc}}$ implies $\tilde{A} \in C^{\alpha}_t
  C^{1 + \beta}_{\tmop{loc}}$ and so uniqueness follows from
  Corollary~\ref{sec3.2 cor local uniqueness}.
\end{proof}

\begin{proof}[of Theorem~\ref{appendixA3 main thm}]
  We divide the proof in several steps.
  
  \tmtextit{Step 1: $C (x_0, A)$ nonempty, compact.} Nonemptiness follows
  immediately from Lemma~\ref{appendixA3 preliminary lemma} applied to $y
  \equiv x_0$; let $x^n$ be a sequence of elements of $C (x_0, A)$, then
  by~{\eqref{appendixA3 a priori}} they are uniformly bounded in
  $C^{\alpha}_t$ and so by Ascoli--Arzel{\`a} we can extract a (not
  relabelled) subsequence $x^n \rightarrow x$ in $C^{\alpha - \varepsilon}_t$
  for all $\varepsilon > 0$, for some $x \in C^{\alpha}_t$. Choosing
  $\varepsilon > 0$ sufficiently small such that $\alpha + \beta (\alpha -
  \varepsilon) > 1$, by Theorem~\ref{sec2 thm definition young integral} the
  map $z_{\cdot} \mapsto \int_0^{\cdot} A (\mathd s, z_s)$ is continuous from
  $C^{\alpha - \varepsilon}_t$ to $C^{\alpha}_t$, therefore
  \[ x_{\cdot}^n = x_0 + \int_0^{\cdot} A (\mathd s, x^n_s) \rightarrow x_0 +
     \int_0^{\cdot} A (\mathd s, x_s) = x_{\cdot}  \text{ in } C^{\alpha}_t,
  \]
  which shows compactness.
  
  \tmtextit{Step 2: $C (x_0, A)$ connected.} Given $A \in C^{\alpha}_t
  C^{\beta, \lambda}_x$, consider a sequence $A^{\varepsilon} \in C^{\alpha}_t
  C^{1 + \beta, \lambda}_x$ such that
  \[ \| A^{\varepsilon} \|_{\alpha, \beta, \lambda} \leqslant 2 \| A
     \|_{\alpha, \beta, \lambda}, \quad A^{\varepsilon} \rightarrow A \text{
     in } C^{\alpha}_t C^{\beta}_{\tmop{loc}} \quad \text{as } \varepsilon
     \rightarrow 0 ; \]
  this is always possible, for instance by taking $A^{\varepsilon} =
  \rho^{\varepsilon} \ast A$, $\{ \rho^{\varepsilon} \}_{\varepsilon > 0}$
  being a family of standard spatial mollifiers. For $x_0 \in \mathbb{R}^d$
  fixed, take $R > 0$ big enough such that
  \[ C \exp (C \| A^{\varepsilon} \|_{\alpha, \beta, \lambda}^2 + \| x_0 + y
     \|_{\alpha}^2) (1 + | y_0 + x_0 |) \leqslant R \quad \forall \,
     \varepsilon \in (0, 1), \, y \in C^{\alpha}_t \, \text{ s.t. } \| y
     \|_{\alpha} \leqslant 1, \]
  where $C$ is the constant appearing in~{\eqref{appendixA3 a priori}}; this
  is always possible due to the uniform bound on $\| A^{\varepsilon}
  \|_{\alpha, \beta, \lambda}$. Define the metric space $E$ to be
  \[ E = \left\{ z \in C^{\alpha}_t : \, \| z \|_{\alpha} \leqslant R
     \right\}, \quad d_E (z^1, z^2) = \| z^1 - z^2 \|_{\alpha} ; \]
  and define maps $f, f_{\varepsilon} : E \rightarrow C^{\alpha}_t$ by
  \[ f (x) = x_{\cdot} - x_0 - \int_0^{\cdot} A (\mathd s, x_s), \quad
     f_{\varepsilon} (x) = x_{\cdot} - x_0 - \int_0^{\cdot} A^{\varepsilon}
     (\mathd s, x_s) . \]
  By Theorem~\ref{sec2 thm definition young integral}, they are continuous
  maps from $E$ to $C^{\alpha}_t$; by reasoning exactly as in Step~1 it is
  easy to check thar they are proper. Observe that an element $x \in E$
  satisfies $f (x) = y$ if and only if it satisfies
  \[ x \in C^{\alpha}_t, \quad x_t = x_0 + y_t + \int_0^t A (\mathd s, x_s)
     \quad \forall \, t \in [0, T], \quad \| x \|_{\alpha} \leqslant R, \]
  similarly for $f_{\varepsilon}$; moreover the bound $\| x \|_{\alpha}
  \leqslant R$ is trivially satisfied for all $y$ such that $\| y \|_{\alpha}
  \leqslant 1$, by our choice of $R$ and Lemma~\ref{appendixA3 preliminary
  lemma}. It follows that, for any such $y$, $f_{\varepsilon} (x) = y$ has
  exactly one solution $x \in E$. In order to apply Theorem~\ref{appendixA3
  browder gupta thm} and get the conclusion, it remains to show that
  $f_{\varepsilon} \rightarrow f$ uniformly in $E$; but by Theorem~\ref{sec2
  thm definition young integral} it holds
  
  \begin{align*}
    \| f (z) - f_{\varepsilon} (z) \|_{\alpha} & = \, \left\| \int_0^{\cdot} A
    (\mathd s, x_s) - \int_0^{\cdot} A^{\varepsilon} (\mathd s, x_s)
    \right\|_{\alpha}\\
    & \lesssim \, \| A - A^{\varepsilon} \|_{\alpha, \beta, R}  (1 + \| z
    \|_{\alpha})\\
    & \lesssim \, \| A - A^{\varepsilon} \|_{\alpha, \beta, R} (1 + R)
    \rightarrow 0 \text{ as } \varepsilon \rightarrow 0
  \end{align*}
  
  and the can conclude that $f^{- 1} (0) = C (x_0, A)$ is simply connected in
  $E$, thus also in $C^{\alpha}_t$.
  
  \tmtextit{Step 3: lower semicontinuity.} Consider now a sequence $(x_0^n,
  A^n) \rightarrow (x_0, A)$ in $\mathbb{R}^d \times C^{\alpha}_t C^{\beta,
  \lambda}_x$, we need to show that for any fixed $y \in C^{\alpha}_t$ it
  holds
  \[ d (y, C (x_0, A)) \leqslant \liminf_{n \rightarrow \infty} d (y, C
     (x_0^n, A^n)) . \]
  Since by Step~1 the set $C (x^n_0, A^n)$ is compact, it is always possible
  to find $x^n \in C (x_0^n, A^n)$ such that
  \[ \| y - x_0^n \| = (y, C (x_0^n, A^n)) ; \]
  we can assume wlog that $\lim d (y, C (x_0^n, A^n))$ exists, since otherwise
  we can extract a subsequence realizing the liminf. Since $(x_0^n, A^n)$ is
  convergent, it is also bounded in $\mathbb{R}^d \times C^{\alpha}_t
  C^{\beta, \lambda}_x$, which implies by estimate~{\eqref{appendixA3 a
  priori}} that the sequence $\{ x^n \}_n$ is bounded in $C^{\alpha}_t$. It is
  not difficult to see, invoking Ascoli--Arzel{\`a} and going through the same
  reasoning as in Step~1, that we can extract a (not relabelled) subsequence
  such that $x^n \rightarrow x$ in $C^{\alpha}_t$ where $x \in C (x_0, A)$. As
  a consequence
  \[ d (y, C (x_0, A)) \leqslant \| y - x \|_{\alpha} = \lim_{n \rightarrow
     \infty} \| y - x^n \|_{\alpha} = \liminf_{n \rightarrow \infty} d (y, C
     (x^n_0, A^n)) \]
  which gives the conclusion.
\end{proof}

Theorem~\ref{appendixA3 main thm} has relevant consequence when considering $C
(x_0, A)$ as a multivalued map; we refer the reader to~{\cite{castaing}} for
an overview on the topic.

Recall that, given a complete metric space $(E, d)$, the space
\[ K (E) = \left\{ K \subset E \, : \, K \text{ is compact} \right\} \]
is itself a complete metric space with the Hausdorff metric
\[ d_H (K_1, K_2) = \max \{ \sup_{a \in K_1} d (a, K_2), \sup_{b \in K_2} d
   (b, K_1) \} \]
and that moreover
\[ d_H (K_1, K_2) = \sup_{a \in E} | d (a, K_1) - d (a, K_2) | = \max_{a \in
   K_1 \cup K_2} | d (a, K_1) - d (a, K_2) | . \]
If we endow the space $(K (E), d_H)$ with its Borel $\sigma$-algebra, then
it's possible to show that a map $F : (\Omega, A) \rightarrow (K (E), d_H)$ is
measurable if and only if, for all $a \in E$, the map
\[ \Omega \ni \omega \mapsto d (a, F (\omega)) \in \mathbb{R} \]
is measurable.

\begin{corollary}
  The map from $\mathbb{R}^d \times C^{\alpha}_t C^{\beta, \lambda}_x$ to $K
  (C^{\alpha}_t)$ given by $(x_0, A) \mapsto C (x_0, A)$ is a measurable
  multifunction.
\end{corollary}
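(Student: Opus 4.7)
The plan is to combine Theorem~\ref{appendixA3 main thm} with the standard characterization of measurability for compact-set valued multifunctions recalled just above the corollary's statement. First, I would note that by Step~1 in the proof of Theorem~\ref{appendixA3 main thm}, for every $(x_0, A) \in \mathbb{R}^d \times C^{\alpha}_t C^{\beta, \lambda}_x$ the set $C(x_0, A)$ is nonempty and compact in $C^{\alpha}_t$, so the map $(x_0, A) \mapsto C(x_0, A)$ genuinely takes values in $K(C^{\alpha}_t)$; this ensures the question of its measurability is well-posed.

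Next, I would invoke the criterion recalled in the paragraph preceding the corollary: a map $F:(\Omega, \mathcal{A}) \to (K(E), d_H)$ into the Hausdorff metric space of compacta is Borel measurable if and only if, for every $a \in E$, the scalar map $\omega \mapsto d(a, F(\omega))$ is measurable. Applied to our setting with $\Omega = \mathbb{R}^d \times C^{\alpha}_t C^{\beta, \lambda}_x$ endowed with its Borel $\sigma$-algebra, $E = C^{\alpha}_t$ and $F(x_0, A) = C(x_0, A)$, it suffices to check that for each fixed $y \in C^{\alpha}_t$ the function
\[
\mathbb{R}^d \times C^{\alpha}_t C^{\beta, \lambda}_x \ni (x_0, A) \mapsto d(y, C(x_0, A)) \in \mathbb{R}
\]
is Borel measurable.

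But this is precisely the content of the last assertion of Theorem~\ref{appendixA3 main thm}: the above function is lower semicontinuous in $(x_0, A)$. Since any lower semicontinuous function on a metric space is Borel measurable (its sublevel sets $\{f \leq c\}$ are closed, hence Borel), the required measurability for every $y$ follows immediately. There is no obstacle to overcome beyond assembling these two inputs; the entire content of the corollary lies in the lower semicontinuity statement of Theorem~\ref{appendixA3 main thm}, which has already been established, together with the general topological fact about compact-valued multifunctions.
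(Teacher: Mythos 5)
Your proof is correct and follows exactly the same route as the paper's: invoke the lower semicontinuity statement from Theorem~\ref{appendixA3 main thm}, note that lower semicontinuous functions are Borel measurable, and conclude via the recalled criterion that measurability of $\omega \mapsto d(a, F(\omega))$ for all $a$ implies measurability of the multifunction. You have merely spelled out the steps the paper compresses into one line.
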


\begin{proof}
  It follows immediately from Theorem~\ref{appendixA3 main thm} and the fact
  that lower semicontinuous maps are measurable.
\end{proof}

\begin{remark}
  For simplicity we have only treated the case $V =\mathbb{R}^d$, but it's
  clear that Theorem~\ref{appendixA3 main thm} admits several extensions; for
  instance it can be readapted to the case of equations of the
  form~{\eqref{sec3.5 mixed YDE}} with $A \in C^{\alpha}_t C^{\beta,
  \lambda}_x$ and $F$ continuous of linear growth. In alternative, one can
  consider a general Banach space $V$ and $A \in C^{\alpha}_t C^{\beta,
  \lambda}_{V, W}$ with $W$ compactly embedded in $V$; this is enough to grant
  global existence by Corollary~\ref{sec3.1 cor local existence} and the usual
  a priori estimates.
\end{remark}

\bibliography{biblio}{}
\bibliographystyle{plain}

\end{document}